\newtheorem{theorem}{Theorem}[section]
\newtheorem{corollary}[theorem]{Corollary}
\newtheorem{lemma}[theorem]{Lemma}
\newtheorem{definition}[theorem]{Definition}
\newtheorem{remark}[theorem]{Remark}
\numberwithin{equation}{section}
\newcounter{mnote}
\let\oldmarginpar\marginpar
  \renewcommand\marginpar[1]{\-\oldmarginpar[\raggedleft\footnotesize #1]%
  {\raggedright\footnotesize #1}}
\newcommand{\reals}{\mathbb{R}}
\newcommand{\lab}{\label}
\newcommand{\floor}[1]{\lfloor{#1}\rfloor}
\definecolor{mve}{rgb}{0.7,0.35,0.15}
\definecolor{brght}{rgb}{0.825,0.2625,0.15}
\definecolor{yello}{rgb}{1,0.925,0.65}
\definecolor{bluu}{rgb}{0.65, 0.95, 1}
\definecolor{bluu2}{rgb}{0.2, 0.5, 0.8}
\newenvironment{itemizeX}
{\begin{list}{\labelitemi}
 {\setlength{\leftmargin}{1.5em}
  \setlength{\topsep}{0.5em}
  \setlength{\itemsep}{0.5em}
  \setlength{\labelwidth}{50.0em}}}
 {\end{list}}
\newenvironment{enumerateX}
{\begin{list}{(\arabic{enumi})}
 {\usecounter{enumi}
  \setlength{\leftmargin}{2.5em}
  \setlength{\topsep}{0.5em}
  \setlength{\itemsep}{0.5em}
  \setlength{\labelwidth}{50.0em}}}
 {\end{list}}
\newenvironment{enumerateXALI}
{\begin{list}{(\arabic{enumi})}
 {\usecounter{enumi}
  \setlength{\leftmargin}{1em}
  \setlength{\topsep}{0.5em}
  \setlength{\itemsep}{0.5em}
  \setlength{\labelwidth}{50.0em}}}
 {\end{list}}
 \newenvironment{itemizeXALI}
{\begin{list}{\labelitemi}
 {\setlength{\leftmargin}{0.4em}
  \setlength{\topsep}{0.5em}
  \setlength{\itemsep}{0.5em}
  \setlength{\labelwidth}{50.0em}}}
 {\end{list}}
\begin{document}

\title[Space of Locally Sobolev Functions]
      {Some Remarks on the Space of Locally Sobolev-Slobodeckij Functions}

\author[A. Behzadan]{A. Behzadan}
\email{a.behzadan@csus.edu}
\address{Department of Mathematics and Statistics\\
        California State University Sacramento\\
        Sacramento CA 95819}

\author[M. Holst]{M. Holst}
\email{mholst@math.ucsd.edu}
\address{Department of Mathematics\\
        University of California San Diego\\
        La Jolla CA 92093}

\thanks{AB was supported by NSF Awards~1262982.}
\thanks{MH was supported in part by
        NSF Awards~2012857, 1620366, and 1262982.}

\date{}
\keywords{Sobolev-Slobodeckij spaces, Locally Sobolev-Slobodeckij spaces, Smooth multiplication triples, Invariance under composition}

\begin{abstract}
The study of certain differential operators between Sobolev spaces of sections of vector bundles on compact manifolds equipped with rough metric is closely related to the study of locally Sobolev functions on domains in the Euclidean space.
In this paper we present a coherent rigorous study of some of the properties of locally Sobolev-Slobodeckij functions that are especially useful in the study of differential operators between sections of vector bundles on compact manifolds with rough metric. Results of this type in published literature generally can be found only for integer order Sobolev spaces $W^{m,p}$ or Bessel potential spaces $H^s$. Here we have presented the relevant results and their detailed proofs for Sobolev-Slobodeckij spaces $W^{s,p}$ where $s$ does not need to be an integer. We also develop a number of results needed in the study of differential operators on manifolds that do not appear to be in the literature.
\end{abstract}

\maketitle
{\footnotesize \tableofcontents }


\vspace*{-0.75cm}
\section{Introduction}
 \label{sec:intro}
The study of elliptic PDEs on compact manifolds naturally leads to the study of Sobolev spaces of functions and more generally Sobolev spaces of sections of vector bundles.
As it turns out, the study of certain differential operators between Sobolev spaces of sections of vector bundles on manifolds equipped with rough metric is closely related to the study of spaces of locally Sobolev functions on domains in the Euclidean space (see \cite{holstbehzadan2017c, holstbehzadan2018b}).

In this paper we focus on certain properties of spaces of locally Sobolev functions that are particularly useful in the study of differential operators on manifolds. Our work can be viewed as a continuation of the excellent work of Antonic and Burazin \cite{Antonic1}; their work is mainly concerned with the properties of spaces of locally Sobolev functions with integer smoothness degree. In particular, they study the following fundamental questions for Sobolev spaces with integer smoothness degree:
\begin{itemize}
\item Topology, metrizability
\item Density of smooth functions
\item Reflexivity, the nature of the dual
\item Continuity of differentiation between certain spaces of locally Sobolev functions
\end{itemize}
Our main goal here is to provide a self-contained manuscript in which the known results are collected and stated in the general setting of Sobolev-Slobodeckij spaces and then develop certain other results that are useful in the study of differential operators on manifolds. In particular, we will discuss
\begin{itemize}
\item General embedding results
\item Pointwise multiplication
\item Invariance under composition
\end{itemize}
Results of this type and other related results have been used in the literature -particularly in the study of Einstein constraint equations on manifolds equipped with rough metric- without complete proof. This paper should be viewed as a part of our efforts to fill some of the gaps. Interested readers can find other results in this direction in \cite{holstbehzadan2018d, holstbehzadan2017c, holstbehzadan2018b, holstbehzadan2015b}. Our hope is that the detailed presentation of this manuscript, along with these other four manuscripts, will help in better understanding the structure of the proofs and the properties of Sobolev-Slobodeckij spaces and locally Sobolev functions.


\section{Notation and Conventions}
\label{sec:notation}

Throughout this paper, $\reals$ denotes the set of real numbers, $\mathbb{N}$ denotes the set of positive integers, and $\mathbb{N}_0$ denotes the set of nonnegative integers. For any nonnegative real number $s$, the integer part of $s$ is denoted by $\floor{s}$. The letter $n$ is a positive integer and stands for the dimension of the space. For all $k\in \mathbb{N}$, $\textrm{GL}(k,\reals)$ is the set of all $k\times k$ invertible
matrices with real entries.\\

 $\Omega$ is a nonempty open set in $\reals^n$. The collection of all compact subsets of $\Omega$ will be denoted by $\mathcal{K}(\Omega)$. If $\mathcal{F}(\Omega)$ is any function space on $\Omega$ and
$K\in \mathcal{K}(\Omega)$, then $\mathcal{F}_K(\Omega)$ denotes
the collection of elements in $\mathcal{F}(\Omega)$ whose support
is inside $K$. Also
\begin{equation*}
\mathcal{F}_{comp}(\Omega)=\bigcup_{K\in
\mathcal{K}(\Omega)}\mathcal{F}_K(\Omega)
\end{equation*}
If $\Omega'\subseteq \Omega$ and $f: \Omega'\rightarrow \reals$, we denote the extension by zero of $f$ to the entire $\Omega$ by $\textrm{ext}^0_{\Omega',\Omega}f: \Omega \rightarrow \reals$, that is,
\begin{align*}
\textrm{ext}^0_{\Omega',\Omega}f (x)=
\begin{cases}
&f(x)\quad \textrm{if $x\in \Omega'$}\\
&0\quad \textrm{otherwise}
\end{cases}
\end{align*}

Lipschitz domain in $\reals^n$ refers to a nonempty bounded open set in $\reals^n$ with Lipschitz continuous boundary. We say that a nonempty open set $\Omega\subseteq \reals^n$ has the \textbf{interior Lipschitz property} provided that for each compact set $K\in \mathcal{K}(\Omega)$ there exists a bounded open set $\Omega'\subseteq \Omega$ with Lipschitz continuous boundary such that $K\subseteq \Omega'$.\\

Each element of $\mathbb{N}_0^n$ is called a multi-index. For a multi-index $\alpha=(\alpha_1,\cdots,\alpha_n)\in \mathbb{N}_0^n$, we let $|\alpha|:=\alpha_1+\cdots+\alpha_n$. Also for sufficiently smooth functions $u:\Omega\rightarrow \reals$ (or for any distribution $u$) we define the $\alpha$th order partial derivative of $u$ as follows:
\begin{equation*}
\partial^\alpha u:=\frac{\partial^{|\alpha|}u}{\partial x_1^{\alpha_1}\cdots \partial x_n^{\alpha_n}}\,.
\end{equation*}

We use the notation $A\preceq B$ to mean $A\leq cB$, where $c$ is a positive constant that does not depend on the non-fixed parameters appearing in $A$ and $B$. We write $A\simeq B$ if $A\preceq B$ and $B\preceq A$.\\

If $X$ and $Y$ are two topological spaces, we use the
notation $X\hookrightarrow Y$ to mean $X \subseteq Y$ and the
inclusion map is continuous.
\section{Background Material}
In this section we collect some useful tools and facts we will need from topology and analysis. Statements without proof in this section are mainly taken from Rudin's functional analysis \cite{Rudi73}, Grubb's distributions and operators \cite{9}, excellent presentation of Reus \cite{Reus1}, Treves' topological vector spaces \cite{Treves1}, and \cite{holstbehzadan2018b} or are direct consequences of statements in the aforementioned references.

\subsection{Topological Vector Spaces}
\begin{definition}\lab{winter10}
A topological vector space is a vector space $X$ together with a
topology $\tau$ with the following properties:
\begin{enumerate}[i)]
\item For all $x\in X$, the singleton $\{x\}$ is a closed set.
\item The maps
\begin{align*}
& (x,y)\mapsto x+y \qquad (\textrm{from $X\times X$ into $X$})\\
& (\lambda,x)\mapsto \lambda x\qquad (\textrm{from $\reals\times X
$ into $X$})
\end{align*}
are continuous where $X\times X$ and $\reals \times X$ are
equipped with the product topology.
\end{enumerate}
\end{definition}

\begin{definition}\lab{winter11}
Suppose $(X,\tau)$ is a topological vector space and $Y\subseteq
X$.
\begin{itemizeXALI}
\item $Y$ is said to be \textbf{convex} if for all $y_1, y_2\in Y$ and $t\in
(0,1)$ it is true that $ty_1+(1-t)y_2\in Y$.
\item We say $Y$ is \textbf{bounded} if for any  neighborhood $U$ of the
origin (i.e. any open set containing the origin), there exits
$t>0$ such that $Y\subseteq tU$.
\end{itemizeXALI}
\end{definition}

\begin{definition}\lab{winter15}
Let $(X,\tau)$ be a topological vector space. $X$ is said to be
\textbf{metrizable} if there exists a metric $d: X\times X\rightarrow
[0,\infty)$ whose induced topology is $\tau$. In this case we say
that the metric $d$ is compatible with the topology $\tau$.
\end{definition}

\begin{theorem}\cite{9, Rudi73}\lab{thmmay71}
Let $(X,\tau)$ be a topological vector space. The following are equivalent:
\begin{itemize}
\item $X$ is metrizable.
\item There exists a translation invariant metric $d$ on $X$ whose collection of open sets is the same as $\tau$. Translation invariant means
    \begin{equation*}
\forall\,x,y,a\in X\qquad d(x+a,y+a)=d(x,y)
\end{equation*}
\item $X$ has a countable local base at the origin.
\end{itemize}
\end{theorem}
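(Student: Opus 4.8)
The plan is to prove the three-way equivalence by establishing the cycle of implications: metrizable $\Rightarrow$ countable local base at the origin $\Rightarrow$ existence of a translation-invariant metric inducing $\tau$ $\Rightarrow$ metrizable. The last implication is immediate, since a translation-invariant metric is in particular a metric, so by Definition \ref{winter15} the space is metrizable. The first implication is also nearly immediate: if $d$ is any compatible metric, then the balls $\{x : d(x,0) < 1/k\}$ for $k \in \mathbb{N}$ need not themselves be open in $\tau$ unless we are careful, but the standard fact is that the collection of $d$-balls forms a base for $\tau$, so one extracts from the $d$-balls around a fixed point a countable neighborhood base; translating (or simply using that $d$-balls around $0$ of radius $1/k$ are $\tau$-neighborhoods of $0$ forming a base there) gives a countable local base at the origin. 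The substance of the theorem is therefore entirely in the implication ``countable local base at the origin $\Rightarrow$ translation-invariant metric,'' which is the classical Birkhoff--Kakutani-type construction.

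For that core implication, I would proceed as follows. Starting from a countable local base $\{U_k\}$ at the origin, first use the continuity of addition and the vector space axioms (together with property (i) that points are closed, which in a TVS upgrades to local convexity-free regularity) to extract a nested sequence of \emph{balanced} neighborhoods $V_1 \supseteq V_2 \supseteq \cdots$ of $0$ with the crucial property $V_{n+1} + V_{n+1} + V_{n+1} \subseteq V_n$, and such that $\{V_n\}$ is still a local base at $0$. The existence of such a sequence is a standard consequence of the continuity of $(x,y,z)\mapsto x+y+z$ at $(0,0,0)$ and the fact that every neighborhood of $0$ contains a balanced one. Next, define an auxiliary function: for dyadic rationals $r = \sum c_n 2^{-n}$ (finite sums, $c_n \in \{0,1\}$) set $A(r) = \sum c_n V_n$, and let $A(r) = X$ for $r \geq 1$; then put $f(x) = \inf\{ r : x \in A(r)\}$ and finally $d(x,y) = f(x-y)$. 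One then checks: $f(x) = 0 \iff x = 0$ (using that $\{V_n\}$ is a local base and points are closed, so $\bigcap V_n = \{0\}$); $f(-x) = f(x)$ (from balancedness of the $V_n$); and the triangle inequality $f(x+y) \le f(x) + f(y)$, which is the one genuinely delicate estimate and follows from the chain condition $V_{n+1}+V_{n+1} \subseteq V_n$ by an induction on the dyadic expansions (this is the classical lemma that $A(r) + A(s) \subseteq A(r+s)$). Translation invariance of $d$ is built into the definition $d(x,y) = f(x-y)$.

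The final step is to verify that the topology induced by $d$ coincides with $\tau$. For this I would show the two families of neighborhoods of $0$ are cofinal in each other: on one hand $\{x : f(x) < 2^{-n}\} \subseteq V_n$ shows every $\tau$-neighborhood contains a $d$-ball; on the other hand $V_n \subseteq \{x : f(x) \le 2^{-n}\}$ (roughly) shows every $d$-ball contains a $\tau$-neighborhood. Because both topologies are translation-invariant (the TVS topology by the continuity of translations, the metric topology by translation invariance of $d$), agreement of the neighborhood filters at $0$ forces the topologies to agree globally.

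The main obstacle is the triangle inequality for $f$, equivalently the subadditivity lemma $A(r) + A(s) \subseteq A(r+s)$ for dyadic $r,s$. This requires the carefully chosen ``summable'' chain condition $V_{n+1}+V_{n+1}+V_{n+1}\subseteq V_n$ and a somewhat fiddly induction handling carries in binary addition; everything else (balancedness giving symmetry, $\bigcap V_n = \{0\}$ giving positivity, cofinality giving the topology match) is routine once that lemma is in hand. Since this is a well-known theorem quoted from \cite{9, Rudi73}, in the write-up I would either reproduce the Birkhoff--Kakutani argument in this level of detail or simply cite it; given the expository aims of the paper, a pointer to Rudin's proof with a brief indication of the construction is likely the intended route.
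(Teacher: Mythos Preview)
The paper does not supply its own proof of this theorem; it is stated as a background result with citations to \cite{9, Rudi73}, exactly as you anticipated in your final paragraph. Your sketch of the Birkhoff--Kakutani construction is correct and is essentially the argument one finds in Rudin, so there is nothing to compare.
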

\noindent (Recall that a subcollection $\mathcal{B}$ of $\tau$ is said to be a \emph{local base} at the origin if for any open set $U$ containing the origin there is $B\in \mathcal{B}$ such that $0\in B\subseteq U$.)
\begin{remark}\lab{remmay72}
It can be shown that if
$d_1$ and $d_2$ are two translation invariant metrics that induce
the same topology on $X$, then the Cauchy sequences of $(X,d_1)$
will be exactly the same as the Cauchy sequences of $(X,d_2)$.
\end{remark}

\begin{definition}\lab{winter24}
Let $(X,\tau)$ be a topological vector space. We say $(X,\tau)$
is \textbf{locally convex} if it has a convex local base at the
origin.
\end{definition}

\begin{definition}\lab{winter25}
Let $(X,\tau)$ be a metrizable locally convex topological vector
space. Let $d$ be any translation invariant metric on $X$ that is
compatible with $\tau$. We say that $X$ is \textit{complete} if and only if
the metric space $(X,d)$ is a complete metric space. A complete
metrizable locally convex topological vector space is called a
\textbf{Frechet space}.
\end{definition}

\begin{definition}\lab{winter27}
A \textbf{seminorm} on a vector space $X$ is a real-valued function
$p:X\rightarrow \reals$ such that
\begin{enumerate}[i.]
\item $\forall\,x,y\in X\qquad p(x+y)\leq p(x)+p(y)$
\item $\forall\,x\in X\,\,\forall\,\alpha\in \reals\qquad p(\alpha
x)=|\alpha| p(x)$
\end{enumerate}
If $\mathcal{P}$ is a family of seminorms on $X$, then we say
$\mathcal{P}$ is \textit{separating} provided that for all $x\neq 0$ there
exists at least one $p\in\mathcal{P}$ such that $p(x)\neq 0$
(that is if $p(x)=0$ for all $p\in\mathcal{P}$, then $x=0$). It easily follows from the definition that any seminorm is a nonnegative function.
\end{definition}

\begin{theorem}\lab{thmmay3626}
Suppose that $(X,\|.\|_X)$ is a normed space. Let $p: X\rightarrow \reals$ be a seminorm on $X$. If $p$ is continuous, then there exists a constant $C>0$ such that
\begin{equation*}
\forall\,x\in X\qquad p(x)\leq C\|x\|_X
\end{equation*}
\end{theorem}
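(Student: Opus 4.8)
The plan is to exploit the continuity of $p$ at the origin, since $p(0)=0$ (this follows from property (ii) of Definition~\ref{winter27} with $\alpha=0$). By continuity at $0$, there is a neighborhood of $0$ in $X$ on which $p$ stays below $1$; since the topology on a normed space has the open balls as a local base at $0$, this means there exists $\delta>0$ such that $\|x\|_X\leq \delta$ implies $p(x)\leq 1$. (One can equivalently use the $\epsilon$--$\delta$ characterization of continuity directly: given $\epsilon=1$ there is $\delta>0$ with $|p(x)-p(0)|<1$ whenever $\|x-0\|_X<\delta$, and then pass from the open ball to the closed ball of radius $\delta/2$, or simply note $p(x)\le 1$ on $\|x\|_X\le\delta$ by working with $\delta/2$ in place of $\delta$.)

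Next I would use the homogeneity of the seminorm to bootstrap this local bound into a global linear bound. Fix any nonzero $x\in X$ and consider the rescaled vector $y=\dfrac{\delta}{\|x\|_X}\,x$, which satisfies $\|y\|_X=\delta$, hence $p(y)\leq 1$. By property (ii) of Definition~\ref{winter27},
\begin{equation*}
p(y)=\frac{\delta}{\|x\|_X}\,p(x),
\end{equation*}
so rearranging gives $p(x)\leq \dfrac{1}{\delta}\|x\|_X$. Taking $C=1/\delta$ yields the claimed inequality for all $x\neq 0$, and the inequality holds trivially at $x=0$ since both sides vanish.

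There is essentially no serious obstacle here; the only point requiring a little care is the translation between the topological notion of continuity of $p$ and a concrete estimate on a ball, i.e.\ unwinding that in a normed space the sets $\{x:\|x\|_X<\delta\}$ form a local base at the origin, so that continuity of $p$ at $0$ really does deliver a $\delta$ with $p(x)\le 1$ on $\|x\|_X\le \delta$. Once that is in hand, the homogeneity scaling argument is completely routine. It is worth remarking that continuity of $p$ everywhere is not needed: continuity at the single point $0$ suffices, and in fact the argument shows that for a seminorm on a normed space, continuity, continuity at $0$, and boundedness (domination by $C\|\cdot\|_X$) are all equivalent.
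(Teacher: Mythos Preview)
Your proof is correct and follows essentially the same approach as the paper: use continuity of $p$ at $0$ to find $\delta>0$ with $p(x)\leq 1$ on the ball $\|x\|_X\leq\delta$, then rescale via homogeneity to get $p(x)\leq\frac{1}{\delta}\|x\|_X$ for all $x$.
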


\begin{proof}
$p$ is continuous at $0$ so there exists $\delta>0$ such that if $\|x\|_X\leq \delta$ then $|p(x)|<1$. If $x\neq 0$, then $\delta\frac{x}{\|x\|_X}$ has norm equal to $\delta$ and so for all $x\neq 0$, $\displaystyle p(\delta\frac{x}{\|x\|_X})<1$. Hence for all $x\neq 0$ we have
\begin{equation*}
p(x)\leq\frac{1}{\delta}\|x\|_X
\end{equation*}
Since $p(0)=0$, clearly the above inequality also holds for $x=0$.
\end{proof}

\begin{definition}\lab{winter28}
Suppose $\mathcal{P}$ is a separating family of seminorms on a
vector space $X$. The \textbf{natural topology} induced by $\mathcal{P}$ is the smallest topology on $X$ that is translation
invariant and with respect to which every $p\in \mathcal{P}$ is
continuous function from $X$ to $\reals$. (Recall that \emph{translation invariant} means if $U\subseteq X$ is open, then $U+x$ is open for every $x\in X$.)
\end{definition}

\begin{remark}\lab{remmay2421}
Suppose that $\mathcal{P}$ and $\mathcal{P}'$ are two separating family of seminorms on a vector space $X$. Let $\tau$ and $\tau'$ be the corresponding natural topologies on $X$. It follows immediately from the definition that if 1)$p: (X,\tau')\rightarrow \reals$ is continuous for each $p\in \mathcal{P}$ and  2) $p': (X,\tau)\rightarrow \reals$ is continuous for each $p'\in \mathcal{P}'$, then $\tau=\tau'$.
\end{remark}

The following theorem can be viewed as an extension of Theorem ~\ref{thmmay3626}.
\begin{theorem}[\cite{Reus1}, Page 157]\lab{thmmay2656}
Let $X$ be a vector space and suppose $\mathcal{P}$ is a separating family of seminorms on $X$. Equip $X$  with the corresponding natural topology. Then a seminorm $q:X\rightarrow \reals$ is continuous if and only if there exist $C>0$ and $p_1,\cdots,p_m\in \mathcal{P}$ such that for all $x\in X$
\begin{equation*}
q(x)\leq C \big(p_1(x)+\cdots+p_m(x)\big)\,.
\end{equation*}
\end{theorem}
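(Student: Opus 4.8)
The plan is to treat the two implications separately; the substance is entirely in showing that continuity of $q$ forces a bound of the asserted form, the converse being a routine consequence of the continuity of the $p_i$.

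For the easy direction, suppose $q(x)\le C(p_1(x)+\cdots+p_m(x))$ for all $x\in X$. Each $p_i$ is continuous by the very definition of the natural topology, hence so is the map $x\mapsto\sum_{i=1}^m p_i(x)$, and therefore $V_\eta:=\{x\in X:\sum_i p_i(x)<\eta\}$ is an open neighborhood of the origin for every $\eta>0$. Using the reverse triangle inequality for seminorms, $|q(x)-q(y)|\le q(x-y)\le C\sum_i p_i(x-y)$, so for a fixed $y\in X$ and $\varepsilon>0$ the set $y+V_{\varepsilon/C}$ --- open by translation invariance of the natural topology --- is mapped by $q$ into $(q(y)-\varepsilon,\,q(y)+\varepsilon)$. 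Thus $q$ is continuous.

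For the converse, assume $q$ is continuous. Then $\{x:q(x)<1\}$ is open and contains $0$. First I would extract from this an explicit ``cylindrical'' neighborhood of the origin: since the natural topology is, by definition, the smallest translation-invariant topology making every $p\in\mathcal{P}$ continuous, it is generated by the subbasis $\{a+p^{-1}(O):a\in X,\ p\in\mathcal{P},\ O\subseteq\reals\text{ open}\}$, so some finite intersection of such subbasic sets contains $0$ and lies inside $\{q<1\}$; unwinding each such set with the estimate $|p(x-a)-p(a)|\le p(x)$ replaces it by a genuine cylinder about the origin, yielding $p_1,\dots,p_m\in\mathcal{P}$ and $\varepsilon>0$ with $\{x\in X:p_i(x)<\varepsilon,\ i=1,\dots,m\}\subseteq\{x:q(x)<1\}$. (Equivalently, one may simply invoke the standard fact that these cylinders form a local base at the origin for the natural topology.) I expect this to be the only real obstacle; everything after it is a homogeneity argument.

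Finally, given $x\in X$ put $P(x)=p_1(x)+\cdots+p_m(x)$. If $P(x)>0$, then for each $\delta>0$ the vector $\tfrac{\varepsilon}{(1+\delta)P(x)}\,x$ satisfies $p_i\bigl(\tfrac{\varepsilon}{(1+\delta)P(x)}x\bigr)\le\tfrac{\varepsilon}{1+\delta}<\varepsilon$ for every $i$, hence lies in the cylinder; applying $q$ and using homogeneity gives $q(x)<\tfrac{(1+\delta)P(x)}{\varepsilon}$, and letting $\delta\to0^+$ yields $q(x)\le\tfrac1\varepsilon P(x)$. If $P(x)=0$, then $tx$ lies in the cylinder for all $t>0$, forcing $t\,q(x)=q(tx)<1$ for all $t>0$ and hence $q(x)=0$. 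In either case $q(x)\le\tfrac1\varepsilon\bigl(p_1(x)+\cdots+p_m(x)\bigr)$, which is the claim with $C=1/\varepsilon$.
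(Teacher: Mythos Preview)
Your proof is correct in both directions. The paper itself does not prove this statement; it is quoted from \cite{Reus1} (page 157) as background material, so there is no in-paper argument to compare against. Your approach is the standard one: for the nontrivial direction you extract a basic ``cylinder'' neighborhood $\{x:p_i(x)<\varepsilon,\ i=1,\dots,m\}$ inside $\{q<1\}$ and then use homogeneity of seminorms to convert the inclusion into the linear bound. The step where you reduce a subbasic neighborhood $a+p^{-1}(O)$ to a cylinder about the origin via $|p(x-a)-p(a)|\le p(x)$ is correct and is exactly how one verifies that such cylinders form a local base for the natural topology.
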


\begin{theorem}\cite{9,Rudi73}\lab{thmmay7310}
Suppose $\mathcal{P}$ is a separating family of seminorms on a
vector space $X$ and $\tau$ is the corresponding natural topology on $X$. Then $(X,\tau)$ is a locally convex topological vector space. Moreover, if $\mathcal{P}=\{p_k\}_{k\in \mathbb{N}}$ is countable, then
 the locally convex topological vector space $(X,\tau)$ is metrizable and the
 following translation invariant metric on $X$ is compatible with
 $\tau$:
 \begin{equation*}
 d(x,y)=\sum_{k=1}^\infty\frac{1}{2^k}\frac{p_k(x-y)}{1+p_k(x-y)}
 \end{equation*}
\end{theorem}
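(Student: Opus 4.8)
The plan is to handle the two assertions separately. For the first, I would write down an explicit local base at the origin for $\tau$: the sets
$$V(q_1,\dots,q_m;\epsilon)=\{x\in X:\ q_i(x)<\epsilon\ \text{for}\ i=1,\dots,m\},\qquad q_i\in\mathcal{P},\ \epsilon>0,$$
together with their translates. The translation-invariant topology generated by these sets makes every $p\in\mathcal{P}$ continuous and is contained in any translation-invariant topology with that property, so by Definition~\ref{winter28} it is exactly $\tau$. Each $V(q_1,\dots,q_m;\epsilon)$ is convex, being a finite intersection of the sets $\{x:q_i(x)<\epsilon\}$, whose convexity is immediate from subadditivity and homogeneity of $q_i$; this gives local convexity. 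For the topological vector space axioms, singletons are closed because $\mathcal{P}$ is separating (given $x\neq y$, pick $p\in\mathcal{P}$ with $p(x-y)>0$ and separate), while continuity of $(x,y)\mapsto x+y$ and $(\lambda,x)\mapsto\lambda x$ follows from $p(x+y)\le p(x)+p(y)$ and $p(\lambda x)=|\lambda|\,p(x)$ by routine $\epsilon$-estimates on the basic neighborhoods.

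For the second assertion, with $\mathcal{P}=\{p_k\}_{k\in\mathbb{N}}$, I would first verify that $d$ is a well-defined translation-invariant metric. The series converges since its $k$th term is at most $2^{-k}$; symmetry and translation invariance are clear because $d(x,y)$ depends only on $x-y$ and $p_k(-z)=p_k(z)$; nonnegativity is obvious and $d(x,y)=0\Rightarrow x=y$ uses that $\mathcal{P}$ is separating. The triangle inequality reduces, term by term, to the fact that $\phi(t)=t/(1+t)$ is nondecreasing and subadditive on $[0,\infty)$, combined with $p_k(x-z)\le p_k(x-y)+p_k(y-z)$.

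Finally I would show that the metric topology $\tau_d$ coincides with $\tau$ by comparing local bases at the origin, both topologies being translation invariant. For $\tau\subseteq\tau_d$: each $p_k$ is $\tau_d$-continuous, since $d(x_n,x)\to 0$ forces $\phi(p_k(x_n-x))\to 0$, hence $p_k(x_n-x)\to 0$ and $|p_k(x_n)-p_k(x)|\le p_k(x_n-x)\to 0$; as $\tau$ is the smallest translation-invariant topology making all $p_k$ continuous, $\tau\subseteq\tau_d$. For $\tau_d\subseteq\tau$ it suffices to show each ball $B_d(0,r)$ contains a basic $\tau$-neighborhood of the origin: choose $N$ with $\sum_{k>N}2^{-k}<r/2$, which bounds the tail of the series independently of $x$ since $\phi\le 1$, and then choose $\epsilon<r/2$ so that $p_k(x)<\epsilon$ for $k\le N$ forces $\sum_{k\le N}2^{-k}\phi(p_k(x))<\epsilon\sum_{k\le N}2^{-k}<r/2$; then $V(p_1,\dots,p_N;\epsilon)\subseteq B_d(0,r)$.

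I expect the only real friction to be bookkeeping rather than conceptual: establishing the subadditivity of $\phi(t)=t/(1+t)$ for the triangle inequality, and carrying out the tail/head splitting of the defining series cleanly for the inclusion $\tau_d\subseteq\tau$. Everything else is a direct unwinding of the definitions of seminorm and of natural topology, together with the separating hypothesis.
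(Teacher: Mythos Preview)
Your proof is correct and follows the standard textbook approach (essentially Rudin's). Note, however, that the paper does not supply its own proof of this statement: it is listed as a background result cited from \cite{9,Rudi73}, so there is nothing in the paper to compare against beyond observing that your argument is the one those references give.
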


\begin{corollary}\lab{corspring181}
Suppose $\mathcal{P}$ is a countable separating family of seminorms on a
vector space $X$ and $\tau$ is the corresponding natural topology on $X$. Then $(X,\tau)$ is a Frechet space if and only if it is complete.
\end{corollary}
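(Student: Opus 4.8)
The plan is to observe that, as soon as $\mathcal{P}$ is countable and separating, the space $(X,\tau)$ automatically possesses every structural feature appearing in the definition of a Fréchet space \emph{except} completeness, so that the asserted equivalence is essentially immediate. Concretely, I would proceed in two short steps.

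First, I would apply Theorem~\ref{thmmay7310} verbatim: since $\mathcal{P}$ is a separating family of seminorms, $(X,\tau)$ is a locally convex topological vector space, and since $\mathcal{P}=\{p_k\}_{k\in\mathbb{N}}$ is countable, $(X,\tau)$ is metrizable, with the explicit translation invariant metric
\[
d(x,y)=\sum_{k=1}^\infty\frac{1}{2^k}\frac{p_k(x-y)}{1+p_k(x-y)}
\]
compatible with $\tau$. In particular $(X,\tau)$ falls within the scope of Definition~\ref{winter25}, so the statement ``$(X,\tau)$ is complete'' is meaningful: by that definition it means that $(X,d')$ is a complete metric space for any translation invariant metric $d'$ compatible with $\tau$, and the metric $d$ above is one admissible choice.

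Second, I would simply unwind the definition of a Fréchet space (Definition~\ref{winter25}): it is a complete metrizable locally convex topological vector space. Since we have just seen that $(X,\tau)$ is metrizable and locally convex unconditionally, the only remaining requirement for $(X,\tau)$ to be a Fréchet space is completeness. Hence $(X,\tau)$ is a Fréchet space if and only if it is complete, which is exactly the claim.

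I do not expect any genuine obstacle here; the only point that requires a word of care is that the notion of completeness invoked in Definition~\ref{winter25} is well posed, i.e.\ independent of the choice of compatible translation invariant metric. This is precisely the content of Remark~\ref{remmay72} (together with Theorem~\ref{thmmay71}, which guarantees that such a metric exists in the first place). One may alternatively note that it is enough to test completeness against the single explicit metric $d$ displayed above, which bypasses the independence issue entirely.
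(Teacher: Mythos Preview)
Your proposal is correct and matches the paper's intended argument: the corollary is stated without proof precisely because it is an immediate consequence of Theorem~\ref{thmmay7310} and Definition~\ref{winter25}, exactly as you unwind it.
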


\begin{theorem}[\cite{Narici1985}, Sections 6.4 and 6.5] \lab{thmmay3627}
Let $(X,\tau)$ be a locally convex topological vector space. Then there exists a separating family of seminorms on $X$ whose corresponding natural topology is $\tau$.
\end{theorem}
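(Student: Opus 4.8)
The plan is to recover $\tau$ from the Minkowski functionals (gauges) of a carefully chosen local base at the origin. First I would produce a local base $\mathcal{B}$ at $0$ consisting of convex, balanced, open neighborhoods of $0$. By Definition~\ref{winter24} there is a convex local base; given a convex neighborhood $U$ of $0$, joint continuity of scalar multiplication at $(0,0)$ yields a balanced open neighborhood of $0$ inside $U$, and replacing it by its convex hull keeps us inside $U$ (since $U$ is convex), keeps the set balanced (convex hull of a balanced set is balanced) and keeps it open (the convex hull of an open set in a topological vector space is open). Each $V\in\mathcal{B}$ is automatically absorbing, because $t\mapsto tx$ is continuous and vanishes at $t=0$, so $x\in\frac1t V$ for small $t>0$.

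Next, for each $V\in\mathcal{B}$ I would set $\mu_V(x)=\inf\{t>0:\ x/t\in V\}$ and verify that $\mu_V$ is a seminorm in the sense of Definition~\ref{winter27}: finiteness comes from $V$ being absorbing; the identity $\mu_V(\alpha x)=|\alpha|\,\mu_V(x)$ uses that $V$ is balanced; and subadditivity $\mu_V(x+y)\le\mu_V(x)+\mu_V(y)$ uses that $V$ is convex. Let $\mathcal{P}=\{\mu_V:V\in\mathcal{B}\}$. To see $\mathcal{P}$ is separating, fix $x\neq 0$; since singletons are closed (Definition~\ref{winter10}(i)), $X\setminus\{x\}$ is an open neighborhood of $0$, so some $V\in\mathcal{B}$ satisfies $x\notin V$, whence $\mu_V(x)\ge 1>0$ (if $\mu_V(x)<1$ then $x/t\in V$ for some $t<1$, and balancedness forces $x=t\cdot(x/t)\in V$, a contradiction).

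Finally I would show that the natural topology $\tau_{\mathcal P}$ induced by $\mathcal{P}$ (Definition~\ref{winter28}) equals $\tau$ by comparing neighborhood filters at $0$, using translation invariance of both topologies to reduce to that case. For $\tau_{\mathcal P}\subseteq\tau$: each $\mu_V$ is $\tau$-continuous, since a seminorm is continuous once it is continuous at $0$, and there $\mu_V^{-1}\big([0,\varepsilon)\big)$ contains the $\tau$-neighborhood $\tfrac{\varepsilon}{2}V$ of $0$; as $\tau_{\mathcal P}$ is the smallest translation-invariant topology making all $\mu_V$ continuous, $\tau_{\mathcal P}\subseteq\tau$. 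For $\tau\subseteq\tau_{\mathcal P}$: given a $\tau$-neighborhood $U$ of $0$, choose $V\in\mathcal{B}$ with $V\subseteq U$; then $\{x:\mu_V(x)<1\}\subseteq V$, and the left-hand set is a $\tau_{\mathcal P}$-open neighborhood of $0$ because $\mu_V$ is $\tau_{\mathcal P}$-continuous by construction, so $U$ is a $\tau_{\mathcal P}$-neighborhood of $0$. Hence $\tau=\tau_{\mathcal P}$.

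The genuinely load-bearing points — and where I expect the only real work — are (i) that the Minkowski functional of a convex balanced absorbing set is a seminorm, with convexity supplying subadditivity and balancedness supplying absolute homogeneity, and (ii) the sandwich $\{x:\mu_V(x)<1\}\subseteq V\subseteq\{x:\mu_V(x)\le 1\}$, which is precisely what allows the gauges to reproduce the original neighborhood filter. I would also be careful at the very first step that the shrink-then-convex-hull construction really stays inside the given convex neighborhood, since that is exactly where the hypothesis of local convexity is consumed; the remaining ingredients (separation from the $T_1$ axiom, continuity of a seminorm from continuity at the origin) are routine.
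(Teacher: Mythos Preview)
The paper does not actually prove this theorem; it is simply cited from \cite{Narici1985} (Sections 6.4 and 6.5) without proof. Your argument via Minkowski functionals of a base of convex balanced open neighborhoods is correct and is precisely the standard proof one finds in that reference (and in Rudin~\cite{Rudi73}, Theorem~1.36), so there is nothing to compare against and nothing to fix.
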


\begin{theorem}[\cite{Rudi73}, Page 28]\lab{thmmay3814}
Suppose $\mathcal{P}$ is a separating family of seminorms on a
vector space $X$ and $\tau$ is the corresponding natural topology on $X$. Then a set $E\subseteq X$ is bounded if and only if $p(E)$ is a bounded set in $\reals$ for all $p\in \mathcal{P}$.
\end{theorem}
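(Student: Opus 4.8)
The plan is to prove the two implications separately, relying on the way the natural topology $\tau$ is built from the seminorms in $\mathcal{P}$. Recall from Theorem~\ref{thmmay7310} that $(X,\tau)$ is a locally convex topological vector space, so the notion of boundedness from Definition~\ref{winter11} applies.

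For the forward implication, suppose $E$ is bounded and fix $p\in\mathcal{P}$. Since $p$ is continuous for $\tau$ and nonnegative, $U=\{x\in X:\ p(x)<1\}$ is an open neighborhood of the origin. By Definition~\ref{winter11} there is $t>0$ with $E\subseteq tU$, and by the homogeneity property of seminorms $tU=\{x\in X:\ p(x)<t\}$; hence $p(E)\subseteq[0,t)$ is a bounded subset of $\reals$.

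For the converse, assume $p(E)$ is bounded in $\reals$ for every $p\in\mathcal{P}$, and let $U$ be an arbitrary neighborhood of the origin. The structural fact I would invoke — a standard consequence of the construction of the natural topology, cf. Theorem~\ref{thmmay7310} and \cite{Rudi73} — is that the finite intersections
\begin{equation*}
V(p_1,\dots,p_m;\varepsilon)=\{x\in X\,:\, p_i(x)<\varepsilon \text{ for } i=1,\dots,m\},\qquad p_i\in\mathcal{P},\ \varepsilon>0,
\end{equation*}
form a local base at the origin for $\tau$. Choose $p_1,\dots,p_m\in\mathcal{P}$ and $\varepsilon>0$ with $V(p_1,\dots,p_m;\varepsilon)\subseteq U$. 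By hypothesis there are constants $M_i$ with $p_i(x)\leq M_i$ for all $x\in E$; set $M=\max_i M_i$ and pick any $t>M/\varepsilon$. Then for $x\in E$ we have $p_i(t^{-1}x)=t^{-1}p_i(x)\leq M/t<\varepsilon$ for each $i$, so $t^{-1}x\in V(p_1,\dots,p_m;\varepsilon)\subseteq U$, i.e. $x\in tU$. Thus $E\subseteq tU$, and since $U$ was arbitrary, $E$ is bounded.

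The seminorm homogeneity manipulations are routine; the only point that needs care is the structural claim that the sets $V(p_1,\dots,p_m;\varepsilon)$ constitute a local base at the origin, which is exactly what allows one to replace an abstract neighborhood $U$ by one controlled by finitely many members of $\mathcal{P}$. Note that since $\mathcal{P}$ need not be countable, one cannot simply fall back on a compatible metric as in Theorem~\ref{thmmay7310}. An alternative route that sidesteps this is to use Theorem~\ref{thmmay2656} directly: shrink $U$ to a convex balanced neighborhood, let $q$ be its Minkowski functional (a continuous seminorm with $\{q<1\}\subseteq U$), bound $q(x)\leq C(p_1(x)+\cdots+p_m(x))$ via Theorem~\ref{thmmay2656}, deduce that $q(E)$ is bounded, and conclude $E\subseteq tU$ for $t$ large as before.
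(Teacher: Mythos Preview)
Your proof is correct and follows the standard argument. Note, however, that the paper does not supply its own proof of this theorem: it is simply stated with a citation to \cite{Rudi73}, Page 28, so there is nothing to compare against beyond observing that your argument is essentially the one found in that reference.
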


\begin{corollary}\lab{cormay13343}
Suppose $\mathcal{P}$ is a separating family of seminorms on a vector space $X$ and $\tau$ is the corresponding natural topology on $X$. It follows from Theorem ~\ref{thmmay2656} and Theorem ~\ref{thmmay3814} that if $E\subseteq X$ is bounded, then for any continuous seminorm $q:(X,\tau)\rightarrow \reals$, $q(E)$ is a bounded set in $\reals$.
\end{corollary}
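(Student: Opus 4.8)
The plan is to combine the two cited theorems in the obvious way. Fix a bounded set $E\subseteq X$ and a continuous seminorm $q:(X,\tau)\rightarrow\reals$. First I would invoke Theorem~\ref{thmmay2656}: since $\mathcal{P}$ is a separating family of seminorms generating $\tau$ and $q$ is $\tau$-continuous, there exist a constant $C>0$ and finitely many seminorms $p_1,\dots,p_m\in\mathcal{P}$ with
\begin{equation*}
q(x)\leq C\big(p_1(x)+\cdots+p_m(x)\big)\qquad\text{for all }x\in X.
\end{equation*}

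Next I would apply Theorem~\ref{thmmay3814} to the bounded set $E$: for each $i\in\{1,\dots,m\}$ the image $p_i(E)$ is a bounded subset of $\reals$, so (recalling from Definition~\ref{winter27} that seminorms are nonnegative) there is $M_i\geq 0$ with $p_i(x)\leq M_i$ for every $x\in E$. Substituting into the displayed inequality gives $q(x)\leq C(M_1+\cdots+M_m)$ for all $x\in E$, and since $q\geq 0$ this shows $q(E)\subseteq[0,\,C(M_1+\cdots+M_m)]$, i.e. $q(E)$ is bounded in $\reals$.

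There is essentially no obstacle here; the only point worth stating carefully is that the finite list $p_1,\dots,p_m$ produced by Theorem~\ref{thmmay2656} depends on $q$ but not on the particular point $x\in E$, so the uniform bounds $M_i$ from Theorem~\ref{thmmay3814} can be assembled into a single bound valid on all of $E$. This is exactly the content asserted in the statement of the corollary, so the proof is complete after this substitution.
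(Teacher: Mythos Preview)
Your proposal is correct and is exactly the argument the paper has in mind: the corollary is stated with no separate proof beyond the phrase ``It follows from Theorem~\ref{thmmay2656} and Theorem~\ref{thmmay3814},'' and you have simply spelled out the obvious combination of those two results.
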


\begin{theorem}[\cite{9}, Page 436, \cite{Narici1985}, Section 6.6]\lab{thmmay11714}
Let $(X,\tau)$ be a topological vector space. Suppose $\mathcal{Q}$ is a separating family of seminorms on a
vector space $Y$ and $\tau'$ is the corresponding natural topology on $Y$. Then a linear map $T:(X,\tau)\rightarrow (Y,\tau')$ is continuous if and only if for each $q\in \mathcal{Q}$, $q\circ T$ is continuous on $X$.
\end{theorem}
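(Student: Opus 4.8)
The plan is to prove the two implications separately: the forward direction is a one‑line observation, and the substance lies in the converse, which I would reduce to continuity of $T$ at the origin.

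For the forward direction, suppose $T$ is continuous. By Definition~\ref{winter28}, the natural topology $\tau'$ is in particular a topology with respect to which every $q\in\mathcal{Q}$ is a continuous function $(Y,\tau')\to\reals$. Hence for each $q\in\mathcal{Q}$ the map $q\circ T$ is a composition of two continuous maps, and is therefore continuous on $X$.

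For the converse, suppose $q\circ T:X\to\reals$ is continuous for every $q\in\mathcal{Q}$. Since $X$ and $Y$ are topological vector spaces, translations are homeomorphisms on each; combined with the linearity of $T$, it suffices to prove that $T$ is continuous at $0\in X$. The structural fact I would invoke is the standard description of a local base at the origin for a natural topology generated by a separating family of seminorms (part of the construction underlying Theorem~\ref{thmmay7310}; see also \cite{Rudi73}): the sets
\[
B(q_1,\dots,q_m;\varepsilon)\;=\;\{\,y\in Y: q_i(y)<\varepsilon \text{ for } i=1,\dots,m\,\},
\qquad m\in\mathbb{N},\ q_i\in\mathcal{Q},\ \varepsilon>0,
\]
form a local base at $0$ in $(Y,\tau')$. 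Given such a basic neighborhood $V=B(q_1,\dots,q_m;\varepsilon)$, one computes
\[
T^{-1}(V)\;=\;\bigcap_{i=1}^{m}(q_i\circ T)^{-1}\big((-\varepsilon,\varepsilon)\big),
\]
which by hypothesis is a finite intersection of open subsets of $X$, each containing $0$ (since $q_i(T0)=0$); hence $T^{-1}(V)$ is an open neighborhood of $0$ in $X$. This gives continuity of $T$ at $0$. To pass to continuity everywhere, for $x_0\in X$ and a neighborhood $W$ of $Tx_0$ one uses the linearity identity $T^{-1}(W)=x_0+T^{-1}(W-Tx_0)$, which is a neighborhood of $x_0$ because $W-Tx_0$ is a neighborhood of $0$.

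I do not expect a genuine obstacle here: the only point that requires care is justifying the explicit local base at the origin for the seminorm topology $\tau'$, which is where essentially all the content sits. The reduction to the origin via translation invariance and linearity, and the elementary set‑theoretic identity for the preimage of a basic neighborhood, are routine bookkeeping.
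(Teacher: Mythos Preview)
Your proof is correct. The paper does not provide its own proof of this statement; it is quoted as a known result with references to \cite{9} and \cite{Narici1985}, so there is nothing to compare against beyond noting that your argument is the standard one.
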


\begin{theorem}\cite{9}\lab{thmapptvconvergence2}
Let $X$ be a Frechet space and let $Y$ be a topological vector
space. When $T$ is a linear map of $X$ into $Y$, the following
two properties are equivalent
\begin{enumerate}
\item $T$ is continuous.
\item $x_n\rightarrow 0$ in $X$ $\Longrightarrow$ $Tx_n\rightarrow
0$ in $Y$.
\end{enumerate}
\end{theorem}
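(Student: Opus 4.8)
The plan is to prove the two implications separately, with $(2)\Rightarrow(1)$ being the substantive direction.

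For $(1)\Rightarrow(2)$: this uses only that continuous maps preserve convergence of sequences, together with linearity. If $T$ is continuous and $x_n\to 0$ in $X$, then since $T$ is linear we have $T0=0$, and continuity yields $Tx_n\to T0=0$ in $Y$. Note this direction does not use the Frechet hypothesis at all.

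For $(2)\Rightarrow(1)$: the key point is that a Frechet space is metrizable (Definition~\ref{winter25} together with Theorem~\ref{thmmay71}), hence has a countable local base at the origin, so, being a topological vector space in which translations are homeomorphisms, it is first countable at every point. First I would record the standard point-set fact: if $Z$ is first countable at a point $z_0$ and $f:Z\to W$ is sequentially continuous at $z_0$, then $f$ is continuous at $z_0$. The argument is by contradiction: if $f$ fails to be continuous at $z_0$, there is a neighborhood $V$ of $f(z_0)$ such that no neighborhood of $z_0$ maps into $V$; choosing a decreasing countable neighborhood base $\{U_k\}$ at $z_0$ and picking $x_k\in U_k$ with $f(x_k)\notin V$ produces a sequence $x_k\to z_0$ with $f(x_k)\not\to f(z_0)$, a contradiction. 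Applying this with $Z=X$ and $z_0=0$, hypothesis~$(2)$ says precisely that $T$ is sequentially continuous at $0$ (since $T0=0$), hence $T$ is continuous at $0$.

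Finally I would upgrade continuity at $0$ to continuity everywhere using linearity and the topological vector space structure of $Y$: given $x_0\in X$ and a neighborhood $W$ of $Tx_0$ in $Y$, the set $W-Tx_0$ is a neighborhood of $0$ in $Y$ because translation is a homeomorphism in a topological vector space; by continuity of $T$ at $0$ there is a neighborhood $U$ of $0$ in $X$ with $T(U)\subseteq W-Tx_0$, so $x_0+U$ is a neighborhood of $x_0$ and $T(x_0+U)=Tx_0+T(U)\subseteq W$. Hence $T$ is continuous at $x_0$, and since $x_0$ was arbitrary, $T$ is continuous. I do not expect a serious obstacle; the only point requiring care is that $Y$ is a general topological vector space, not assumed locally convex or metrizable, so one cannot reduce to a seminorm estimate as in Theorem~\ref{thmmay11714} and must argue directly with neighborhoods. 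The metrizability of $X$ is exactly what makes sequential information sufficient, and this is where the Frechet hypothesis is genuinely used.
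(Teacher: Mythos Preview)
Your proof is correct and follows the standard argument. Note, however, that the paper does not give its own proof of this theorem: it is stated as a background result with a citation to \cite{9} (Grubb), so there is no ``paper's proof'' to compare against. Your argument is exactly the classical one---metrizability of the Frechet space $X$ gives first countability, so sequential continuity at $0$ implies continuity at $0$, and linearity together with translation-invariance of the topologies upgrades this to continuity everywhere. Nothing is missing, and your care to avoid assuming local convexity or metrizability of $Y$ is appropriate.
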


\begin{theorem}\cite{9, Rudi73}\lab{thmapptvconvergence1}
Let $X$ and $Y$ be two vector spaces and suppose $\mathcal{P}$
and $\mathcal{Q}$ are two separating families of seminorms on $X$
and $Y$, respectively. Equip $X$ and $Y$ with the corresponding
natural topologies. Then
\begin{enumerateXALI}
\item A sequence $x_n$ converges to $x$ in $X$ if and only if for
all $p\in \mathcal{P}$, $p(x_n-x)\rightarrow 0$.
\item A linear operator $T:X\rightarrow Y$ is continuous if and
only if
{\fontsize{10}{10}{\begin{equation*}
\forall\,q\in \mathcal{Q}\quad \exists\,c>0,\,k\in
\mathbb{N},\,p_1,\cdots,p_k\in \mathcal{P}\quad \textrm{such
that}\quad \forall\,x\in X\quad |q\circ T(x)|\leq c\max_{1\leq
i\leq k}p_i(x)
\end{equation*}}}
\item A linear operator $T:X\rightarrow \reals$ is continuous if and
only if
\begin{equation*}
\exists\,c>0,\,k\in \mathbb{N},\,p_1,\cdots,p_k\in
\mathcal{P}\quad \textrm{such that}\quad \forall\,x\in X\quad
|T(x)|\leq c\max_{1\leq i\leq k}p_i(x)
\end{equation*}
\end{enumerateXALI}
\end{theorem}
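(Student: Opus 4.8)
The plan is to obtain all three items from the seminorm calculus already assembled, principally Theorem~\ref{thmmay2656} (continuity of a seminorm on a space carrying a natural topology) and Theorem~\ref{thmmay11714} (reduction of continuity of a linear map to continuity of the scalar maps $q\circ T$), together with the standard description of a neighborhood base at the origin for the natural topology induced by $\mathcal{P}$: the sets $V(p_1,\dots,p_k;\varepsilon)=\{x\in X:\ p_i(x)<\varepsilon,\ i=1,\dots,k\}$ with $p_i\in\mathcal{P}$, $k\in\mathbb{N}$, $\varepsilon>0$. This local base is implicit in Theorem~\ref{thmmay7310}; I would either quote it from the cited references or recover it quickly from Definition~\ref{winter28}.

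For item (1), the forward implication uses the topological vector space structure from Theorem~\ref{thmmay7310}: if $x_n\to x$ then $x_n-x\to 0$ by continuity of subtraction, and since each $p\in\mathcal{P}$ is continuous with $p(0)=0$ we get $p(x_n-x)\to 0$. For the converse, given a basic neighborhood $x+V(p_1,\dots,p_k;\varepsilon)$ of $x$, the hypothesis $p_i(x_n-x)\to 0$ for the finitely many indices $i\le k$ yields an $N$ beyond which $p_i(x_n-x)<\varepsilon$ for all $i\le k$ simultaneously, i.e.\ $x_n\in x+V(p_1,\dots,p_k;\varepsilon)$; since such sets form a neighborhood base at $x$, this shows $x_n\to x$. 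Note this argument needs no countability hypothesis on $\mathcal{P}$.

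For item (2), I would first observe that for each $q\in\mathcal{Q}$ the composition $q\circ T:X\to\reals$ is again a seminorm on $X$ (subadditivity and absolute homogeneity pass through the linear map $T$), so in particular $|q\circ T(x)|=q\circ T(x)\ge 0$. By Theorem~\ref{thmmay11714}, $T$ is continuous if and only if $q\circ T$ is continuous on $X$ for every $q\in\mathcal{Q}$; and by Theorem~\ref{thmmay2656}, for a fixed $q$ this holds exactly when there are $C>0$ and $p_1,\dots,p_m\in\mathcal{P}$ with $q\circ T(x)\le C\big(p_1(x)+\cdots+p_m(x)\big)$ for all $x$. I would then pass between this $\sum$-form and the $\max$-form in the statement using $\max_{1\le i\le m}p_i(x)\le \sum_{i=1}^m p_i(x)\le m\max_{1\le i\le m}p_i(x)$, which only rescales the constant. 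Combining these equivalences over all $q\in\mathcal{Q}$ gives precisely the asserted characterization.

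Item (3) is the specialization of (2) to $Y=\reals$ equipped with the single (separating) seminorm $q=|\cdot|$, whose natural topology is the usual topology of $\reals$; here $q\circ T=|T|$ and $\mathcal{Q}$ is a singleton, so the ``$\forall\,q$'' quantifier collapses and (2) reduces verbatim to (3). I do not anticipate a genuine obstacle: the only points needing a little care are justifying the local-base description underlying item (1) and the bookkeeping between the $\max$ and $\sum$ normalizations and the (harmless) absolute values; everything else is immediate once Theorems~\ref{thmmay2656} and~\ref{thmmay11714} are in hand.
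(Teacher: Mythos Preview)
The paper does not supply a proof of this theorem: it is stated in the background section with attribution to the cited references \cite{9,Rudi73}, so there is no ``paper's own proof'' to compare against. Your argument is correct and is essentially the standard one; you have also economized nicely by deriving items (2) and (3) directly from Theorems~\ref{thmmay11714} and~\ref{thmmay2656}, which the paper has already recorded, so nothing further is needed.
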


\begin{definition}\lab{winter31}
Let $(X,\tau)$ be a locally convex topological vector space.
\begin{itemizeXALI}
\item The \textbf{weak topology} on $X$ is the natural topology
induced by the separating family of seminorms $\{p_F\}_{F\in X^*}$ where
\begin{equation*}
\forall\,F\in X^*\qquad p_F: X\rightarrow \reals,\quad
p_F(x):=|F(x)|
\end{equation*}
It can be shown that this topology is the smallest (weakest) topology with respect to which all the
 linear maps in $[(X,\tau)]^*$ are continuous. A sequence $\{x_m\}$ converges to $x$ in $X$ with respect to
the weak topology if and only if $F(x_m)\rightarrow F(x)$ in
$\reals$ for all $F\in X^*$. In this case we may write $x_m\rightharpoonup x$. We denote the weak topology on $X$ by $\sigma (X,X^*)$. It can be shown that $[(X,\tau)]^*$ is the same set as $[(X,\sigma(X,X^*))]^*$.
\item The \textbf{weak$^*$ topology} on $X^*$ is the natural topology
induced by the separating family of seminorms $\{p_x\}_{x\in X}$ where
\begin{equation*}
\forall\,x\in X\qquad p_x: X^*\rightarrow \reals,\quad
p_x(f):=|f(x)|
\end{equation*}
It can be shown that this topology is the weakest topology with respect to which all the
 linear maps $\{f\mapsto f(x)\}_{x\in X}$ (from $X^*$ to $\reals$) are continuous. A sequence $\{f_m\}$ converges to $f$ in $X^*$ with respect to
the weak$^*$ topology if and only if $f_m(x)\rightarrow f(x)$ in
$\reals$ for all $x\in X$. We denote the weak$^*$ topology on $X^*$ by $\sigma (X^*,X)$.
\item The \textbf{strong topology} on $X^*$ is the natural topology induced
by the separating family of seminorms $\{p_B\}_{B\subseteq X
\textrm{bounded}}$ where for any bounded subset $B$ of $X$
\begin{equation*}
p_B: X^*\rightarrow \reals\qquad p_B(f):=\sup \{|f(x)|: x\in B\}
\end{equation*}
(it can be shown that for any bounded subset $B$ of $X$ and $f\in
X^*$, $f(B)$ is a bounded subset of $\reals$; see Theorem ~\ref{thmmay3814} and Theorem ~\ref{thmmay7320})
\end{itemizeXALI}
\end{definition}
\begin{remark}\lab{winter32}
\leavevmode
\begin{enumerateX}
\item If $X$ is a normed space, then the topology induced by the
norm
\begin{equation*}
\forall\,f\in X^*\qquad \|f\|_{op}=\sup_{\|x\|_X=1}|f(x)|
\end{equation*}
on $X^*$ is the same as the strong topology on $X^*$
(\cite{Treves1}, Page 198).
\item In this manuscript, unless otherwise stated, we consider the topological dual of a
locally convex topological vector space with the strong topology. Of course, it
is worth mentioning that for many of the spaces that we will
consider (including $X=\mathcal{E}(\Omega)$ or $X=D(\Omega)$
where $\Omega$ is an open subset of $\reals^n$) a sequence in
$X^*$ converges with respect to the weak$^*$ topology if and only
if it converges with respect to the strong topology (for more
details on this see the definition and properties of \textbf{Montel
spaces} in section 34.4, page 356 of \cite{Treves1}).
\end{enumerateX}
\end{remark}

\begin{theorem}\lab{thmmay7314}
Let $(X,\tau)$ be a locally convex topological vector space. Then the evaluation map
\begin{equation*}
J: (X,\tau)\rightarrow X^{**}:=[(X^*,\textrm{strong topology})]^*,\qquad J(x)(F):=F(x)
\end{equation*}
is a well-defined injective linear map. $X^{**}$ is called the \textbf{bidual} of $X$.
\end{theorem}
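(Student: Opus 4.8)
The plan is to verify the three assertions in Theorem~\ref{thmmay7314} in turn: that $J$ is well-defined (i.e.\ for each $x\in X$ the functional $F\mapsto F(x)$ really lies in $X^{**}$), that $J$ is linear, and that $J$ is injective. Linearity is immediate from the pointwise definition: $J(\alpha x+\beta y)(F)=F(\alpha x+\beta y)=\alpha F(x)+\beta F(y)=\big(\alpha J(x)+\beta J(y)\big)(F)$ for all $F\in X^*$, since each $F$ is linear. So the two substantive points are well-definedness and injectivity.

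For well-definedness, fix $x\in X$ and set $\ell_x:=J(x):X^*\rightarrow\reals$, $\ell_x(F)=F(x)$. This map is clearly linear in $F$. To see it is continuous on $X^*$ equipped with the strong topology, recall from Definition~\ref{winter31} that the strong topology on $X^*$ is the natural topology induced by the separating family of seminorms $\{p_B\}_{B\subseteq X\text{ bounded}}$. By Theorem~\ref{thmapptvconvergence1}(iii), a linear functional on $X^*$ is continuous for this natural topology precisely when it is dominated by a finite maximum of such seminorms. Now observe that the singleton $B=\{x\}$ is a bounded subset of $X$ (for any neighborhood $U$ of the origin, continuity of scalar multiplication gives $t>0$ with $x\in tU$), so $p_{\{x\}}$ is one of the defining seminorms, and $|\ell_x(F)|=|F(x)|=p_{\{x\}}(F)$ for every $F\in X^*$. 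Hence $\ell_x$ is continuous with constant $c=1$ and a single seminorm, so $\ell_x\in X^{**}$, and $J$ is well-defined.

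For injectivity it suffices, by linearity, to show $J(x)=0$ implies $x=0$. Suppose $x\neq 0$. Then $\{x\}$ is a closed set distinct from $\{0\}$, and since $(X,\tau)$ is a locally convex topological vector space, the Hahn--Banach separation theorem produces a continuous linear functional $F\in X^*$ with $F(x)\neq 0$; equivalently, points of a locally convex space are separated by $X^*$. Consequently $J(x)(F)=F(x)\neq 0$, so $J(x)\neq 0$. This proves $\ker J=\{0\}$, i.e.\ $J$ is injective, completing the proof.

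I expect the only real obstacle to be the injectivity step, which genuinely relies on the availability of enough continuous linear functionals — that is, on the Hahn--Banach theorem in locally convex spaces (the separating family of seminorms furnished by Theorem~\ref{thmmay3627} is exactly what makes this work). The well-definedness step, while needing the identification of the strong topology via the seminorms $p_B$ and the elementary fact that singletons are bounded, is otherwise routine; continuity of $\ell_x$ follows the same pattern as Theorem~\ref{thmmay3626}. No completeness or metrizability hypotheses are needed anywhere, so the argument applies to an arbitrary locally convex topological vector space as stated.
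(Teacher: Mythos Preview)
The paper does not supply a proof of this theorem; it is listed among the background facts in Section~3 that are ``mainly taken from'' the standard references (Rudin, Grubb, Treves, etc.), and no proof environment follows the statement. Your argument is correct and is precisely the standard one: linearity is immediate, well-definedness follows because the singleton $\{x\}$ is bounded so that $|\ell_x(F)|=p_{\{x\}}(F)$ exhibits $\ell_x$ as continuous for the strong topology via Theorem~\ref{thmapptvconvergence1}, and injectivity is the Hahn--Banach separation property of locally convex (Hausdorff) spaces. There is nothing to compare against in the paper itself.
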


\begin{definition}\lab{defmay7316}
Let $(X,\tau)$ be a locally convex topological vector space. Let $\tau'$ denote the strong topology on $X^{**}$ as the dual of $(X^*,\textrm{strong topology})$.
\begin{itemizeX}
\item If the evaluation map $J: (X,\tau)\rightarrow (X^{**},\tau')$ is bijective, then we say that $(X,\tau)$ is a semireflexive space.
\item If the evaluation map $J: (X,\tau)\rightarrow (X^{**},\tau')$ is a linear topological isomorphism, then we say that $(X,\tau)$ is a reflexive space.
\end{itemizeX}
\end{definition}

\begin{theorem}[\cite{Petersen83}, Pages 16 and 17]\lab{thmmay6758}
\leavevmode
\begin{itemize}
\item Strong dual of a reflexive topological vector space is reflexive.
\item Every semireflexive space whose topology is defined by the inductive limit of a sequence of Banach spaces is reflexive.
 \item  Every semireflexive Frechet space is reflexive. 
\end{itemize}
\end{theorem}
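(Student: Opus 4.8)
The plan is to derive all three assertions from a single structural fact: a locally convex topological vector space is reflexive (in the sense of Definition~\ref{defmay7316}) exactly when it is both semireflexive and barrelled. Granting this, the third statement is immediate, since a semireflexive Frechet space is semireflexive and, being a Baire space, barrelled; and the second statement follows the same way once we recall that every Banach space is barrelled (Baire category applied to a barrel) and that the locally convex inductive limit of a family of barrelled spaces is again barrelled, so the inductive limit of a sequence of Banach spaces is barrelled.

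To prove the structural fact I would proceed in three steps. First, identify the bidual via the bipolar theorem: a linear functional on the strong dual $X^*_\beta$ is continuous if and only if it is bounded on the polar $B^\circ$ of some bounded $B\subseteq X$, hence (after rescaling) lies in the bipolar $(B^\circ)^\circ$ computed in the pairing between $X^{**}$ and $X^*$; since $B^\circ$ is a strong neighborhood of the origin in $X^*$, Alaoglu--Bourbaki shows $(B^\circ)^\circ$ is $\sigma(X^{**},X^*)$-compact, and it is contained in $J(X)$ precisely when $B$ is relatively $\sigma(X,X^*)$-compact, where $J$ is the evaluation map, injective by Theorem~\ref{thmmay7314}. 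This yields the classical description: $X$ is semireflexive if and only if every bounded subset of $X$ is relatively $\sigma(X,X^*)$-compact. Second, assuming semireflexivity, $J\colon X\to X^{**}$ is a continuous linear bijection, and it is a topological isomorphism onto $X^{**}_\beta$ if and only if the topology it pulls back from $\beta(X^{**},X^*)$ equals the original topology of $X$; because semireflexivity gives $X^{**}=X$, so $\beta(X^*,X)$ is compatible with the pairing and the $\beta(X^*,X)$-bounded, $\sigma(X^*,X)$-bounded, and $\sigma(X^*,X^{**})$-bounded subsets of $X^*$ all coincide, that pulled-back topology is the topology of uniform convergence on strongly bounded subsets of $X^*$, and it agrees with the original topology exactly when every strongly bounded subset of $X^*$ is equicontinuous, i.e. when $X$ is quasi-barrelled. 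Third, one checks that a semireflexive quasi-barrelled space is in fact barrelled, because relative weak compactness of bounded sets forces every barrel to absorb bounded sets.

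For the first statement, let $X$ be reflexive, hence semireflexive and barrelled; I would show $X^*_\beta$ is again semireflexive and barrelled and then invoke the structural fact. Barrelledness of $X^*_\beta$ is the standard assertion that the strong dual of a semireflexive space is barrelled: a barrel $T\subseteq X^*_\beta$ is closed and absolutely convex, hence $\sigma(X^*,X)$-closed, so its polar $T^\circ$ is a bounded subset of $X=X^{**}$, whence $T=T^{\circ\circ}$ is a strong neighborhood of the origin. Semireflexivity of $X^*_\beta$ uses the description from step one together with barrelledness of $X$: by the converse of the Banach--Steinhaus theorem every $\beta(X^*,X)$-bounded subset $A$ of $X^*$ is equicontinuous, hence relatively $\sigma(X^*,X)$-compact by Alaoglu--Bourbaki; and since $X$ is reflexive, $\sigma(X^*,X)=\sigma(X^*,X^{**})$, so $A$ is relatively $\sigma(X^*,X^{**})$-compact, which is precisely semireflexivity of $X^*_\beta$. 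Combining, $X^*_\beta$ is semireflexive and barrelled, hence reflexive.

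The main obstacle is the structural fact itself, and within it the bookkeeping of which topologies on the tower $X^*$, $X^{**}$, $X^{***}$ are in force and why the competing notions of bounded and of equicontinuous subsets collapse under (semi)reflexivity. The ingredients --- the bipolar theorem, the Alaoglu--Bourbaki theorem, and the characterization of barrelled spaces via the converse Banach--Steinhaus theorem --- are all standard, but assembling them without circularity, in particular the two sub-lemmas that a semireflexive quasi-barrelled space is barrelled and that the strong dual of a semireflexive space is barrelled, is where the care is required.
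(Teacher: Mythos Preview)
The paper does not give its own proof of this theorem: it is stated with a citation to \cite{Petersen83} and used as a black box (its only application in the paper is in Corollary~\ref{cormay7826}). So there is no ``paper's proof'' to compare against; you have supplied an argument where the paper supplies none.

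Your outline is the standard route (as found, e.g., in Schaefer or K\"othe): reduce all three assertions to the equivalence ``reflexive $\Leftrightarrow$ semireflexive and barrelled,'' and then feed in the appropriate barrelledness result (Baire category for Frechet spaces, stability of barrelledness under locally convex inductive limits for the second item, and the pair of sub-lemmas you name for the first item). The identifications you make---that semireflexivity of $X$ forces $\beta(X^*,X)$ to be compatible with the pairing $\langle X^*,X\rangle$, so that closed convex sets in $X^*_\beta$ are $\sigma(X^*,X)$-closed, and that barrelledness of $X$ gives equicontinuity of $\sigma(X^*,X)$-bounded sets---are correct and are exactly what is needed. The one place your sketch is terse is step~3, ``semireflexive $+$ quasi-barrelled $\Rightarrow$ barrelled'': the clean way to fill this in is to note that in a semireflexive space every bounded set has weakly compact closed absolutely convex hull, that such sets are Banach disks, and that a barrel absorbs every Banach disk; hence every barrel is bornivorous and quasi-barrelled upgrades to barrelled. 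You flag this as the delicate point, which is accurate.
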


\begin{theorem}\lab{thmapril2713}
Let $(X,\tau_X)$ and $(Z,\tau_Z)$ be two locally convex topological vector spaces. For all $x\in X$, let $l_x: X^*\rightarrow \reals$ be the linear map defined by $l_x(f)=f(x)$. Then
\begin{enumerate}
\item A linear map $T:(Z,\tau_Z)\rightarrow (X,\sigma(X,X^*))$ is continuous if and only if for all $F\in [(X,\tau_X)]^*$, the linear map $F\circ T: (Z,\tau_Z)\rightarrow \reals$ is continuous.
\item A linear map $T:(Z,\tau_Z)\rightarrow (X^*,\sigma(X^*,X))$ is continuous if and only if for all $x\in X$, the linear map $l_x\circ T: (Z,\tau_Z)\rightarrow \reals$ is continuous.
\end{enumerate}
\end{theorem}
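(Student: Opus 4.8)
The plan is to reduce both statements to Theorem~\ref{thmmay11714} together with the elementary observation that a \emph{linear} functional on a topological vector space is continuous precisely when its absolute value is continuous. Recall first that, by Definition~\ref{winter31}, the weak topology $\sigma(X,X^*)$ is the natural topology on $X$ induced by the separating family of seminorms $\{p_F\}_{F\in X^*}$ with $p_F(x)=|F(x)|$ (where $X^*=[(X,\tau_X)]^*$), and the weak$^*$ topology $\sigma(X^*,X)$ is the natural topology on $X^*$ induced by the separating family of seminorms $\{p_x\}_{x\in X}$ with $p_x(f)=|f(x)|=|l_x(f)|$. Both families are separating, by the Hahn--Banach theorem and by the very definition of $X^*$ as a space of functionals, respectively, so the natural-topology machinery (Theorems~\ref{thmmay7310} and~\ref{thmmay11714}) applies to them.

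Next I would record the auxiliary fact: if $g:(Z,\tau_Z)\rightarrow\reals$ is linear, then $g$ is continuous if and only if $|g|$ is continuous. One direction is immediate, since $|g|=|\cdot|\circ g$ is a composition of continuous maps. Conversely, if $|g|$ is continuous then for every $\varepsilon>0$ the set $g^{-1}((-\varepsilon,\varepsilon))=|g|^{-1}((-\varepsilon,\varepsilon))$ is open and contains the origin; hence $g$ is continuous at $0$, and a linear map on a topological vector space that is continuous at the origin is continuous everywhere.

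To prove (1), apply Theorem~\ref{thmmay11714} with $Y=X$ equipped with $\tau'=\sigma(X,X^*)$ and $\mathcal{Q}=\{p_F\}_{F\in X^*}$: it says $T:(Z,\tau_Z)\rightarrow(X,\sigma(X,X^*))$ is continuous if and only if $p_F\circ T$ is continuous on $Z$ for every $F\in X^*$. But $(p_F\circ T)(z)=|F(T(z))|=|(F\circ T)(z)|$, so $p_F\circ T=|F\circ T|$, and $F\circ T:(Z,\tau_Z)\rightarrow\reals$ is linear; by the auxiliary fact, $|F\circ T|$ is continuous if and only if $F\circ T$ is continuous. Combining, $T$ is continuous if and only if $F\circ T$ is continuous for every $F\in[(X,\tau_X)]^*$, which is the assertion.

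To prove (2), I would argue identically: apply Theorem~\ref{thmmay11714} with $Y=X^*$ equipped with $\tau'=\sigma(X^*,X)$ and $\mathcal{Q}=\{p_x\}_{x\in X}$, use $(p_x\circ T)(z)=|T(z)(x)|=|l_x(T(z))|=|(l_x\circ T)(z)|$ so that $p_x\circ T=|l_x\circ T|$, and invoke the auxiliary fact for the linear functional $l_x\circ T$. I do not expect a genuine obstacle here; the only points requiring care are correctly identifying the defining seminorm families of $\sigma(X,X^*)$ and $\sigma(X^*,X)$ (so that Theorem~\ref{thmmay11714} applies verbatim), verifying that these families are separating, and the routine observation that continuity at the origin upgrades to global continuity for a linear map.
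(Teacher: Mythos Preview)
The paper does not supply its own proof of this theorem; it is stated without proof in the Background Material section, where unproved statements are attributed to standard references or are said to be direct consequences thereof. So there is no paper proof to compare against.

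Your argument is correct and is exactly the natural one: the weak topology $\sigma(X,X^*)$ and the weak$^*$ topology $\sigma(X^*,X)$ are, by Definition~\ref{winter31}, natural topologies induced by the seminorm families $\{p_F\}_{F\in X^*}$ and $\{p_x\}_{x\in X}$ respectively, so Theorem~\ref{thmmay11714} applies verbatim and reduces continuity of $T$ to continuity of each $p_F\circ T=|F\circ T|$ (respectively $p_x\circ T=|l_x\circ T|$). Your auxiliary lemma that a linear functional is continuous iff its absolute value is continuous then finishes both parts. The only minor remark is that separating-ness of these families is already asserted in Definition~\ref{winter31} itself, so you need not invoke Hahn--Banach separately; but doing so is harmless.
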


\begin{theorem}[\cite{Reus1}, Page 163, \cite{9}, Page 46] \lab{thmfallinjectiveadjoint1} Let $X$ and $Y$ be locally
convex topological vector spaces and suppose $T:X\rightarrow Y$
is a continuous linear map. Either equip both $X^*$ and $Y^*$ with the strong topology or equip both with the weak$^*$ topology. Then
\begin{enumerateXALI}
\item the map
\begin{equation*}
T^*: Y^*\rightarrow X^*\qquad \langle T^*y,x \rangle_{X^*\times
X}=\langle y,Tx \rangle_{Y^*\times Y}
\end{equation*}
is well-defined, linear, and continuous. ($T^*$ is called the
\textbf{adjoint} of $T$.)
\item If $T(X)$ is dense in $Y$, then $T^*:Y^*\rightarrow X^*$ is
injective.
\end{enumerateXALI}
\end{theorem}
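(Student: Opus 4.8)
The plan is to dispose of well-definedness and linearity at once, then prove continuity by feeding the seminorm criterion of Theorem~\ref{thmmay11714} the families that define the two dual topologies, treating the strong and weak$^*$ cases in parallel, and finally to obtain injectivity in (2) from a density/continuity argument. For well-definedness: given $y\in Y^*$, the composition $y\circ T\colon X\to\reals$ is linear and, being the composition of the continuous linear map $T$ with the continuous functional $y$, is continuous; hence $y\circ T\in X^*$, and setting $T^*y:=y\circ T$ gives exactly $\langle T^*y,x\rangle_{X^*\times X}=\langle y,Tx\rangle_{Y^*\times Y}$. Linearity of $y\mapsto T^*y$ is immediate, and as a set map $T^*$ does not depend on the topologies placed on $X^*$ and $Y^*$.

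For continuity, recall from Definition~\ref{winter31} and Theorem~\ref{thmmay7310} that the weak$^*$ topology on $X^*$ is the natural topology of the separating family $\{p_x\}_{x\in X}$, $p_x(f)=|f(x)|$, while the strong topology on $X^*$ is the natural topology of $\{p_B\}_B$, $p_B(f)=\sup_{x\in B}|f(x)|$, with $B$ ranging over bounded subsets of $X$; the analogous statements hold for $Y^*$, and in both cases $Y^*$ is a locally convex topological vector space. By Theorem~\ref{thmmay11714}, $T^*\colon Y^*\to X^*$ is continuous as soon as $q\circ T^*$ is continuous on $Y^*$ for every seminorm $q$ in the defining family of the chosen topology on $X^*$. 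In the weak$^*$ case $q=p_x$ and $(p_x\circ T^*)(y)=|y(Tx)|=p_{Tx}(y)$, which is a member of the family defining the weak$^*$ topology on $Y^*$, hence continuous. In the strong case $q=p_B$ with $B\subseteq X$ bounded, and $(p_B\circ T^*)(y)=\sup_{x\in B}|y(Tx)|=\sup_{z\in T(B)}|y(z)|=p_{T(B)}(y)$; since $T$ is continuous and linear, it maps the bounded set $B$ to a bounded set $T(B)$ (if $U$ is a neighborhood of the origin in $Y$ then $T^{-1}(U)$ is a neighborhood of the origin in $X$, so $B\subseteq t\,T^{-1}(U)$ for some $t>0$, whence $T(B)\subseteq tU$), so $p_{T(B)}$ belongs to the family defining the strong topology on $Y^*$ and is therefore continuous. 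This proves part (1).

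For part (2), suppose $T(X)$ is dense in $Y$ and $T^*y=0$ for some $y\in Y^*$. Then $\langle y,Tx\rangle=\langle T^*y,x\rangle=0$ for all $x\in X$, i.e.\ $y$ vanishes on $T(X)$. Since $y\colon Y\to\reals$ is continuous and $\{0\}$ is closed in $\reals$, the set $y^{-1}(\{0\})$ is a closed subset of $Y$ containing the dense set $T(X)$, hence equals $Y$; thus $y=0$. Therefore the kernel of $T^*$ is trivial and $T^*$ is injective, for either choice of topology on $X^*$ and $Y^*$.

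I expect no genuine obstacle here: once the correct characterization of continuity (Theorem~\ref{thmmay11714}) is invoked, the continuity of $T^*$ reduces to the observation that $T^*$ pulls a defining seminorm of the target dual topology back to a defining seminorm of the source dual topology. The one point requiring its own short argument is that a continuous linear map preserves boundedness, and it enters only in the strong-topology case; apart from that, the ``hard part'' is purely bookkeeping — matching the seminorm families of Definition~\ref{winter31} with the hypotheses of Theorem~\ref{thmmay11714}.
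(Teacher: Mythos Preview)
Your proof is correct. The paper does not supply its own proof of this theorem --- it is quoted from \cite{Reus1} and \cite{9} as background material --- so there is no in-paper argument to compare against; your argument via Theorem~\ref{thmmay11714} and the seminorm families of Definition~\ref{winter31} is the standard one and would serve perfectly well as the omitted proof.
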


\begin{theorem}[\cite{Rudi73}, Page 70]\lab{thmmay7320}
Let $(X,\tau)$ be a locally convex topological vector space. Then a set $E\subseteq X$ is bounded with respect to $\tau$ if and only if it is bounded with respect to $\sigma (X,X^*)$.
\end{theorem}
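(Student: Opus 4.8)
The plan is to establish the two implications separately. The implication ``$\tau$-bounded $\Rightarrow$ weakly bounded'' is formal: by Hahn--Banach (local convexity), $X^*$ separates points of $X$, so $\{p_F\}_{F\in X^*}$ with $p_F(x)=|F(x)|$ is a separating family; since every $F\in X^*$ is $\tau$-continuous, each $p_F$ is $\tau$-continuous, and hence $\sigma(X,X^*)\subseteq\tau$, i.e. every weak neighborhood of the origin is already a $\tau$-neighborhood. Therefore, if $E$ is $\tau$-bounded and $V$ is a weak neighborhood of the origin, then $V$ is a $\tau$-neighborhood and there is $t>0$ with $E\subseteq tV$; thus $E$ is $\sigma(X,X^*)$-bounded.

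The substance is the converse. Assume $E$ is weakly bounded. By Theorem~\ref{thmmay3627} choose a separating family $\mathcal{P}$ of seminorms whose natural topology is $\tau$; by Theorem~\ref{thmmay3814} it suffices to prove $\sup_{x\in E}p(x)<\infty$ for every $p\in\mathcal{P}$. Fix such a $p$. Since $p$ is $\tau$-continuous, $U:=\{x\in X:\ p(x)\le 1\}$ is a closed, convex, balanced neighborhood of the origin whose Minkowski functional is $p$ itself. The key move is to build a Banach space out of the dual on which $E$ acts as a pointwise-bounded family of functionals, so that the uniform boundedness principle applies. Set
\[
Y:=\Bigl\{F\in X^*:\ \|F\|_p:=\sup_{x\in U}|F(x)|<\infty\Bigr\}.
\]
I expect $(Y,\|\cdot\|_p)$ to be a Banach space: if $(F_n)$ is $\|\cdot\|_p$-Cauchy then, $U$ being absorbing, $(F_n)$ converges pointwise on $X$ to a linear functional $F$; the estimate $|(F_n-F)(x)|\le\limsup_m\|F_n-F_m\|_p$ for $x\in U$ forces $\|F_n-F\|_p\to 0$; and writing $F=F_N+(F-F_N)$ shows $F$ is bounded on the neighborhood $U$, hence $F\in X^*$, so $F\in Y$.

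Next, for $x\in X$ define $G_x:Y\to\reals$ by $G_x(F):=F(x)$. Scaling $x$ into $U$ gives $|F(x)|\le\|F\|_p\,p(x)$ for all $F\in Y$, so $G_x\in Y^*$ with $\|G_x\|_{Y^*}\le p(x)$; and applying Hahn--Banach to the functional $\lambda x\mapsto\lambda p(x)$ on $\mathrm{span}\{x\}$, which is dominated by $p$, yields $F_0\in X^*$ with $|F_0|\le p$ on $X$ and $F_0(x)=p(x)$, whence $F_0\in Y$, $\|F_0\|_p\le 1$, and $\|G_x\|_{Y^*}\ge|F_0(x)|=p(x)$; thus $\|G_x\|_{Y^*}=p(x)$. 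Now weak boundedness of $E$ says exactly that for each fixed $F\in Y\subseteq X^*$ the set $\{G_x(F):x\in E\}=\{F(x):x\in E\}$ is bounded in $\reals$; that is, $\{G_x\}_{x\in E}$ is pointwise bounded on the Banach space $Y$. By the uniform boundedness principle, $\sup_{x\in E}\|G_x\|_{Y^*}<\infty$, i.e. $\sup_{x\in E}p(x)<\infty$. Since $p\in\mathcal{P}$ was arbitrary, Theorem~\ref{thmmay3814} shows $E$ is $\tau$-bounded.

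The main obstacle is the converse implication, and in particular recognizing it as a uniform-boundedness phenomenon: the passage from ``each individual $F\in X^*$ is bounded on $E$'' to ``each continuous seminorm is bounded on $E$'' requires a completeness (equicontinuity) input, here encoded in the local Banach space $Y=X^*_U$ together with Banach--Steinhaus. A variant avoids constructing $Y$ explicitly: using that the polar $U^{\circ}$ is weak-$*$ compact (Banach--Alaoglu), one views each $x\in E$ as an element of $C(U^{\circ})^{**}$ and checks that $F\mapsto\int_{U^{\circ}}F\,d\nu$ is a $\tau$-continuous linear functional on $X$ for every regular Borel measure $\nu$ on $U^{\circ}$, so that weak boundedness again feeds into Banach--Steinhaus; the mechanism is identical. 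The remaining ingredients---the Minkowski-functional identities, completeness of $Y$, and the Hahn--Banach extension---are routine.
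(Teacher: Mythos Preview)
The paper does not give its own proof of this statement; it is quoted from Rudin's \emph{Functional Analysis} without argument. Your proof is correct and is essentially the classical one: the forward implication is immediate from $\sigma(X,X^*)\subseteq\tau$, and the converse is obtained by fixing a continuous seminorm $p$, passing to the Banach space $Y=\{F\in X^*:\sup_{p\le 1}|F|<\infty\}$, identifying $p(x)$ with the operator norm of the evaluation $G_x\in Y^*$ via Hahn--Banach, and invoking Banach--Steinhaus. All the checks you outline (completeness of $Y$, the equality $\|G_x\|_{Y^*}=p(x)$, continuity of the extended $F_0$) go through as stated.
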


\begin{corollary}\lab{cormay7322}
If $(X,\tau)$ is a locally convex topological vector space and $x_n\rightharpoonup x$  (i.e. $x_n$ converges to $x$ with respect to $\sigma(X,X^*)$), then $\{x_n\}$ is bounded with respect to both $\tau$ and $\sigma (X,X^*)$.
\end{corollary}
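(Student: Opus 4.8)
The plan is to reduce everything to the elementary fact that a convergent sequence of real numbers is bounded, and then invoke the seminorm characterizations of boundedness already established. First I would recall that, by Definition~\ref{winter31}, the weak topology $\sigma(X,X^*)$ is precisely the natural topology on $X$ induced by the separating family of seminorms $\{p_F\}_{F\in X^*}$, where $p_F(y)=|F(y)|$. Since $x_n\rightharpoonup x$ means, by that same definition, that $F(x_n)\rightarrow F(x)$ in $\reals$ for every $F\in X^*$, each real sequence $\{F(x_n)\}_{n}$ is convergent and therefore bounded; hence $\sup_n p_F(x_n)=\sup_n|F(x_n)|<\infty$ for every $F\in X^*$.

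Next I would apply Theorem~\ref{thmmay3814} to the locally convex topological vector space $(X,\sigma(X,X^*))$ with the separating family $\mathcal{P}=\{p_F\}_{F\in X^*}$: a subset $E\subseteq X$ is bounded with respect to $\sigma(X,X^*)$ if and only if $p_F(E)$ is a bounded subset of $\reals$ for every $F\in X^*$. Taking $E=\{x_n:n\in\mathbb{N}\}$, the estimate from the previous step shows that $p_F(E)$ is bounded in $\reals$ for each $F$, so $\{x_n\}$ is bounded with respect to $\sigma(X,X^*)$.

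Finally, Theorem~\ref{thmmay7320} asserts that for a locally convex topological vector space a set is bounded with respect to $\tau$ if and only if it is bounded with respect to $\sigma(X,X^*)$; applying this with $E=\{x_n\}$ yields boundedness with respect to $\tau$ as well, completing the proof. There is essentially no serious obstacle here: the only point requiring care is that the hypothesis is phrased as weak convergence, so one must first unwind Definition~\ref{winter31} to the pointwise statement $F(x_n)\rightarrow F(x)$ before the boundedness machinery of Theorems~\ref{thmmay3814} and~\ref{thmmay7320} can be brought to bear.
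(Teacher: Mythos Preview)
Your proof is correct and is exactly the argument the paper has in mind: the corollary is stated immediately after Theorem~\ref{thmmay7320} with no separate proof, so the intended justification is precisely the one you spell out (weak convergence gives $\sup_n|F(x_n)|<\infty$ for each $F\in X^*$, hence weak boundedness via Theorem~\ref{thmmay3814}, hence $\tau$-boundedness via Theorem~\ref{thmmay7320}).
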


\begin{theorem}\lab{thmmay2}
Let $(X,\tau_X)$ and $(Y,\tau_Y)$ be two locally convex topological vector spaces. If $T:(X,\tau_X)\rightarrow (Y^*, \sigma(Y^*,Y))$ is continuous, then $T: (X,\sigma(X,X^*))\rightarrow (Y^*, \sigma(Y^*,Y))$ is continuous. In particular, if $u_n\rightharpoonup u$ (i.e. $u_n$ converges to $u$ with respect to $\sigma(X,X^*)$), then $T(u_n)\rightarrow T(u)$ in $(Y^*, \sigma(Y^*,Y))$.
\end{theorem}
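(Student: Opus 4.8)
The plan is to reduce everything to Theorem~\ref{thmapril2713}(2), which characterizes continuity of a linear map into a weak$^*$-dual space in terms of scalar continuity against the elements of the predual, and then to invoke the fact recorded in Definition~\ref{winter31} that a locally convex space and its weak topology have exactly the same continuous linear functionals.

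First I would apply Theorem~\ref{thmapril2713}(2) with the roles of the spaces relabeled: take $Z=X$ equipped with $\tau_X$, and let the space whose dual appears in that theorem be $Y$, so that the ``$X^*$'' there is our $Y^*$ and the weak$^*$ topology ``$\sigma(X^*,X)$'' there is our $\sigma(Y^*,Y)$. For $y\in Y$ write $l_y:Y^*\to\reals$, $l_y(f)=f(y)$. Since $T:(X,\tau_X)\to (Y^*,\sigma(Y^*,Y))$ is continuous by hypothesis, Theorem~\ref{thmapril2713}(2) gives that for every $y\in Y$ the composition $l_y\circ T:(X,\tau_X)\to\reals$ is continuous; equivalently, $l_y\circ T\in [(X,\tau_X)]^* = X^*$.

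Next I would run the same characterization in the other direction. To prove that $T:(X,\sigma(X,X^*))\to (Y^*,\sigma(Y^*,Y))$ is continuous, Theorem~\ref{thmapril2713}(2) — now with $\tau_Z=\sigma(X,X^*)$ — reduces the task to checking that $l_y\circ T:(X,\sigma(X,X^*))\to\reals$ is continuous for each $y\in Y$. But we have just observed that $l_y\circ T\in X^*$, and every element of $X^*$ is, by the very definition of the weak topology $\sigma(X,X^*)$, a continuous function on $(X,\sigma(X,X^*))$; equivalently $[(X,\tau_X)]^* = [(X,\sigma(X,X^*))]^*$ as recorded in Definition~\ref{winter31}. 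Hence $l_y\circ T$ is $\sigma(X,X^*)$-continuous for each $y\in Y$, and the asserted continuity of $T$ follows.

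Finally, for the ``in particular'' assertion: if $u_n\rightharpoonup u$, then $u_n\to u$ in $(X,\sigma(X,X^*))$, and since any continuous map between topological spaces is automatically sequentially continuous, the continuity of $T:(X,\sigma(X,X^*))\to (Y^*,\sigma(Y^*,Y))$ just established yields $T(u_n)\to T(u)$ in $(Y^*,\sigma(Y^*,Y))$. I do not anticipate a genuine obstacle here; the only points requiring care are the bookkeeping of which space plays which role when applying Theorem~\ref{thmapril2713}(2), and the (standard, already-cited) fact that passing from $\tau_X$ to the weak topology $\sigma(X,X^*)$ does not change the continuous dual.
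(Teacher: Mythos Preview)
Your proof is correct and follows essentially the same approach as the paper: both reduce via Theorem~\ref{thmapril2713}(2) to showing that each $l_y\circ T$ is $\sigma(X,X^*)$-continuous, and both deduce this from $l_y\circ T\in[(X,\tau_X)]^*$ together with the defining property of the weak topology. The only cosmetic difference is that you invoke Theorem~\ref{thmapril2713}(2) in both directions, whereas the paper obtains $l_y\circ T\in X^*$ by directly composing the continuous maps $l_y$ and $T$.
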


\begin{proof}
For all $y\in Y$, let $l_y: Y^*\rightarrow \reals$ be the map $l_y(F)=F(y)$. By Theorem ~\ref{thmapril2713} $T: (X,\sigma(X,X^*))\rightarrow (Y^*, \sigma(Y^*,Y))$ is continuous if $l_y\circ T: (X,\sigma(X,X^*))\rightarrow \reals$ is continuous for all $y\in Y$. Let $y\in Y$.
\begin{enumerateX}
\item By definition of the weak$^*$ topology on $Y^*$, we know that the linear map $l_y: Y^*\rightarrow \reals$ is continuous.
\item By assumption $T:(X,\tau_X)\rightarrow (Y^*, \sigma(Y^*,Y))$ is a continuous linear map.
\end{enumerateX}
Therefore, $l_y\circ T$  belongs to $[(X,\tau_X)]^*$. Since $\sigma(X,X^*)$ is the weakest topology on $X$ that makes all elements of $[(X,\tau_X)]^*$ continuous, we can conclude that $l_y\circ T: (X,\sigma(X,X^*))\rightarrow \reals$ is continuous.
\end{proof}

\begin{theorem}[\cite{Vogt2000}, Page 13]\lab{thmmay7323}
Let $(X,\tau)$ be a Frechet space. Then $X$ is reflexive if and only if every bounded set $E$ in $X$ is relatively weakly compact (i.e. the closure of $E$ w.r.t $\sigma(X,X^*)$ is compact w.r.t $\sigma(X,X^*)$ ).
\end{theorem}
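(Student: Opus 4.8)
The plan is to isolate the classical characterization of \emph{semireflexivity} and then read the theorem off from it. Concretely, I claim that for \emph{any} locally convex topological vector space $X$ the evaluation map $J$ of Definition~\ref{defmay7316} is bijective (i.e.\ $X$ is semireflexive) if and only if every bounded subset of $X$ is relatively $\sigma(X,X^*)$-compact. Granting this, the theorem is short: if $X$ is reflexive then it is semireflexive, so its bounded sets are relatively weakly compact; conversely, if every bounded set of $X$ is relatively weakly compact then $X$ is semireflexive, and since $X$ is in addition a Frechet space, Theorem~\ref{thmmay6758} promotes semireflexivity to reflexivity. So all the real work is in the semireflexivity characterization, and the Frechet hypothesis enters only at the very last step.

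For the direction ``semireflexive $\Rightarrow$ bounded sets relatively weakly compact'', I would fix a bounded $E\subseteq X$ and observe that the seminorm $f\mapsto p_E(f):=\sup_{x\in E}|f(x)|$ is continuous on $X^*$ equipped with the strong topology (this is exactly what boundedness of $E$ gives, cf.\ Definition~\ref{winter31}), so the set $J(E)=\{f\mapsto f(x):x\in E\}\subseteq X^{**}$ is equicontinuous on $(X^*,\mathrm{strong})$ since $|J(x)(f)|=|f(x)|\le p_E(f)$. By the Alaoglu--Bourbaki theorem its $\sigma(X^{**},X^*)$-closure is $\sigma(X^{**},X^*)$-compact. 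A routine check from the definitions of the two weak topologies shows $J$ is a homeomorphism from $(X,\sigma(X,X^*))$ onto $(J(X),\sigma(X^{**},X^*))$; since $J$ is surjective, that compact closure equals $J(C)$ for a $\sigma(X,X^*)$-compact set $C\supseteq E$, and as compact sets are closed in the Hausdorff space $(X,\sigma(X,X^*))$, the weak closure of $E$ sits inside $C$ and is therefore weakly compact.

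For the reverse direction ``bounded sets relatively weakly compact $\Rightarrow$ semireflexive'' I must show $J$ is onto. Take $\xi\in X^{**}$, i.e.\ a continuous linear functional on $(X^*,\mathrm{strong})$. By Theorem~\ref{thmmay2656}, $|\xi|$ is dominated by a finite sum of seminorms $p_{B_i}$ with $B_i\subseteq X$ bounded; taking a finite union, rescaling, and then passing to the closed absolutely convex hull (which stays bounded in any locally convex space, since convex hulls and closures of bounded sets are bounded and the value of $|f|$ on an absolutely convex hull is controlled by its value on the generating set), I may assume there is a single closed absolutely convex bounded $B$ with $|\xi(f)|\le p_B(f)=\sup_{x\in B}|f(x)|$ for all $f\in X^*$. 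This says exactly that $\xi$ lies in the bipolar of $B$ computed in the pairing between $X^*$ and $X^{**}$. Now by hypothesis $B$, being bounded and — by the Hahn--Banach theorem — weakly closed, is $\sigma(X,X^*)$-compact; hence $J(B)$ is a $\sigma(X^{**},X^*)$-compact absolutely convex, and in particular $\sigma(X^{**},X^*)$-closed, subset of $X^{**}$. The bipolar theorem identifies that bipolar with the $\sigma(X^{**},X^*)$-closed absolutely convex hull of $J(B)$, which is $J(B)$ itself. Therefore $\xi\in J(B)\subseteq J(X)$, so $J$ is surjective.

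I expect the reverse half of the semireflexivity characterization to be the main obstacle: the delicate points are keeping the pairings $\langle X,X^*\rangle$ and $\langle X^*,X^{**}\rangle$ straight, verifying that the closed absolutely convex hull of a bounded set is still bounded so that the hypothesis can be applied to it, and invoking Alaoglu--Bourbaki and the bipolar theorem correctly in the general (non-normed) locally convex setting rather than the familiar Banach-space one. Once the characterization is established, the passage from semireflexivity to reflexivity is handled entirely by Theorem~\ref{thmmay6758}, and nothing further is needed.
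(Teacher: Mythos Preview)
The paper does not prove this theorem; it is quoted as a known result from \cite{Vogt2000} and used as a black box (in Theorem~\ref{thmmay2858}). So there is no ``paper's own proof'' to compare against.

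Your argument is the standard one and is correct. The reduction to the characterization of semireflexivity (bounded sets relatively $\sigma(X,X^*)$-compact) together with Theorem~\ref{thmmay6758} to upgrade semireflexivity to reflexivity in the Frechet case is exactly how this is done in the literature. In the forward direction your use of Alaoglu--Bourbaki on the equicontinuous set $J(E)$ and the observation that $J$ is a $\sigma(X,X^*)$--$\sigma(X^{**},X^*)$ homeomorphism onto its image (hence onto $X^{**}$ when $J$ is surjective) is fine. In the reverse direction the key steps---reducing to a single closed absolutely convex bounded $B$, noting such a $B$ is weakly closed by Mazur/Hahn--Banach, hence weakly compact by hypothesis, and then invoking the bipolar theorem in the pairing $\langle X^*,X^{**}\rangle$ to conclude $\xi\in J(B)$---are all correct. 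Your own list of ``delicate points'' identifies precisely the places where care is needed, and you have handled them correctly.
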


\begin{theorem}[\cite{Bogachev2017}, Page 167]\lab{thmmay7324}
Let $(X,\tau)$ be a separable Frechet space. If $E\subseteq X$ is relatively weakly compact, then every infinite sequence in $E$ has a subsequence that converges in $(X, \sigma(X,X^*))$.
\end{theorem}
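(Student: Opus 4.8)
The plan is to reduce the statement to the assertion that weakly compact subsets of a separable Fréchet space are metrizable in the weak topology, and then invoke the elementary fact that a compact metrizable space is sequentially compact. Set $K:=\overline{E}^{\,\sigma(X,X^*)}$; by hypothesis $K$ is $\sigma(X,X^*)$-compact, and every infinite sequence in $E$ is an infinite sequence in $K$, so it suffices to prove that $(K,\sigma(X,X^*))$ is metrizable.

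To do this, I would first produce a countable subset of $X^*$ that is weak$^*$-dense. Since $X$ is a Fréchet space it is metrizable, hence has a countable base $\{U_k\}_{k\in\mathbb N}$ of neighborhoods of the origin; let $U_k^\circ=\{f\in X^*:\ |f(x)|\le 1\ \forall x\in U_k\}$ denote the polar. Because every $f\in X^*$ is continuous at $0$, one has $X^*=\bigcup_k U_k^\circ$, and each $U_k^\circ$ is equicontinuous. Fixing a countable dense set $D\subseteq X$ (here separability of $X$ is used), the weak$^*$ topology restricted to the equicontinuous set $U_k^\circ$ coincides with the topology of pointwise convergence on $D$, by a standard $\varepsilon/3$ argument using equicontinuity and density; hence $U_k^\circ$ embeds into $\mathbb R^D$ with the product topology and is therefore separable and metrizable, so it contains a countable weak$^*$-dense subset. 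Taking the union over $k$ yields a countable set $\{f_m\}_{m\in\mathbb N}\subseteq X^*$ that is weak$^*$-dense in $X^*$.

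Next I would check that $\{f_m\}$ separates the points of $X$: given $x\neq 0$, Hahn--Banach (applicable since a Fréchet space is locally convex) provides $f\in X^*$ with $f(x)\neq 0$; the set $\{g\in X^*:\ g(x)\neq 0\}$ is weak$^*$-open and nonempty, hence meets $\{f_m\}$, so some $f_m(x)\neq 0$. Consequently the family of seminorms $p_m(x):=|f_m(x)|$ is separating, and by Theorem~\ref{thmmay7310} the natural topology it induces, namely $\sigma(X,\{f_m\})$, is metrizable. Now consider the identity map $\iota:(K,\sigma(X,X^*))\to (K,\sigma(X,\{f_m\}))$: it is continuous since $\sigma(X,\{f_m\})\subseteq\sigma(X,X^*)$, its domain is compact, and its codomain is Hausdorff because $\{f_m\}$ separates points; hence $\iota$ is a homeomorphism and the weak topology of $X$ restricted to $K$ agrees with the metrizable topology $\sigma(X,\{f_m\})|_K$. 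Thus $(K,\sigma(X,X^*))$ is compact and metrizable, therefore sequentially compact, and any infinite sequence in $E\subseteq K$ has a subsequence converging with respect to $\sigma(X,X^*)$ to a point of $K\subseteq X$.

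The main obstacle is the construction in the second paragraph: one must pass from the weak topology, which is governed by the entire (typically nonmetrizable) dual $X^*$, to a countable point-separating subfamily of functionals, and the device that makes this transfer exact on $K$ is the compact-to-Hausdorff rigidity of continuous bijections. Establishing weak$^*$-separability of $X^*$ requires the polar/equicontinuity machinery — a Banach--Alaoglu-type fact together with metrizability of equicontinuous sets in the separable case — which is standard but is not recorded explicitly earlier in the text; I would either cite it or include the short $\varepsilon/3$ argument. Everything else (Hahn--Banach, metrizability from countably many separating seminorms via Theorem~\ref{thmmay7310}, and sequential compactness of compact metric spaces) is routine.
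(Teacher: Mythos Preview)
The paper does not supply a proof of this theorem; it is stated with a bare citation to \cite{Bogachev2017}, Page 167, so there is nothing to compare against. Your argument is nonetheless correct and is the standard route: reduce to sequential compactness of the weak closure $K$ by showing that $(K,\sigma(X,X^*))$ is metrizable, which in turn follows once one exhibits a countable point-separating family in $X^*$.

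A couple of minor remarks. First, you do slightly more than necessary: you construct a weak$^*$-dense countable set in $X^*$, whereas all the final step requires is a countable set that separates points of $X$. Your density argument is fine (density of $A_k$ in each $U_k^\circ$ together with $X^*=\bigcup_k U_k^\circ$ does give weak$^*$-density of $\bigcup_k A_k$ in $X^*$, since any weak$^*$-open neighborhood of $f\in U_k^\circ$ meets $U_k^\circ$ in a relatively open set), but you could stop earlier: already each $U_k^\circ$ is weak$^*$-compact by Alaoglu--Bourbaki and, being metrizable via the embedding into $\reals^D$, is separable; a countable dense set in each $U_k^\circ$ suffices to separate points by the Hahn--Banach step you give. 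Second, your invocation of Theorem~\ref{thmmay7310} for metrizability of $\sigma(X,\{f_m\})$ is exactly on point and is the only place you use something explicitly recorded in the paper.

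The only genuinely nontrivial ingredient you import from outside the paper is the fact that on an equicontinuous subset of $X^*$ the weak$^*$ topology coincides with the topology of pointwise convergence on a dense subset of $X$; your $\varepsilon/3$ sketch is adequate, though in a self-contained write-up you might also note that the $U_k$ can be taken balanced and absorbing so that the polars $U_k^\circ$ are indeed equicontinuous and their union exhausts $X^*$.
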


The next theorem is an immediate consequence of the previous theorems.

\begin{theorem}\lab{thmmay2858}
Suppose that $(X,\tau)$ is a separable reflexive Frechet space. Then every bounded sequence in $(X,\tau)$ has a weakly convergent subsequence, that is, a subsequence that converges w.r.t $\sigma(X,X^*)$.
\end{theorem}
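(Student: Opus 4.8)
The plan is to read off the conclusion directly from the two preceding structural theorems, Theorem~\ref{thmmay7323} and Theorem~\ref{thmmay7324}, by feeding a bounded sequence into the machinery of relative weak compactness. Concretely, given a bounded sequence $\{x_n\}$ in $(X,\tau)$, I would set $E := \{x_n : n\in\mathbb{N}\}$ and observe that $E$ is a bounded subset of $X$ (a finite-or-countable set whose elements form a bounded sequence is bounded as a set, since being bounded only requires absorption by every neighborhood of the origin).

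Next, I would invoke Theorem~\ref{thmmay7323}: since $(X,\tau)$ is a reflexive Frechet space, every bounded subset of $X$ is relatively weakly compact; in particular the set $E$ above is relatively weakly compact, i.e. its closure with respect to $\sigma(X,X^*)$ is $\sigma(X,X^*)$-compact. Then I would invoke Theorem~\ref{thmmay7324}: since $(X,\tau)$ is a separable Frechet space and $E$ is relatively weakly compact, every infinite sequence drawn from $E$ admits a subsequence convergent in $(X,\sigma(X,X^*))$. Applying this to the sequence $\{x_n\}$ itself (which of course takes values in $E$) yields a subsequence $\{x_{n_k}\}$ converging with respect to $\sigma(X,X^*)$, which is exactly a weakly convergent subsequence.

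The only mild subtlety to address — and the one place where I expect to need a sentence rather than a citation — is the passage from "$\{x_n\}$ is a bounded sequence" to "$E=\{x_n\}$ is a bounded set", together with the (trivial) remark that if $\{x_n\}$ has finite range then the conclusion is immediate by passing to a constant subsequence, so we may assume $E$ is infinite before applying Theorem~\ref{thmmay7324}, whose statement concerns infinite sequences. Everything else is a direct chaining of hypotheses into the quoted results, so no further estimates or constructions are required; this is why the statement was flagged in the excerpt as "an immediate consequence of the previous theorems."
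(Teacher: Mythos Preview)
Your proposal is correct and is exactly the argument the paper has in mind: the paper does not even spell out a proof, stating only that the result is an immediate consequence of Theorems~\ref{thmmay7323} and~\ref{thmmay7324}, which is precisely the chaining you describe.
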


\begin{theorem}[\cite{Brezis2011}, Page 61]\lab{thmmay21022}
Let $X$ and $Y$ be two Banach spaces. Let $T:X\rightarrow Y$ be a linear map. Then $T$ is continuous if and only if it is weak-weak continuous, that is, $T:(X,\|.\|_X)\rightarrow (Y,\|.\|_Y)$ is continuous if and only if $T:(X,\sigma(X,X^*))\rightarrow (Y,\sigma(Y,Y^*))$ is continuous.
\end{theorem}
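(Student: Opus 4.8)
The plan is to handle the two implications separately, in each case reducing weak continuity of the vector-valued map $T$ to an elementary statement about scalar functionals, using Theorem~\ref{thmapril2713} together with the fact (recorded in Definition~\ref{winter31}) that a locally convex space and its weak topology have the same continuous dual, so that $[(X,\|\cdot\|_X)]^\ast = [(X,\sigma(X,X^\ast))]^\ast = X^\ast$, and likewise for $Y$.

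For the forward implication, suppose $T\colon (X,\|\cdot\|_X)\to (Y,\|\cdot\|_Y)$ is continuous. By Theorem~\ref{thmapril2713}(1), applied with source space $(X,\sigma(X,X^\ast))$ and target $(Y,\sigma(Y,Y^\ast))$, the map $T\colon (X,\sigma(X,X^\ast))\to (Y,\sigma(Y,Y^\ast))$ is continuous provided $F\circ T\colon (X,\sigma(X,X^\ast))\to\reals$ is continuous for every $F\in Y^\ast$. But $F\circ T$ is a composition of norm-continuous linear maps, so $F\circ T\in X^\ast$, and every element of $X^\ast$ is by definition continuous on $(X,\sigma(X,X^\ast))$. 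Hence weak--weak continuity follows with essentially no computation. (Equivalently, one can argue directly that the $T$-preimage of a basic weak neighborhood $\{y:|g_i(y-Tx_0)|<\varepsilon\}$ is $\{x:|(g_i\circ T)(x-x_0)|<\varepsilon\}$, a weak neighborhood of $x_0$ since $g_i\circ T\in X^\ast$.)

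For the converse, suppose $T\colon (X,\sigma(X,X^\ast))\to (Y,\sigma(Y,Y^\ast))$ is continuous. First I would note that for each $g\in Y^\ast$ the functional $g\circ T$ lies in $X^\ast$: since $g$ is $\sigma(Y,Y^\ast)$-continuous, $g\circ T\colon (X,\sigma(X,X^\ast))\to\reals$ is continuous, i.e.\ $g\circ T\in [(X,\sigma(X,X^\ast))]^\ast = X^\ast$. Next I would test $T$ on the closed unit ball $B_X$ of $X$. Recalling that $\sigma(Y,Y^\ast)$ is the natural topology of the separating family of seminorms $\{|g(\cdot)|\}_{g\in Y^\ast}$, Theorem~\ref{thmmay3814} shows that $T(B_X)$ is $\sigma(Y,Y^\ast)$-bounded, because for each $g\in Y^\ast$ we have $\sup_{x\in B_X}|g(Tx)| = \|g\circ T\|_{op}<\infty$. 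Since $Y$ is a Banach space, Theorem~\ref{thmmay7320} (applied to $Y$) upgrades this to norm-boundedness: there is $M>0$ with $\|Tx\|_Y\le M$ for all $x\in B_X$, hence $\|Tx\|_Y\le M\|x\|_X$ for all $x\in X$, so $T$ is norm-continuous.

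The only real content lies in the converse, and its crux is the step ``$T(B_X)$ weakly bounded $\Rightarrow$ $T(B_X)$ norm bounded'' via Theorem~\ref{thmmay7320}; this is exactly where the Banach (really, normed) structure is essential, since in a general locally convex setting weak--weak continuity does not force continuity. I expect that passage, together with the bookkeeping needed to invoke the seminorm descriptions of the weak topologies and the identifications of the duals correctly, to be the main point requiring care; everything else is formal.
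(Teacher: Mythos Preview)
Your argument is correct. Note, however, that the paper does not give its own proof of this theorem: it is simply quoted from \cite{Brezis2011}, so there is no in-paper proof to compare against.

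It is worth remarking that your converse differs from the standard argument in the cited reference. Brezis proves norm-continuity from weak--weak continuity via the closed graph theorem: if $x_n\to x$ in norm and $Tx_n\to y$ in norm, then $x_n\rightharpoonup x$, hence $Tx_n\rightharpoonup Tx$ by weak--weak continuity, while also $Tx_n\rightharpoonup y$; uniqueness of weak limits gives $y=Tx$, so the graph is closed. Your route instead goes through ``weakly bounded $\Rightarrow$ norm bounded'' (Theorem~\ref{thmmay7320}), which is essentially the uniform boundedness principle in disguise. Both approaches rely on a Baire-category consequence (closed graph in one case, uniform boundedness in the other), so neither is more elementary; your version has the minor advantage of fitting cleanly into the seminorm framework already set up in the paper (Theorems~\ref{thmmay3814} and~\ref{thmmay7320}), while the closed-graph route is perhaps slightly more direct and avoids the detour through the unit ball. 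Your parenthetical remark that the Banach structure of $Y$ is ``essential'' is a small overstatement for your own argument: Theorem~\ref{thmmay7320} holds for locally convex spaces, and for normed $Y$ the needed completeness is that of $Y^\ast$, which is automatic.
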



\begin{theorem}\lab{thmmay21220}
Let $X$ be a Banach space and $Y$ be a closed subspace of $X$ with the induced norm. Suppose that ${y_m}$ is a sequence in $Y$ and $y\in Y$. If $y_m\rightarrow y$ in $(X, \sigma(X,X^*))$, then $y_m\rightarrow y$ in $(Y,\sigma(Y,Y^*))$.
\end{theorem}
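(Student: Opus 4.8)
The plan is to reduce weak convergence in the subspace $Y$ to weak convergence in the ambient space $X$ by extending functionals via the Hahn--Banach theorem.

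First recall, from Definition~\ref{winter31} (or equivalently from part (1) of Theorem~\ref{thmapptvconvergence1} applied to the separating family of seminorms $\{p_F\}_{F\in Y^*}$ defining $\sigma(Y,Y^*)$), that a sequence converges weakly in a normed space exactly when it is tested successfully against every continuous linear functional. Thus the conclusion $y_m\to y$ in $(Y,\sigma(Y,Y^*))$ is equivalent to the statement that $g(y_m)\to g(y)$ in $\reals$ for every $g\in Y^*$, and this is what I will verify.

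So fix $g\in Y^*$. Since $Y$ is a linear subspace of the normed space $X$ and $g$ is a bounded linear functional on $Y$ (with the norm induced from $X$), the Hahn--Banach theorem yields an extension $G\in X^*$ with $G|_Y=g$ (one even has $\|G\|_{op}=\|g\|_{op}$, but the norm equality is not needed here). By hypothesis $y_m\to y$ with respect to $\sigma(X,X^*)$, so, again by the characterization of the weak topology (Definition~\ref{winter31}), $G(y_m)\to G(y)$ in $\reals$. Since $y_m\in Y$ for all $m$ and $y\in Y$, we have $G(y_m)=g(y_m)$ and $G(y)=g(y)$, whence $g(y_m)\to g(y)$.

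As $g\in Y^*$ was arbitrary, this shows $y_m\to y$ in $(Y,\sigma(Y,Y^*))$. I expect no genuine obstacle in this argument: the only external ingredient is the Hahn--Banach extension theorem, and in fact closedness of $Y$ plays no role in this direction (it is enough that $Y$ be a linear subspace). For completeness one may note that the reverse implication is even more elementary, since every $F\in X^*$ restricts to an element $F|_Y\in Y^*$, so weak convergence in $Y$ immediately forces $F(y_m)=F|_Y(y_m)\to F|_Y(y)=F(y)$ for all $F\in X^*$.
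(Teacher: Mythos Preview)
Your proof is correct and is essentially the explicit version of the paper's argument: the paper simply cites the fact (from Brezis) that on a closed subspace $Y$ the topology induced by $\sigma(X,X^*)$ coincides with $\sigma(Y,Y^*)$, and that fact is itself proved via the Hahn--Banach extension you carry out. Your observation that closedness of $Y$ is not actually needed for this implication is also correct.
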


\begin{proof}
This is a direct consequence of the fact that the following two topologies on the space $Y$ are the same (see \cite{Brezis2011}, Page 70):
\begin{enumerate}
\item the topology induced by $\sigma(X,X^*)$
\item the topology $\sigma(Y,Y^*)$.
\end{enumerate}
\end{proof}

\begin{definition}\lab{winter36}
Let $X$ be a vector space and let $\{X_\alpha\}_{\alpha\in I}$ be a family of vector subspaces of $X$ with the property that
\begin{itemize}
\item for each $\alpha \in I$, $X_\alpha$ is equipped with a topology that makes it a locally convex topological vector space, and
\item $\bigcup_{\alpha\in I} X_\alpha=X$.
\end{itemize}
The \textbf{inductive limit topology} on $X$ with respect to the family $\{X_\alpha\}_{\alpha\in I}$ is defined to be the largest topology with
respect to which
\begin{enumerate}
\item $X$ is a locally convex topological vector space, and
\item all the inclusions $X_\alpha\subseteq
X$ are continuous.
\end{enumerate}
\end{definition}
\begin{theorem}\cite{Reus1}\lab{winter37}
Let $X$ be a vector space equipped with the inductive limit topology with respect to $\{X_\alpha\}$ as described above. If $Y$ is a locally convex vector space, then a linear map $T:X\rightarrow Y$ is continuous if and only if $T|_{X_\alpha}: X_\alpha\rightarrow Y$ is continuous for all $\alpha\in I$.
\end{theorem}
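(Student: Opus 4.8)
\emph{Proof plan.}

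The plan is to prove the two implications separately; the forward implication is immediate and the reverse one carries all the content, via an auxiliary locally convex topology built from seminorms. For the easy direction, suppose $T:(X,\tau)\to Y$ is continuous, where $\tau$ denotes the inductive limit topology. By Definition~\ref{winter36} each inclusion $\iota_\alpha:X_\alpha\hookrightarrow X$ is continuous, so $T|_{X_\alpha}=T\circ\iota_\alpha$ is continuous for every $\alpha\in I$.

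For the converse, assume $T|_{X_\alpha}$ is continuous for every $\alpha$. The idea is to manufacture a locally convex topology $\tilde\tau$ on $X$ that simultaneously (a) makes $T$ continuous and (b) is coarser than $\tau$; then $T:(X,\tau)\to(X,\tilde\tau)\to Y$ is continuous as a composition of the continuous identity $(X,\tau)\to(X,\tilde\tau)$ with $T:(X,\tilde\tau)\to Y$. Concretely, using Theorem~\ref{thmmay3627} fix a separating family of seminorms $\mathcal{Q}$ generating the topology of $Y$ and a separating family $\mathcal{R}$ generating $\tau$, and set
$\mathcal{S}=\{\,q\circ T:q\in\mathcal{Q}\,\}\cup\mathcal{R}$.
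Since $\mathcal{R}\subseteq\mathcal{S}$ is already separating, $\mathcal{S}$ is a separating family of seminorms on $X$, so by Theorem~\ref{thmmay7310} its natural topology $\tilde\tau$ makes $(X,\tilde\tau)$ a locally convex topological vector space in which, by Definition~\ref{winter28}, every member of $\mathcal{S}$ is a continuous seminorm.

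Next I would verify two facts. First, each inclusion $\iota_\alpha:X_\alpha\to(X,\tilde\tau)$ is continuous: by Theorem~\ref{thmmay11714} it suffices that $s\circ\iota_\alpha$ be continuous on $X_\alpha$ for each $s\in\mathcal{S}$; for $s=r\in\mathcal{R}$ this holds because $\iota_\alpha:X_\alpha\to(X,\tau)$ is continuous (Definition~\ref{winter36}) and $r$ is continuous on $(X,\tau)$, and for $s=q\circ T$ we have $s\circ\iota_\alpha=q\circ(T|_{X_\alpha})$, continuous since $T|_{X_\alpha}$ is continuous by hypothesis and $q$ is continuous on $Y$. Second, $T:(X,\tilde\tau)\to Y$ is continuous: by Theorem~\ref{thmmay11714} it is enough that $q\circ T$ be continuous on $(X,\tilde\tau)$ for each $q\in\mathcal{Q}$, which holds because $q\circ T\in\mathcal{S}$ and the natural topology $\tilde\tau$ makes every element of $\mathcal{S}$ continuous. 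Now $(X,\tilde\tau)$ is a locally convex topological vector space with all inclusions $X_\alpha\hookrightarrow X$ continuous, so the maximality clause in Definition~\ref{winter36} forces $\tilde\tau\subseteq\tau$; hence the identity $(X,\tau)\to(X,\tilde\tau)$ is continuous, and composing with the second fact yields continuity of $T:(X,\tau)\to Y$.

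The main obstacle is that the hypothesis ``$T|_{X_\alpha}$ continuous for all $\alpha$'' cannot be applied directly against $\tau$, since we have no concrete description of $\tau$-neighborhoods at hand; the way to exploit the definition is instead to produce an honest competing locally convex topology and invoke maximality. The one technical point requiring care there is keeping the defining seminorm family separating --- this is why $\mathcal{R}$ is adjoined to $\{q\circ T:q\in\mathcal{Q}\}$ --- because otherwise Theorem~\ref{thmmay7310} would not apply and $(X,\tilde\tau)$ might fail to be a topological vector space in the sense of Definition~\ref{winter10}.
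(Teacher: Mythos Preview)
The paper states this theorem with a citation to \cite{Reus1} and provides no proof of its own, so there is nothing to compare against directly. Your argument is correct: the forward direction is immediate, and for the converse you build a competing locally convex topology $\tilde\tau$ from the seminorms $\{q\circ T\}\cup\mathcal{R}$, verify that all inclusions $X_\alpha\hookrightarrow(X,\tilde\tau)$ are continuous, and then invoke the maximality clause in Definition~\ref{winter36} to get $\tilde\tau\subseteq\tau$, from which continuity of $T$ follows by composition.

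One small remark: adjoining $\mathcal{R}$ does more than just make $\mathcal{S}$ separating --- it in fact forces $\tau\subseteq\tilde\tau$ as well (since $\tau$ is the natural topology of $\mathcal{R}$ and every $r\in\mathcal{R}$ is $\tilde\tau$-continuous), so you actually obtain $\tilde\tau=\tau$. This is harmless for your argument, but it means the construction is less of a ``competing'' topology and more a direct verification that $q\circ T$ is already $\tau$-continuous for each $q\in\mathcal{Q}$, which by Theorem~\ref{thmmay11714} is exactly continuity of $T$. A slightly leaner variant would be to take $\mathcal{S}=\{q\circ T\}\cup\{p\}$ for a single separating seminorm $p$ known to be $\tau$-continuous (or any separating subfamily), but what you wrote is fine as is.
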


\begin{definition}\lab{defmay7606}
Let $X$ be a vector space and let $\{X_j\}_{j\in \mathbb{N}_0}$ be an increasing chain of subspaces of $X$:
\begin{equation*}
X_0\subsetneq X_1\subsetneq X_2\subsetneq \cdots
\end{equation*}
Suppose that
\begin{itemize}
\item each $X_j$ is equipped with a locally convex topology $\tau_j$;
\item for each $j$, the inclusion $(X_j,\tau_j)\hookrightarrow (X_{j+1},\tau_{j+1})$ is a linear topological embedding with closed image.
\end{itemize}
Then the inductive limit topology on $X$ with respect to the family $\{X_j\}_{j\in \mathbb{N}_0}$ is called a \textbf{strict inductive limit topology}.
\end{definition}

\begin{theorem}\cite{Reus1}\lab{thmmay7609}
Suppose that $X$ is equipped with the strict inductive limit topology with respect to the chain $\{X_j\}_{j\in \mathbb{N}_0}$. Then a subset $E$ of $X$ is bounded if and only if there exists $m\in \mathbb{N}_0$ such that $B$ is bounded in $X_m$.
\end{theorem}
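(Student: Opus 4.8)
The plan is to settle the easy implication at once and then concentrate on the substantive one. If $E\subseteq X_m$ is bounded in $(X_m,\tau_m)$, then it is bounded in $(X,\tau)$: the inclusion $\iota_m\colon(X_m,\tau_m)\hookrightarrow(X,\tau)$ is continuous by the very definition of the inductive limit topology (Definition~\ref{winter36}), so for every $\tau$-continuous seminorm $q$ on $X$ the composition $q\circ\iota_m$ is a $\tau_m$-continuous seminorm and hence $q(E)=(q\circ\iota_m)(E)$ is bounded in $\reals$ by Corollary~\ref{cormay13343}; since $q(E)$ is bounded for every $\tau$-continuous seminorm $q$, Theorem~\ref{thmmay3814} gives that $E$ is $\tau$-bounded. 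Thus everything reduces to showing: if $E$ is $\tau$-bounded, then $E\subseteq X_m$ for some $m$, and $E$ is then $\tau_m$-bounded.

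Before attacking this I would record two structural facts about the strict inductive limit. First, since each inclusion $X_j\hookrightarrow X_{j+1}$ has closed image and is a topological embedding, an easy induction (``a closed subset of a closed subspace is closed'', and a composite of embeddings is an embedding) shows that $X_j$ is closed in $X_l$ and $\tau_l|_{X_j}=\tau_j$ whenever $l\ge j$. Second, and this is the key technical point, a seminorm $p$ on $X$ is $\tau$-continuous if and only if $p|_{X_j}$ is $\tau_j$-continuous for every $j$. One direction is immediate from continuity of the inclusions; for the other, let $\mathcal P$ be the family of all seminorms on $X$ whose restriction to each $X_j$ is continuous. It is separating (it contains all $\tau$-continuous seminorms, which separate points since $\tau$ is Hausdorff), so it induces a natural topology $\tau_{\mathcal P}$ making $(X,\tau_{\mathcal P})$ a locally convex space (Theorem~\ref{thmmay7310}). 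By Theorem~\ref{thmmay11714} each inclusion $(X_j,\tau_j)\hookrightarrow(X,\tau_{\mathcal P})$ is continuous, so $\tau_{\mathcal P}\subseteq\tau$ by the maximality in Definition~\ref{winter36}; conversely every $\tau$-continuous seminorm lies in $\mathcal P$, and since $\tau$ is generated by a separating family of its continuous seminorms (Theorem~\ref{thmmay3627}), $\tau\subseteq\tau_{\mathcal P}$. Hence $\tau=\tau_{\mathcal P}$, which is exactly the claim.

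Next I would prove, by contradiction, that a $\tau$-bounded $E$ lies in some $X_m$. If not, pick $x_1\in E$ and then inductively indices $m_1<m_2<\cdots$ and points $x_{k+1}\in E\setminus X_{m_k}$ with $\{x_1,\dots,x_{k+1}\}\subseteq X_{m_{k+1}}$. I build a consistent chain of continuous seminorms $p_k$ on $X_{m_k}$ with $p_{k+1}|_{X_{m_k}}=p_k$ and $p_{k+1}(x_{k+1})\ge k+1$, starting from $p_1\equiv 0$. Given $p_k$, first extend it to a $\tau_{m_{k+1}}$-continuous seminorm $\tilde p_k$ on $X_{m_{k+1}}$ (the Hahn--Banach extension of seminorms, valid because $\tau_{m_{k+1}}|_{X_{m_k}}=\tau_{m_k}$; concretely, the infimal convolution of $p_k$ with the Minkowski functional of a suitable absolutely convex $0$-neighborhood). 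Then, since $X_{m_k}$ is closed in $X_{m_{k+1}}$ and $x_{k+1}\notin X_{m_k}$, the Hausdorff quotient $X_{m_{k+1}}/X_{m_k}$ carries a continuous seminorm nonzero on the class of $x_{k+1}$, which pulls back to a $\tau_{m_{k+1}}$-continuous seminorm $r_{k+1}$ with $r_{k+1}|_{X_{m_k}}=0$ and $r_{k+1}(x_{k+1})=1$; set $p_{k+1}:=\tilde p_k+(k+1)r_{k+1}$, so that $p_{k+1}|_{X_{m_k}}=p_k$ and $p_{k+1}(x_{k+1})\ge k+1$. Since $\bigcup_k X_{m_k}=X$, the $p_k$ glue to a single seminorm $p$ on $X$ with $p|_{X_{m_k}}=p_k$; by the lemma of the previous paragraph $p$ is $\tau$-continuous, yet $p(x_{k+1})\ge k+1$ with $x_{k+1}\in E$, contradicting boundedness of $p(E)$ (Corollary~\ref{cormay13343}). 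So $E\subseteq X_m$ for some $m$. To finish, given any $\tau_m$-continuous seminorm $q$ on $X_m$, extend it successively along $X_m\hookrightarrow X_{m+1}\hookrightarrow\cdots$ to a consistent family, obtaining (since $\bigcup_{j\ge m}X_j=X$) a seminorm $\bar q$ on $X$ with $\bar q|_{X_m}=q$ that restricts continuously to every $X_j$, hence is $\tau$-continuous; then $q(E)=\bar q(E)$ is bounded by Corollary~\ref{cormay13343}, and as $q$ was arbitrary, Theorem~\ref{thmmay3814} shows $E$ is $\tau_m$-bounded.

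I expect the contradiction step to be the real obstacle. The naive idea of separately choosing, for each $k$, a continuous seminorm that is large at $x_k$ does not work, because the separating seminorms must be mutually compatible all the way up the tower; the remedy is the stepwise construction above, and it is here that the defining features of a \emph{strict} inductive limit are genuinely used --- the closedness of $X_{m_k}$ inside $X_{m_{k+1}}$ (to obtain $r_{k+1}$ vanishing on the earlier stage) together with $\tau_{m_{k+1}}|_{X_{m_k}}=\tau_{m_k}$ (so that $p_k$ admits a Hahn--Banach extension at all). Everything else --- the easy direction, the seminorm-continuity lemma, and the final descent from $X$-boundedness to $X_m$-boundedness --- is comparatively routine bookkeeping with seminorms.
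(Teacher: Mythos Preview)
The paper does not supply a proof of this theorem; it is quoted from \cite{Reus1} as background. So there is no ``paper's own proof'' to compare against, and the question reduces to whether your argument stands on its own.

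It does. What you have written is the standard Dieudonn\'e--Schwartz argument, and the logic is sound. The two structural facts you isolate are exactly the ones needed: that $\tau_l|_{X_j}=\tau_j$ with $X_j$ closed in $X_l$ for $l\ge j$ (an easy induction from Definition~\ref{defmay7606}), and that a seminorm on $X$ is $\tau$-continuous iff each restriction is $\tau_j$-continuous (your $\tau=\tau_{\mathcal P}$ argument is clean and uses only Theorems~\ref{thmmay7310}, \ref{thmmay11714}, \ref{thmmay3627} from the paper). The inductive construction of the $p_k$ is correct; since $m_k\to\infty$ you indeed have $\bigcup_k X_{m_k}=X$, and continuity of the glued $p$ on an arbitrary $X_j$ follows by choosing $k$ with $m_k\ge j$ and using $p|_{X_j}=(p_k)|_{X_j}$ together with $\tau_{m_k}|_{X_j}=\tau_j$.

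One small point worth making explicit is the seminorm extension step. The infimal-convolution formula you allude to is
\[
\tilde p_k(z)=\inf_{y\in X_{m_k}}\bigl(p_k(y)+q(z-y)\bigr),
\]
where $q$ is any $\tau_{m_{k+1}}$-continuous seminorm with $p_k\le q$ on $X_{m_k}$; such $q$ exists precisely because $\tau_{m_{k+1}}|_{X_{m_k}}=\tau_{m_k}$ (Theorem~\ref{thmmay2656}). One checks directly that $\tilde p_k$ is a seminorm, that $\tilde p_k\le q$ (hence continuous), and that $\tilde p_k|_{X_{m_k}}=p_k$ (the inequality $\tilde p_k\ge p_k$ on $X_{m_k}$ uses $q\ge p_k$ there together with subadditivity of $p_k$). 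With that spelled out, the proof is complete.
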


\subsection{Function Spaces and Distributions}
\begin{definition}\lab{defmay7634}
Let $\Omega$ be a nonempty open set in $\reals^n$ and $m\in
\mathbb{N}_0$.
\begin{align*}
& C(\Omega)=\{f:\Omega\rightarrow \reals: \textrm{$f$ is
continuous}\}\\
& C^m(\Omega)=\{f: \Omega\rightarrow \reals:
\textrm{$\forall\,|\alpha|\leq m\quad \partial^\alpha f \in
C(\Omega)$}\}\qquad (C^0(\Omega)=C(\Omega))\\
& BC(\Omega)=\{f:\Omega\rightarrow \reals: \textrm{$f$ is
continuous and bounded on $\Omega$}\}\\
& BC^m(\Omega)=\{f\in C^m(\Omega): \textrm{$\forall\,|\alpha|\leq
m\quad \partial^\alpha f$ is bounded on $\Omega$}\}\\
&C^\infty(\Omega)=\bigcap_{m\in \mathbb{N}_0}C^m(\Omega),\quad
BC^\infty(\Omega)=\bigcap_{m\in \mathbb{N}_0}BC^m(\Omega)\\
&C_c^\infty(\Omega)=\{f\in C^\infty(\Omega): \textrm{support of $f$ is an element of $\mathcal{K}(\Omega)$}\}
\end{align*}
\end{definition}
Let $0<\lambda\leq 1$. A function $F: \Omega\subseteq \reals^n
\rightarrow \reals^k$ is called \textit{$\lambda$-Holder continuous} if
there exists a constant $L$ such that
\begin{equation*}
|F(x)-F(y)|\leq L|x-y|^\lambda\quad \forall\,x,y\in \Omega
\end{equation*}
Clearly a $\lambda$-Holder continuous function on $\Omega$ is
uniformly continuous on $\Omega$. $1$-Holder continuous functions
are also called \textit{Lipschitz continuous} functions or simply Lipschitz
functions. We define
\begin{align*}
BC^{m,\lambda}(\Omega)&=\{f:\Omega\rightarrow \reals:
\textrm{$\forall\,|\alpha|\leq m\quad
\partial^\alpha f$ is $\lambda$-Holder continuous and bounded}\}\\
&=\{f\in BC^m(\Omega): \textrm{$\forall\,|\alpha|\leq m\quad
\partial^\alpha f$ is $\lambda$-Holder continuous}\}
\end{align*}
and
\begin{equation*}
BC^{\infty,\lambda}(\Omega):=\bigcap_{m\in \mathbb{N}_0}BC^{m,\lambda}(\Omega)
\end{equation*}

\begin{theorem}\cite{9}\lab{thmmay7635}
Let $\Omega$ be a nonempty open set in $\reals^n$ and let $K\in \mathcal{K}(\Omega)$. There is a function $\psi\in C_c^\infty(\Omega)$ taking values in $[0,1]$ such that $\psi=1$ on a neighborhood containing $K$.
\end{theorem}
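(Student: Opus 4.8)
The plan is to obtain $\psi$ by mollifying the indicator function of a compact set wedged between $K$ and $\Omega$, after first fixing the relevant length scales; none of the background machinery above is needed, this is the standard bump-function construction.

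First I would set up the geometry. Since $K$ is compact, $\reals^n\setminus\Omega$ is closed, and the two are disjoint, the number
\[
r:=\begin{cases}\operatorname{dist}(K,\reals^n\setminus\Omega)&\text{if }\Omega\ne\reals^n,\\[1mm] 1&\text{if }\Omega=\reals^n,\end{cases}
\]
is strictly positive (a convergent subsequence argument using compactness of $K$ rules out $r=0$). I would set $\varepsilon:=r/4$ and introduce the fattened set $A:=\{x\in\reals^n:\operatorname{dist}(x,K)\le 2\varepsilon\}$, which is compact, contains $K$ in its interior, and has every point at distance at least $2\varepsilon$ from $\reals^n\setminus\Omega$; in particular $A\subseteq\Omega$. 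Next I would fix a mollifier $\eta\in C_c^\infty(\reals^n)$ with $\eta\ge 0$, $\supp\eta\subseteq\overline{B(0,1)}$, and $\int_{\reals^n}\eta=1$ — either citing its existence as standard or sketching the usual construction from the flat function $t\mapsto e^{-1/t}$ (for $t>0$, zero otherwise) composed with $x\mapsto 1-|x|^2$ and normalized — put $\eta_\varepsilon(x):=\varepsilon^{-n}\eta(x/\varepsilon)$, so that $\supp\eta_\varepsilon\subseteq\overline{B(0,\varepsilon)}$, $\eta_\varepsilon\ge 0$, $\int\eta_\varepsilon=1$, and define
\[
\psi:=\eta_\varepsilon*\chi_A,\qquad \psi(x)=\int_{\reals^n}\eta_\varepsilon(x-y)\,\chi_A(y)\,dy .
\]

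Then I would verify the four required properties. (i) Smoothness: differentiation passes under the integral sign onto the smooth compactly supported factor $\eta_\varepsilon$, the standard smoothing property of convolution of an $L^1_{loc}$ function with a test function, so $\psi\in C^\infty(\reals^n)$. (ii) Range: $0\le\psi\le 1$ is immediate from $0\le\chi_A\le 1$, $\eta_\varepsilon\ge 0$, $\int\eta_\varepsilon=1$. (iii) Support: $\supp\psi\subseteq\{x:\operatorname{dist}(x,A)\le\varepsilon\}$, which is closed and bounded, hence compact, and every point of it is still at distance at least $\varepsilon$ from $\reals^n\setminus\Omega$, so it is a compact subset of $\Omega$; thus $\psi\in C_c^\infty(\Omega)$. (iv) Equals $1$ near $K$: if $\operatorname{dist}(x,K)<\varepsilon$, then any $y$ with $|x-y|\le\varepsilon$ satisfies $\operatorname{dist}(y,K)<2\varepsilon$, i.e. $\overline{B(x,\varepsilon)}\subseteq A$, so $\chi_A\equiv 1$ on the support of $y\mapsto\eta_\varepsilon(x-y)$ and hence $\psi(x)=\int\eta_\varepsilon(x-y)\,dy=1$; therefore $\psi=1$ on the open set $U:=\{x:\operatorname{dist}(x,K)<\varepsilon\}$, which contains $K$.

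The one step that needs care will be the choice of the three scales — $K$ at scale $0$, the fattened compact set $A$ at scale $2\varepsilon$, the mollification radius $\varepsilon$ — arranged so that \emph{simultaneously} the support of $\psi$ stays strictly inside $\Omega$ and $\psi$ is identically $1$ on a \emph{full} neighborhood of $K$; fattening by only $\varepsilon$ and then mollifying at radius $\varepsilon$ would force $\psi=1$ on $K$ alone, not on an open set. Apart from this bookkeeping, the only genuine input is the existence of the mollifier $\eta$ itself, which is either quoted or produced explicitly as indicated.
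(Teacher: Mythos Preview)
Your proof is correct; this is the standard mollification construction and all four verifications are sound, including the bookkeeping with the scales $\varepsilon$, $2\varepsilon$, $3\varepsilon<r$. The paper itself does not supply a proof of this statement --- it is simply quoted from Grubb's book \cite{9} --- so there is nothing to compare against.
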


\begin{theorem}[Exhaustion by compact sets]\cite{9}\lab{winter2}
Let $\Omega$ be a nonempty open subset of $\reals^n$. There
exists a sequence of compact subsets $(K_j)_{j\in\mathbb{N}}$
such that $\cup_{j\in \mathbb{N}} \mathring{K}_j=\Omega$ and
\begin{equation*}
K_1\subseteq \mathring{K}_2\subseteq K_2\subseteq \cdots\subseteq
\mathring{K}_j\subseteq K_j\subseteq \cdots
\end{equation*}
Moreover, as a direct consequence, if $K$ is any compact subset
of the open set $\Omega$, then there exists an open set $V$ such
that $K\subseteq V\subseteq \bar{V}\subseteq \Omega$.
\end{theorem}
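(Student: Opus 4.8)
The plan is to build the exhausting sequence explicitly. For each $j\in\mathbb{N}$ set
\[
K_j := \{x\in\reals^n : |x|\leq j\}\cap\{x\in\reals^n : \operatorname{dist}(x,\reals^n\setminus\Omega)\geq 1/j\},
\]
where the second set is understood to be all of $\reals^n$ in the degenerate case $\Omega=\reals^n$. First I would note that $x\mapsto \operatorname{dist}(x,\reals^n\setminus\Omega)$ is $1$-Lipschitz, hence continuous, so $K_j$ is the intersection of a closed ball with a closed superlevel set; being closed and bounded it is compact, and plainly $K_j\subseteq\Omega$.

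To obtain the nesting I would introduce the open sets
\[
U_j := \{x\in\reals^n : |x|< j\}\cap\{x\in\reals^n : \operatorname{dist}(x,\reals^n\setminus\Omega)> 1/j\}
\]
and verify the sandwich $K_j\subseteq U_{j+1}\subseteq K_{j+1}$: the first inclusion follows from $|x|\leq j<j+1$ together with $1/j>1/(j+1)$, and the second from replacing the strict inequalities by non-strict ones. Since $U_{j+1}$ is open and contained in $K_{j+1}$, it is contained in $\mathring{K}_{j+1}$, so $K_j\subseteq\mathring{K}_{j+1}$; this yields the full chain $K_1\subseteq\mathring{K}_2\subseteq K_2\subseteq\cdots$.

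Next I would check $\bigcup_j K_j=\Omega$. The inclusion $\subseteq$ was already observed. For $\supseteq$, given $x\in\Omega$, openness of $\Omega$ forces $\operatorname{dist}(x,\reals^n\setminus\Omega)>0$, so choosing $j$ large enough that $|x|\leq j$ and $\operatorname{dist}(x,\reals^n\setminus\Omega)\geq 1/j$ puts $x$ in $K_j$. Combining this with the nesting gives $\Omega=\bigcup_j K_j\subseteq\bigcup_j\mathring{K}_{j+1}\subseteq\bigcup_j\mathring{K}_j\subseteq\bigcup_j K_j=\Omega$, hence $\bigcup_j\mathring{K}_j=\Omega$ as well.

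For the final assertion, let $K$ be a compact subset of $\Omega$. Then $\{\mathring{K}_j\}_{j\in\mathbb{N}}$ is an open cover of $K$; extracting a finite subcover and using that the $\mathring{K}_j$ increase with $j$ produces a single index $j_0$ with $K\subseteq\mathring{K}_{j_0}$. Taking $V:=\mathring{K}_{j_0}$, which is open, we get $K\subseteq V$ and $\bar{V}\subseteq\overline{K_{j_0}}=K_{j_0}\subseteq\Omega$, as desired. I do not anticipate a genuine obstacle here; the only points requiring a little care are the convention in the case $\Omega=\reals^n$ (where the distance to the complement is $+\infty$, so the constraint $\operatorname{dist}(\cdot,\reals^n\setminus\Omega)\geq 1/j$ is vacuous) and the appeal to continuity of the distance function to ensure each $K_j$ is genuinely closed.
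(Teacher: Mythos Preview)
Your proof is correct and is the standard explicit construction for an exhaustion by compact sets. The paper itself does not supply a proof of this theorem; it simply cites it from Grubb's \emph{Distributions and Operators} (reference~\cite{9}), so there is no in-paper argument to compare against. The construction you give---intersecting closed balls with superlevel sets of the distance-to-complement function---is precisely the one typically presented in that reference and in most treatments of the result.
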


\begin{theorem}\cite{9}\lab{winter3}
Let $\Omega$ be a nonempty open subset of $\reals^n$. Let
$\{K_j\}_{j\in\mathbb{N}}$ be an exhaustion of $\Omega$ by compact
sets. Define
\begin{equation*}
V_0=\mathring{K}_4,\qquad \forall\, j\in \mathbb{N}\quad
V_j=\mathring{K}_{j+4}\setminus K_j
\end{equation*}
Then
\begin{enumerateXALI}
\item Each $V_j$ is an open bounded set and $\Omega=\cup_j V_j$.
\item The cover $\{V_j\}_{j\in\mathbb{N}_0}$ is \textbf{locally finite} in
$\Omega$, that is, each compact subset of $\Omega$ has nonempty
intersection with only a finite number of the $V_j$'s.
\item There is a family of functions $\psi_j\in
C_c^{\infty}(\Omega)$ taking values in $[0.1]$ such that
$\textrm{supp}\,\psi_j\subseteq V_j$ and
\begin{equation*}
\sum_{j\in \mathbb{N}_0}\psi_j(x)=1\qquad \textrm{for all $x\in
\Omega$}
\end{equation*}
\end{enumerateXALI}
\end{theorem}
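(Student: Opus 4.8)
The plan is to obtain parts (i) and (ii) directly from the two defining features of an exhaustion by compact sets (Theorem~\ref{winter2}) — that $\bigcup_j \mathring K_j = \Omega$ and that $K_j \subseteq \mathring K_{j+1}$ for every $j$ — and to concentrate the real work on part (iii). For (iii) the strategy is the classical one for a partition of unity subordinate to a locally finite cover: first produce compact sets $L_j$ with $L_j \subseteq V_j$ whose union is still all of $\Omega$; then, using Theorem~\ref{thmmay7635}, bump each $L_j$ up to the constant $1$ by a smooth function supported inside $V_j$; finally normalize by the (locally finite, hence smooth) sum of these bumps. Part (ii) is exactly what guarantees that the normalizing sum is a legitimate smooth function.

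\emph{Parts (i) and (ii).} For $j \ge 1$ we have $V_j = \mathring K_{j+4} \cap (K_j)^c$, the intersection of an open set with the complement of a compact (hence closed) set, so $V_j$ is open; it is bounded because $V_j \subseteq K_{j+4}$; and $V_0 = \mathring K_4$ is open and bounded for the same reasons. To see $\Omega = \bigcup_j V_j$, fix $x \in \Omega$ and let $m$ be the least index with $x \in \mathring K_m$. If $m \le 4$, then $x \in \mathring K_m \subseteq \mathring K_4 = V_0$. If $m \ge 5$, set $j = m - 4 \ge 1$; then $x \in \mathring K_m = \mathring K_{j+4}$, while $K_j \subseteq \mathring K_{j+1} \subseteq \mathring K_{m-1}$ and $x \notin \mathring K_{m-1}$ by minimality, so $x \notin K_j$ and hence $x \in V_j$. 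For (ii), let $K \in \mathcal K(\Omega)$. The sets $\mathring K_j$ form an increasing open cover of the compact set $K$, so $K \subseteq \mathring K_N \subseteq K_N$ for some $N$; then for every $j \ge N$ we have $K \subseteq K_N \subseteq K_j$, so $K \cap V_j \subseteq K \cap (K_j)^c = \emptyset$. Hence $K$ meets only $V_0, \dots, V_{N-1}$, and $\{V_j\}$ is locally finite.

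\emph{Part (iii).} Put $L_0 = K_3$ and, for $j \ge 1$, $L_j = K_{j+3} \setminus \mathring K_{j+1} = K_{j+3} \cap (\mathring K_{j+1})^c$. Each $L_j$ is compact ($L_0 = K_3$ trivially; for $j \ge 1$, $L_j$ is closed and contained in the compact set $K_{j+3}$) and is contained in $V_j$: indeed $L_0 = K_3 \subseteq \mathring K_4 = V_0$, while for $j \ge 1$ we have $L_j \subseteq K_{j+3} \subseteq \mathring K_{j+4}$ and $L_j \cap K_j \subseteq (\mathring K_{j+1})^c \cap K_j = \emptyset$ since $K_j \subseteq \mathring K_{j+1}$. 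The same least-index argument as above shows $\bigcup_j L_j = \Omega$: with $m$ least so that $x \in \mathring K_m$, one gets $x \in L_0$ if $m \le 3$, and $x \in L_{m-3}$ if $m \ge 4$, because then $x \in \mathring K_m \subseteq K_m = K_{(m-3)+3}$ and $x \notin \mathring K_{m-1}$ forces $x \notin \mathring K_{m-2} = \mathring K_{(m-3)+1}$. Next, for each $j$ choose $\phi_j \in C_c^\infty(\Omega)$ with $0 \le \phi_j \le 1$, $\phi_j \equiv 1$ on a neighborhood of $L_j$, and $\supp \phi_j \subseteq V_j$; this is possible by applying Theorem~\ref{thmmay7635} inside the open set $V_j$ (and trivially by taking $\phi_j \equiv 0$ when $V_j = \emptyset$, in which case $L_j = \emptyset$ as well). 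Set $\Phi := \sum_j \phi_j$. By (ii), every point of $\Omega$ has a compact neighborhood meeting only finitely many $V_j$, so on that neighborhood all but finitely many $\phi_j$ vanish identically; thus the sum defining $\Phi$ is locally finite and $\Phi \in C^\infty(\Omega)$. Moreover $\Phi \ge 1$ on $\Omega$, since every $x \in \Omega$ lies in some $L_j$, where $\phi_j(x) = 1$. Finally set $\psi_j := \phi_j / \Phi$. As $\Phi$ is smooth and never vanishes, $\psi_j \in C^\infty(\Omega)$; from $0 \le \phi_j \le 1 \le \Phi$ we get $0 \le \psi_j \le 1$; since $\Phi > 0$ we have $\{\psi_j \ne 0\} = \{\phi_j \ne 0\}$, hence $\supp \psi_j = \supp \phi_j \subseteq V_j$ and $\psi_j \in C_c^\infty(\Omega)$; and $\sum_j \psi_j = \Phi/\Phi \equiv 1$, the interchange being legitimate because the sum is locally finite.

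\emph{Main obstacle.} The only step that needs genuine care is the construction of the auxiliary compact sets $L_j$: the index shifts must be chosen so that each $L_j$ is squeezed strictly between $K_j$ and $\mathring K_{j+4}$ (so that $L_j \subseteq V_j$) while the $L_j$ still cover $\Omega$; the $+1$ in $\mathring K_{j+1}$ is precisely what removes the boundary $K_j \setminus \mathring K_j$ from $L_j$. This is the concrete replacement, available here because of the explicit exhaustion, for the usual appeal to the shrinking lemma for point-finite open covers of a normal space. Everything downstream — the smoothness and strict positivity of $\Phi$, and all the asserted properties of the normalized $\psi_j$ — is then a purely formal consequence of local finiteness from (ii) together with Theorem~\ref{thmmay7635}.
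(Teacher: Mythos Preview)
Your proof is correct and follows the standard construction of a smooth partition of unity subordinate to a locally finite open cover. Note that the paper does not actually prove this theorem; it is quoted from Grubb~\cite{9} as background material, so there is no in-paper argument to compare against. Your choice of the auxiliary compacta $L_0=K_3$ and $L_j=K_{j+3}\setminus\mathring K_{j+1}$ is exactly the kind of explicit ``shrinking'' that the exhaustion structure makes available, and the verifications that $L_j\subseteq V_j$ and $\bigcup_j L_j=\Omega$ via the least-index argument are clean and accurate.
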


Let $\Omega$ be a nonempty open set in $\reals^n$. For all $\varphi\in C^{\infty}(\Omega)$, $j\in \mathbb{N}$ and $K\in \mathcal{K}(\Omega)$ we define
\begin{equation*}
\|\varphi\|_{j,K}:=\sup \{|\partial^\alpha \varphi(x)|:
|\alpha|\leq j, x\in K\}
\end{equation*}
 For all $j\in \mathbb{N}$ and $K\in \mathcal{K}(\Omega)$, $\|.\|_{j,K}$
is a seminorm on $C^{\infty}(\Omega)$. We define
$\mathcal{E}(\Omega)$ to be $C^\infty(\Omega)$ equipped with the
natural topology induced by the separating family of seminorms
$\{\|.\|_{j,K}\}_{j\in \mathbb{N},K\in \mathcal{K}(\Omega)}$. It
can be shown that $\mathcal{E}(\Omega)$ is a Frechet space.\\\\
 For all $K\in \mathcal{K}(\Omega)$ we define $\mathcal{E}_K(\Omega)$ to be
$C_K^\infty(\Omega)$ equipped with the subspace topology. Since
$C^\infty_K(\Omega)$ is a closed subset of the Frechet space
$\mathcal{E}(\Omega)$, $\mathcal{E}_K(\Omega)$ is also a Frechet
space.\\\\
 We define $D(\Omega)=\bigcup_{K\in
\mathcal{K}(\Omega)}\mathcal{E}_K(\Omega)$ equipped with the
inductive limit topology with respect to the family of vector subspaces $\{\mathcal{E}_K(\Omega)\}_{K\in
\mathcal{K}(\Omega)}$.  It can be shown that if $\{K_j\}_{j\in \mathbb{N}_0}$ is an exhaustion by compacts sets of $\Omega$, then the inductive limit topology on $D(\Omega)$ with respect to the family $\{\mathcal{E}_{K_j}\}_{j\in \mathbb{N}_0}$ is exactly the same as the inductive limit topology with respect to $\{\mathcal{E}_K(\Omega)\}_{K\in
\mathcal{K}(\Omega)}$.

\begin{remark}\lab{remfallcontintotest1}
Suppose $Y$ is a topological space and the mapping $T:
Y\rightarrow D(\Omega)$ is such that $T(Y)\subseteq
\mathcal{E}_K(\Omega)$ for some $K\in \mathcal{K}(\Omega)$. Since
$\mathcal{E}_K(\Omega)\hookrightarrow D(\Omega)$, if $T:
Y\rightarrow \mathcal{E}_K(\Omega)$ is continuous, then $T:
Y\rightarrow D(\Omega)$ will be continuous.
\end{remark}
\begin{theorem}[Convergence and Continuity for
$\mathcal{E}(\Omega)$]\lab{winter56} Let $\Omega$ be a nonempty
open set in $\reals^n$. Let $Y$ be a topological vector space
whose topology is induced by a separating family of seminorms $\mathcal{Q}$.
\begin{enumerateXALI}
\item A sequence $\{\varphi_m\}$ converges to $\varphi$ in
$\mathcal{E}(\Omega)$ if and only if
$\|\varphi_m-\varphi\|_{j,K}\rightarrow 0$ for all $j\in
\mathbb{N}$ and $K\in \mathcal{K}(\Omega)$.
\item  Suppose $T:\mathcal{E}(\Omega)\rightarrow Y$ is a linear map.
Then the followings are equivalent
\begin{itemize}
\item $T$ is continuous.
\item For every $q\in\mathcal{Q}$, there exist $j\in \mathbb{N}$ and $K\in\mathcal{K}(\Omega)$, and $C>0$ such that
\begin{equation*}
\forall\,\varphi\in \mathcal{E}(\Omega)\qquad q(T(\varphi))\leq C\|\varphi\|_{j,K}
\end{equation*}
\item If $\varphi_m\rightarrow 0$ in $\mathcal{E}(\Omega)$, then $T(\varphi_m)\rightarrow 0$ in $Y$.
\end{itemize}
\item In particular, a linear map $T:\mathcal{E}(\Omega)\rightarrow
\reals$ is continuous if and only if there exist $j\in \mathbb{N}$
and $K\in\mathcal{K}(\Omega)$, and $C>0$ such that
\begin{equation*}
\forall\,\varphi\in \mathcal{E}(\Omega)\qquad |T(\varphi)|\leq
C\|\varphi\|_{j,K}
\end{equation*}
\item A linear map $T: Y\rightarrow \mathcal{E}(\Omega)$ is continuous if and only if
{\fontsize{10}{10}{\begin{equation*}
\forall\, j\in \mathbb{N},\,\forall\, K\in
\mathcal{K}(\Omega)\qquad \exists\,C>0,\,k\in \mathbb{N}\,,
q_1,\cdots,q_k \in \mathcal{Q}\quad \textrm{such that $\forall\,y$} \quad
\|T(y)\|_{j,K}\leq C \max_{1\leq i\leq k} q_i(y)
\end{equation*}}}
\end{enumerateXALI}
\end{theorem}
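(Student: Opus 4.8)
The plan is to obtain all four items as consequences of the abstract results on natural topologies of separating families of seminorms, using two structural facts already established: $\mathcal{E}(\Omega)$ is, by construction, $C^\infty(\Omega)$ with the natural topology of the separating family $\mathcal{P}:=\{\|\cdot\|_{j,K}\}_{j\in\mathbb{N},\,K\in\mathcal{K}(\Omega)}$, and $\mathcal{E}(\Omega)$ is a Frechet space. The only space-specific ingredient needed is an elementary \emph{consolidation} remark: given finitely many $\|\cdot\|_{j_1,K_1},\dots,\|\cdot\|_{j_k,K_k}$ in $\mathcal{P}$, put $j:=\max_i j_i$ and $K:=\bigcup_i K_i$, which is again compact, hence in $\mathcal{K}(\Omega)$; since the supremum defining $\|\cdot\|_{j_i,K_i}$ ranges over a subset of the one defining $\|\cdot\|_{j,K}$, we get $\|\varphi\|_{j_i,K_i}\le\|\varphi\|_{j,K}$ for all $i$ and $\varphi$, so $\max_i\|\varphi\|_{j_i,K_i}\le\|\varphi\|_{j,K}$ and $\sum_i\|\varphi\|_{j_i,K_i}\le k\,\|\varphi\|_{j,K}$. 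Thus any finite subfamily of $\mathcal{P}$ can be replaced by a single member. With this in hand, item (a) is immediate from Theorem~\ref{thmapptvconvergence1}(1) applied to $X=\mathcal{E}(\Omega)$ with the family $\mathcal{P}$, and item (c) will drop out of item (b) by taking $Y=\reals$ with $\mathcal{Q}=\{|\cdot|\}$, for which the quantifier ``for every $q\in\mathcal{Q}$'' collapses to a single condition.

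For item (b): by Theorem~\ref{thmmay11714}, $T:\mathcal{E}(\Omega)\to Y$ is continuous if and only if for each $q\in\mathcal{Q}$ the map $q\circ T$ is continuous on $\mathcal{E}(\Omega)$; since $T$ is linear, $q\circ T$ is a seminorm on $\mathcal{E}(\Omega)$, so by Theorem~\ref{thmmay2656} its continuity amounts to a bound $q(T\varphi)\le C\sum_{i=1}^m p_i(\varphi)$ with $p_i\in\mathcal{P}$, and the consolidation remark rewrites this as $q(T\varphi)\le C'\|\varphi\|_{j,K}$ for a single $j,K$; conversely such a one-seminorm bound is trivially of the form required by Theorem~\ref{thmmay2656}, so it gives back continuity of $q\circ T$ and hence of $T$. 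This establishes the equivalence of the first two bullets, and the equivalence with the third (sequential continuity at $0$) is precisely Theorem~\ref{thmapptvconvergence2}, which applies because $\mathcal{E}(\Omega)$ is a Frechet space.

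For item (d) the roles are swapped: $\mathcal{E}(\Omega)$ is now the target with the natural topology of $\mathcal{P}$. By Theorem~\ref{thmmay11714}, $T:Y\to\mathcal{E}(\Omega)$ is continuous if and only if $\|\cdot\|_{j,K}\circ T$ is continuous on $Y$ for every $j\in\mathbb{N}$ and $K\in\mathcal{K}(\Omega)$; $T$ being linear, this composite is a seminorm on $Y$, and $Y$ carries the natural topology of $\mathcal{Q}$, so Theorem~\ref{thmmay2656} makes its continuity equivalent to a bound $\|T(y)\|_{j,K}\le C\sum_{i=1}^k q_i(y)$ with $q_i\in\mathcal{Q}$. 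Since $\sum_i q_i(y)\le k\max_i q_i(y)\le k\sum_i q_i(y)$, this is the same, after adjusting $C$, as $\|T(y)\|_{j,K}\le C\max_{1\le i\le k}q_i(y)$, which is the stated criterion.

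There is no genuinely hard step here; the content is bookkeeping — matching each claim to the right abstract theorem and applying the consolidation remark. The two points that call for a little care are: using that $\mathcal{E}(\Omega)$ is Frechet in order to invoke Theorem~\ref{thmapptvconvergence2} for the sequential-continuity characterization in (b), and checking in (b) and (d) that $q\circ T$ and $\|\cdot\|_{j,K}\circ T$ are genuine seminorms so that Theorem~\ref{thmmay2656} is applicable.
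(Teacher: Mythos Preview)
Your proof is correct. Note that the paper does not actually supply a proof of this theorem: it appears in the Background Material section, where the authors state that results without proof are taken from standard references (Rudin, Grubb, Reus, Tr\`eves) or are direct consequences thereof. Your argument is exactly the kind of direct consequence the authors have in mind --- reducing each item to the abstract Theorems~\ref{thmapptvconvergence1}, \ref{thmmay11714}, \ref{thmmay2656}, and \ref{thmapptvconvergence2} via the consolidation remark that the family $\{\|\cdot\|_{j,K}\}$ is directed (any finite subfamily is dominated by a single member). In fact one could bypass Theorems~\ref{thmmay11714} and~\ref{thmmay2656} entirely and appeal directly to Theorem~\ref{thmapptvconvergence1}(2) for the first two bullets of item~(2), and to Theorem~\ref{thmapptvconvergence1}(2) again (with roles of source and target swapped) for item~(4); your route through the seminorm-continuity criterion is equally valid and perhaps slightly more transparent about why the single-seminorm bound emerges.
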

\begin{theorem}[Convergence and Continuity for
$\mathcal{E}_K(\Omega)$]\lab{winter57} Let $\Omega$ be a nonempty
open set in $\reals^n$ and $K\in \mathcal{K}(\Omega)$. Let $Y$ be
a topological vector space whose topology is induced by a separating family
of seminorms $\mathcal{Q}$.
\begin{enumerateXALI}
\item A sequence $\{\varphi_m\}$ converges to $\varphi$ in
$\mathcal{E}_K(\Omega)$ if and only if
$\|\varphi_m-\varphi\|_{j,K}\rightarrow 0$ for all $j\in
\mathbb{N}$.
\item Suppose $T:\mathcal{E}_K(\Omega)\rightarrow Y$ is a linear map.
Then the followings are equivalent
\begin{itemize}
\item $T$ is continuous.
\item For every $q\in\mathcal{Q}$, there exists $j\in \mathbb{N}$ and $C>0$ such that
\begin{equation*}
\forall\,\varphi\in \mathcal{E}_K(\Omega)\qquad q(T(\varphi))\leq
C\|\varphi\|_{j,K}
\end{equation*}
\item If $\varphi_m\rightarrow 0$ in $\mathcal{E}_K(\Omega)$, then $T(\varphi_m)\rightarrow 0$ in $Y$.
\end{itemize}
\end{enumerateXALI}
\end{theorem}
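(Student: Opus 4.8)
The plan is to first reduce the subspace topology on $\mathcal{E}_K(\Omega)$ to a natural topology generated by an explicit \emph{countable, one-parameter} family of seminorms, and then read both statements off the general results already recorded for such spaces. The key observation is that for every $\varphi\in C_K^\infty(\Omega)$ and every $K'\in\mathcal{K}(\Omega)$ one has
\begin{equation*}
\|\varphi\|_{j,K'}\leq \|\varphi\|_{j,K}\qquad (j\in\mathbb{N}),
\end{equation*}
because $\varphi$ and all its derivatives vanish identically on the open set $\Omega\setminus K$, so the supremum defining $\|\varphi\|_{j,K'}$ is effectively a supremum over $K'\cap K\subseteq K$. Combined with the fact that $\{\|.\|_{j,K}\}_{j\in\mathbb{N}}$ is itself a subfamily of $\{\|.\|_{j,K'}\}_{j\in\mathbb{N},\,K'\in\mathcal{K}(\Omega)}$ (take $K'=K$) and is separating on $C_K^\infty(\Omega)$ (already $\|.\|_{0,K}$ separates points of $C_K^\infty(\Omega)$, by the support condition), Remark~\ref{remmay2421} yields that the subspace topology that $\mathcal{E}_K(\Omega)$ inherits from $\mathcal{E}(\Omega)$ coincides with the natural topology induced by the countable separating family $\{\|.\|_{j,K}\}_{j\in\mathbb{N}}$. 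I would also record here that these seminorms are nested, $\|\varphi\|_{j,K}\leq\|\varphi\|_{j',K}$ whenever $j\leq j'$, so that any finite maximum (or sum) of them is dominated by a single one, and recall, as stated in the text, that $\mathcal{E}_K(\Omega)$ is a Fr\'echet space.

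Granting this identification, part (1) is immediate from Theorem~\ref{thmapptvconvergence1}(1): a sequence converges in a space carrying the natural topology of a family of seminorms exactly when that family, evaluated at the differences, tends to zero; here that family is $\{\|.\|_{j,K}\}_{j\in\mathbb{N}}$.

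For part (2) I would argue the chain of equivalences as follows. For the equivalence of continuity of $T$ with the seminorm estimate: by Theorem~\ref{thmmay11714}, $T:\mathcal{E}_K(\Omega)\to Y$ is continuous if and only if $q\circ T:\mathcal{E}_K(\Omega)\to\reals$ is continuous for every $q\in\mathcal{Q}$; since $q\circ T$ is again a seminorm, Theorem~\ref{thmmay2656} applied to the generating family $\{\|.\|_{j,K}\}_{j\in\mathbb{N}}$ says this holds iff $q\circ T$ is dominated by a constant times a finite sum of the $\|.\|_{j,K}$, which, by the nesting noted above, reduces to a single estimate $q(T\varphi)\leq C\|\varphi\|_{j,K}$ for some $j\in\mathbb{N}$, $C>0$; the converse implication is trivial, since domination by one continuous seminorm forces continuity. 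For the equivalence of continuity with sequential continuity at the origin: $\mathcal{E}_K(\Omega)$ is Fr\'echet, so this is precisely Theorem~\ref{thmapptvconvergence2}.

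Thus the proof is largely bookkeeping built on the already-established machinery; the one genuinely substantive point — the step to handle with care — is the topological identification in the first paragraph, namely the inequality $\|\varphi\|_{j,K'}\leq\|\varphi\|_{j,K}$ on $C_K^\infty(\Omega)$ and the consequent collapse of the two-parameter seminorm family to the one-parameter family $\{\|.\|_{j,K}\}_{j\in\mathbb{N}}$, which is exactly what makes the clean statements (with $K$ the only compact set appearing, and a single index $j$) correct.
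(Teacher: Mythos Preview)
Your proof is correct. The paper does not supply its own proof of this theorem; it is stated as background material drawn from standard references (Rudin, Grubb, Reus, Tr\`eves), so there is no ``paper's proof'' to compare against. Your argument---reducing the subspace topology on $\mathcal{E}_K(\Omega)$ to the natural topology of the nested countable family $\{\|.\|_{j,K}\}_{j\in\mathbb{N}}$ via the support inequality $\|\varphi\|_{j,K'}\leq\|\varphi\|_{j,K}$, and then invoking Theorems~\ref{thmapptvconvergence1}, \ref{thmmay11714}, \ref{thmmay2656}, and \ref{thmapptvconvergence2}---is exactly the standard route and is carried out cleanly. One minor streamlining: you could bypass the combination of Theorems~\ref{thmmay11714} and~\ref{thmmay2656} by appealing directly to Theorem~\ref{thmapptvconvergence1}(2), which already gives the max-of-finitely-many-seminorms estimate, and then use nesting to collapse to a single $\|.\|_{j,K}$; but this is cosmetic.
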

\begin{theorem}[Convergence and Continuity for $D(\Omega)$]\lab{thmfallconvcont13}
Let $\Omega$ be a nonempty open set in $\reals^n$. Let $Y$ be a topological vector space whose topology is induced by a separating family of seminorms $\mathcal{Q}$.
\begin{enumerateXALI}
\item A sequence $\{\varphi_m\}$ converges to $\varphi$ in
$D(\Omega)$ if and only if there is a $K\in\mathcal{K}(\Omega)$
such that $\textrm{supp} \varphi_m\subseteq K$ and
$\varphi_m\rightarrow \varphi$ in $\mathcal{E}_K(\Omega)$.
\item Suppose $T:D(\Omega)\rightarrow Y$ is a linear map.
Then the followings are equivalent
\begin{itemize}
\item $T$ is continuous.
\item For all $K\in \mathcal{K}(\Omega)$, $T: \mathcal{E}_K(\Omega)\rightarrow
Y$ is continuous.
\item For every $q\in\mathcal{Q}$ and $K\in \mathcal{K}(\Omega)$, there exists $j\in \mathbb{N}$ and $C>0$ such that
\begin{equation*}
\forall\,\varphi\in \mathcal{E}_K(\Omega)\qquad q(T(\varphi))\leq
C\|\varphi\|_{j,K}
\end{equation*}
\item If $\varphi_m\rightarrow 0$ in $D(\Omega)$, then $T(\varphi_m)\rightarrow 0$ in $Y$.
\end{itemize}
\item In particular, a linear map $T: D(\Omega)\rightarrow \reals$
 is continuous if and only if for every $K\in \mathcal{K}(\Omega)$, there exists $j\in \mathbb{N}$ and $C>0$ such that
\begin{equation*}
\forall\,\varphi\in \mathcal{E}_K(\Omega)\qquad |T(\varphi)|\leq
C\|\varphi\|_{j,K}
\end{equation*}
\end{enumerateXALI}
\end{theorem}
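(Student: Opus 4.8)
The plan is to reduce all three statements to facts already established: the convergence-and-continuity theorem for the Fréchet spaces $\mathcal{E}_K(\Omega)$ (Theorem~\ref{winter57}), the universal property of the inductive limit topology (Theorem~\ref{winter37}), the Fréchet criterion that sequential continuity at the origin implies continuity (Theorem~\ref{thmapptvconvergence2}), and the description of bounded subsets of a strict inductive limit (Theorem~\ref{thmmay7609}). Note first that (iii) is simply (ii) with $Y=\reals$ and $\mathcal{Q}=\{|\cdot|\}$, so no separate argument is needed there.

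For part~(ii) I would close a cycle of implications. By construction $D(\Omega)$ carries the inductive limit topology with respect to $\{\mathcal{E}_K(\Omega)\}_{K\in\mathcal{K}(\Omega)}$ and $Y$ is locally convex (Theorem~\ref{thmmay7310}), so Theorem~\ref{winter37} immediately gives that $T$ is continuous if and only if $T|_{\mathcal{E}_K(\Omega)}$ is continuous for every $K$; and Theorem~\ref{winter57}(ii), applied inside each $\mathcal{E}_K(\Omega)$, restates the latter as the asserted family of estimates $q(T\varphi)\le C\|\varphi\|_{j,K}$. That a continuous $T$ sends $D(\Omega)$-null sequences to $Y$-null sequences is trivial. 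Conversely, suppose $T$ has this sequential property and fix $K$: since the inclusion $\mathcal{E}_K(\Omega)\hookrightarrow D(\Omega)$ is continuous, a sequence converging to $0$ in $\mathcal{E}_K(\Omega)$ also converges to $0$ in $D(\Omega)$, hence is mapped by $T$ to a $Y$-null sequence; so $T|_{\mathcal{E}_K(\Omega)}$ is sequentially continuous at the origin, and as $\mathcal{E}_K(\Omega)$ is a Fréchet space, Theorem~\ref{thmapptvconvergence2} upgrades this to continuity of $T|_{\mathcal{E}_K(\Omega)}$; running back through Theorem~\ref{winter37} closes the loop.

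For part~(i), the direction ($\Leftarrow$) is immediate from continuity of the inclusion $\mathcal{E}_K(\Omega)\hookrightarrow D(\Omega)$: if all $\varphi_m$ (and hence $\varphi$) are supported in $K$ and $\varphi_m\to\varphi$ in $\mathcal{E}_K(\Omega)$, then $\varphi_m\to\varphi$ in $D(\Omega)$. For ($\Rightarrow$) I would fix an exhaustion $\{K_j\}_{j\in\mathbb{N}}$ of $\Omega$ by compact sets (Theorem~\ref{winter2}) and note that $D(\Omega)$ is then the \emph{strict} inductive limit of the Fréchet spaces $\mathcal{E}_{K_j}(\Omega)$: the inclusions $\mathcal{E}_{K_j}(\Omega)\hookrightarrow\mathcal{E}_{K_{j+1}}(\Omega)$ are topological embeddings (on a function supported in $K_j$, $\|\cdot\|_{i,K_{j+1}}$ reduces to $\|\cdot\|_{i,K_j}$) with closed image, and this inductive limit is the one defining $D(\Omega)$. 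A convergent sequence together with its limit is bounded in any topological vector space, so $E:=\{\varphi_m:m\in\mathbb{N}\}\cup\{\varphi\}$ is bounded in $D(\Omega)$, and Theorem~\ref{thmmay7609} produces $m_0$ with $E$ bounded in $\mathcal{E}_{K_{m_0}}(\Omega)$; in particular $\textrm{supp}\,\varphi_m\subseteq K:=K_{m_0}$ for all $m$. It remains to see that $\varphi_m\to\varphi$ in $D(\Omega)$ forces $\varphi_m\to\varphi$ in $\mathcal{E}_K(\Omega)$, i.e.\ (Theorem~\ref{winter57}(i)) $\|\varphi_m-\varphi\|_{j,K}\to0$ for every $j$; this follows from the standard fact that a strict inductive limit induces on each step its original topology (so that $\mathcal{E}_K(\Omega)\hookrightarrow D(\Omega)$ is a topological embedding), or directly from the observation that each seminorm $\psi\mapsto\|\psi\|_{j,K}$ is continuous on $D(\Omega)$, since its restriction to every $\mathcal{E}_{K'}(\Omega)$ is dominated by $\|\cdot\|_{j,K'}$ and hence the absolutely convex set $\{\psi:\|\psi\|_{j,K}<1\}$ is a neighborhood of the origin in the locally convex inductive limit.

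I expect this forward direction of~(i) to be the only real obstacle, the point being to preclude supports ``escaping to infinity''; the bounded-set theorem, Theorem~\ref{thmmay7609}, gives the quickest route as sketched. Should one wish to avoid it, the classical alternative is a contradiction argument: if no common compact support existed, one could pick a subsequence $\varphi_{m_k}$ and points $y_k\in\Omega\setminus K_k$ with $\varphi_{m_k}(y_k)\neq0$, and then $\psi\mapsto\sum_k k\,|\varphi_{m_k}(y_k)|^{-1}|\psi(y_k)|$ would be a continuous seminorm on $D(\Omega)$ (on each $\mathcal{E}_{K'}(\Omega)$ all but finitely many terms vanish and the rest are controlled by $\|\cdot\|_{0,K'}$) that is unbounded on the bounded set $\{\varphi_m\}$, contradicting Corollary~\ref{cormay13343}.
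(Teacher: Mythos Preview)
Your proof is correct, and in fact the paper does not supply its own proof of this theorem: it is listed in the background section as a standard result drawn from Rudin, Grubb, Reus, and Treves, with no argument given. So there is nothing to compare against; you have filled in what the paper takes for granted.

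A few remarks on the details. Your treatment of~(ii) and~(iii) is clean and exactly how one would argue from the toolkit the paper assembles. For the forward direction of~(i), your primary route via Theorem~\ref{thmmay7609} is the right idea, but the final step --- passing from convergence in $D(\Omega)$ to convergence in $\mathcal{E}_K(\Omega)$ once the common support $K$ is secured --- leans on the ``strict inductive limit induces the step topologies'' principle, which the paper does not record. Your alternative, showing each $\|\cdot\|_{j,K}$ is a continuous seminorm on $D(\Omega)$, works; but the cleanest route available from the paper's own statements is simply: the identity $D(\Omega)\to\mathcal{E}(\Omega)$ is continuous by Theorem~\ref{winter37} (each $\mathcal{E}_{K'}(\Omega)\hookrightarrow\mathcal{E}(\Omega)$ is continuous by definition of the subspace topology), so $\varphi_m\to\varphi$ in $\mathcal{E}(\Omega)$, and since all terms lie in the closed subspace $\mathcal{E}_K(\Omega)$ carrying the induced topology, the convergence holds there. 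This avoids any appeal to unrecorded facts about strict inductive limits. Your contradiction-style backup argument is also correct and is the classical textbook proof.
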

\begin{remark}\lab{remfallcontintotest2}
Let $\Omega$ be a nonempty open set in $\reals^n$. Here are two immediate consequences of the previous theorems and remark:
\begin{enumerateXALI}
\item The identity map
\begin{equation*}
i_{D,\mathcal{E}}:D(\Omega)\rightarrow \mathcal{E}(\Omega)
\end{equation*}
is continuous (that is, $D(\Omega)\hookrightarrow \mathcal{E}(\Omega)$ ).
\item If $T:\mathcal{E}(\Omega)\rightarrow \mathcal{E}(\Omega)$ is a continuous linear map such that $\textrm{supp}(T \varphi)\subseteq \textrm{supp}\varphi$ for all $\varphi\in \mathcal{E}(\Omega)$ (i.e. $T$ is a \textbf{local} continuous linear map), then $T$ restricts to a continuous linear map from $D(\Omega)$ to $D(\Omega)$. Indeed, the assumption $\textrm{supp}(T \varphi)\subseteq \textrm{supp}\varphi$ implies that $T(D(\Omega))\subseteq D(\Omega)$. Moreover $T:D(\Omega)\rightarrow D(\Omega)$ is continuous if and only if for $K\in \mathcal{K}(\Omega)$ $T: \mathcal{E}_K(\Omega)\rightarrow D(\Omega)$ is continuous. Since $T(\mathcal{E}_K(\Omega))\subseteq \mathcal{E}_K(\Omega)$, this map is continuous if and only if $T: \mathcal{E}_K(\Omega)\rightarrow \mathcal{E}_K(\Omega)$ is continuous (see Remark \ref{remfallcontintotest1}). However, since the topology of $\mathcal{E}_K(\Omega)$ is the induced topology from $\mathcal{E}(\Omega)$, the continuity of the preceding map follows from the continuity of $T:\mathcal{E}(\Omega)\rightarrow \mathcal{E}(\Omega)$.
\end{enumerateXALI}
\end{remark}

\begin{theorem}
Let $\Omega$ be a nonempty open set in $\reals^n$. Then $D(\Omega)$ is separable.
\end{theorem}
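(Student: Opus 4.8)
The plan is to reduce separability of $D(\Omega)$ to separability of each Fréchet building block $\mathcal{E}_K(\Omega)$, and then to glue countable dense subsets together using the continuity of the inclusions $\mathcal{E}_K(\Omega)\hookrightarrow D(\Omega)$. First I would fix an exhaustion of $\Omega$ by compact sets $\{K_j\}_{j\in\mathbb{N}}$ (Theorem~\ref{winter2}), so that $D(\Omega)=\bigcup_{j\in\mathbb{N}}\mathcal{E}_{K_j}(\Omega)$, and recall from Definition~\ref{winter36} that each inclusion $\mathcal{E}_{K_j}(\Omega)\hookrightarrow D(\Omega)$ is continuous.

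The first substantive step is to show that each $\mathcal{E}_{K_j}(\Omega)$ is separable. Since $\mathcal{E}_{K_j}(\Omega)$ carries the subspace topology inherited from $\mathcal{E}(\Omega)$, and a subspace of a separable metrizable space is again separable (second countability is hereditary, and a metrizable space is separable iff second countable), it suffices to prove that $\mathcal{E}(\Omega)$ itself is separable. For this I would consider the map $\iota:\mathcal{E}(\Omega)\to\prod_{\alpha\in\mathbb{N}_0^n}C(\Omega)$ given by $\iota(f)=(\partial^\alpha f)_{\alpha}$, where each copy of $C(\Omega)$ carries the topology of uniform convergence on compact subsets. Because the generating seminorms $\|f\|_{j,K}=\max_{|\alpha|\le j}\sup_K|\partial^\alpha f|$ of $\mathcal{E}(\Omega)$ are exactly the finite maxima of the pullbacks along $\iota$ of the canonical seminorms on the product, $\iota$ is a linear topological embedding, and it is injective because $f=\partial^{(0,\ldots,0)}f$ is one of the coordinates. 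Now $C(\Omega)$ is separable: the map $f\mapsto(f|_{K_j})_{j}$ embeds it into $\prod_{j}C(K_j)$, and each $C(K_j)$ is separable by the Stone--Weierstrass theorem (polynomials with rational coefficients are a countable dense subset of $C(K_j)$). A countable product of separable metrizable spaces is separable and metrizable, and $\mathbb{N}_0^n$ is countable, so $\prod_{\alpha}C(\Omega)$ is separable metrizable; hence so is its subspace $\iota(\mathcal{E}(\Omega))\cong\mathcal{E}(\Omega)$, and therefore so is $\mathcal{E}_{K_j}(\Omega)$. (Alternatively one can avoid the product space entirely by proving directly that polynomials with rational coefficients are dense in $\mathcal{E}(\Omega)$: multiply $f$ by a cutoff to get a compactly supported function, apply the Fourier inversion formula, truncate the frequency integral --- the error tends to $0$ in every $C^k(K)$ because the Fourier transform is Schwartz --- and observe that the truncated function is entire, hence on each compact set it is the uniform limit, together with all derivatives, of its Taylor polynomials.)

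For the final step, pick a countable dense subset $A_j\subseteq\mathcal{E}_{K_j}(\Omega)$ for each $j$ and set $A=\bigcup_{j\in\mathbb{N}}A_j$, a countable subset of $D(\Omega)$. Given any $\varphi\in D(\Omega)$, there is an $m$ with $\varphi\in\mathcal{E}_{K_m}(\Omega)$, and by density we may choose $\varphi_k\in A_m$ with $\varphi_k\to\varphi$ in $\mathcal{E}_{K_m}(\Omega)$; since $\mathcal{E}_{K_m}(\Omega)\hookrightarrow D(\Omega)$ is continuous, $\varphi_k\to\varphi$ in $D(\Omega)$ as well, so $\varphi$ lies in the closure of $A$ in $D(\Omega)$. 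As $\varphi$ was arbitrary, $A$ is dense in $D(\Omega)$, which is therefore separable.

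I expect the only genuinely substantive point to be the separability of $\mathcal{E}(\Omega)$ (equivalently of the pieces $\mathcal{E}_{K_j}(\Omega)$); once that is in hand the patching argument is soft. The one thing to watch is that the inductive-limit topology on $D(\Omega)$ is strictly finer than the subspace topology from $\mathcal{E}(\Omega)$, so separability of $D(\Omega)$ does \emph{not} follow merely from the inclusion $D(\Omega)\subseteq\mathcal{E}(\Omega)$; it really uses the continuity of each $\mathcal{E}_{K_j}(\Omega)\hookrightarrow D(\Omega)$ together with the elementary fact that a sequential limit of points of $A$ lies in the closure of $A$.
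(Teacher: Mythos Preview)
Your argument is correct, but there is nothing to compare it to: the paper states this theorem as a background fact without any proof. Your route---separability of $\mathcal{E}(\Omega)$, hence of each $\mathcal{E}_{K_j}(\Omega)$ as a subspace of a separable metrizable space, followed by patching countable dense subsets via the continuous inclusions $\mathcal{E}_{K_j}(\Omega)\hookrightarrow D(\Omega)$---is the standard one, and you have identified the only real subtlety, namely that separability does not descend along the set-theoretic inclusion $D(\Omega)\subseteq\mathcal{E}(\Omega)$ because the inductive limit topology is strictly finer.
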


\begin{definition}\lab{winter60}
Let $\Omega$ be a nonempty open set in $\reals^n$. The topological
dual of $D(\Omega)$, denoted $D'(\Omega)$ $(D'(\Omega)
=[D(\Omega)]^*)$, is called the \textbf{space of distributions} on
$\Omega$. Each element of $D'(\Omega)$ is called a \textbf{distribution} on
$\Omega$. The action of a distribution $u\in D'(\Omega)$ on a function $\varphi\in D(\Omega)$ is sometimes denoted by $\langle u,\varphi\rangle_{D'(\Omega)\times D(\Omega)}$ or simply $\langle u,\varphi\rangle$.
\end{definition}
\begin{remark}\lab{winter61}
Every function $f\in L^1_{loc}(\Omega)$ defines a distribution
$u_f\in D'(\Omega)$ as follows
\begin{equation}\lab{eqnwinter61}
\forall\, \varphi\in D(\Omega)\qquad u_f(\varphi):=\int_\Omega
f\varphi dx
\end{equation}
In particular, every function $\varphi\in \mathcal{E}(\Omega)$
defines a distribution $u_\varphi$. It can be shown that the map
$i:\mathcal{E}(\Omega)\rightarrow D'(\Omega)$ which sends
$\varphi$ to $u_\varphi$ is an injective linear continuous map
(\cite{Reus1}, Page 11). Therefore we can identify
$\mathcal{E}(\Omega)$ with a subspace of $D'(\Omega)$; we sometimes refer to the map $i$ as the "identity map".
\end{remark}

\begin{theorem}[\cite{9}, Page 47] \lab{thmmay7643} Let $\Omega$ be a nonempty open set in $\reals^n$. Equip $D'(\Omega)$ with the weak$^*$ topology. Then under the above identification, $C_c^\infty(\Omega)$ is dense in $D'(\Omega)$.
\end{theorem}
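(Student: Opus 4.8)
The plan is to obtain the density from a Hahn--Banach duality argument. Let $i:\mathcal{E}(\Omega)\to D'(\Omega)$ denote the canonical embedding of Remark~\ref{winter61}; the assertion is that the linear subspace $i\big(C_c^\infty(\Omega)\big)$ is dense in $D'(\Omega)$ for the weak$^*$ topology $\sigma(D'(\Omega),D(\Omega))$. I will use the standard corollary of Hahn--Banach that a linear subspace $M$ of a locally convex space $Z$ is dense if and only if every continuous linear functional on $Z$ that vanishes on $M$ is identically zero: if $\overline M\neq Z$, pick $v_0\in Z\setminus\overline M$ and separate $\{v_0\}$ from the closed subspace $\overline M$ by a continuous functional, which is then forced to vanish on $\overline M$ while being nonzero at $v_0$.

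The first step is to identify the weak$^*$-continuous dual of $D'(\Omega)$. By definition $\sigma(D'(\Omega),D(\Omega))$ is the natural topology induced by the separating family of seminorms $\{p_\psi\}_{\psi\in D(\Omega)}$, $p_\psi(v)=|\langle v,\psi\rangle|$. Hence, by Theorem~\ref{thmapptvconvergence1}, a linear functional $\Lambda:D'(\Omega)\to\reals$ is weak$^*$-continuous if and only if there are $c>0$ and $\psi_1,\dots,\psi_k\in D(\Omega)$ with $|\Lambda(v)|\leq c\max_{1\leq i\leq k}|\langle v,\psi_i\rangle|$ for all $v\in D'(\Omega)$. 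Setting $\Lambda_i(v)=\langle v,\psi_i\rangle$, this inequality gives $\bigcap_{i=1}^k\ker\Lambda_i\subseteq\ker\Lambda$, and the elementary fact that a linear functional annihilated by the common kernel of finitely many linear functionals is a linear combination of them yields $\Lambda=\sum_{i=1}^k c_i\Lambda_i=\langle\cdot,\psi\rangle$ with $\psi:=\sum_i c_i\psi_i\in D(\Omega)$. Thus every weak$^*$-continuous functional on $D'(\Omega)$ is of the form $v\mapsto\langle v,\psi\rangle$ for some $\psi\in D(\Omega)$.

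Now suppose $\psi\in D(\Omega)$ is such that the functional $v\mapsto\langle v,\psi\rangle$ vanishes on $i\big(C_c^\infty(\Omega)\big)$, i.e.\ $\langle u_\varphi,\psi\rangle=\int_\Omega\varphi\psi\,dx=0$ for every $\varphi\in C_c^\infty(\Omega)$. Choosing $\varphi=\psi$ gives $\int_\Omega\psi^2\,dx=0$, so $\psi\equiv0$ because $\psi$ is continuous. Consequently the only weak$^*$-continuous functional annihilating $i\big(C_c^\infty(\Omega)\big)$ is the zero functional, and by the density criterion recalled above, $i\big(C_c^\infty(\Omega)\big)$ is weak$^*$-dense in $D'(\Omega)$, as claimed.

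I expect the first step to be the main obstacle: one must verify that the weak$^*$-continuous dual of $D'(\Omega)$ is exactly $D(\Omega)$ (not a strictly larger space), after which the remaining steps are immediate. An alternative, more computational route bypasses this identification: given $u\in D'(\Omega)$, $\psi_1,\dots,\psi_k\in D(\Omega)$ and $\epsilon>0$, choose by Theorem~\ref{thmmay7635} a cutoff $\chi\in C_c^\infty(\Omega)$ equal to $1$ on a neighborhood of the compact set $\bigcup_i\supp\psi_i$, observe that $\langle\chi u,\psi_i\rangle=\langle u,\psi_i\rangle$, and then mollify the compactly supported distribution $\chi u$; for small $\delta$ the regularization $\rho_\delta*(\chi u)$ lies in $C_c^\infty(\Omega)$ and satisfies $\langle\rho_\delta*(\chi u),\psi_i\rangle\to\langle u,\psi_i\rangle$, placing it in the prescribed weak$^*$-neighborhood of $u$. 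This second approach, however, requires developing the convolution of distributions and the attendant support estimates near $\partial\Omega$, which are not set up in the excerpt.
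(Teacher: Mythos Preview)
Your proof is correct. Note, however, that the paper does not supply its own proof of this theorem: it is quoted as a known fact from Grubb's book (reference~\cite{9}, page~47), so there is no in-paper argument to compare against. Your Hahn--Banach approach is the standard one and is entirely self-contained given the tools in the excerpt; the identification of the $\sigma(D'(\Omega),D(\Omega))$-dual with $D(\Omega)$ via Theorem~\ref{thmapptvconvergence1} is exactly the right ingredient, and choosing $\varphi=\psi$ to kill the annihilator is clean. The alternative mollification route you sketch is closer in spirit to how Grubb actually proves it (constructive approximation rather than duality), but as you correctly note, that path requires convolution of distributions, which the present paper has not developed.
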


\begin{theorem}[\cite{Treves1}, Page 302]\lab{thmmay111006}
Let $\Omega$ be a nonempty open set in $\reals^n$. Equip $D'(\Omega)$ with the strong topology. Then under the identification described in Remark ~\ref{winter61}, $C_c^\infty(\Omega)$ is sequentially dense in $D'(\Omega)$.
\end{theorem}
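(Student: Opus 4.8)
The plan is to reduce the claim to the weak$^*$ topology and then construct the approximating sequence explicitly by a cutoff-and-mollify procedure. First I would invoke part~(2) of Remark~\ref{winter32}: since $D(\Omega)$ is a Montel space, a sequence in $D'(\Omega)$ converges in the strong topology if and only if it converges in the weak$^*$ topology $\sigma(D'(\Omega),D(\Omega))$. Consequently it suffices to produce, for an arbitrary $u\in D'(\Omega)$, a sequence $\{v_j\}\subseteq C_c^\infty(\Omega)$ with $\langle v_j,\varphi\rangle\to\langle u,\varphi\rangle$ for every $\varphi\in D(\Omega)$. (One could also try to extract such a sequence from the weak$^*$ density already recorded in Theorem~\ref{thmmay7643}, but that approach would still require exhibiting an explicit sequence, so I would go straight for the construction.)

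To build $\{v_j\}$, fix $u$ and an exhaustion $K_1\subseteq\mathring K_2\subseteq K_2\subseteq\cdots$ of $\Omega$ by compact sets (Theorem~\ref{winter2}). Applying Theorem~\ref{thmmay7635} to the compact set $K_j$ viewed inside the open set $\mathring K_{j+1}$ and extending by zero, I would choose $\chi_j\in C_c^\infty(\Omega)$ with $0\le\chi_j\le1$, $\chi_j\equiv1$ on a neighborhood of $K_j$, and $\supp\chi_j\subseteq\mathring K_{j+1}$. Then $\chi_j u$ is a distribution whose support is contained in the compact set $\supp\chi_j\subseteq\mathring K_{j+1}$, so (extended by zero) it is a compactly supported distribution on $\reals^n$. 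Fixing an even nonnegative mollifier $\rho\in C_c^\infty(\reals^n)$ with $\int\rho=1$ and $\supp\rho\subseteq\overline{B(0,1)}$, and writing $\rho_\varepsilon(x)=\varepsilon^{-n}\rho(x/\varepsilon)$, I would pick $\varepsilon_j\in(0,1/j)$ small enough that $\supp(\chi_j u)+\overline{B(0,\varepsilon_j)}$ is a compact subset of $\Omega$ (possible since $\supp(\chi_j u)\subseteq\mathring K_{j+1}\subseteq\mathring K_{j+2}$) and set $v_j:=(\chi_j u)*\rho_{\varepsilon_j}$. By the standard fact that the convolution of a compactly supported distribution with a $C_c^\infty$ function is a $C_c^\infty$ function, $v_j\in C_c^\infty(\Omega)$.

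To verify $v_j\to u$ weak$^*$, I would use the pairing identity
\begin{equation*}
\langle v_j,\varphi\rangle=\langle(\chi_j u)*\rho_{\varepsilon_j},\varphi\rangle=\langle\chi_j u,\rho_{\varepsilon_j}*\varphi\rangle=\langle u,\chi_j\cdot(\rho_{\varepsilon_j}*\varphi)\rangle,
\end{equation*}
valid for $\varphi\in D(\Omega)$ because $\rho$ is even. Given $\varphi$ with $\supp\varphi\subseteq K_N$ for some $N$, for all large $j$ the radius $\varepsilon_j$ is small enough that $\supp(\rho_{\varepsilon_j}*\varphi)\subseteq K_N+\overline{B(0,\varepsilon_j)}\subseteq\mathring K_{N+1}$, and $j$ is large enough that $K_{N+1}\subseteq K_j$, whence $\chi_j\equiv1$ on $K_{N+1}$; thus $\chi_j\cdot(\rho_{\varepsilon_j}*\varphi)=\rho_{\varepsilon_j}*\varphi$ and $\langle v_j,\varphi\rangle=\langle u,\rho_{\varepsilon_j}*\varphi\rangle$. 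Since $\varepsilon_j\to0$, for every multi-index $\alpha$ one has $\partial^\alpha(\rho_{\varepsilon_j}*\varphi)=\rho_{\varepsilon_j}*\partial^\alpha\varphi\to\partial^\alpha\varphi$ uniformly, with all these functions supported in $K_{N+1}$; hence $\rho_{\varepsilon_j}*\varphi\to\varphi$ in $\mathcal{E}_{K_{N+1}}(\Omega)$ by Theorem~\ref{winter57}. Because $u$ restricted to $\mathcal{E}_{K_{N+1}}(\Omega)$ is continuous (Theorem~\ref{thmfallconvcont13}), I conclude $\langle v_j,\varphi\rangle=\langle u,\rho_{\varepsilon_j}*\varphi\rangle\to\langle u,\varphi\rangle$. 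Together with the first paragraph this gives $v_j\to u$ in the strong topology of $D'(\Omega)$, which is the claim.

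The conceptual core is the Montel reduction in the first step; after that the argument is elementary. The place I expect to require the most care is the support bookkeeping: the cutoffs $\chi_j$ and the mollification radii $\varepsilon_j$ must be coordinated so that simultaneously each $v_j$ lies in $C_c^\infty(\Omega)$ and, for each fixed $\varphi$, the function $\chi_j$ is eventually identically $1$ on $\supp(\rho_{\varepsilon_j}*\varphi)$ --- this is precisely what collapses $\langle v_j,\varphi\rangle$ into the single-distribution quantity $\langle u,\rho_{\varepsilon_j}*\varphi\rangle$ and thereby sidesteps any diagonal extraction over a dense family of test functions. The only external input I am taking for granted is the behavior of the convolution of a compactly supported distribution with a test function, namely that it yields a $C_c^\infty$ function and obeys the displayed pairing identity.
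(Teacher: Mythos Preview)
The paper does not supply its own proof of this statement; it simply cites \cite{Treves1}, p.~302, and records the result for later use. So there is nothing to compare against at the level of argument.

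Your proof is correct and is essentially the classical one. The reduction via the Montel property (Remark~\ref{winter32}) is exactly right, and the cutoff-and-mollify construction is the standard route. Your support bookkeeping is sound: the choice $\varepsilon_j<1/j$ together with $\supp\chi_j\subseteq\mathring K_{j+1}$ guarantees both that $v_j\in C_c^\infty(\Omega)$ and that, for any fixed $\varphi$, eventually $\chi_j\equiv 1$ on $\supp(\rho_{\varepsilon_j}*\varphi)$. One small point worth making explicit for completeness: the displayed identity $\langle \chi_j u,\rho_{\varepsilon_j}*\varphi\rangle=\langle u,\chi_j\cdot(\rho_{\varepsilon_j}*\varphi)\rangle$ is being evaluated by extending $\chi_j u$ by zero to $\reals^n$, pairing there, and then pulling back to $\Omega$; this is harmless since $\chi_j\cdot(\rho_{\varepsilon_j}*\varphi)$ has support inside $\supp\chi_j\subseteq\Omega$ for every $j$, not only for large $j$. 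With that clarification the chain of equalities is valid throughout, and the limit step via Theorem~\ref{winter57} and Theorem~\ref{thmfallconvcont13} is clean.
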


\begin{remark}\lab{remmay111008}
\leavevmode
\begin{itemizeX}
\item Clearly sequential density is a stronger notion than density. So $C_c^\infty(\Omega)$ is dense in $(D'(\Omega), \textrm{strong topology})$.
\item Recall that, according to Remark ~\ref{winter32}, a sequence converges in $(D'(\Omega), \textrm{weak$^*$})$ if and only if it converges in $(D'(\Omega), \textrm{strong topology})$. This together with the fact that weak$^*$ topology is weaker than the strong topology implies that convergent sequences in both topologies converge to the same limit. Therefore it follows from Theorem ~\ref{thmmay111006} that $C_c^\infty(\Omega)$ is sequentially dense in $(D'(\Omega), \textrm{weak$^*$})$. Hence Theorem ~\ref{thmmay7643} can be viewed as a corollary of Theorem ~\ref{thmmay111006}.
\end{itemizeX}
\end{remark}

\begin{theorem}[\cite{Reus1}, Page 9]\lab{thmmay111020}
$D(\Omega)$ is reflexive. So $[(D'(\Omega), \textrm{strong topology})]^*$ can be identified with the topological vector space $D(\Omega)$.
\end{theorem}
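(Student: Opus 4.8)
The plan is to first establish that $D(\Omega)$ is \emph{semireflexive} and then to upgrade this to reflexivity using the fact that $D(\Omega)$ carries a strict inductive limit topology built from a \emph{sequence} of particularly well-behaved Frechet spaces. Recall from Theorem~\ref{thmmay7314} that the evaluation map $J\colon D(\Omega)\to D(\Omega)^{**}$ is automatically well-defined, linear, and injective, so the whole content is (i) surjectivity of $J$ (semireflexivity) and (ii) that $J$ is in addition a linear topological isomorphism onto $(D(\Omega)^{**},\tau')$. Fix an exhaustion $\{K_j\}_{j\in\mathbb{N}}$ of $\Omega$ by compact sets; as recalled in the text, $D(\Omega)$ is, up to the identification of the two inductive limit topologies, the strict inductive limit (in the sense of Definition~\ref{defmay7606}) of the increasing chain of Frechet spaces $\mathcal{E}_{K_1}(\Omega)\subseteq\mathcal{E}_{K_2}(\Omega)\subseteq\cdots$.

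Step~1: each $\mathcal{E}_{K_j}(\Omega)$ is reflexive. Since $\mathcal{E}_{K_j}(\Omega)=C^\infty_{K_j}(\Omega)$ is a Frechet space, Theorem~\ref{thmmay7323} reduces this to showing that every bounded $B\subseteq\mathcal{E}_{K_j}(\Omega)$ is relatively weakly compact. By Theorem~\ref{thmmay3814}, boundedness of $B$ means precisely that $\sup_{\varphi\in B}\|\varphi\|_{m,K_j}<\infty$ for every $m$, i.e. all partial derivatives of the elements of $B$ are uniformly bounded on $K_j$ (with all supports inside $K_j$). The Arzela--Ascoli theorem together with a standard diagonal argument over $m$ then shows that every sequence in $B$ has a subsequence converging, with respect to all the seminorms $\|\cdot\|_{m,K_j}$, to an element of $\mathcal{E}_{K_j}(\Omega)$; hence $B$ is relatively compact in the metrizable space $\mathcal{E}_{K_j}(\Omega)$, and a fortiori relatively weakly compact. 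This Montel-type compactness argument is the main technical point of the proof.

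Step~2: $D(\Omega)$ is semireflexive. Let $B\subseteq D(\Omega)$ be bounded. By Theorem~\ref{thmmay7609} there is an $m$ with $B\subseteq\mathcal{E}_{K_m}(\Omega)$ and $B$ bounded in $\mathcal{E}_{K_m}(\Omega)$; by Step~1 and Theorem~\ref{thmmay7323}, the $\sigma(\mathcal{E}_{K_m}(\Omega),\mathcal{E}_{K_m}(\Omega)^*)$-closure $C$ of $B$ is weakly compact in $\mathcal{E}_{K_m}(\Omega)$. Since $\mathcal{E}_{K_m}(\Omega)$ is a closed subspace of $D(\Omega)$ (a built-in feature of the strict inductive limit), the Hahn--Banach theorem shows that $\sigma(\mathcal{E}_{K_m}(\Omega),\mathcal{E}_{K_m}(\Omega)^*)$ coincides with the topology induced on $\mathcal{E}_{K_m}(\Omega)$ by $\sigma(D(\Omega),D(\Omega)^*)$, and that the closed convex set $\mathcal{E}_{K_m}(\Omega)$ is also $\sigma(D(\Omega),D(\Omega)^*)$-closed; hence $C$ is moreover the $\sigma(D(\Omega),D(\Omega)^*)$-closure of $B$ and is $\sigma(D(\Omega),D(\Omega)^*)$-compact. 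Thus every bounded subset of $D(\Omega)$ is relatively weakly compact, which by the standard semireflexivity criterion for locally convex spaces gives that $D(\Omega)$ is semireflexive, i.e. $J$ is bijective.

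Step~3: reflexivity and the identification. Since $D(\Omega)$ is semireflexive and its topology is the inductive limit of the sequence of Frechet spaces $\{\mathcal{E}_{K_j}(\Omega)\}$, Theorem~\ref{thmmay6758} (second bullet, in the form valid for strict inductive limits of a sequence of Frechet spaces; cf. also \cite{Reus1}) upgrades semireflexivity to reflexivity: $J\colon D(\Omega)\to(D(\Omega)^{**},\tau')$ is a linear topological isomorphism. By Definition~\ref{defmay7316} this is exactly the statement that $D(\Omega)$ is reflexive, and, unwinding the definition of the bidual, that $[(D'(\Omega),\text{strong topology})]^*=D(\Omega)^{**}$ is identified, as a topological vector space, with $D(\Omega)$. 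The two delicate points are the Arzela--Ascoli argument in Step~1 and the passage from semireflexive to reflexive in Step~3, which relies essentially on the sequential (LF-type) nature of the defining inductive limit rather than on an arbitrary inductive limit of locally convex spaces.
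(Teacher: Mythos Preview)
The paper does not supply its own proof of this statement; it simply records the result with a citation to \cite{Reus1}. So there is no in-paper proof to compare against, and the question is whether your argument stands on its own.

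Your overall strategy is the standard one (establish the Montel property of $D(\Omega)$ via the Arzel\`a--Ascoli compactness of bounded sets in each $\mathcal{E}_{K_j}(\Omega)$, pass to the strict inductive limit, and then upgrade semireflexivity to reflexivity), and Step~1 is correct as written. There are, however, two places where you invoke results that go beyond what the paper actually provides. First, in Step~2 you appeal to ``the standard semireflexivity criterion for locally convex spaces'' to conclude that $D(\Omega)$ is semireflexive from the relative weak compactness of bounded sets; but Theorem~\ref{thmmay7323} in the paper states this criterion only for \emph{Frechet} spaces, and $D(\Omega)$ is not Frechet. The general criterion (a locally convex space is semireflexive iff every bounded set is relatively $\sigma(X,X^*)$-compact) is of course true, but you should cite it externally rather than gesture at the Frechet version. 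Second, in Step~3 you apply the second bullet of Theorem~\ref{thmmay6758}, which as stated in the paper requires the inductive limit to be built from a sequence of \emph{Banach} spaces, whereas the $\mathcal{E}_{K_j}(\Omega)$ are genuinely non-normable Frechet spaces. You flag this and defer to \cite{Reus1}, which is honest, but as written the step does not follow from the tools the paper supplies; the cleanest fix is to note instead that a strict (LF)-space is barrelled, and that every barrelled semireflexive space is reflexive---both standard facts, but again external to the paper's background section.

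In short: the architecture of your proof is correct and is essentially the argument one finds in \cite{Reus1} or \cite{Treves1}; the two steps flagged above are not wrong, but each leans on a theorem strictly stronger than what the paper has recorded, and you should either cite those stronger theorems explicitly or supply the missing arguments.
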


\begin{definition}[Restriction of a Distribution]\lab{winter64}
Let $\Omega$ be an open subset of $\reals^n$ and $V$ be an open
susbset of $\Omega$. We define the restriction map
$\textrm{res}_{\Omega,V}: D'(\Omega)\rightarrow D'(V)$ as follows
\begin{equation*}
\langle \textrm{res}_{\Omega,V} u,\varphi \rangle_{D'(V)\times
D(V)}:=\langle u,
\textrm{ext}_{V,\Omega}^0\varphi\rangle_{D'(\Omega)\times
D(\Omega)}
\end{equation*}
This is well-defined; indeed, $\textrm{res}_{\Omega,V}:
D'(\Omega)\rightarrow D'(V)$ is a continuous linear map as it is
the adjoint of the continuous map
$\textrm{ext}_{V,\Omega}^0:D(V)\rightarrow D(\Omega)$. Given
$u\in D'(\Omega)$, we sometimes write $u|_V$ instead of
$\textrm{res}_{\Omega,V} u$.
\end{definition}

\begin{definition}[Support of a Distribution]\lab{winter66}
Let $\Omega$ be a nonempty open set in $\reals^n$. Let $u\in
D'(\Omega)$.
\begin{itemizeXALI}
\item We say $u$ is equal to zero on some open subset $V$ of
$\Omega$ if $u|_V=0$.
\item Let $\{V_i\}_{i\in I}$ be the collection of all open subsets of
$\Omega$ such that $u$ is equal to zero on $V_i$. Let
$V=\bigcup_{i\in I} V_i$. The support of $u$ is defined as follows
\begin{equation*}
\textrm{supp}\,u:=\Omega\setminus V
\end{equation*}
Note that $\textrm{supp}\,u$ is closed in $\Omega$ but it is not
necessarily closed in $\reals^n$.
\end{itemizeXALI}
\end{definition}

\begin{theorem}\cite{Reus1}\lab{thmmay31127}
Let $\Omega$ be a nonempty open set in $\reals^n$ and let $u\in D'(\Omega)$. If $\varphi\in
D(\Omega)$ vanishes on a neighborhood containing $\textrm{supp}\, u$,
then $u(\varphi)=0$.
\end{theorem}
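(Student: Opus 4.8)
The plan is to localize $\varphi$ by a finite smooth partition of unity supported away from $\textrm{supp}\,u$, and then apply the definition of the support together with the restriction map of Definition~\ref{winter64}.

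First I would set $K=\textrm{supp}\,u$ and, using the hypothesis, fix an open set $W$ with $K\subseteq W\subseteq\Omega$ on which $\varphi$ vanishes. Since $\varphi\in D(\Omega)$, the set $L:=\textrm{supp}\,\varphi$ is compact, and because $\varphi\equiv 0$ on the open set $W$ we have $L\subseteq\Omega\setminus W$ (if $x\in W\cap\overline{\{\varphi\neq 0\}}$ then the neighborhood $W$ of $x$ would meet $\{\varphi\neq 0\}$, a contradiction). Recalling from Definition~\ref{winter66} that $\Omega\setminus K=\bigcup_{i\in I}V_i$, where each $V_i$ is open in $\Omega$ and $u|_{V_i}=0$, we obtain $L\subseteq\Omega\setminus W\subseteq\Omega\setminus K=\bigcup_{i\in I}V_i$. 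By compactness of $L$ there are finitely many indices $i_1,\dots,i_m$ with $L\subseteq V_{i_1}\cup\cdots\cup V_{i_m}$.

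Next I would invoke a smooth partition of unity subordinate to this finite cover, assembled in the usual way from the bump functions of Theorem~\ref{thmmay7635} (cover $L$ by finitely many balls whose closures each sit inside one of the $V_{i_k}$, take a finite subcover, form nonnegative bump functions supported in the appropriate $V_{i_k}$, normalize against a cutoff equal to $1$ on $L$, and regroup by $k$). This yields $\psi_1,\dots,\psi_m\in C_c^\infty(\Omega)$ with $0\le\psi_k\le 1$, $\textrm{supp}\,\psi_k\subseteq V_{i_k}$, and $\sum_{k=1}^m\psi_k=1$ on $L$. Then $\varphi=\sum_{k=1}^m\psi_k\varphi$ on all of $\Omega$ (the identity holds on $L$ because $\sum_k\psi_k=1$ there, and off $L$ both sides vanish), so by linearity $u(\varphi)=\sum_{k=1}^m u(\psi_k\varphi)$. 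For each fixed $k$, the function $\psi_k\varphi$ lies in $D(\Omega)$ with $\textrm{supp}(\psi_k\varphi)\subseteq\textrm{supp}\,\psi_k\subseteq V_{i_k}$, hence its restriction $\chi_k:=(\psi_k\varphi)|_{V_{i_k}}$ belongs to $D(V_{i_k})$ and satisfies $\textrm{ext}^0_{V_{i_k},\Omega}\chi_k=\psi_k\varphi$. Using the definition of the restriction map (Definition~\ref{winter64}),
\[
u(\psi_k\varphi)=\langle u,\textrm{ext}^0_{V_{i_k},\Omega}\chi_k\rangle=\langle u|_{V_{i_k}},\chi_k\rangle=0,
\]
since $u|_{V_{i_k}}=0$. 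Summing over $k$ gives $u(\varphi)=0$.

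The only ingredient beyond bookkeeping is the construction of the finite partition of unity subordinate to $\{V_{i_1},\dots,V_{i_m}\}$ with compactly supported pieces summing to $1$ on $L$; this is routine given Theorem~\ref{thmmay7635}, so I do not expect a genuine obstacle. The conceptual point worth noting is that this argument simultaneously shows that $u$ vanishes on the whole union $V=\Omega\setminus\textrm{supp}\,u$, which is the gluing property implicit in Definition~\ref{winter66}.
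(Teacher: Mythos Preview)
Your argument is correct and is the standard proof of this fact. Note, however, that the paper does not actually supply its own proof of this theorem: it is stated with a citation to \cite{Reus1} and left unproved in the text, so there is no in-paper argument to compare against. Your localization via a finite partition of unity subordinate to the cover $\{V_{i_1},\dots,V_{i_m}\}$ of $L=\textrm{supp}\,\varphi$, together with Definition~\ref{winter64} and Definition~\ref{winter66}, is precisely the textbook route and would be an appropriate proof to include.
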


\begin{theorem}\cite{Reus1}\lab{thmmay11130}
Let $\{u_i\}$ be a sequence in $D'(\Omega)$, $u\in D(\Omega)$, and $K\in\mathcal{K}(\Omega)$ such that $u_i\rightarrow u$ in $D'(\Omega)$ and $\textrm{supp}\,u_i\subseteq K$ for all $i$. Then also $\textrm{supp}\,u\subseteq K$.
\end{theorem}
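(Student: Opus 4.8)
The plan is to verify directly that $u$ vanishes on the open set $\Omega\setminus K$; granting this, Definition~\ref{winter66} finishes the job, since $\Omega\setminus K$ is then among the open sets whose union $V$ is removed to form $\textrm{supp}\,u$, so that $\textrm{supp}\,u=\Omega\setminus V\subseteq\Omega\setminus(\Omega\setminus K)=K$ (recall $K\subseteq\Omega$). So everything reduces to checking $u|_{\Omega\setminus K}=0$.

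To this end, fix an arbitrary $\psi\in D(\Omega\setminus K)$ and put $\varphi:=\textrm{ext}^0_{\Omega\setminus K,\Omega}\psi\in D(\Omega)$. By the definition of the restriction map (Definition~\ref{winter64}) it is enough to prove $\langle u,\varphi\rangle=0$. Now $\varphi$ vanishes on $\Omega\setminus\textrm{supp}\,\psi$, which is open in $\reals^n$ because $\textrm{supp}\,\psi$ is compact, and which contains $K$ because $\textrm{supp}\,\psi\subseteq\Omega\setminus K$. Hence $\varphi$ vanishes on an open neighborhood of $K$, and therefore on an open neighborhood of $\textrm{supp}\,u_i$ for every $i$. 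Theorem~\ref{thmmay31127} then gives $\langle u_i,\varphi\rangle=0$ for all $i$.

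It remains to pass to the limit. Since $u_i\to u$ in $D'(\Omega)$, in particular $\langle u_i,\phi\rangle\to\langle u,\phi\rangle$ for every fixed $\phi\in D(\Omega)$ (this is weak$^*$ convergence, which is implied by convergence in the strong topology; see Remark~\ref{winter32}); applying this with $\phi=\varphi$ yields $\langle u,\varphi\rangle=\lim_i\langle u_i,\varphi\rangle=0$. As $\psi\in D(\Omega\setminus K)$ was arbitrary, $u|_{\Omega\setminus K}=0$, and the conclusion follows as explained in the first paragraph.

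The argument is essentially routine and I do not anticipate a genuine obstacle. The one point that calls for a little care is the replacement of ``$\varphi$ vanishes on $K$'' by ``$\varphi$ vanishes on an \emph{open} neighborhood of $\textrm{supp}\,u_i$'', since it is the latter, stronger statement that is needed to invoke Theorem~\ref{thmmay31127}; this is exactly why one works with the complement of the compact set $\textrm{supp}\,\psi$, which is an open set still containing $K\supseteq\textrm{supp}\,u_i$. (Here one reads the hypothesis of the theorem as $u\in D'(\Omega)$, which is automatic since $u$ is by assumption the $D'(\Omega)$-limit of the $u_i$.)
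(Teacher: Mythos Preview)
Your proof is correct. The paper itself does not supply a proof of this statement; it is quoted as a background result from \cite{Reus1}, so there is nothing to compare against. Your argument is the standard one, and you handled the one subtle point (ensuring $\varphi$ vanishes on an \emph{open} neighborhood of $K$, not merely on $K$ itself, so that Theorem~\ref{thmmay31127} applies) cleanly by passing to the complement of the compact set $\textrm{supp}\,\psi$. You also correctly flagged the evident typo in the hypothesis (the limit $u$ should lie in $D'(\Omega)$, not $D(\Omega)$).
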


\begin{theorem}[\cite{Blanchard2003}, Page 38]\lab{thmapril141} Let $\Omega$ be a nonempty open set in $\reals^n$. Suppose that $\{T_i\}$ is a sequence in $D'(\Omega)$ with the property that for all $\varphi\in D(\Omega)$, $\lim_{i\rightarrow \infty}\langle T_i,\varphi\rangle _{D'(\Omega)\times D(\Omega)}$ exists. Then there exists $T\in D'(\Omega)$ such that
\begin{equation*}
\forall\,\varphi\in D(\Omega)\qquad  \langle T,\varphi\rangle _{D'(\Omega)\times D(\Omega)}=\lim_{i\rightarrow \infty}\langle T_i,\varphi\rangle _{D'(\Omega)\times D(\Omega)}
\end{equation*}
\end{theorem}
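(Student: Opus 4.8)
The plan is to produce the candidate limit functional by a pointwise formula and then upgrade pointwise convergence to continuity by a Banach--Steinhaus argument carried out on each Fr\'echet building block $\mathcal{E}_K(\Omega)$ of $D(\Omega)$.

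First I would set
\begin{equation*}
T:D(\Omega)\rightarrow\reals,\qquad \langle T,\varphi\rangle:=\lim_{i\rightarrow\infty}\langle T_i,\varphi\rangle,
\end{equation*}
which is well-defined by hypothesis. Linearity of $T$ is immediate: for $\varphi,\psi\in D(\Omega)$ and $a,b\in\reals$ one has $\langle T_i,a\varphi+b\psi\rangle=a\langle T_i,\varphi\rangle+b\langle T_i,\psi\rangle$ for every $i$, and passing to the limit gives the same identity for $T$. So the entire content of the theorem is the \emph{continuity} of $T$, i.e. the assertion that $T\in D'(\Omega)=[D(\Omega)]^*$.

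Next I would reduce continuity on $D(\Omega)$ to continuity on each $\mathcal{E}_K(\Omega)$. By Theorem~\ref{thmfallconvcont13}, $T:D(\Omega)\rightarrow\reals$ is continuous if and only if for every $K\in\mathcal{K}(\Omega)$ there exist $j\in\mathbb{N}$ and $C>0$ with $|\langle T,\varphi\rangle|\leq C\|\varphi\|_{j,K}$ for all $\varphi\in\mathcal{E}_K(\Omega)$. Fix such a $K$. Since the inclusion $\mathcal{E}_K(\Omega)\hookrightarrow D(\Omega)$ is continuous (it is one of the maps defining the inductive limit; see Remark~\ref{remfallcontintotest1}), each restriction $T_i|_{\mathcal{E}_K(\Omega)}$ belongs to $[\mathcal{E}_K(\Omega)]^*$. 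Moreover, for each fixed $\varphi\in\mathcal{E}_K(\Omega)$ the numerical sequence $\langle T_i,\varphi\rangle$ converges, hence is bounded. Thus $\{T_i|_{\mathcal{E}_K(\Omega)}\}_{i\in\mathbb{N}}$ is a pointwise bounded family of continuous linear functionals on the Fr\'echet space $\mathcal{E}_K(\Omega)$. Now I would invoke the uniform boundedness principle (Banach--Steinhaus): on a Fr\'echet space, which is barrelled, a pointwise bounded family of continuous linear functionals is equicontinuous. Since the topology of $\mathcal{E}_K(\Omega)$ is generated by the seminorms $\{\|\cdot\|_{j,K}\}_{j\in\mathbb{N}}$, equicontinuity at the origin means there exist $j\in\mathbb{N}$ and $C>0$, \emph{independent of $i$}, such that $|\langle T_i,\varphi\rangle|\leq C\|\varphi\|_{j,K}$ for all $\varphi\in\mathcal{E}_K(\Omega)$ and all $i$. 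Letting $i\rightarrow\infty$ yields $|\langle T,\varphi\rangle|\leq C\|\varphi\|_{j,K}$ on $\mathcal{E}_K(\Omega)$, so $T|_{\mathcal{E}_K(\Omega)}$ is continuous by Theorem~\ref{winter57}. As $K$ was arbitrary, Theorem~\ref{thmfallconvcont13} gives $T\in D'(\Omega)$, and by construction $\langle T,\varphi\rangle=\lim_i\langle T_i,\varphi\rangle$ for every $\varphi\in D(\Omega)$.

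The main obstacle is precisely this appeal to barrelledness: the hypothesis supplies only \emph{pointwise} boundedness of the $T_i$, and without the uniform boundedness principle there is no reason for the seminorm index $j$ and the constant $C$ to be uniform in $i$; for a general (non-barrelled) locally convex space the pointwise limit of continuous functionals can fail to be continuous, so the completeness of $\mathcal{E}(\Omega)$ (hence of each $\mathcal{E}_K(\Omega)$) recorded earlier is genuinely used. A secondary point to verify carefully is the reduction itself --- that $\mathcal{E}_K(\Omega)$ carries the subspace topology from $\mathcal{E}(\Omega)$ and that this topology is the natural topology of the seminorms $\|\cdot\|_{j,K}$ restricted to $C_K^\infty(\Omega)$ --- so that the estimate produced by equicontinuity is exactly the one Theorem~\ref{winter57} requires. (Alternatively one may apply Banach--Steinhaus directly on $D(\Omega)$, which is barrelled as a strict inductive limit of the Fr\'echet spaces $\mathcal{E}_{K_j}(\Omega)$; the route through the individual $\mathcal{E}_K(\Omega)$ is cleaner given the continuity criteria already established above.)
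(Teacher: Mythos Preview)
Your proof is correct and is the standard argument for sequential weak$^*$ completeness of $D'(\Omega)$. Note, however, that the paper does not supply its own proof of this theorem: it is simply quoted from \cite{Blanchard2003}, so there is no in-paper argument to compare against. Your route---reducing continuity to each $\mathcal{E}_K(\Omega)$ via Theorem~\ref{thmfallconvcont13}, then invoking Banach--Steinhaus on these Fr\'echet (hence barrelled) spaces to obtain a uniform estimate $|\langle T_i,\varphi\rangle|\leq C\|\varphi\|_{j,K}$ with $C,j$ independent of $i$, and finally passing to the limit---is exactly the classical proof one finds in the cited reference and elsewhere. The secondary point you flag (that on $\mathcal{E}_K(\Omega)$ the subspace topology from $\mathcal{E}(\Omega)$ coincides with the natural topology of the seminorms $\{\|\cdot\|_{j,K}\}_{j\in\mathbb{N}}$) is indeed what makes the equicontinuity output match the hypothesis of Theorem~\ref{winter57}, and it holds because for $\varphi$ supported in $K$ one has $\|\varphi\|_{j,K'}\leq\|\varphi\|_{j,K}$ for every $K'\in\mathcal{K}(\Omega)$.
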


\begin{definition}[Sobolev-Slobodeckij spaces]\lab{defa1}
Let $\Omega$ be a nonempty open set in $\reals^n$. Let $s\in \reals$ and $p\in (1,\infty)$.
\begin{itemize}
\item If $s=k\in \mathbb{N}_0$,
\begin{equation*}
W^{k,p}(\Omega)=\{u\in L^p (\Omega):
\|u\|_{W^{k,p}(\Omega)}:=\sum_{|\nu|\leq
k}\|\partial^{\nu}u\|_{L^p(\Omega)}<\infty\}
\end{equation*}
\item If $s=\theta\in(0,1)$,
\begin{equation*}
W^{\theta,p}(\Omega)=\{u\in L^p (\Omega):
 |u|_{W^{\theta,p}(\Omega)}:=\big(\int\int_{\Omega\times
\Omega}\frac{|u(x)-u(y)|^p}{|x-y|^{n+\theta p}}dx
dy\big)^{\frac{1}{p}} <\infty\}
\end{equation*}
\item If $s=k+\theta,\, k\in \mathbb{N}_0,\, \theta\in(0,1)$,
\begin{equation*}
W^{s,p}(\Omega)=\{u\in
W^{k,p}(\Omega):\|u\|_{W^{s,p}(\Omega)}:=\|u\|_{W^{k,p}(\Omega)}+\sum_{|\nu|=k}
|\partial^{\nu}u|_{W^{\theta,p}(\Omega)}<\infty\}
\end{equation*}
\item $W^{s,p}_0(\Omega)$ is defined as the closure of $C_c^\infty(\Omega)$ in $W^{s,p}(\Omega)$.
\item If $s<0$,
\begin{equation*}
W^{s,p}(\Omega)=(W^{-s,p'}_{0}(\Omega))^{*} \quad
(\frac{1}{p}+\frac{1}{p'}=1)
\end{equation*}
\item For all compact sets $K\subset \Omega$ we define
\begin{equation*}
W^{s,p}_K(\Omega)=\{u\in W^{s,p}(\Omega):
\textrm{supp}\,u\subseteq K\}
\end{equation*}
with $\|u\|_{W^{s,p}_K(\Omega)}:=\|u\|_{W^{s,p}(\Omega)}$. Note that for $s<0$, $W^{s,p}(\Omega)$ can be viewed as a subspace of $D'(\Omega)$ (see Theorem ~\ref{thmmay7701}) and the support of $u\in W^{s,p}(\Omega)$ is interpreted as the support of a distribution.
\item $ W^{s,p}_{comp}(\Omega):=\bigcup_{K\in\mathcal{K}(\Omega)}W^{s,p}_K(\Omega)$.
 $W^{s,p}_{comp}(\Omega)$ is equipped with the inductive limit topology with respect to the family of vector subspaces $\{W^{s,p}_K(\Omega)\}_{K\in\mathcal{K}(\Omega)}$. It can be shown that if $\{K_j\}_{j\in \mathbb{N}_0}$ is an exhaustion by compacts sets of $\Omega$, then the inductive limit topology on $W^{s,p}_{comp}(\Omega)$ with respect to the family $\{W^{s,p}_{K_j}(\Omega)\}_{j\in \mathbb{N}_0}$ is exactly the same as the inductive limit topology with respect to $\{W^{s,p}_K(\Omega)\}_{K\in\mathcal{K}(\Omega)}$.
\end{itemize}
\end{definition}

\begin{theorem}\lab{thmmay141029}
Let $\Omega$ be a nonempty open set in $\reals^n$, $s\geq 1$ and $1<p<\infty$. Then $u\in W^{s,p}(\Omega)$ if and only if $u\in L^p(\Omega)$ and for all $1\leq i\leq n$, $\displaystyle \frac{\partial u}{\partial x^i}\in W^{s-1,p}(\Omega)$.
\end{theorem}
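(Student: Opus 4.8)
The plan is to prove the stronger quantitative statement that, writing $s=k+\theta$ with $k=\floor{s}\geq 1$ and $\theta\in[0,1)$ (so that $s-1=(k-1)+\theta$), one has
\begin{equation*}
\|u\|_{W^{s,p}(\Omega)}\simeq \|u\|_{L^p(\Omega)}+\sum_{i=1}^n\Big\|\frac{\partial u}{\partial x^i}\Big\|_{W^{s-1,p}(\Omega)}
\end{equation*}
for every $u\in L^p(\Omega)$, where both sides are allowed to be $+\infty$ and the implicit constants depend only on $n$ and $k$. Granting this, the biconditional is immediate: if $u\in W^{s,p}(\Omega)$ the left side is finite, hence the right side is finite, which forces each $\partial u/\partial x^i\in W^{s-1,p}(\Omega)$; conversely, if $u\in L^p(\Omega)$ and each $\partial u/\partial x^i\in W^{s-1,p}(\Omega)$, the right side is finite, hence so is $\|u\|_{W^{s,p}(\Omega)}$, which (unwinding the definition of $W^{s,p}$, noting finiteness of the sum defining $\|u\|_{W^{k,p}(\Omega)}$ already gives $u\in W^{k,p}(\Omega)$) means $u\in W^{s,p}(\Omega)$.

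The engine is the elementary bookkeeping that every $\nu\in\mathbb{N}_0^n$ with $1\leq|\nu|\leq k$ can be written $\nu=\mu+e_i$, where $e_i$ is the $i$th standard unit vector, $\nu_i\geq 1$, and $|\mu|=|\nu|-1\leq k-1$; moreover iterated distributional derivatives satisfy $\partial^\nu u=\partial^\mu(\partial u/\partial x^i)$, and conversely $\partial^\mu(\partial u/\partial x^i)=\partial^{\mu+e_i}u$ for any $\mu$ with $|\mu|\leq k-1$ and any $i$. First I would use this to compare the ``$W^{k,p}$ parts'' of the two sides: splitting the sum defining $\|u\|_{W^{k,p}(\Omega)}$ into the $\nu=0$ term and the terms with $1\leq|\nu|\leq k$ and regrouping the latter by a choice of $i$ with $\nu_i\geq 1$,
\begin{equation*}
\|u\|_{L^p(\Omega)}+\sum_{i=1}^n\sum_{|\mu|\leq k-1}\Big\|\partial^\mu\frac{\partial u}{\partial x^i}\Big\|_{L^p(\Omega)}\;\simeq\;\sum_{|\nu|\leq k}\|\partial^\nu u\|_{L^p(\Omega)}=\|u\|_{W^{k,p}(\Omega)},
\end{equation*}
the two-sided bound holding because each $\partial^\nu u$ with $1\le|\nu|\le k$ appears on the left at least once and at most $n$ times. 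The left side here equals $\|u\|_{L^p(\Omega)}+\sum_i\|\partial u/\partial x^i\|_{W^{k-1,p}(\Omega)}$, using $W^{0,p}(\Omega)=L^p(\Omega)$ directly from the definition; this is exactly what makes the degenerate case $k=1$ uniform.

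If $\theta=0$ this already finishes the proof. If $\theta\in(0,1)$ it remains to compare the Slobodeckij seminorm contributions, which in $\|u\|_{W^{s,p}(\Omega)}$ involve precisely the top-order derivatives $\partial^\nu u$ with $|\nu|=k$, matched under the correspondence above by the case $|\mu|=k-1$: for $|\nu|=k$, choosing $i$ with $\nu_i\geq 1$ gives $|\partial^\nu u|_{W^{\theta,p}(\Omega)}=|\partial^\mu(\partial u/\partial x^i)|_{W^{\theta,p}(\Omega)}$ with $|\mu|=k-1$, and the same counting argument yields
\begin{equation*}
\sum_{|\nu|=k}|\partial^\nu u|_{W^{\theta,p}(\Omega)}\;\simeq\;\sum_{i=1}^n\sum_{|\mu|=k-1}\Big|\partial^\mu\frac{\partial u}{\partial x^i}\Big|_{W^{\theta,p}(\Omega)}.
\end{equation*}
When $k\geq 2$ the inner sum is the Slobodeckij part of $\|\partial u/\partial x^i\|_{W^{(k-1)+\theta,p}(\Omega)}$; when $k=1$ it is $|\partial u/\partial x^i|_{W^{\theta,p}(\Omega)}$, which combined with the $\|\partial u/\partial x^i\|_{L^p(\Omega)}$ term from the previous step is $\|\partial u/\partial x^i\|_{W^{\theta,p}(\Omega)}$. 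Adding the two displayed equivalences gives the asserted norm equivalence, and hence the theorem. I do not expect a genuine obstacle here: the argument is purely definitional, requiring care only with (i) the multiplicities in the correspondence $\nu\leftrightarrow(\mu,i)$, (ii) the separate treatment of $k=1$, where $W^{s-1,p}(\Omega)$ is $W^{\theta,p}(\Omega)$ (or $L^p(\Omega)$) rather than a space with a composite norm, and (iii) the observation that the Slobodeckij seminorm enters the $W^{s,p}$-norm only at top order $|\nu|=k$. The identity $\partial^\nu u=\partial^\mu(\partial u/\partial x^i)$ for $\nu=\mu+e_i$ is immediate from the definition of the distributional derivative and needs no analytic input.
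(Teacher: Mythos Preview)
Your proposal is correct and follows essentially the same route as the paper: both arguments rest on the elementary correspondence $\nu=\mu+e_i$ between multi-indices $\nu$ with $1\le|\nu|\le k$ and pairs $(\mu,i)$ with $|\mu|\le k-1$, applied first to the $L^p$ terms in the $W^{k,p}$-norm and then (when $\theta>0$) to the Slobodeckij seminorms at top order. The only difference is cosmetic: the paper phrases each step as a chain of membership equivalences, whereas you upgrade this to a two-sided norm estimate by tracking the multiplicities in the correspondence; this is a harmless strengthening and the underlying idea is identical.
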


\begin{proof}
We consider two cases:
\begin{itemizeX}
\item \textbf{Case 1: $s=k\in \mathbb{N}$}
\begin{align*}
u\in W^{k,p}(\Omega)&\Longleftrightarrow \textrm{$u\in L^p(\Omega)$ and $\partial^\alpha u\in L^p(\Omega)\quad \forall\, 1\leq |\alpha|\leq k$}\\
&\Longleftrightarrow \textrm{$u\in L^p(\Omega)$ and $\partial^\beta \big[\frac{\partial u}{\partial x^i}\big]\in L^p(\Omega)\quad \forall\, 0\leq |\beta|\leq k-1,\,1\leq i\leq n$}\\
&\Longleftrightarrow \textrm{$u\in L^p(\Omega)$ and $\frac{\partial u}{\partial x^i}\in W^{k-1,p}(\Omega)
\quad \forall\, 1\leq i\leq n$}
\end{align*}
\item \textbf{Case 2: $s=k+\theta$, $k\in \mathbb{N}$, $0<\theta<1$}
{\fontsize{9}{10}{\begin{align*}
&u\in W^{s,p}(\Omega)\Longleftrightarrow \textrm{$u\in W^{k,p}(\Omega)$ and $\frac{\partial^\nu u(x)-\partial^\nu u(y)}{|x-y|^{\frac{n}{p}+\theta}}\in L^p(\Omega\times \Omega)\quad \forall\, |\nu|= k$}\\
&\Longleftrightarrow \textrm{$u\in L^{p}(\Omega)$ and $\frac{\partial u}{\partial x^i}\in W^{k-1,p}(\Omega)
\quad \forall\, 1\leq i\leq n$ and $\frac{\partial^\nu u(x)-\partial^\nu u(y)}{|x-y|^{\frac{n}{p}+\theta}}\in L^p(\Omega\times \Omega)\quad \forall\, |\nu|= k$}\\
&\Longleftrightarrow \textrm{$u\in L^{p}(\Omega)$ and $\frac{\partial u}{\partial x^i}\in W^{k-1,p}(\Omega)
$ and $\frac{\partial^\beta \frac{\partial u}{\partial x^i}(x)-\partial^\beta \frac{\partial u}{\partial x^i}(y)}{|x-y|^{\frac{n}{p}+\theta}}\in L^p(\Omega\times \Omega)\quad \forall\, |\beta|= k-1\,\forall\, 1\leq i\leq n$}\\
&\Longleftrightarrow \textrm{$u\in L^p(\Omega)$ and $\frac{\partial u}{\partial x^i}\in W^{s-1,p}(\Omega)
\quad \forall\, 1\leq i\leq n$}
\end{align*}}}
\end{itemizeX}
\end{proof}

\begin{remark}\lab{remmay7646}
Let $\Omega$ be a nonempty open set in $\reals^n$, $s\in\reals$ and $1<p<\infty$. Clearly for $s\geq 0$, $C_c^\infty(\Omega)\subseteq W^{s,p}(\Omega)$. For $s<0$, it is easy to see that for all $\varphi\in C_c^\infty(\Omega)$, the map $l_\varphi: W^{-s,p'}_0(\Omega)\rightarrow \reals$ which sends $u\in W^{-s,p'}_0(\Omega)$ to $\int_\Omega u\varphi\,dx$ belongs to $[W^{-s,p'}_0(\Omega)]^*=W^{s,p}(\Omega)$. The map $\varphi\mapsto l_\varphi$ is one-to-one and we can use it to identify $C_c^\infty(\Omega)$ with a subspace of $W^{s,p}(\Omega)$; we sometimes refer to the map that sends $\varphi$ to $l_\varphi$ as the "identity map". So we can talk about the identity map from $C_c^\infty(\Omega)$ to $W^{s,p}(\Omega)$ for all $s\in \reals$.
\end{remark}

\begin{theorem}\cite{holstbehzadan2018b}\lab{thmmay7658}
Let $\Omega$ be a nonempty open set in $\reals^n$, $s\geq 0$, and $1<p<\infty$. Then $W^{s,p}(\Omega)$ is a reflexive Banach space.
\end{theorem}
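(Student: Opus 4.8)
The plan is to realize $W^{s,p}(\Omega)$ as a closed subspace of a finite product of $L^p$-spaces and then invoke the two standard Banach-space facts that a finite product of reflexive spaces is reflexive and that a closed subspace of a reflexive Banach space is reflexive. Write $k=\floor{s}$ and $\theta=s-k\in[0,1)$, and let $\mathcal{A}=\{\nu\in\mathbb{N}_0^n:|\nu|\le k\}$, $\mathcal{B}=\{\nu\in\mathbb{N}_0^n:|\nu|=k\}$, both finite index sets. First I would introduce the target space: when $\theta=0$ set
\begin{equation*}
Z=\prod_{\nu\in\mathcal{A}}L^p(\Omega),\qquad Tu=(\partial^\nu u)_{\nu\in\mathcal{A}},
\end{equation*}
and when $\theta\in(0,1)$ set
\begin{equation*}
Z=\Big(\prod_{\nu\in\mathcal{A}}L^p(\Omega)\Big)\times\Big(\prod_{\nu\in\mathcal{B}}L^p(\Omega\times\Omega)\Big),\qquad Tu=\Big((\partial^\nu u)_{\nu\in\mathcal{A}},\ (g^u_\nu)_{\nu\in\mathcal{B}}\Big),
\end{equation*}
where $g^u_\nu(x,y)=\big(\partial^\nu u(x)-\partial^\nu u(y)\big)/|x-y|^{\frac{n}{p}+\theta}$. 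Equip $Z$ with the norm equal to the sum of the $L^p$-norms of the components. By the very definition of $\|\cdot\|_{W^{s,p}(\Omega)}$ in Definition~\ref{defa1}, $T$ is a well-defined linear isometry of $W^{s,p}(\Omega)$ into $Z$ (in particular $\|\cdot\|_{W^{s,p}(\Omega)}$ is a norm). Since $1<p<\infty$, every $L^p$ factor is reflexive, and a finite product of reflexive Banach spaces is reflexive, so $Z$ is a reflexive Banach space.

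The crux is then to show that $T\big(W^{s,p}(\Omega)\big)$ is closed in $Z$, which is equivalent to $W^{s,p}(\Omega)$ being complete. Granting this, $T\big(W^{s,p}(\Omega)\big)$ is a closed subspace of the reflexive space $Z$, hence reflexive, and $T$ is an isometric isomorphism of $W^{s,p}(\Omega)$ onto it; therefore $W^{s,p}(\Omega)$ is a reflexive Banach space. To prove completeness, take a Cauchy sequence $\{u_m\}$ in $W^{s,p}(\Omega)$. Then $\{Tu_m\}$ is Cauchy in the Banach space $Z$ and so converges, say to $\big((v_\nu)_{\nu\in\mathcal{A}},(w_\nu)_{\nu\in\mathcal{B}}\big)$ (the second block being absent when $\theta=0$). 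In particular $u_m=\partial^0 u_m\to v_0$ in $L^p(\Omega)$, hence in $D'(\Omega)$; since distributional differentiation is sequentially continuous on $D'(\Omega)$, $\partial^\nu u_m\to\partial^\nu v_0$ in $D'(\Omega)$, while also $\partial^\nu u_m\to v_\nu$ in $L^p(\Omega)\hookrightarrow D'(\Omega)$, so $v_\nu=\partial^\nu v_0\in L^p(\Omega)$ for every $\nu\in\mathcal{A}$, i.e. $v_0\in W^{k,p}(\Omega)$. When $\theta\in(0,1)$ it remains to identify $w_\nu$ with $g^{v_0}_\nu$: passing to a subsequence we may assume $\partial^\nu u_m\to\partial^\nu v_0$ a.e. on $\Omega$ and $g^{u_m}_\nu\to w_\nu$ a.e. on $\Omega\times\Omega$ for each $\nu\in\mathcal{B}$, and then along this subsequence $g^{u_m}_\nu(x,y)\to g^{v_0}_\nu(x,y)$ for a.e. $(x,y)$, forcing $w_\nu=g^{v_0}_\nu$ a.e.; in particular $g^{v_0}_\nu\in L^p(\Omega\times\Omega)$, i.e. $|\partial^\nu v_0|_{W^{\theta,p}(\Omega)}<\infty$. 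Hence $v_0\in W^{s,p}(\Omega)$ and $Tu_m\to Tv_0$ in $Z$, i.e. $u_m\to v_0$ in $W^{s,p}(\Omega)$, which establishes completeness.

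I expect the only genuine subtlety to be in the completeness step, specifically the passage to an a.e.-convergent subsequence and the identification of the $L^p$-limit of the Gagliardo difference quotients $g^{u_m}_\nu$ with the difference quotient $g^{v_0}_\nu$ of the limit; the functional-analytic wrapper (finite products and closed subspaces of reflexive spaces are reflexive) and the verification that $T$ is an isometry are routine. For $s=0$ the assertion reduces to the reflexivity of $L^p(\Omega)$, and for $s=k\in\mathbb{N}_0$ the difference-quotient block is simply omitted throughout the argument, so these cases are covered automatically.
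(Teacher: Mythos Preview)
Your argument is correct and is the standard route to reflexivity of $W^{s,p}(\Omega)$: realize the space as a closed (i.e.\ complete) subspace of a finite product of $L^p$ spaces via the isometry $T$, and then invoke that closed subspaces of reflexive spaces are reflexive. The identification of the limits $v_\nu=\partial^\nu v_0$ via continuity of distributional differentiation, and $w_\nu=g^{v_0}_\nu$ via a.e.-convergent subsequences, is handled correctly.

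There is nothing to compare against here: the paper does not supply its own proof of this theorem but simply cites \cite{holstbehzadan2018b}. Your write-up is exactly the kind of argument one would expect to find in that reference (and is essentially the classical Adams--Fournier proof for integer $s$, augmented by the Gagliardo difference-quotient components for the fractional part).
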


\begin{corollary}\lab{cormay7659}
Let $\Omega$ be a nonempty open set in $\reals^n$, $s\geq 0$, and $1<p<\infty$. A closed subspace of a reflexive space is reflexive, so $W^{s,p}_0(\Omega)$ is reflexive. Dual of a reflexive Banach space is a reflexive Banach space, so $W^{-s,p'}(\Omega)$ is a reflexive Banach space.
\end{corollary}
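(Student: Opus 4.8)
The statement collects two consequences of Theorem~\ref{thmmay7658} together with standard facts about reflexive Banach spaces, so the proof is short. First I would observe that, by Definition~\ref{defa1}, $W^{s,p}_0(\Omega)$ is the closure of $C_c^\infty(\Omega)$ in $W^{s,p}(\Omega)$, hence a closed linear subspace of the Banach space $W^{s,p}(\Omega)$, and therefore a Banach space in its own right under the restricted norm.

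Next I would invoke the classical fact (see, e.g., \cite{Brezis2011}) that a closed subspace of a reflexive Banach space is reflexive, applied to $W^{s,p}_0(\Omega)\subseteq W^{s,p}(\Omega)$, the latter being reflexive by Theorem~\ref{thmmay7658}. If a self-contained argument is preferred: given $\phi\in (W^{s,p}_0(\Omega))^{**}$, define $\tilde\phi\in (W^{s,p}(\Omega))^{**}$ by $\tilde\phi(f)=\phi(f|_{W^{s,p}_0(\Omega)})$; reflexivity of $W^{s,p}(\Omega)$ yields $u\in W^{s,p}(\Omega)$ with $\tilde\phi=J(u)$, and a Hahn--Banach separation argument (any $f\in (W^{s,p}(\Omega))^*$ vanishing on the closed subspace $W^{s,p}_0(\Omega)$ also annihilates $u$) forces $u\in W^{s,p}_0(\Omega)$ and then $J_{W^{s,p}_0(\Omega)}(u)=\phi$. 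Either way $W^{s,p}_0(\Omega)$ is reflexive.

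For the negative-order space I would unwind Definition~\ref{defa1}. If $s=0$ then $W^{-s,p'}(\Omega)=W^{0,p'}(\Omega)=L^{p'}(\Omega)$, which is a reflexive Banach space by Theorem~\ref{thmmay7658}. If $s>0$, then since $\tfrac1p+\tfrac1{p'}=1$ gives $(p')'=p$, the definition reads $W^{-s,p'}(\Omega)=\big(W^{s,(p')'}_0(\Omega)\big)^*=\big(W^{s,p}_0(\Omega)\big)^*$. The topological dual of a Banach space is a Banach space, and by Theorem~\ref{thmmay6758} the strong dual of a reflexive space is reflexive; applying this to the reflexive Banach space $W^{s,p}_0(\Omega)$ shows that $W^{-s,p'}(\Omega)$ is a reflexive Banach space.

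There is no real obstacle here. The only points requiring mild care are the index bookkeeping ($p\leftrightarrow p'$ and the degenerate case $s=0$) and, if one wants to avoid citing a black box, the Hahn--Banach step showing that the preimage $u$ actually lies in the closed subspace $W^{s,p}_0(\Omega)$ rather than merely in $W^{s,p}(\Omega)$.
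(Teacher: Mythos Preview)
Your proposal is correct and follows exactly the approach the paper takes: the paper gives no separate proof, as the argument is embedded in the corollary statement itself (closed subspaces of reflexive spaces are reflexive, and duals of reflexive Banach spaces are reflexive). Your write-up simply unpacks these two facts with more detail, including the harmless case split at $s=0$ and an optional self-contained Hahn--Banach argument, neither of which is needed but neither of which is wrong.
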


\begin{remark}\lab{remmay77}
Let $\Omega$ be a nonempty open set in $\reals^n$, $s\geq 0$, and $1<p<\infty$. Since $W^{s,p}_0(\Omega)$ is reflexive, it can be identified with $[W^{s,p}_0(\Omega)]^{**}$ and we may write $[W^{-s,p'}(\Omega)]^*=W^{s,p}_0(\Omega)$ and talk about the duality pairing $\langle u, f \rangle_{W^{s,p}_0(\Omega)\times W^{-s,p'}(\Omega)}$. To be more precise, we notice that, the identification of $[W^{s,p}_0(\Omega)]^{**}$ and $W^{s,p}_0(\Omega)$ is done by the evaluation map
\begin{equation*}
J: W^{s,p}_0(\Omega)\rightarrow [W^{s,p}_0(\Omega)]^{**}\qquad J(u)[f]=f(u)
\end{equation*}
Therefore for all $u\in W^{s,p}_0(\Omega)$ and $f\in W^{-s,p'}(\Omega)$
\begin{equation*}
\langle u, f \rangle_{W^{s,p}_0(\Omega)\times W^{-s,p'}(\Omega)}=
\langle J(u), f \rangle_{[W^{s,p}_0(\Omega)]^{**}\times W^{-s,p'}(\Omega)}=f(u)=
\langle f, u\rangle_{W^{-s,p'}(\Omega)\times W^{s,p}_0(\Omega)}
\end{equation*}
\end{remark}

\begin{theorem}\lab{thmmay7647}
Let $\Omega$ be a nonempty open set in $\reals^n$, $s\geq 0$, and $1<p<\infty$. Then $C_c^\infty(\Omega)$ is dense in $W^{-s,p'}(\Omega)$. We may write this as $W^{-s,p'}_0(\Omega)=W^{-s,p'}(\Omega)$.
\end{theorem}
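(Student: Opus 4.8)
The plan is to argue by duality and the Hahn--Banach theorem. By Definition~\ref{defa1}, $W^{-s,p'}(\Omega) = [W^{s,p}_0(\Omega)]^*$ is a Banach space, and a linear subspace of a normed space is dense precisely when its annihilator in the dual space is trivial. Since $s\geq 0$, Corollary~\ref{cormay7659} tells us that $W^{s,p}_0(\Omega)$ is reflexive, so the evaluation map identifies $[W^{-s,p'}(\Omega)]^* = \big[(W^{s,p}_0(\Omega))^*\big]^*$ with $W^{s,p}_0(\Omega)$ itself, with the duality pairing normalized as in Remark~\ref{remmay77}: every continuous linear functional on $W^{-s,p'}(\Omega)$ has the form $f\mapsto f(u)$ for a unique $u\in W^{s,p}_0(\Omega)$. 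Hence it suffices to show that if $u\in W^{s,p}_0(\Omega)$ satisfies $f(u)=0$ for every $f$ lying in the image of $C_c^\infty(\Omega)$ inside $W^{-s,p'}(\Omega)$, then $u=0$.

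To carry this out, recall from Remark~\ref{remmay7646} that a test function $\varphi\in C_c^\infty(\Omega)$ is identified with the functional $l_\varphi\in W^{-s,p'}(\Omega)$ acting by $l_\varphi(v)=\int_\Omega v\varphi\,dx$. Since $s\geq 0$ we have the continuous inclusions $W^{s,p}_0(\Omega)\subseteq W^{s,p}(\Omega)\subseteq L^p(\Omega)\subseteq L^1_{loc}(\Omega)$, so for $u\in W^{s,p}_0(\Omega)$ the quantity $l_\varphi(u)=\int_\Omega u\varphi\,dx$ is exactly the pairing of $u$, regarded as a locally integrable function (equivalently, as a distribution in $D'(\Omega)$), with $\varphi$. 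The hypothesis $l_\varphi(u)=0$ for all $\varphi\in C_c^\infty(\Omega)$ therefore says that $u$ vanishes as a distribution on $\Omega$; by the fundamental lemma of the calculus of variations, $u=0$ almost everywhere, hence $u=0$ in $L^p(\Omega)$ and a fortiori in $W^{s,p}_0(\Omega)$.

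It follows that the annihilator of $C_c^\infty(\Omega)$ in $[W^{-s,p'}(\Omega)]^*$ reduces to $\{0\}$, so $C_c^\infty(\Omega)$ is norm-dense in $W^{-s,p'}(\Omega)$, i.e. $W^{-s,p'}_0(\Omega)=W^{-s,p'}(\Omega)$. The only delicate point is the reflexivity-based identification of the bidual together with the bookkeeping of the duality pairings from Remark~\ref{remmay77}, which is what guarantees that testing against $C_c^\infty(\Omega)$ genuinely detects all of $[W^{-s,p'}(\Omega)]^*$; once this is in hand, the argument collapses to the classical fact that a locally integrable function with vanishing integral against every test function is zero (for $s=0$ this just recovers the standard density of $C_c^\infty(\Omega)$ in $L^{p'}(\Omega)$).
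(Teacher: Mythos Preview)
Your proof is correct and follows essentially the same route as the paper: both use the Hahn--Banach criterion for density, invoke the reflexivity of $W^{s,p}_0(\Omega)$ (Corollary~\ref{cormay7659}) to identify $[W^{-s,p'}(\Omega)]^*$ with $W^{s,p}_0(\Omega)$, and then reduce to the fundamental lemma of the calculus of variations via the explicit form $l_\varphi(u)=\int_\Omega u\varphi\,dx$ from Remark~\ref{remmay7646}.
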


\begin{proof}
Our proof will be based on a similar argument given in page 65 of \cite{32}. Let $\varphi\mapsto l_\varphi$ be the mapping introduced in Remark ~\ref{remmay7646}. Our goal is to show that the set
\begin{equation*}
V:=\{l_\varphi: \varphi\in C_c^\infty(\Omega)\}
\end{equation*}
is dense in $W^{-s,p'}(\Omega)$. To this end it is enough to show that if $F\in [W^{-s,p'}(\Omega)]^*$ is such that $F(l_\varphi)=0$ for all $\varphi\in C_c^\infty(\Omega)$, then $F=0$. Indeed, let $F$ be such an element. By reflexivity of $W^{s,p}_0(\Omega)$ there exists $f\in W^{s,p}_0(\Omega)$ such that
\begin{equation*}
\forall\,v\in W^{-s,p'}(\Omega)\qquad F(v)=v(f)
\end{equation*}
Thus for all $\varphi\in C_c^\infty(\Omega)$ we have
\begin{equation*}
0=F(l_\varphi)=l_\varphi(f)=\int_\Omega f(x)\varphi(x)\,dx
\end{equation*}
So by the fundamental lemma of the calculus of variations (see \cite{Brezis2011}, Page 110) we have $f=0$ (as an element of $W^{s,p}(\Omega)\subseteq L^1_{loc}(\Omega)$) and therefore $F=0$.
\end{proof}

\begin{theorem}\lab{thmmay7701}
Let $\Omega$ be a nonempty open set in $\reals^n$, $s\in \reals$, and $1<p<\infty$. Equip $D'(\Omega)$ with weak$^{*}$ topology or strong topology. Then
\begin{equation*}
D(\Omega)\hookrightarrow W^{s,p}(\Omega)\hookrightarrow D'(\Omega)
\end{equation*}
\end{theorem}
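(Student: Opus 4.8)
The plan is to treat $s\ge 0$ and $s<0$ separately, since the definition of $W^{s,p}(\Omega)$ is genuinely different in the two ranges. In each case there are two inclusions to establish: identify $W^{s,p}(\Omega)$ with a subspace of $D'(\Omega)$ via an injective linear map and check continuity (into either the weak$^*$ or the strong topology), and show the inclusion $D(\Omega)\to W^{s,p}(\Omega)$ is continuous by reducing to the pieces $\mathcal E_K(\Omega)$ of the inductive limit.

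\textbf{The case $s\ge 0$.} For the right inclusion, every $u\in W^{s,p}(\Omega)\subseteq L^p(\Omega)\subseteq L^1_{loc}(\Omega)$ defines a distribution $u_f$ by $\varphi\mapsto\int_\Omega u\varphi\,dx$ as in Remark~\ref{winter61}, and this assignment is injective by the fundamental lemma of the calculus of variations (exactly as in the proof of Theorem~\ref{thmmay7647}). To see it is continuous I would use that $W^{s,p}(\Omega)$ is a Banach, hence Frechet, space and invoke Theorem~\ref{thmapptvconvergence2}: it suffices that $u_n\to 0$ in $W^{s,p}(\Omega)$ forces $(u_n)_f\to 0$ in $D'(\Omega)$. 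Since $\|u_n\|_{L^p(\Omega)}\le\|u_n\|_{W^{s,p}(\Omega)}\to 0$, Hölder's inequality gives $\langle (u_n)_f,\varphi\rangle\to 0$ for each $\varphi\in D(\Omega)$, i.e. $(u_n)_f\to 0$ in the weak$^*$ topology; by the Montel property recorded in Remark~\ref{winter32} this is the same as strong convergence, so continuity holds for either topology on $D'(\Omega)$.

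For the left inclusion (still $s\ge 0$), by the inductive-limit description of $D(\Omega)$ and Theorem~\ref{thmfallconvcont13} it is enough to prove $\mathcal E_K(\Omega)\to W^{s,p}(\Omega)$ is continuous for each $K\in\mathcal K(\Omega)$, which by Theorem~\ref{winter57} reduces to a bound $\|\varphi\|_{W^{s,p}(\Omega)}\le C_K\|\varphi\|_{\floor{s}+1,K}$ for $\varphi$ supported in $K$. The $L^p$ and integer-derivative contributions are bounded at once by $|K|^{1/p}\|\varphi\|_{\floor{s}+1,K}$; the one genuine calculation, and the main obstacle of the proof, is the Slobodeckij seminorm $|\partial^\nu\varphi|_{W^{\theta,p}(\Omega)}$ with $|\nu|=\floor{s}$, which I would control by splitting $\Omega\times\Omega$ into $\{|x-y|<1\}$ and $\{|x-y|\ge 1\}$. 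On the first piece $|\partial^\nu\varphi(x)-\partial^\nu\varphi(y)|\le\|\nabla\partial^\nu\varphi\|_{L^\infty}|x-y|$ and $\int_{|z|<1}|z|^{p-n-\theta p}\,dz<\infty$ because $p(1-\theta)>0$; on the second piece the numerator vanishes unless $x$ or $y$ lies in $K$, and $\int_{|z|\ge 1}|z|^{-n-\theta p}\,dz<\infty$. Both integrals carry a factor $|K|$, so the required bound follows and the inclusion is continuous.

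\textbf{The case $s<0$.} Write $t=-s>0$, so $W^{s,p}(\Omega)=(W^{t,p'}_0(\Omega))^*$. For $D(\Omega)\hookrightarrow W^{s,p}(\Omega)$ I would use the injective map $\varphi\mapsto l_\varphi$, $l_\varphi(u)=\int_\Omega u\varphi\,dx$, of Remark~\ref{remmay7646}; again reducing to $\mathcal E_K(\Omega)$ and using $\|u\|_{L^{p'}(\Omega)}\le\|u\|_{W^{t,p'}_0(\Omega)}$, Hölder gives $\|l_\varphi\|_{(W^{t,p'}_0(\Omega))^*}\le\|\varphi\|_{L^p(\Omega)}\le|K|^{1/p}\|\varphi\|_{0,K}$, the bound required by Theorem~\ref{winter57}. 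For $W^{s,p}(\Omega)\hookrightarrow D'(\Omega)$, note that the inclusion $\iota:D(\Omega)\hookrightarrow W^{t,p'}_0(\Omega)$ is continuous (this is the $t>0$ case just treated, together with $C_c^\infty(\Omega)\subseteq W^{t,p'}_0(\Omega)$, the latter carrying the subspace topology) and has dense image by the very definition of $W^{t,p'}_0(\Omega)$ as the closure of $C_c^\infty(\Omega)$. Then Theorem~\ref{thmfallinjectiveadjoint1} shows the adjoint $\iota^*:(W^{t,p'}_0(\Omega))^*=W^{s,p}(\Omega)\to D'(\Omega)$ is well-defined, linear, continuous, and injective; since the strong topology on $(W^{t,p'}_0(\Omega))^*$ is its Banach-space norm topology (Remark~\ref{winter32}) and the weak$^*$ topology on $D'(\Omega)$ is coarser than the strong one, continuity holds for either topology on $D'(\Omega)$. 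Finally $\langle\iota^*u,\varphi\rangle_{D'(\Omega)\times D(\Omega)}=u(\varphi)$, so $\iota^*$ is precisely the natural inclusion, which completes the proof.
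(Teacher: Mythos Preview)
Your proof is correct and follows the same overall architecture as the paper: split on the sign of $s$, handle $W^{s,p}(\Omega)\hookrightarrow D'(\Omega)$ for $s\ge 0$ via $L^p\hookrightarrow D'$ and sequential continuity, and for $s<0$ obtain $W^{s,p}(\Omega)\hookrightarrow D'(\Omega)$ as the adjoint of the dense continuous inclusion $D(\Omega)\hookrightarrow W^{-s,p'}_0(\Omega)$ via Theorem~\ref{thmfallinjectiveadjoint1}.

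Where you differ is in the two ``left'' inclusions. For $s\ge 0$ the paper simply cites \cite{holstbehzadan2018b} for $D(\Omega)\hookrightarrow W^{s,p}(\Omega)$, whereas you carry out the Slobodeckij-seminorm estimate by hand (splitting on $|x-y|\lessgtr 1$); this is more self-contained and makes the dependence on $K$ explicit. For $s<0$ the paper obtains $D(\Omega)\hookrightarrow W^{s,p}(\Omega)$ by a second duality step: it shows $W^{-s,p'}_0(\Omega)\hookrightarrow D'(\Omega)$ is continuous with dense image and then takes adjoints, invoking the reflexivity $[D'(\Omega)]^*=D(\Omega)$ (Theorem~\ref{thmmay111020}). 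Your route via the direct H\"older bound $\|l_\varphi\|\le\|\varphi\|_{L^p(\Omega)}\le |K|^{1/p}\|\varphi\|_{0,K}$ is more elementary and avoids appealing to the reflexivity of $D(\Omega)$, at the cost of not exhibiting the pleasant symmetry of the paper's argument (both inclusions in the $s<0$ case arising as adjoints).
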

\begin{proof}
Recall that the convergent sequences in $D'(\Omega)$ equipped with strong topology are exactly the same as the convergent sequence of $D'(\Omega)$ equipped with the weak$^*$ topology (see Remark ~\ref{winter32}). This together with the Theorem ~\ref{thmapptvconvergence2} imply that in the study of the continuity of the inclusion map from $W^{s,p}(\Omega)$ to $D'(\Omega)$, it does not matter whether we equip $D'(\Omega)$ with the strong topology or weak$^*$ topology. In the proof, as usual, we assume $D'(\Omega)$ is equipped with the strong topology. We consider two cases:
\begin{itemizeX}
\item \textbf{Case 1: $s\geq 0$} The continuity of the embedding $D(\Omega)\hookrightarrow W^{s,p}(\Omega)$ has been studied in \cite{holstbehzadan2018b}. Also clearly $W^{s,p}(\Omega)\hookrightarrow L^p(\Omega)\hookrightarrow D'(\Omega)$. The former continuous embedding holds by the definition of $W^{s,p}(\Omega)$ and the latter embedding is continuous because if $u_m\rightarrow 0$ in $L^p(\Omega)$, then for all $\varphi\in D(\Omega)$
    \begin{equation*}
    |\langle u_m,\varphi\rangle_{D'(\Omega)\times D(\Omega)}-0|=|\int_\Omega u_m\varphi\,dx|\leq \|u_m\|_p\|\varphi\|_\infty\rightarrow 0
    \end{equation*}
    So $u_m\rightarrow 0$ in $D'(\Omega)$. This implies the continuity of the inclusion map from $L^p(\Omega)$ to $D'(\Omega)$ by Theorem ~\ref{thmapptvconvergence2}.
\item \textbf{Case 2: $s<0$} Since $W^{-s,p'}_0(\Omega)\hookrightarrow W^{-s,p'}(\Omega)$, it follows from previous case that $W^{-s,p'}_0(\Omega)\hookrightarrow D'(\Omega)$. Also since $D(\Omega)\subseteq W^{-s,p'}_0(\Omega)$ is dense in $D'(\Omega)$ (see Theorem ~\ref{thmmay7643}, Theorem ~\ref{thmmay111006}, and Remark ~\ref{remmay111008}), it follows that the inclusion map from $W^{-s,p'}_0(\Omega)$ to $D'(\Omega)$ is continuous with dense image. Thus, by Theorem ~\ref{thmfallinjectiveadjoint1}, $D(\Omega)\hookrightarrow W^{s,p}(\Omega)$. Here we used the facts that 1) the strong dual of the normed space $W^{-s,p'}_0(\Omega)$ is $W^{s,p}(\Omega)$ and that 2) the dual of $(D'(\Omega),\textrm{strong topology})$ is $D(\Omega)$ (see Theorem ~\ref{thmmay111020}).
      It remains to show that $W^{s,p}(\Omega)\hookrightarrow D'(\Omega)$. It follows from Case 1 that $D(\Omega)\hookrightarrow W^{-s,p'}_0(\Omega)$ and by definition $D(\Omega)$ is dense in $W^{-s,p'}_0(\Omega)$. So, by Theorem ~\ref{thmfallinjectiveadjoint1}, $W^{s,p}(\Omega)\hookrightarrow D'(\Omega)$.
\end{itemizeX}
\end{proof}

\begin{remark}\lab{remapril28610}
Note that for $s\leq 0$, $W^{s,p}_0(\Omega)$ is the same as $W^{s,p}(\Omega)$. For $s> 0$, $W^{s,p}_0(\Omega)$ is a subspace of $W^{s,p}(\Omega)$ which contains $C_c^\infty(\Omega)$. So it follows from the previous theorem that
\begin{equation*}
D(\Omega)\hookrightarrow W^{s,p}_0(\Omega)\hookrightarrow D'(\Omega)
\end{equation*}
To be more precise we should note that for $s<0$, we identify $\varphi\in D(\Omega)$ with the corresponding distribution in $D'(\Omega)$. Under this identification, for all $s\in \reals$ the "identity map" $i: D(\Omega)\rightarrow W^{s,p}_0(\Omega)$ is continuous with dense image and so its adjoint $i^*: [W^{s,p}_0(\Omega)]^*\rightarrow D'(\Omega)$ will be an injective continuous map (Theorem ~\ref{thmfallinjectiveadjoint1}) and we have
\begin{equation*}
\langle i^* u,\varphi\rangle_{D'(\Omega)\times D(\Omega)}=\langle u,i\,\varphi\rangle_{[W^{s,p}_{0}(\Omega)]^*\times W^{s,p}_{0}(\Omega)}=\langle u,\varphi\rangle_{[W^{s,p}_{0}(\Omega)]^*\times W^{s,p}_{0}(\Omega)}
\end{equation*}
We usually identify $[W^{s,p}_{0}(\Omega)]^*$ with its image under $i^*$ and view $[W^{s,p}_{0}(\Omega)]^*$ as a subspace of $D'(\Omega)$. So, under this identification, we can rewrite the above equality as follows:
\begin{equation*}
\forall\, u\in [W^{s,p}_{0}(\Omega)]^*\,\, \forall\,\varphi\in D(\Omega)\qquad \langle  u,\varphi\rangle_{D'(\Omega)\times D(\Omega)}=
\langle u,\varphi\rangle_{[W^{s,p}_{0}(\Omega)]^*\times W^{s,p}_{0}(\Omega)}
\end{equation*}
Finally noting that for all $s\in \reals$ and $1<p<\infty$, $[W^{s,p}_{0}(\Omega)]^*=W^{-s,p'}_0(\Omega)$ (see Definition ~\ref{defa1}, Theorem ~\ref{thmmay7647}, and Corollary ~\ref{cormay7659}), we can write
\begin{equation*}
\forall\, u\in W^{-s,p'}_{0}(\Omega)\,\, \forall\,\varphi\in D(\Omega)\qquad \langle  u,\varphi\rangle_{D'(\Omega)\times D(\Omega)}=
\langle u,\varphi\rangle_{W^{-s,p'}_{0}(\Omega)\times W^{s,p}_{0}(\Omega)}
\end{equation*}
\end{remark}

\begin{theorem}\lab{thmapril271}
Let $\Omega$ be a nonempty open set in $\reals^n$, $s\geq 0$, and $1<p<\infty$. Then
\begin{enumerate}
\item The mapping $F\mapsto F|_{C_c^\infty(\Omega)}$ is an isometric isomorphism between $W^{-s,p'}(\Omega)$ and $[C_c^\infty(\Omega),\|.\|_{s,p}]^*$.
\item Suppose $u\in D'(\Omega)$. If $u: (C_c^\infty(\Omega),\|.\|_{-s,p'})\rightarrow \reals$ is continuous, then $u\in W^{s,p}_0(\Omega)$ (more precisely, there is a unique element in $W^{s,p}_0(\Omega)$ whose corresponding distribution is $u$). Moreover,
    \begin{equation*}
    \|u\|_{W^{s,p}_0(\Omega)}=\sup_{0\not\equiv\varphi\in C_c^\infty(\Omega)}\frac{\langle u,\varphi\rangle_{D'(\Omega)\times D(\Omega)}}{\|\varphi\|_{W^{-s,p'}(\Omega)}}
    \end{equation*}
\end{enumerate}
\end{theorem}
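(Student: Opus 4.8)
The plan is to reduce both assertions to a single principle: a bounded linear functional on a dense linear subspace of a normed space has a unique, isometric extension to the whole space. The two facts that activate this principle are the duality identification $W^{-s,p'}(\Omega)=[W^{s,p}_0(\Omega)]^*$ (valid for all $s\geq 0$) and the reflexivity of $W^{s,p}_0(\Omega)$ from Theorem~\ref{thmmay7658} and Corollary~\ref{cormay7659}.

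I would begin by pinning down $W^{-s,p'}(\Omega)=[W^{s,p}_0(\Omega)]^*$. For $s>0$ this is Definition~\ref{defa1} applied to the negative index $-s$ (recall $(p')'=p$), and for $s=0$ it is the classical identification $L^{p'}(\Omega)=[L^p(\Omega)]^*$ together with $W^{0,p}_0(\Omega)=L^p(\Omega)$, the latter because $C_c^\infty(\Omega)$ is dense in $L^p(\Omega)$. Granting this, part (1) is the density principle in its cleanest form: $W^{s,p}_0(\Omega)$ is by definition the closure of $C_c^\infty(\Omega)$ in $W^{s,p}(\Omega)$, so restriction $F\mapsto F|_{C_c^\infty(\Omega)}$ sends $[W^{s,p}_0(\Omega)]^*$ into $[C_c^\infty(\Omega),\|\cdot\|_{s,p}]^*$; it is injective (a continuous functional vanishing on a dense set is zero), surjective (a bounded functional on the dense subspace $C_c^\infty(\Omega)$ extends to its closure), and isometric (the supremum computing the $[W^{s,p}_0(\Omega)]^*$-norm may be taken over the dense subset $C_c^\infty(\Omega)$).

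For part (2), let $\iota\colon (C_c^\infty(\Omega),\|\cdot\|_{-s,p'})\to W^{-s,p'}(\Omega)$ be the isometric embedding $\varphi\mapsto l_\varphi$ of Remark~\ref{remmay7646} (for $s=0$ this is just the inclusion $C_c^\infty(\Omega)\hookrightarrow L^{p'}(\Omega)$); by Theorem~\ref{thmmay7647} its image is dense, so $W^{-s,p'}(\Omega)$ is a completion of $(C_c^\infty(\Omega),\|\cdot\|_{-s,p'})$. The hypothesis makes $u|_{C_c^\infty(\Omega)}$ bounded for $\|\cdot\|_{-s,p'}$, hence---transported along $\iota$---it extends uniquely to $\tilde u\in[W^{-s,p'}(\Omega)]^*$ with
\[
\|\tilde u\|_{[W^{-s,p'}(\Omega)]^*}=\sup_{0\not\equiv\varphi\in C_c^\infty(\Omega)}\frac{|\langle u,\varphi\rangle_{D'(\Omega)\times D(\Omega)}|}{\|\varphi\|_{W^{-s,p'}(\Omega)}}\,.
\]
Because $W^{s,p}_0(\Omega)$ is reflexive, the canonical map $J\colon W^{s,p}_0(\Omega)\to [W^{-s,p'}(\Omega)]^*=[[W^{s,p}_0(\Omega)]^*]^*$ is an isometric isomorphism, so $\tilde u=J(f)$ for a unique $f\in W^{s,p}_0(\Omega)$ with $\|f\|_{W^{s,p}_0(\Omega)}=\|\tilde u\|$. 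Evaluating on test functions, $\langle u,\varphi\rangle_{D'(\Omega)\times D(\Omega)}=\tilde u(l_\varphi)=l_\varphi(f)=\int_\Omega f\varphi\,dx$ for every $\varphi\in C_c^\infty(\Omega)$, so $u$ equals the distribution determined by $f$; the injectivity of $W^{s,p}(\Omega)\hookrightarrow L^p(\Omega)\hookrightarrow D'(\Omega)$ from Theorem~\ref{thmmay7701} both forces uniqueness of $f$ and lets us identify $u$ with $f\in W^{s,p}_0(\Omega)$. Finally, $\|u\|_{W^{s,p}_0(\Omega)}=\|f\|_{W^{s,p}_0(\Omega)}=\|\tilde u\|$, and feeding this into the displayed identity (and replacing $\varphi$ by $-\varphi$ to discard the absolute value) gives the stated norm formula.

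The extension-by-density steps and the $L^p$ duality input are routine, and all the required structural results are already in place. The one place that genuinely needs attention is the last paragraph's chain of identifications: one must verify that the abstract extension $\tilde u$, once transported through $J$ to an honest element $f$ of $W^{s,p}_0(\Omega)$, really agrees with the original distribution $u$ on all of $D(\Omega)$---and this is precisely the computation $\tilde u(l_\varphi)=\int_\Omega f\varphi\,dx$, which works because the pairing $\langle l_\varphi,\cdot\rangle$ is the integral pairing built into Remark~\ref{remmay7646}. I expect that bookkeeping, rather than any analytic difficulty, to be the only subtle point.
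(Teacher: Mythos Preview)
Your proposal is correct and follows essentially the same route as the paper. For item~(2) the paper likewise extends $u|_{C_c^\infty(\Omega)}$ to $\tilde u\in[W^{-s,p'}(\Omega)]^*$ by density (Theorem~\ref{thmmay7647}), invokes reflexivity of $W^{s,p}_0(\Omega)$ to write $\tilde u=J(v)$ for a unique $v\in W^{s,p}_0(\Omega)$, and then verifies $v=u$ in $D'(\Omega)$ via exactly the computation $\langle v,\varphi\rangle=\int_\Omega v\varphi=J(v)(\varphi)=\tilde u(\varphi)=u(\varphi)$ that you flagged as the key bookkeeping step; for item~(1) the paper simply cites an external reference, so your density argument actually supplies more detail than the paper does.
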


\begin{proof}
The first item has been studied in \cite{holstbehzadan2018b}. Here we will prove the second item. Since $u: (C_c^\infty(\Omega),\|.\|_{-s,p'})\rightarrow \reals$ is continuous, it can be extended to a continuous linear map $\tilde{u}: W^{-s,p'}(\Omega)\rightarrow \reals$. So $\tilde{u}\in [W^{-s,p'}(\Omega)]^*$. However, $W_0^{s,p}(\Omega)$ is reflexive, therefore there exists a unique $v\in W^{s,p}_0(\Omega)$ such that $\tilde{u}=J(v)$ where $J(v): W^{-s,p'}(\Omega)\rightarrow \reals$ is the evaluation map defined by $J(v)(F)=\langle F,v\rangle_{W^{-s,p'}(\Omega)\times W_0^{s,p}(\Omega)}$. To finish the proof, it is enough to show that $v=u$ as elements of $D'(\Omega)$. For all $\varphi\in C_c^\infty(\Omega)$ we have
\begin{align*}
\langle v,\varphi\rangle_{D'(\Omega)\times D(\Omega)} &= \int_\Omega v\varphi\,dx\stackrel{\textrm{Remark ~\ref{remmay7646}}}{=}\langle\varphi, v \rangle_{W^{-s,p'}(\Omega)\times W_0^{s,p}(\Omega)}\\
&=J(v)(\varphi)=\tilde{u}(\varphi)=u(\varphi)=\langle u,\varphi\rangle_{D'(\Omega)\times D(\Omega)}
\end{align*}
Also
\begin{align*}
\|u\|_{W^{s,p}_0(\Omega)}&=\|v\|_{W^{s,p}_0(\Omega)}=\|J(v)\|_{[W^{-s,p'}(\Omega)]^*}\\
&= \|\tilde{u}\|_{[W^{-s,p'}(\Omega)]^*}=\sup_{0\not\equiv\varphi\in C_c^\infty(\Omega)}\frac{\langle \tilde{u},\varphi\rangle_{D'(\Omega)\times D(\Omega)}}{\|\varphi\|_{W^{-s,p'}(\Omega)}}\\
&= \sup_{0\not\equiv\varphi\in C_c^\infty(\Omega)}\frac{\langle u,\varphi\rangle_{D'(\Omega)\times D(\Omega)}}{\|\varphi\|_{W^{-s,p'}(\Omega)}}
\end{align*}
\end{proof}

\begin{corollary}\lab{corapril2812}
Let $\Omega$ be a nonempty open set in $\reals^n$, $s\geq 0$, and $1<p<\infty$. Suppose that $u\in D'(\Omega)$. As a direct consequence of Theorem ~\ref{thmapril271} we have
\begin{itemizeX}
\item If $\displaystyle \sup_{0\not\equiv\varphi\in C_c^\infty(\Omega)}\frac{\langle u,\varphi\rangle_{D'(\Omega)\times D(\Omega)}}{\|\varphi\|_{W^{s,p}(\Omega)}}<\infty$, then $u\in W^{-s,p'}(\Omega)$
     and
     \begin{equation*}
      \|u\|_{W^{-s,p'}(\Omega)}=\sup_{0\not\equiv\varphi\in C_c^\infty(\Omega)}\frac{\langle u,\varphi\rangle_{D'(\Omega)\times D(\Omega)}}{\|\varphi\|_{W^{s,p}(\Omega)}}\,.
     \end{equation*}
\item If $\displaystyle \sup_{0\not\equiv\varphi\in C_c^\infty(\Omega)}\frac{\langle u,\varphi\rangle_{D'(\Omega)\times D(\Omega)}}{\|\varphi\|_{W^{-s,p'}(\Omega)}}<\infty$, then $u\in W^{s,p}_0(\Omega)$
    and
    \begin{equation*}
     \|u\|_{W^{s,p}(\Omega)}=\sup_{0\not\equiv\varphi\in C_c^\infty(\Omega)}\frac{\langle u,\varphi\rangle_{D'(\Omega)\times D(\Omega)}}{\|\varphi\|_{W^{-s,p'}(\Omega)}}\,.
     \end{equation*}
\end{itemizeX}
That is, for any $e\in \reals$ and $1<q<\infty$ in order to show that $u\in D'(\Omega)$ belongs to $W^{e,q}_0(\Omega)$, it is enough to prove that
\begin{equation*}
\sup_{0\not\equiv\varphi\in C_c^\infty(\Omega)}\frac{\langle u,\varphi\rangle_{D'(\Omega)\times D(\Omega)}}{\|\varphi\|_{W^{-e,q'}(\Omega)}}<\infty
\end{equation*}
and in fact $\displaystyle \|u\|_{W^{e,q}(\Omega)}=\sup_{0\not\equiv\varphi\in C_c^\infty(\Omega)}\frac{\langle u,\varphi\rangle_{D'(\Omega)\times D(\Omega)}}{\|\varphi\|_{W^{-e,q'}(\Omega)}}$.
\end{corollary}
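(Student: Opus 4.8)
The plan is to reinterpret each of the two boundedness hypotheses as a continuity statement for the restriction $u|_{C_c^\infty(\Omega)}$ and then quote the two parts of Theorem~\ref{thmapril271}; essentially nothing beyond bookkeeping is involved, which is why the statement is recorded as a corollary.

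First I would record an elementary observation: for any norm $\|\cdot\|$ on the real vector space $C_c^\infty(\Omega)$, replacing $\varphi$ by $-\varphi$ gives
\[
\sup_{0\not\equiv\varphi\in C_c^\infty(\Omega)}\frac{\langle u,\varphi\rangle_{D'(\Omega)\times D(\Omega)}}{\|\varphi\|}
=\sup_{0\not\equiv\varphi\in C_c^\infty(\Omega)}\frac{|\langle u,\varphi\rangle_{D'(\Omega)\times D(\Omega)}|}{\|\varphi\|},
\]
so this quantity is exactly the operator norm of the linear functional $\varphi\mapsto\langle u,\varphi\rangle$ on $(C_c^\infty(\Omega),\|\cdot\|)$, and it is finite precisely when that functional is continuous.

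For the first bullet I would then argue as follows. Finiteness of $\sup_{\varphi}\langle u,\varphi\rangle/\|\varphi\|_{W^{s,p}(\Omega)}$ means that $u|_{C_c^\infty(\Omega)}$ is a continuous functional on $(C_c^\infty(\Omega),\|\cdot\|_{s,p})$, i.e.\ an element of $[C_c^\infty(\Omega),\|\cdot\|_{s,p}]^*$. By Theorem~\ref{thmapril271}(1) there is then a unique $F\in W^{-s,p'}(\Omega)$ with $F|_{C_c^\infty(\Omega)}=u|_{C_c^\infty(\Omega)}$ and with $\|F\|_{W^{-s,p'}(\Omega)}$ equal to the operator norm computed above. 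It remains only to identify $F$ with $u$ inside $D'(\Omega)$: by the description of the embedding $W^{-s,p'}(\Omega)\hookrightarrow D'(\Omega)$ (Theorem~\ref{thmmay7701} together with Remark~\ref{remapril28610}), the distributional action of $F$ on any $\varphi\in D(\Omega)=C_c^\infty(\Omega)$ is exactly $F|_{C_c^\infty(\Omega)}(\varphi)=\langle u,\varphi\rangle_{D'(\Omega)\times D(\Omega)}$, so $F=u$ in $D'(\Omega)$, and the asserted membership and norm identity follow.

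For the second bullet the situation is more direct: finiteness of $\sup_{\varphi}\langle u,\varphi\rangle/\|\varphi\|_{W^{-s,p'}(\Omega)}$ says precisely that $u|_{C_c^\infty(\Omega)}:(C_c^\infty(\Omega),\|\cdot\|_{-s,p'})\to\reals$ is continuous, which is the hypothesis of Theorem~\ref{thmapril271}(2); that theorem yields $u\in W^{s,p}_0(\Omega)$ and the displayed formula for $\|u\|_{W^{s,p}_0(\Omega)}$, which equals $\|u\|_{W^{s,p}(\Omega)}$ since $W^{s,p}_0(\Omega)$ carries the norm of $W^{s,p}(\Omega)$. Finally, the unified assertion is obtained by distinguishing $e\geq 0$ (apply the second bullet with $(s,p)=(e,q)$) from $e<0$ (apply the first bullet with $(s,p)=(-e,q')$, together with $W^{e,q}_0(\Omega)=W^{e,q}(\Omega)$ for $e\leq 0$, from Remark~\ref{remapril28610}). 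The only step that genuinely requires care is the distributional identification $F=u$ in the first bullet; once that is in place, everything else is unwinding definitions.
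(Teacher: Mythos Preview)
Your proposal is correct and matches the paper's approach: the paper gives no explicit proof here, simply recording the statement as ``a direct consequence of Theorem~\ref{thmapril271}'', and your write-up is exactly the unpacking of that phrase, invoking part~(1) for the first bullet (plus the distributional identification via Remark~\ref{remapril28610}) and part~(2) for the second. Your final case split on the sign of $e$ to obtain the unified assertion is the right bookkeeping and is implicit in the corollary as stated.
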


\begin{theorem}
Let $\Omega$ be a nonempty open set in $\reals^n$, $s\in \reals$, and $1<p<\infty$. Suppose that $K\in \mathcal{K}(\Omega)$. Then $W^{s,p}_K(\Omega)$ is a closed subspace of $W^{s,p}(\Omega)$.
\end{theorem}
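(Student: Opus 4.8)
The plan is to reduce everything to the behaviour of distributional supports under convergence in $D'(\Omega)$, using the continuous embedding $W^{s,p}(\Omega)\hookrightarrow D'(\Omega)$ from Theorem~\ref{thmmay7701}. First I would note that $W^{s,p}_K(\Omega)$ is genuinely a linear subspace: if $\supp u\subseteq K$ and $\supp v\subseteq K$, then $\supp(u+v)\subseteq \supp u\cup\supp v\subseteq K$ and $\supp(\lambda u)\subseteq\supp u\subseteq K$, and the $W^{s,p}(\Omega)$-norm restricted to it is by definition $\|\cdot\|_{W^{s,p}_K(\Omega)}$. So only closedness is at issue.

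Since $W^{s,p}(\Omega)$ is a normed space — a reflexive Banach space when $s\geq 0$ by Theorem~\ref{thmmay7658}, and the dual of the Banach space $W^{-s,p'}_0(\Omega)$ when $s<0$ — it is in particular a metric space, so it suffices to prove \emph{sequential} closedness. Thus I would take a sequence $\{u_m\}$ in $W^{s,p}_K(\Omega)$ with $u_m\to u$ in $W^{s,p}(\Omega)$ and show $u\in W^{s,p}_K(\Omega)$. Membership $u\in W^{s,p}(\Omega)$ is automatic (that is where the convergence takes place), so the only thing requiring justification is $\supp u\subseteq K$.

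The key step is then immediate: by Theorem~\ref{thmmay7701} the inclusion $W^{s,p}(\Omega)\hookrightarrow D'(\Omega)$ is continuous, hence $u_m\to u$ in $D'(\Omega)$ as well. Each $u_m$, regarded as a distribution, has $\supp u_m\subseteq K$ (for $s<0$ this is the very notion of support used in Definition~\ref{defa1}; for $s\geq 0$ the element lies in $L^p(\Omega)\subseteq L^1_{loc}(\Omega)$, and the essential support of such a function coincides with the support of the distribution it induces). Applying Theorem~\ref{thmmay11130} to the sequence $\{u_m\}\subseteq D'(\Omega)$ with the common compact support bound $K$ gives $\supp u\subseteq K$, i.e.\ $u\in W^{s,p}_K(\Omega)$, which finishes the proof.

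There is no real obstacle here; the only point needing a moment's care is the compatibility, in the range $s\geq 0$, of the "support" appearing in the definition of $W^{s,p}_K(\Omega)$ with the distributional support used in Theorem~\ref{thmmay11130}, and this is dispatched by the standard fact that an $L^1_{loc}$ function and the distribution it defines have the same support. Everything else is a direct combination of the continuous embedding into $D'(\Omega)$ with the fact that the class of distributions supported in a fixed compact set is closed under $D'$-convergence.
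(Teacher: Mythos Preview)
Your proof is correct and follows essentially the same approach as the paper: reduce to sequential closedness, pass to $D'(\Omega)$ via Theorem~\ref{thmmay7701}, and then invoke Theorem~\ref{thmmay11130} to conclude $\supp u\subseteq K$, with the same remark about compatibility of the $L^1_{loc}$ and distributional notions of support for $s\geq 0$. Your write-up is a bit more detailed (explicitly checking the subspace structure and why sequential closedness suffices), but the argument is identical in substance.
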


\begin{proof}
It is enough to show that if $\{u_i\}$ is a sequence of elements in $W^{s,p}_{K}(\Omega)$ such that $u_i\rightarrow u$ in $W^{s,p}(\Omega)$, then $u\in W^{s,p}_K(\Omega)$, i.e., $\textrm{supp}\, u\subseteq K$. By Theorem ~\ref{thmmay7701}, we have $u_i\rightarrow u$ in $D'(\Omega)$. Now it follows from Theorem ~\ref{thmmay11130} that $\textrm{supp}\,u\subseteq K$. Note that for any $s\geq 0$, we have $W^{s,p}(\Omega)\subseteq L^p(\Omega)\subseteq L^1_{loc}(\Omega)$; in this proof we implicitly used the fact that for functions in $L^1_{loc}(\Omega)$, the usual definition of support agrees with the distributional definition of support.
\end{proof}

Next we list several embedding theorems for Sobolev-Slobodeckij spaces.

\begin{theorem}[\cite{36}, Section 2.8.1]
\lab{winter86} Suppose $1< p\leq q<\infty$ and $-\infty< t\leq
s<\infty$ satisfy $s-\frac{n}{p}\geq t-\frac{n}{q}$. Then
$W^{s,p}(\reals^n)\hookrightarrow W^{t,q}(\reals^n)$. In
particular, $W^{s,p}(\reals^n)\hookrightarrow W^{t,p}(\reals^n)$.
\end{theorem}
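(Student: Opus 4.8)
The plan is to reduce the assertion to the classical Sobolev-type embeddings on the Besov and Triebel--Lizorkin scales. First, the ``in particular'' clause is the special case $q=p$ of the main statement (for which the scaling hypothesis just reads $s\geq t$), so it is enough to prove the latter. Introduce the intermediate smoothness exponent
\begin{equation*}
\tau_0:=s-\tfrac{n}{p}+\tfrac{n}{q}\,.
\end{equation*}
Since $q\geq p$ we have $\tau_0\leq s$, and the hypothesis $s-\frac{n}{p}\geq t-\frac{n}{q}$ says precisely that $\tau_0\geq t$; hence $t\leq\tau_0\leq s$. It therefore suffices to establish the two embeddings
\begin{equation*}
W^{s,p}(\reals^n)\hookrightarrow W^{\tau_0,q}(\reals^n)
\qquad\text{and}\qquad
W^{\tau_0,q}(\reals^n)\hookrightarrow W^{t,q}(\reals^n)
\end{equation*}
and compose them. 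The first carries the \emph{sharp} scaling $\tau_0-\frac{n}{q}=s-\frac{n}{p}$, while the second keeps the integrability exponent $q$ fixed and only lowers the smoothness index. (When $p=q$ one has $\tau_0=s$, so the first embedding is trivial and everything rests on the second.)

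For both embeddings I would pass to the Littlewood--Paley description of the spaces. Recall that on $\reals^n$, with equivalent norms and $1<p<\infty$: $W^{m,p}(\reals^n)=H^{m,p}(\reals^n)=F^{m}_{p,2}(\reals^n)$ for every $m\in\mathbb{Z}$ (for $m<0$ via $W^{m,p}=(W^{-m,p'}_0)^*$, the density $W^{-m,p'}_0(\reals^n)=W^{-m,p'}(\reals^n)$, Calder\'{o}n's identification $W^{k,p'}=H^{k,p'}$ for $k\in\mathbb{N}_0$, and Besov--Triebel--Lizorkin duality), while $W^{s,p}(\reals^n)=B^{s}_{p,p}(\reals^n)$ for every $s\in\reals\setminus\mathbb{Z}$ (classically for $s>0$, and by the duality $B^{s}_{p,p}=(B^{-s}_{p',p'})^*$ for $s<0$). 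One then invokes the standard embeddings on these scales: monotonicity in the fine index, $B^{\sigma}_{p,r_0}\hookrightarrow B^{\sigma}_{p,r_1}$ for $r_0\leq r_1$; the comparisons $B^{\sigma}_{p,\min(p,2)}\hookrightarrow F^{\sigma}_{p,2}\hookrightarrow B^{\sigma}_{p,\max(p,2)}$; the fixed-exponent embeddings $B^{\sigma}_{p,r_0}\hookrightarrow B^{\tau}_{p,r_1}$ and $F^{\sigma}_{p,r_0}\hookrightarrow F^{\tau}_{p,r_1}$ for $\tau<\sigma$ and arbitrary fine indices; and the Sobolev-type embeddings valid when $\sigma-\frac{n}{p}=\tau-\frac{n}{q}$ and $1<p\leq q<\infty$, namely the equal-fine-index ones $B^{\sigma}_{p,r}\hookrightarrow B^{\tau}_{q,r}$ and $F^{\sigma}_{p,r}\hookrightarrow F^{\tau}_{q,r}$ together with, for $p<q$, the Jawerth--Franke refinements $F^{\sigma}_{p,r}\hookrightarrow B^{\tau}_{q,p}$ and $B^{\sigma}_{p,q}\hookrightarrow F^{\tau}_{q,r}$. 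Granting these, $W^{\tau_0,q}\hookrightarrow W^{t,q}$ follows from the fixed-exponent embeddings (with a $B$--$F$ comparison inserted when exactly one of $\tau_0,t$ is an integer), and $W^{s,p}\hookrightarrow W^{\tau_0,q}$ is obtained by splitting into the four cases according to the integrality of $s$ and $\tau_0$: if both are integers, $F^{s}_{p,2}\hookrightarrow F^{\tau_0}_{q,2}$; if neither is, $B^{s}_{p,p}\hookrightarrow B^{\tau_0}_{q,p}\hookrightarrow B^{\tau_0}_{q,q}$; if $s\in\mathbb{Z}$ but $\tau_0\notin\mathbb{Z}$, $F^{s}_{p,2}\hookrightarrow B^{\tau_0}_{q,p}\hookrightarrow B^{\tau_0}_{q,q}$; and if $s\notin\mathbb{Z}$ but $\tau_0\in\mathbb{Z}$, $B^{s}_{p,p}\hookrightarrow B^{s}_{p,q}\hookrightarrow F^{\tau_0}_{q,2}$ (the last three using $p<q$; recall that if $p=q$ then $\tau_0=s$ and no such step is needed).

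The crux is precisely this mixed integer/non-integer situation: because $W^{\sigma,p}(\reals^n)$ lies in the Triebel--Lizorkin scale ($F^{\sigma}_{p,2}$) when $\sigma\in\mathbb{Z}$ but in the diagonal Besov scale ($B^{\sigma}_{p,p}$) otherwise, the equal-fine-index Sobolev embeddings alone do not connect $W^{s,p}$ with $W^{\tau_0,q}$, and one genuinely needs the Jawerth--Franke theorem, with the fine indices tracked carefully (the chains displayed above close only because $p<q$). A secondary point is the range $s<0$ or $\tau_0<0$: one reduces to the non-negative case by duality, since $W^{s,p}(\reals^n)\hookrightarrow W^{\tau_0,q}(\reals^n)$ is the adjoint of the dense, non-negative-order embedding $W^{-\tau_0,q'}(\reals^n)\hookrightarrow W^{-s,p'}(\reals^n)$ (whose hypotheses, $q'\leq p'$ and $(-\tau_0)-\frac{n}{q'}\geq(-s)-\frac{n}{p'}$, are again the standing assumptions), the one remaining mixed-sign instance $\tau_0<0\leq s$ being handled by factoring through a suitable $L^{r}(\reals^n)$.
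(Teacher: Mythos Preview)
The paper does not supply a proof of this statement; it is quoted directly from Triebel's monograph \cite{36}, Section~2.8.1, as part of the background material. Your proposal is correct and is in fact precisely the kind of argument one finds in Triebel: identify $W^{s,p}(\reals^n)$ with $F^{s}_{p,2}(\reals^n)$ when $s\in\mathbb{Z}$ and with $B^{s}_{p,p}(\reals^n)$ otherwise, then invoke the standard embedding theorems on the $B$/$F$ scales (including the Jawerth--Franke refinements for the mixed integer/noninteger cases). One minor simplification: since those identifications, together with all the $B$/$F$ embeddings you cite, are valid for the full real line of smoothness parameters, the separate duality reduction for negative $s$ or $\tau_0$ is not actually needed---you can run the four-case argument uniformly in $s\in\reals$.
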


\begin{theorem}\cite{33,holstbehzadan2015b}\lab{thm3.4} Let $\Omega$ be a nonempty bounded open subset of $\mathbb{R}^n$ with Lipschitz continuous
boundary. Suppose $1\leq p, q<\infty$ ($p$ does NOT need to be
less than or equal to $q$) and $0\leq t\leq s$ satisfy
$s-\frac{n}{p}\geq t-\frac{n}{q}$. If $s\not\in \mathbb{N}_0$, additionally assume that $s\neq t$. Then
$W^{s,p}(\Omega)\hookrightarrow W^{t,q}(\Omega)$. Furthermore, if $s>t$, then the embedding $W^{s,p}(\Omega)\hookrightarrow W^{t,p}(\Omega)$ is compact.
\end{theorem}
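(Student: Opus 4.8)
The plan is to reduce everything to the Euclidean embedding Theorem~\ref{winter86} through a bounded extension operator, to dispatch the ``loss of integrability'' case $p>q$ using only that $|\Omega|<\infty$, and to obtain compactness from a Frechet--Kolmogorov (Rellich-type) argument. Since $\Omega$ is a bounded Lipschitz domain, for each $a\ge 0$ and $1<p<\infty$ there is a bounded linear extension operator $E\colon W^{a,p}(\Omega)\to W^{a,p}(\reals^n)$ with $(Eu)|_\Omega=u$ (a Stein-type / universal extension; this is the one genuinely nontrivial external input for non-integer $a$, to be cited). Multiplying the output by a fixed $\chi\in C_c^\infty(\reals^n)$ equal to $1$ on $\overline\Omega$ --- pointwise multiplication by $\chi$ being bounded on $W^{a,p}(\reals^n)$ --- we may moreover assume every $Eu$ is supported in a fixed ball $B\supseteq\overline\Omega$. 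The restriction $R\colon W^{b,q}(\reals^n)\to W^{b,q}(\Omega)$ is trivially bounded and $R\circ E=\mathrm{id}$.

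If $p\le q$, the hypothesis $s-\tfrac np\ge t-\tfrac nq$ is exactly the one in Theorem~\ref{winter86}, so $W^{s,p}(\reals^n)\hookrightarrow W^{t,q}(\reals^n)$, and composing $E$, this embedding, and $R$ yields $W^{s,p}(\Omega)\hookrightarrow W^{t,q}(\Omega)$. If instead $p>q$, then $\tfrac1p<\tfrac1q$ forces $s-\tfrac np>s-\tfrac nq\ge t-\tfrac nq$ as soon as $s\ge t$, so the hypothesis reduces to $s\ge t$; one then factors $W^{s,p}(\Omega)\hookrightarrow W^{t,p}(\Omega)$ (the case $p=q$ just treated, since $s-\tfrac np\ge t-\tfrac np$) followed by the finite-measure embedding $W^{t,p}(\Omega)\hookrightarrow W^{t,q}(\Omega)$. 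This last embedding is elementary when $t\in\mathbb N_0$ (Holder on $\Omega$, using $|\Omega|<\infty$, applied to each $\partial^\alpha u$), and for non-integer $t=\ell+\tau$ it reduces, via Theorem~\ref{thmmay141029}, to $W^{\tau,p}(\Omega)\hookrightarrow W^{\tau,q}(\Omega)$ for $\tau\in(0,1)$. The ``in particular'' statement $W^{s,p}(\Omega)\hookrightarrow W^{t,p}(\Omega)$ is then just the special case $q=p$.

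The point $W^{\tau,p}(\Omega)\hookrightarrow W^{\tau,q}(\Omega)$ with $\tau\in(0,1)$ and $p>q$ is the one genuine obstacle, because the obvious estimate degenerates: writing $\frac{|v(x)-v(y)|^q}{|x-y|^{n+\tau q}}=\big(\frac{|v(x)-v(y)|^p}{|x-y|^{n+\tau p}}\big)^{q/p}\,|x-y|^{-n(p-q)/p}$ and applying Holder with exponents $p/q$ and $p/(p-q)$ leaves the weight integral $\iint_{\Omega\times\Omega}|x-y|^{-n}\,dx\,dy=\infty$. It nevertheless holds: a dyadic decomposition of the Gagliardo seminorm shows that the extra $L^p$-integrability, in the presence of the \emph{strict} gain $\tau-\tfrac np>\tau-\tfrac nq$, replaces the would-be divergent series by a geometrically convergent one (equivalently, it is the Besov embedding $B^\tau_{p,p}(\Omega)\hookrightarrow B^\tau_{q,q}(\Omega)$, which holds for arbitrary fine indices when the differential dimensions are strictly ordered). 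I would cite \cite{33, holstbehzadan2015b} for this step. The endpoints $p=1$ or $q=1$ go through verbatim, using the $p=1$ instance of Theorem~\ref{winter86}.

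Finally, for compactness when $s>t$ it suffices to treat $W^{s,p}(\Omega)\hookrightarrow W^{t,p}(\Omega)$. Using the ball-valued extension above, a bounded set in $W^{s,p}(\Omega)$ is carried to a bounded subset of $W^{s,p}(\reals^n)$ supported in $B$; since $s>t$, translations act equicontinuously on it in the $W^{t,p}(\reals^n)$-norm (more regularity giving better translation continuity), so by the Frechet--Kolmogorov--Riesz criterion it is relatively compact in $W^{t,p}(B)$, and restricting to $\Omega\subseteq B$ gives relative compactness in $W^{t,p}(\Omega)$; alternatively one simply cites the standard compact fractional Sobolev embedding on bounded Lipschitz domains. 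The two places where real work is concentrated are the existence of the non-integer extension operator and the borderline $p>q$, $t\notin\mathbb N_0$ embedding above; everything else is bookkeeping with the maps $E$ and $R$.
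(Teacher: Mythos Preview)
The paper does not prove this theorem at all; it is stated as a cited result from \cite{33,holstbehzadan2015b}, and the text immediately moves on to the next statement. So there is no ``paper's own proof'' to compare against.

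Your outline is a reasonable standard proof. A few small remarks. First, the reduction of the fractional case $t=\ell+\tau$ to $W^{\tau,p}(\Omega)\hookrightarrow W^{\tau,q}(\Omega)$ comes directly from the definition of the $W^{\ell+\tau,p}$-norm (integer part plus Gagliardo seminorms of $\partial^\nu u$, $|\nu|=\ell$), not really from Theorem~\ref{thmmay141029}; the latter is an ``if and only if'' membership statement, not a norm decomposition. Second, your appeal to the $p=1$ instance of Theorem~\ref{winter86} is off: that theorem is stated only for $1<p$, so the endpoint $p=1$ (or $q=1$) would have to be handled separately or cited elsewhere. Third, you are right that the genuinely nontrivial ingredients are (i) the non-integer Stein-type extension on Lipschitz domains and (ii) the borderline embedding $W^{\tau,p}(\Omega)\hookrightarrow W^{\tau,q}(\Omega)$ for $p>q$, $\tau\in(0,1)$; your honest acknowledgment that the naive H\"older argument diverges at the weight $|x-y|^{-n}$ and that one needs a Besov/dyadic or interpolation argument (real interpolation between $L^p\hookrightarrow L^q$ and $W^{1,p}\hookrightarrow W^{1,q}$ on a bounded domain works cleanly) is appropriate. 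The compactness sketch is acceptable in spirit but vague as written; ``translations act equicontinuously'' would need to be turned into a quantitative estimate (e.g.\ $\|\tau_h v - v\|_{W^{t,p}}\preceq |h|^{\min(s-t,1)}\|v\|_{W^{s,p}}$) or replaced by a direct citation of the fractional Rellich--Kondrachov theorem.
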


\begin{theorem}\cite{holstbehzadan2018b}\lab{thm3.2}
Let $\Omega\subseteq \reals^n$ be an arbitrary nonempty open set.
\begin{enumerateXALI}
\item Suppose $1\leq p\leq q<\infty$
 and $0\leq t\leq s$
satisfy $s-\frac{n}{p}\geq t-\frac{n}{q}$. Then
$W^{s,p}_{K}(\Omega)\hookrightarrow W^{t,q}_{K}(\Omega)$ for all
$K\in \mathcal{K}(\Omega)$.
\item For all $k_1, k_2\in \mathbb{N}_0$  with $k_1\leq k_2$ and $1<p<\infty$,
$W^{k_2,p}(\Omega)\hookrightarrow W^{k_1,p}(\Omega)$.
\item If $0\leq t \leq s <1$ and $1<p<\infty$, then $W^{s,p}(\Omega)\hookrightarrow
W^{t,p}(\Omega)$.
\item If $0\leq t \leq s <\infty$ are such that $\floor{s}=\floor{t}$ and
$1<p<\infty$, then $W^{s,p}(\Omega)\hookrightarrow
W^{t,p}(\Omega)$.
\item If $0\leq t \leq s <\infty$, $t\in \mathbb{N}_0$, and
$1<p<\infty$, then $W^{s,p}(\Omega)\hookrightarrow
W^{t,p}(\Omega)$.
\end{enumerateXALI}
\end{theorem}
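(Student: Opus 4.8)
\emph{Proof proposal.} The plan is to handle the five items in the order (2), (5), (3), (4), (1): the first four are either definitional or short consequences of the preceding ones, and the genuine content lies in item~(1), which I would reduce to the Euclidean embedding Theorem~\ref{winter86} via extension by zero. Items (2) and (5) are immediate from Definition~\ref{defa1}: for (2), $\|u\|_{W^{k_1,p}(\Omega)}=\sum_{|\nu|\le k_1}\|\partial^\nu u\|_{L^p(\Omega)}\le\sum_{|\nu|\le k_2}\|\partial^\nu u\|_{L^p(\Omega)}=\|u\|_{W^{k_2,p}(\Omega)}$, so the inclusion is norm-nonincreasing; for (5), writing $s=k+\theta$ with $k=\floor{s}$, the fact that $t\in\mathbb{N}_0$ and $t\le s$ forces $t\le k$, while the definition of the $W^{s,p}$-norm gives $\|u\|_{W^{k,p}(\Omega)}\le\|u\|_{W^{s,p}(\Omega)}$ (an equality if $s\in\mathbb{N}_0$), so $W^{s,p}(\Omega)\hookrightarrow W^{k,p}(\Omega)$, and composing with (2) finishes.

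For (3), with $0\le t\le s<1$, the only thing to bound is $|u|_{W^{t,p}(\Omega)}$ when $t>0$ (for $t=0$, $W^{0,p}(\Omega)=L^p(\Omega)$ and there is nothing to do). I would split the integral over $\Omega\times\Omega$ into the region $|x-y|<1$, where $|x-y|^{-(n+tp)}\le|x-y|^{-(n+sp)}$ since $tp\le sp$, so that piece is at most $|u|_{W^{s,p}(\Omega)}^p$; and the region $|x-y|\ge1$, where $|u(x)-u(y)|^p\le 2^{p-1}(|u(x)|^p+|u(y)|^p)$ and $\int_{|z|\ge1}|z|^{-(n+tp)}\,dz<\infty$, so that piece is at most $C\|u\|_{L^p(\Omega)}^p$; no regularity of $\partial\Omega$ is used, so this works for arbitrary $\Omega$. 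Hence $\|u\|_{W^{t,p}(\Omega)}\preceq\|u\|_{W^{s,p}(\Omega)}$. Item (4) then reduces to (3): if $\floor{s}=\floor{t}=k$ and $s\in\mathbb{N}_0$ then $t=s$ and the embedding is the identity; otherwise write $s=k+\theta_s$, $t=k+\theta_t$ with $0\le\theta_t\le\theta_s<1$ (the case $\theta_t=0$ being covered by the bound $\|u\|_{W^{k,p}(\Omega)}\le\|u\|_{W^{s,p}(\Omega)}$ itself), and apply (3) to each top-order derivative to get $|\partial^\nu u|_{W^{\theta_t,p}(\Omega)}\preceq\|\partial^\nu u\|_{W^{\theta_s,p}(\Omega)}\preceq\|u\|_{W^{s,p}(\Omega)}$ for $|\nu|=k$; summing gives $W^{s,p}(\Omega)\hookrightarrow W^{t,p}(\Omega)$.

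For (1), fix $K\in\mathcal{K}(\Omega)$ and put $\delta:=\textrm{dist}(K,\reals^n\setminus\Omega)>0$. The crucial first step is that extension by zero maps $W^{s,p}_K(\Omega)$ boundedly into $W^{s,p}(\reals^n)$, with image in the functions supported in $K$. Given $u\in W^{s,p}_K(\Omega)$, choose via Theorem~\ref{thmmay7635} a cutoff $\psi\in C_c^\infty(\Omega)$ with $\psi\equiv1$ on a neighborhood of $K$; using that $u$, hence each $\partial^\nu u$ with $|\nu|\le\floor{s}$, is supported in $K$, one checks that $\partial^\nu(\textrm{ext}^0_{\Omega,\reals^n}u)=\textrm{ext}^0_{\Omega,\reals^n}(\partial^\nu u)$ — the cutoff absorbs the putative boundary terms — which takes care of the $W^{\floor{s},p}$ part and reduces the seminorm part to the case $0<s<1$. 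When $0<s<1$, splitting the integral defining $|\textrm{ext}^0_{\Omega,\reals^n}u|_{W^{s,p}(\reals^n)}^p$ into the piece over $\Omega\times\Omega$ (equal to $|u|_{W^{s,p}(\Omega)}^p$) and the two symmetric cross pieces with $x\in K$, $y\notin\Omega$ (where $|x-y|\ge\delta$, so that $\int_{|z|\ge\delta}|z|^{-(n+sp)}\,dz<\infty$ gives a bound by $C\|u\|_{L^p(\Omega)}^p$) yields $\|\textrm{ext}^0_{\Omega,\reals^n}u\|_{W^{s,p}(\reals^n)}\preceq\|u\|_{W^{s,p}(\Omega)}$. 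Granting this, apply Theorem~\ref{winter86} to $\textrm{ext}^0_{\Omega,\reals^n}u\in W^{s,p}(\reals^n)$ to land in $W^{t,q}(\reals^n)$ with controlled norm, and then restrict back to $\Omega$, observing that restriction is norm-nonincreasing on the $L^q$ part and on each Gagliardo seminorm (one integrates over the smaller set $\Omega\times\Omega$) and preserves the support; hence $u\in W^{t,q}_K(\Omega)$ with $\|u\|_{W^{t,q}_K(\Omega)}\preceq\|u\|_{W^{s,p}_K(\Omega)}$.

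The main obstacle is exactly this first step of item~(1): the cross-term estimate for the Gagliardo seminorm of a zero-extension, and the verification that differentiation commutes with zero-extension for functions supported away from $\partial\Omega$; everything else is definitional or the same elementary unit-ball splitting used in item~(3). I would also flag the minor mismatch that Theorem~\ref{winter86} is stated for $p>1$, so if the endpoint $p=1$ is admitted in item~(1) it must be dispatched by running the analogous estimate on $\reals^n$ directly.
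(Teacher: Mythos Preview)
The paper does not supply a proof of this theorem: it is stated with a citation to \cite{holstbehzadan2018b} and no argument appears in the present manuscript, so there is nothing here to compare your proposal against line by line.

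On its own merits your proposal is sound and follows the standard route. Items (2), (5), (3), (4) are handled correctly by direct inspection of Definition~\ref{defa1} and the usual splitting of the Gagliardo integral into $|x-y|<1$ and $|x-y|\ge 1$. For item~(1), reducing to Theorem~\ref{winter86} via extension by zero and then restriction is the natural argument; your treatment of the cross terms in the seminorm (using $\textrm{dist}(K,\reals^n\setminus\Omega)>0$) and of the commutation $\partial^\nu\circ\textrm{ext}^0_{\Omega,\reals^n}=\textrm{ext}^0_{\Omega,\reals^n}\circ\partial^\nu$ on $W^{s,p}_K(\Omega)$ is correct. Your flag about the endpoint $p=1$ is also apt: Theorem~\ref{winter86} as stated in this paper requires $1<p$, so covering $p=1$ in item~(1) needs either a separate reference for the Euclidean embedding at that endpoint or a direct argument on $\reals^n$.
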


\begin{theorem}\cite{Gris85}\lab{thm3.3}
Let $\Omega$ be a nonempty bounded open subset of $\mathbb{R}^n$ with
Lipschitz continuous boundary or $\Omega=\mathbb{R}^n$. If
$sp>n$, then $W^{s,p}(\Omega)\hookrightarrow
L^{\infty}(\Omega)\cap C^{0}(\Omega)$ and $W^{s,p}(\Omega)$ is a
Banach algebra.
\end{theorem}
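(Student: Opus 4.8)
The plan is to reduce both assertions to the case $\Omega=\reals^n$ and then treat them separately. When $\Omega$ is a bounded Lipschitz domain, the standard bounded linear Sobolev extension operator $E:W^{s,p}(\Omega)\to W^{s,p}(\reals^n)$ with $Eu|_\Omega=u$ (a classical fact for Lipschitz domains, obtained by locally flattening the boundary), together with the obviously bounded restriction map $W^{s,p}(\reals^n)\to W^{s,p}(\Omega)$ (immediate from Definition~\ref{defa1}, since $s>0$ here), reduces the embedding $W^{s,p}(\Omega)\hookrightarrow L^\infty(\Omega)\cap C^0(\Omega)$ to the same statement on $\reals^n$; and it reduces the algebra estimate on $\Omega$ to the one on $\reals^n$ because $uv=\bigl((Eu)(Ev)\bigr)|_\Omega$, so $\|uv\|_{W^{s,p}(\Omega)}\leq\|(Eu)(Ev)\|_{W^{s,p}(\reals^n)}\preceq\|Eu\|_{W^{s,p}(\reals^n)}\|Ev\|_{W^{s,p}(\reals^n)}\preceq\|u\|_{W^{s,p}(\Omega)}\|v\|_{W^{s,p}(\Omega)}$. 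So from now on $\Omega=\reals^n$.

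For the embedding, I would first collapse the problem to a single fractional exponent in $(0,1)$. Since $s-\tfrac np>0$, set $\theta:=\tfrac12\min\{s-\tfrac np,\,1\}\in(0,1)$ and pick $q\in(1,\infty)$ with $q\geq p$ and $q>n/\theta$; then $0\leq\theta\leq s$, $\theta q>n$, and $s-\tfrac np\geq\theta>\theta-\tfrac nq$, so Theorem~\ref{winter86} gives $W^{s,p}(\reals^n)\hookrightarrow W^{\theta,q}(\reals^n)$. Hence it suffices to prove the fractional Morrey embedding: if $0<\theta<1$ and $\theta q>n$, then every $u\in W^{\theta,q}(\reals^n)$ has a representative that is bounded and $(\theta-\tfrac nq)$-Holder continuous, with $\|u\|_{L^\infty}\preceq\|u\|_{W^{\theta,q}(\reals^n)}$. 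This I would establish by the classical averaging argument: writing $u_B$ for the mean value of $u$ on a ball $B$, Holder's inequality applied to $\frac{1}{|B(x,\rho)|^2}\int_{B(x,\rho)}\int_{B(x,\rho)}|u(z)-u(w)|\,dz\,dw$ bounds $|u_{B(x,\rho)}-u_{B(x,\rho/2)}|$ by a constant times $\rho^{\theta-n/q}|u|_{W^{\theta,q}(\reals^n)}$; summing over dyadic radii (the geometric series converges since $\theta-\tfrac nq>0$) gives $|u(x)-u_{B(x,r)}|\preceq r^{\theta-n/q}|u|_{W^{\theta,q}(\reals^n)}$ at every Lebesgue point $x$, comparing the means over $B(x,2|x-y|)$ and $B(y,2|x-y|)$ yields the pointwise Holder estimate for $|u(x)-u(y)|$, and $|u(x)|\leq|u(x)-u_{B(x,1)}|+|u_{B(x,1)}|$ gives the sup bound. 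A Holder continuous function is continuous, so $W^{\theta,q}(\reals^n)\hookrightarrow C^0(\reals^n)\cap L^\infty(\reals^n)$.

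For the algebra property on $\reals^n$, write $s=k+\theta$ with $k=\floor{s}$ and $\theta\in[0,1)$. Since $u,v\in W^{s,p}(\reals^n)\subseteq L^\infty(\reals^n)$, we have $uv\in L^p(\reals^n)$ with $\|uv\|_{L^p}\leq\|u\|_{L^\infty}\|v\|_{L^p}$, and for $1\leq|\alpha|\leq k$ the Leibniz rule gives $\partial^\alpha(uv)=\sum_{\beta\leq\alpha}\binom{\alpha}{\beta}\,\partial^\beta u\,\partial^{\alpha-\beta}v$. Each term $\partial^\beta u\,\partial^{\alpha-\beta}v$ is controlled in $L^p$ by Holder's inequality combined with Gagliardo--Nirenberg interpolation, $\|\partial^\beta u\|_{L^{p_1}}\preceq\|u\|_{W^{s,p}}$ and $\|\partial^{\alpha-\beta}v\|_{L^{p_2}}\preceq\|v\|_{W^{s,p}}$ with $\tfrac1{p_1}+\tfrac1{p_2}=\tfrac1p$ (so automatically $p_1,p_2\geq p$, consistent with Theorem~\ref{winter86}); such exponents exist because the two embedding conditions $s-\tfrac np\geq|\beta|-\tfrac n{p_1}$, $s-\tfrac np\geq|\alpha-\beta|-\tfrac n{p_2}$, once added, force only $2s-k\geq\tfrac np$, which holds since $s>\tfrac np$ and $s\geq k$. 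When $\theta>0$ one must also bound $|\partial^\nu(uv)|_{W^{\theta,p}}$ for $|\nu|=k$; expanding as above and using the pointwise identity $f(x)g(x)-f(y)g(y)=f(x)\bigl(g(x)-g(y)\bigr)+g(y)\bigl(f(x)-f(y)\bigr)$ together with Holder's inequality inside the Gagliardo double integral leads to a fractional Leibniz (Moser/Kato--Ponce-type) estimate such as
\[
|\partial^\beta u\,\partial^{\nu-\beta}v|_{W^{\theta,p}(\reals^n)}\preceq\|\partial^\beta u\|_{L^{r_1}}\,\|\partial^{\nu-\beta}v\|_{W^{\theta,q_1}(\reals^n)}+\|\partial^{\nu-\beta}v\|_{L^{r_2}}\,\|\partial^\beta u\|_{W^{\theta,q_2}(\reals^n)},
\]
and then Theorem~\ref{winter86} together with iteration of Theorem~\ref{thmmay141029} (which gives $\partial^\mu u\in W^{s-|\mu|,p}(\reals^n)$ for $|\mu|\leq k$) bounds all four factors by $\|u\|_{W^{s,p}(\reals^n)}$ or $\|v\|_{W^{s,p}(\reals^n)}$, provided $r_i,q_i$ are chosen to satisfy both the integrability constraint forced by the double integral and the Sobolev constraints; these are simultaneously solvable, again exactly because of the gap $s-\tfrac np>0$. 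Summing over $\alpha,\beta$ and using $\|u\|_{L^\infty}\preceq\|u\|_{W^{s,p}(\reals^n)}$ from the first part then yields $\|uv\|_{W^{s,p}(\reals^n)}\preceq\|u\|_{W^{s,p}(\reals^n)}\|v\|_{W^{s,p}(\reals^n)}$.

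The two genuinely delicate points are the fractional Morrey inequality --- where the dyadic telescoping must be arranged so that the $x$-dependent error terms cancel and only the full Gagliardo seminorm survives --- and, more seriously, the fractional Leibniz estimate with its exponent bookkeeping, which is precisely the place where the hypothesis $sp>n$ enters in an essential way. As an alternative in the easier sub-regime $\floor{s}\cdot p>n$, one can bypass the direct fractional computation by real interpolation, $W^{s,p}(\reals^n)=\bigl(W^{\floor{s},p}(\reals^n),W^{\floor{s}+1,p}(\reals^n)\bigr)_{\theta,p}$, applied to the bilinear multiplication map, since the integer-order spaces $W^{m,p}(\reals^n)$ with $mp>n$ are classical Banach algebras; the complementary sub-regime $\floor{s}\cdot p\leq n<sp$ still appears to require the direct fractional argument above (or a Littlewood--Paley / paraproduct decomposition).
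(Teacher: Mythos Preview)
The paper does not prove this theorem; it is quoted from \cite{Gris85} as a known result and used as a black box. So there is no ``paper's own proof'' to compare against.

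Your outline is a correct and standard route. The reduction to $\reals^n$ via a Lipschitz extension operator is fine, and your reduction of the embedding to a single fractional Morrey inequality via Theorem~\ref{winter86} is clean; the dyadic-averaging sketch for $W^{\theta,q}\hookrightarrow C^{0,\theta-n/q}$ when $\theta q>n$ is the classical Campanato-type argument and works as you describe.

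The one place where you are genuinely sketching rather than proving is the fractional Leibniz step. You assert that exponents $r_i,q_i$ satisfying both the H\"older constraint $\tfrac1{r_i}+\tfrac1{q_i}=\tfrac1p$ and the Sobolev constraints from Theorem~\ref{winter86} can always be chosen, ``exactly because of the gap $s-\tfrac np>0$'', but you do not carry this out. In fact the pointwise splitting $fg(x)-fg(y)=f(x)(g(x)-g(y))+g(y)(f(x)-f(y))$ followed by H\"older in the Gagliardo integral gives terms like $\|f\|_{L^\infty}|g|_{W^{\theta,p}}$ directly (taking $r_1=\infty$, $q_1=p$), and since here $f=\partial^\beta u$ and $g=\partial^{\nu-\beta}v$ with $|\beta|+|\nu-\beta|=k$, what you actually need is $\partial^\beta u\in L^\infty$ and $\partial^{\nu-\beta}v\in W^{\theta,p}$, or more generally $\partial^\beta u\in L^{r}$ with $r$ large and $\partial^{\nu-\beta}v\in W^{\theta,q}$ with $\tfrac1r+\tfrac1q=\tfrac1p$. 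The verification that Theorem~\ref{winter86} supplies these embeddings for some admissible pair uses $s-|\beta|-\tfrac np\geq -\tfrac nr$ and $s-|\nu-\beta|-\tfrac np\geq\theta-\tfrac nq$; adding gives $2s-k-\theta\geq\tfrac np$, i.e.\ $s\geq\tfrac np$, with room to spare since the inequality is strict. So the claim is true, but in a complete proof you should exhibit the choice (e.g.\ when $|\beta|=0$ take $r=\infty$, $q=p$; when $|\beta|=k$ take $r=p$, $q=\infty$ after swapping roles; intermediate $|\beta|$ interpolate). Your alternative via real interpolation of the bilinear map between integer-order algebras is also legitimate in the sub-regime $\lfloor s\rfloor p>n$, as you note.
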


In the next several theorems we will list certain multiplication properties of Sobolev spaces. Suppose $\varphi\in C^\infty(\Omega)$ and $u\in W^{s,p}(\Omega)$. If $s\geq 0$, then the product $\varphi u$ has a clear meaning. What if $s<0$? In this case, $u|_{D(\Omega)}$ is a distribution and by the product $\varphi u$ we mean the distribution $(\varphi)(u|_{D(\Omega)})$; then $\varphi u$ is in $W^{s,p}(\Omega)$ if $(\varphi)(u|_{D(\Omega)}): (C_c^\infty(\Omega),\|.\|_{-s,p'})\rightarrow \reals$ is continuous. Because then it possesses a unique extension to a continuous linear map from $W^{-s,p'}_0(\Omega)$ to $\reals$ and so it can be viewed as an element of $[W^{-s,p'}_0(\Omega)]^*=W^{s,p}(\Omega)$. See Theorem ~\ref{thmapril271} and Corollary ~\ref{corapril2812}. Also see Remark ~\ref{remapril301014}.

\begin{theorem}[Multiplication by smooth functions I, \cite{Trie92}, Page
203] \lab{winter87} Let $s\in \reals$, $1<p<\infty$, and
$\varphi\in BC^\infty(\reals^n)$. Then the linear map
\begin{equation*}
m_\varphi: W^{s,p}(\reals^n)\rightarrow W^{s,p}(\reals^n),\qquad
u\mapsto \varphi u
\end{equation*}
is well-defined and bounded.
\end{theorem}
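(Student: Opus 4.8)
The plan is to prove this first for $s\ge 0$, bootstrapping from $s$ an integer to $s\in(0,1)$ to general $s$, and then to deduce the case $s<0$ by a duality (transpose) argument. For $s=k\in\mathbb{N}_0$ the statement is immediate from the Leibniz rule: for $|\alpha|\le k$ one has $\partial^\alpha(\varphi u)=\sum_{\beta\le\alpha}\binom{\alpha}{\beta}(\partial^\beta\varphi)(\partial^{\alpha-\beta}u)$, and since every $\partial^\beta\varphi$ is bounded (because $\varphi\in BC^\infty(\reals^n)$), summing $L^p$-norms gives $\|\varphi u\|_{W^{k,p}(\reals^n)}\le C(k,n,\varphi)\|u\|_{W^{k,p}(\reals^n)}$; in particular $\varphi u\in L^p$, so the product is unambiguous in this range.

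The technical core is the case $s=\theta\in(0,1)$. Writing $(\varphi u)(x)-(\varphi u)(y)=\varphi(x)\bigl(u(x)-u(y)\bigr)+u(y)\bigl(\varphi(x)-\varphi(y)\bigr)$, the first term contributes at most $\|\varphi\|_\infty\,|u|_{W^{\theta,p}(\reals^n)}$ to the Gagliardo seminorm. For the second term I would estimate
\[
\int\!\!\int_{\reals^n\times\reals^n}\frac{|u(y)|^p\,|\varphi(x)-\varphi(y)|^p}{|x-y|^{n+\theta p}}\,dx\,dy
\]
by splitting the region of integration at $|x-y|=1$: on $\{|x-y|\le 1\}$ use $|\varphi(x)-\varphi(y)|\le(\mathrm{Lip}\,\varphi)\,|x-y|$ together with $\int_{|z|\le 1}|z|^{p(1-\theta)-n}\,dz<\infty$ (valid since $p(1-\theta)>0$), while on $\{|x-y|>1\}$ use $|\varphi(x)-\varphi(y)|\le 2\|\varphi\|_\infty$ together with $\int_{|z|>1}|z|^{-n-\theta p}\,dz<\infty$; in both cases the remaining integral in $y$ equals $\|u\|_{L^p(\reals^n)}^p$. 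This yields $|\varphi u|_{W^{\theta,p}(\reals^n)}\le C\,\|u\|_{W^{\theta,p}(\reals^n)}$, hence the claim for $s=\theta$. Only $\|\varphi\|_\infty$ and the Lipschitz constant of $\varphi$ enter this bound, which is what makes the subsequent bootstrap go through.

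For general $s=k+\theta$ with $k\in\mathbb{N}$ and $\theta\in(0,1)$, the $W^{k,p}$-part of $\|\varphi u\|_{W^{s,p}(\reals^n)}$ is handled by the integer case, while for $|\nu|=k$ the Leibniz rule writes $\partial^\nu(\varphi u)$ as a finite sum of terms $(\partial^\beta\varphi)(\partial^{\nu-\beta}u)$ with $|\nu-\beta|\le k$. Each factor $\partial^\beta\varphi$ is bounded with bounded gradient, hence bounded and Lipschitz; and $\partial^{\nu-\beta}u\in W^{k+\theta-|\nu-\beta|,p}(\reals^n)\hookrightarrow W^{\theta,p}(\reals^n)$ (using the iterated characterization of Theorem~\ref{thmmay141029} and the embedding Theorem~\ref{winter86}), with norm $\preceq\|u\|_{W^{s,p}(\reals^n)}$. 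Applying the $\theta$-estimate of the previous paragraph to each term and summing gives $\sum_{|\nu|=k}|\partial^\nu(\varphi u)|_{W^{\theta,p}(\reals^n)}\preceq\|u\|_{W^{s,p}(\reals^n)}$, which completes the case $s\ge 0$.

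Finally, for $s<0$ the product $\varphi u$ is defined as the distribution $\psi\mapsto\langle u,\varphi\psi\rangle$ on $D(\reals^n)$, which makes sense since $W^{s,p}(\reals^n)\subseteq D'(\reals^n)$ by Theorem~\ref{thmmay7701} and $\varphi\psi\in D(\reals^n)$. For $\psi\in C_c^\infty(\reals^n)$ we have $\langle\varphi u,\psi\rangle=\langle u,\varphi\psi\rangle$, and since $-s>0$ the already-established case applied to $\varphi$ gives $\|\varphi\psi\|_{W^{-s,p'}(\reals^n)}\preceq\|\psi\|_{W^{-s,p'}(\reals^n)}$; therefore
\[
\bigl|\langle\varphi u,\psi\rangle\bigr|=\bigl|\langle u,\varphi\psi\rangle\bigr|\le\|u\|_{W^{s,p}(\reals^n)}\,\|\varphi\psi\|_{W^{-s,p'}(\reals^n)}\preceq\|u\|_{W^{s,p}(\reals^n)}\,\|\psi\|_{W^{-s,p'}(\reals^n)}.
\]
By Corollary~\ref{corapril2812} (together with Remark~\ref{remapril28610}, noting that $W^{s,p}_0(\reals^n)=W^{s,p}(\reals^n)$ for $s<0$) this shows $\varphi u\in W^{s,p}(\reals^n)$ with $\|\varphi u\|_{W^{s,p}(\reals^n)}\preceq\|u\|_{W^{s,p}(\reals^n)}$. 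I expect the main obstacle to be the double-integral estimate in the second paragraph — the region split at $|x-y|=1$ and the bookkeeping of the exponent condition $p(1-\theta)>0$ — since everything else reduces to the Leibniz rule or to a formal transpose argument.
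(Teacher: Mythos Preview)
Your argument is correct. The integer case via Leibniz, the fractional case $\theta\in(0,1)$ via the splitting $(\varphi u)(x)-(\varphi u)(y)=\varphi(x)\bigl(u(x)-u(y)\bigr)+u(y)\bigl(\varphi(x)-\varphi(y)\bigr)$ with the region split at $|x-y|=1$, the bootstrap to general $s\ge 0$, and the duality step for $s<0$ are all sound; the exponent checks $p(1-\theta)>0$ and $\theta p>0$ are exactly what is needed for the two $z$-integrals to converge.

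However, note that the paper does \emph{not} actually prove this theorem: it is quoted as a background result from Triebel's monograph \cite{Trie92}, page~203, with no argument supplied. Triebel's treatment sits inside the general Fourier-analytic framework of Triebel--Lizorkin and Besov spaces $F^s_{p,q}$, $B^s_{p,q}$, where pointwise multiplication by $BC^\infty$ functions is handled uniformly for all $s\in\reals$ via Littlewood--Paley decompositions and paraproduct-type estimates, and the Sobolev--Slobodeckij spaces $W^{s,p}$ are recovered as the special cases $F^s_{p,2}$ (integer $s$) and $B^s_{p,p}$ (noninteger $s$). Your route is more elementary and entirely self-contained within the Slobodeckij-norm definitions used in this paper, which is a genuine advantage for the purposes here; Triebel's route, by contrast, gives the result in one stroke for all $s$ (no separate integer/fractional/negative cases) and for a much wider scale of spaces, at the cost of heavier machinery.
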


\begin{theorem}[Multiplication by smooth
functions II, \cite{holstbehzadan2018b}]\lab{thmfallmultsmooth20} Let $\Omega$ be a nonempty
bounded open set in $\reals^n$ with Lipschitz continuous boundary.
\begin{enumerateX}
\item Let $k\in \mathbb{N}_0$ and $1<p<\infty$. If $\varphi\in
BC^k(\Omega)$, then the linear map $W^{k,p}(\Omega)\rightarrow
W^{k,p}(\Omega)$ defined by $u\mapsto \varphi u$ is well-defined
and bounded.
\item Let $s\in \reals$ and $1<p<\infty$. If $\varphi\in
BC^{\infty}(\Omega)$, then the linear map
$W^{s,p}(\Omega)\rightarrow W^{s,p}(\Omega)$ defined by $u\mapsto
\varphi u$ is well-defined and bounded.
\end{enumerateX}
\end{theorem}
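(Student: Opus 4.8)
The plan is to work outward from the integer case: first prove part (1) by induction on $k$, then use it to reach positive fractional orders by estimating the Gagliardo seminorm, and finally reach negative orders by a duality argument.

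For part (1) the case $k=0$ is immediate from the pointwise bound $|\varphi u|\le\|\varphi\|_{BC^0(\Omega)}|u|$. For the inductive step I would first record the Leibniz rule for weak first derivatives: if $u\in W^{1,p}(\Omega)$ and $g\in BC^1(\Omega)$, then $gu\in W^{1,p}(\Omega)$ with $\partial_i(gu)=g\,\partial_i u+u\,\partial_i g$, proved by testing against $\psi\in C_c^\infty(\Omega)$, writing $g\,\partial_i\psi=\partial_i(g\psi)-(\partial_i g)\psi$, and using that the defining identity for $\partial_i u$ extends from $C_c^\infty(\Omega)$ to $C_c^1(\Omega)$ test functions (a routine mollification). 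Granting this, for $\varphi\in BC^k(\Omega)$ with $k\ge1$ and $u\in W^{k,p}(\Omega)$ one has $\varphi u\in L^p(\Omega)$ and, for each $i$, $\partial_i(\varphi u)=(\partial_i\varphi)u+\varphi\,\partial_i u$; since $\partial_i\varphi\in BC^{k-1}(\Omega)$, $\varphi\in BC^{k-1}(\Omega)$, and $u,\partial_i u\in W^{k-1,p}(\Omega)$, the induction hypothesis puts both summands in $W^{k-1,p}(\Omega)$ with norms $\preceq\|\varphi\|_{BC^k(\Omega)}\|u\|_{W^{k,p}(\Omega)}$. Theorem~\ref{thmmay141029} then yields $\varphi u\in W^{k,p}(\Omega)$ with the asserted bound.

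For part (2) the core is $s=\theta\in(0,1)$. Besides the trivial $L^p$ bound, I would split
\begin{equation*}
\varphi(x)u(x)-\varphi(y)u(y)=\varphi(x)\big(u(x)-u(y)\big)+u(y)\big(\varphi(x)-\varphi(y)\big),
\end{equation*}
so that $|\varphi u|_{W^{\theta,p}(\Omega)}^p\preceq\|\varphi\|_\infty^p\,|u|_{W^{\theta,p}(\Omega)}^p+\int_\Omega|u(y)|^p\Big(\int_\Omega\frac{|\varphi(x)-\varphi(y)|^p}{|x-y|^{n+\theta p}}\,dx\Big)dy$. In the inner integral I would use $|\varphi(x)-\varphi(y)|\le L|x-y|$ on $\{|x-y|<1\}$ (with $L$ the Lipschitz constant of $\varphi$, controlled by $\|\varphi\|_{BC^1(\Omega)}$) and $|\varphi(x)-\varphi(y)|\le2\|\varphi\|_\infty$ on $\{|x-y|\ge1\}$; the first piece is finite because $\int_{|z|<1}|z|^{\,p-n-\theta p}\,dz<\infty$ exactly when $p-n-\theta p>-n$, i.e. $\theta<1$, and the second because $n+\theta p>n$. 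This bounds the inner integral by a constant uniform in $y$, whence the second term is $\preceq\|u\|_{L^p(\Omega)}^p$ and $\|\varphi u\|_{W^{\theta,p}(\Omega)}\preceq\|\varphi\|_{BC^1(\Omega)}\|u\|_{W^{\theta,p}(\Omega)}$.

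For $s=k+\theta$ with $k\ge1$ and $\theta\in(0,1)$, part (1) already gives $\varphi u\in W^{k,p}(\Omega)$, so it remains to control $\partial^\nu(\varphi u)$ in $W^{\theta,p}(\Omega)$ for $|\nu|=k$. By the Leibniz rule $\partial^\nu(\varphi u)=\sum_{\beta\le\nu}\binom{\nu}{\beta}\,\partial^\beta\varphi\cdot\partial^{\nu-\beta}u$; for $\beta=0$ the factor $\partial^\nu u$ lies in $W^{\theta,p}(\Omega)$ by definition of $W^{s,p}(\Omega)$, while for $\beta\ne0$ the factor $\partial^{\nu-\beta}u$ lies in $W^{|\beta|,p}(\Omega)\hookrightarrow W^{\theta,p}(\Omega)$ by Theorem~\ref{thm3.4}, so in every case the $\theta\in(0,1)$ estimate applies to $\partial^\beta\varphi\cdot\partial^{\nu-\beta}u$ and gives membership in $W^{\theta,p}(\Omega)$ with norm $\preceq\|\varphi\|_{BC^\infty(\Omega)}\|u\|_{W^{s,p}(\Omega)}$. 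Finally, for $s\le0$ I would argue by duality: by the positive-order case with exponent $p'$ and order $-s\ge0$, the map $\psi\mapsto\varphi\psi$ is bounded on $W^{-s,p'}(\Omega)$, and since it carries $C_c^\infty(\Omega)$ into $C_c^\infty(\Omega)$ it restricts to a bounded map of the closure $W^{-s,p'}_0(\Omega)$ into itself; its Banach-space adjoint is then a bounded operator on $[W^{-s,p'}_0(\Omega)]^*=W^{s,p}(\Omega)$ which, evaluated against $\psi\in C_c^\infty(\Omega)$, equals $\langle u,\varphi\psi\rangle=\langle\varphi u,\psi\rangle$, i.e. it is the distributional product. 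The step I expect to be the main obstacle is the uniform-in-$y$ bound on the kernel integral in the fractional estimate: that is precisely where the hypothesis $\theta<1$ enters and where one must combine the Lipschitz bound near the diagonal with boundedness of $\varphi$ away from it; once this estimate and Theorem~\ref{thmmay141029} are available, the remaining reductions are routine.
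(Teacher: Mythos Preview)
The paper does not give its own proof of this theorem; it is quoted from \cite{holstbehzadan2018b} and stated without argument. So there is nothing to compare against at the level of technique, and the relevant question is simply whether your proposal stands on its own. It does: the induction for part (1) via the weak Leibniz rule and Theorem~\ref{thmmay141029} is correct, the Gagliardo--seminorm estimate for $\theta\in(0,1)$ is the standard one (and your near/far splitting pinpoints exactly where $\theta<1$ is used), the passage to $s=k+\theta$ via the higher-order Leibniz rule and Theorem~\ref{thm3.4} is fine, and the duality reduction for $s<0$ is the natural argument.

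One small point worth tightening: when you invoke a Lipschitz bound $|\varphi(x)-\varphi(y)|\le L|x-y|$ from $\varphi\in BC^1(\Omega)$, this is not automatic on an arbitrary open set; it uses that a bounded domain with Lipschitz boundary is quasiconvex (any two points can be joined by a path in $\Omega$ of length comparable to $|x-y|$), so that bounded gradient implies Lipschitz. Alternatively, extend $\varphi$ to $BC^\infty(\reals^n)$ using the Lipschitz boundary and read off the Lipschitz estimate there. Either way the gap is easily closed, and since $\Omega$ is bounded you could even dispense with the $|x-y|\ge 1$ region entirely by scaling.
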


\begin{theorem}[Multiplication by smooth functions III, \cite{holstbehzadan2018b}] \lab{thmfallmultsmooth21}
Let $\Omega$ be any nonempty open set in $\reals^n$. Let $p\in
(1,\infty)$.
\begin{enumerateX}
\item If $0\leq s<1$ and $\varphi\in BC^{0,1}(\Omega)$ (that is, $\varphi\in L^\infty(\Omega)$ and $\varphi$ is Lipschitz), then
\begin{equation*}
m_\varphi: W^{s,p}(\Omega)\rightarrow W^{s,p}(\Omega),\qquad
u\mapsto \varphi u
\end{equation*}
is a well-defined bounded linear map.
\item If $k\in \mathbb{N}_0$ and $\varphi\in BC^k(\Omega)$, then
\begin{equation*}
m_\varphi: W^{k,p}(\Omega)\rightarrow W^{k,p}(\Omega),\qquad
u\mapsto \varphi u
\end{equation*}
is a well-defined bounded linear map.
\item If $-1<s<0$ and $\varphi\in BC^{\infty,1}(\Omega)$ or $s\in
\mathbb{Z}^-$ and $\varphi\in BC^{\infty}(\Omega)$, then
\begin{equation*}
m_\varphi: W^{s,p}(\Omega)\rightarrow W^{s,p}(\Omega),\qquad
u\mapsto \varphi u
\end{equation*}
is a well-defined bounded linear map.
\end{enumerateX}
\end{theorem}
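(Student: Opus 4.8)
The plan is to dispatch the three cases separately. Cases (1) and (2) are proved by direct estimation, and case (3) is then obtained from them by a duality argument, so the real analytic work is concentrated in the first two cases. Throughout, the key point that keeps $\Omega$ arbitrary is that every quantity is an integral over $\Omega$ or $\Omega\times\Omega$, so all estimates survive restriction from $\reals^n$ and no extension operator is needed.

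For case (1), when $s=0$ the assertion is just $\|\varphi u\|_{L^p(\Omega)}\leq\|\varphi\|_{L^\infty(\Omega)}\|u\|_{L^p(\Omega)}$. For $0<s<1$ I would control the two pieces of $\|\varphi u\|_{W^{s,p}(\Omega)}=\|\varphi u\|_{L^p(\Omega)}+|\varphi u|_{W^{s,p}(\Omega)}$ separately; the $L^p$ piece is as above, so the content is the Gagliardo seminorm. Here I would write $\varphi(x)u(x)-\varphi(y)u(y)=\varphi(x)\big(u(x)-u(y)\big)+u(y)\big(\varphi(x)-\varphi(y)\big)$. The first term, after division by $|x-y|^{n+sp}$ and integration, is bounded by $\|\varphi\|_{L^\infty(\Omega)}^p\,|u|_{W^{s,p}(\Omega)}^p$. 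For the second term I would break the integration region into $\{|x-y|\leq 1\}$ and $\{|x-y|>1\}$: on the first use the Lipschitz estimate $|\varphi(x)-\varphi(y)|\leq L|x-y|$ together with the convergence of $\int_{|z|\leq1}|z|^{p-n-sp}\,dz$ (which needs $s<1$); on the second use $|\varphi(x)-\varphi(y)|\leq 2\|\varphi\|_{L^\infty(\Omega)}$ together with the convergence of $\int_{|z|>1}|z|^{-n-sp}\,dz$ (which needs $s>0$). Both pieces come out $\preceq\|u\|_{L^p(\Omega)}^p$, and adding everything up gives $\|\varphi u\|_{W^{s,p}(\Omega)}\preceq\|u\|_{W^{s,p}(\Omega)}$.

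For case (2) I would first establish the order-one Leibniz rule: if $\varphi\in BC^1(\Omega)$ and $u\in W^{1,p}(\Omega)$, then $\varphi u\in W^{1,p}(\Omega)$ with $\partial_i(\varphi u)=(\partial_i\varphi)u+\varphi\,\partial_i u$; this follows by approximating $u$ in $W^{1,p}(\Omega)$ by functions in $C^\infty(\Omega)\cap W^{1,p}(\Omega)$ (Meyers--Serrin), applying the classical product rule to the approximants, and passing to the limit in $D'(\Omega)$ using $\varphi,\partial_i\varphi\in L^\infty(\Omega)$. Iterating this (equivalently, using $\partial^\alpha(\varphi u)=\sum_{\beta\leq\alpha}\binom{\alpha}{\beta}\partial^\beta\varphi\,\partial^{\alpha-\beta}u$ for $|\alpha|\leq k$) shows every derivative of $\varphi u$ of order $\leq k$ lies in $L^p(\Omega)$ with $\|\partial^\alpha(\varphi u)\|_{L^p(\Omega)}\leq\sum_{\beta\leq\alpha}\binom{\alpha}{\beta}\|\partial^\beta\varphi\|_{L^\infty(\Omega)}\|\partial^{\alpha-\beta}u\|_{L^p(\Omega)}$; summing over $|\alpha|\leq k$ yields $\|\varphi u\|_{W^{k,p}(\Omega)}\preceq\|\varphi\|_{BC^k(\Omega)}\|u\|_{W^{k,p}(\Omega)}$.

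For case (3), where $s<0$, I would first recall that $\varphi u$ denotes the distribution $\psi\mapsto\langle u,\varphi\psi\rangle_{D'(\Omega)\times D(\Omega)}$, which makes sense precisely because $\varphi\in C^\infty(\Omega)$ so that $\psi\mapsto\varphi\psi$ maps $D(\Omega)$ into $D(\Omega)$ -- this is why the hypotheses require $\varphi\in BC^{\infty,1}(\Omega)$ or $\varphi\in BC^\infty(\Omega)$. By Corollary \ref{corapril2812}, to conclude $\varphi u\in W^{s,p}(\Omega)$ with the stated bound it is enough to estimate $\sup_{0\not\equiv\psi\in C_c^\infty(\Omega)}\langle\varphi u,\psi\rangle_{D'(\Omega)\times D(\Omega)}/\|\psi\|_{W^{-s,p'}(\Omega)}$. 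Writing $\langle\varphi u,\psi\rangle=\langle u,\varphi\psi\rangle$ and identifying this with the duality pairing of $W^{s,p}(\Omega)=[W^{-s,p'}_0(\Omega)]^*$ with $W^{-s,p'}_0(\Omega)$ via Remark \ref{remapril28610}, one gets $|\langle\varphi u,\psi\rangle|\leq\|u\|_{W^{s,p}(\Omega)}\|\varphi\psi\|_{W^{-s,p'}(\Omega)}$. Since $-s>0$, I would then apply case (1) when $0<-s<1$ (using $BC^{\infty,1}(\Omega)\subseteq BC^{0,1}(\Omega)$) or case (2) when $k:=-s\in\mathbb{N}$ (using $BC^\infty(\Omega)\subseteq BC^{k}(\Omega)$), in either case with exponent $p'$, to obtain $\|\varphi\psi\|_{W^{-s,p'}(\Omega)}\preceq\|\psi\|_{W^{-s,p'}(\Omega)}$. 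This makes the supremum $\preceq\|u\|_{W^{s,p}(\Omega)}$, finishing the proof.

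The hard part will be the second term in case (1): coaxing the Lipschitz increment $\varphi(x)-\varphi(y)$ into cooperating with the Gagliardo kernel. A single global bound does not work -- the Lipschitz estimate fails at infinity and the $L^\infty$ estimate fails near the diagonal -- so the dyadic-type split into $\{|x-y|\leq1\}$ and $\{|x-y|>1\}$ is essential, and it is exactly there that the hypothesis $s<1$ enters (for near-diagonal integrability). The rest is routine bookkeeping: the density-plus-limit argument in case (2) is classical, and case (3) is purely formal once (1) and (2) are in hand.
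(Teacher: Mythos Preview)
The paper does not prove this theorem; it is quoted from \cite{holstbehzadan2018b} and used as background. So there is no ``paper's own proof'' to compare against, and your proposal must be judged on its own merits.

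Your argument is correct and follows the standard route. In case~(1) the decomposition $\varphi(x)u(x)-\varphi(y)u(y)=\varphi(x)\big(u(x)-u(y)\big)+u(y)\big(\varphi(x)-\varphi(y)\big)$ together with the near/far split $\{|x-y|\le1\}\cup\{|x-y|>1\}$ is exactly the right tool; the radial integrals you invoke converge precisely under the stated constraints $0<s<1$, and the fact that one integrates the kernel over the subset $\{z:y+z\in\Omega\}\subseteq\reals^n$ only helps. Case~(2) via Meyers--Serrin and the Leibniz rule is routine and works on arbitrary open sets. Case~(3) is handled by duality, and your use of Corollary~\ref{corapril2812} and Remark~\ref{remapril28610} is correct; in fact this is precisely the mechanism the paper itself employs a few lines later in the proof of Theorem~\ref{thmmay7724}, so your approach is fully consistent with the paper's framework.
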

\begin{theorem}[Multiplication by smooth functions IV, \cite{holstbehzadan2018b}]\lab{lemapp3} 
Let $\Omega$ be a nonempty open set in $\reals^n$,
$K\in\mathcal{K}(\Omega)$, $p\in (1,\infty)$, and $-1<s<0$ or $s\in\mathbb{Z}^{-}$ or $s\in [0,\infty)$. If $\varphi\in C^{\infty}(\Omega)$, then the linear
map
\begin{equation*}
W^{s,p}_{K}(\Omega)\rightarrow W^{s,p}_{K}(\Omega),\qquad u\mapsto
\varphi u
\end{equation*}
is well-defined and bounded.
\end{theorem}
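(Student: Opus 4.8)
The plan is to reduce the statement to multiplication theorems already available, by first trading the merely smooth $\varphi$ for a compactly supported one. Using Theorem~\ref{thmmay7635}, fix $\psi\in C_c^\infty(\Omega)$ with values in $[0,1]$ and $\psi\equiv 1$ on an open neighborhood $V$ of $K$, and set $\tilde\varphi:=\psi\varphi\in C_c^\infty(\Omega)$. Then $\tilde\varphi\in BC^{\infty,1}(\Omega)\subseteq BC^{\infty}(\Omega)\subseteq BC^{k}(\Omega)$ for every $k$, and $\tilde\varphi\in BC^{0,1}(\Omega)$. I claim that for every $u\in W^{s,p}_K(\Omega)$ one has $\varphi u=\tilde\varphi u$ in $W^{s,p}(\Omega)$: when $s\geq 0$ this holds a.e.\ because $u=0$ off $\supp u\subseteq K$ while $\varphi=\tilde\varphi$ on $V\supseteq\supp u$; when $s<0$, for any $\chi\in D(\Omega)$ the function $(\varphi-\tilde\varphi)\chi=(1-\psi)\varphi\chi$ vanishes on the neighborhood $V$ of $\supp u$, so $\langle(\varphi-\tilde\varphi)u,\chi\rangle=\langle u,(\varphi-\tilde\varphi)\chi\rangle=0$ by Theorem~\ref{thmmay31127}, i.e.\ $(\varphi-\tilde\varphi)u=0$ in $D'(\Omega)$. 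Since moreover $\supp(\tilde\varphi u)\subseteq\supp u\subseteq K$, it suffices to show that $u\mapsto\tilde\varphi u$ is a bounded linear map $W^{s,p}_K(\Omega)\to W^{s,p}(\Omega)$.

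For $s\in(-1,0)$ and for $s\in\mathbb{Z}^-$ this is immediate from Theorem~\ref{thmfallmultsmooth21}(3) (using $\tilde\varphi\in BC^{\infty,1}(\Omega)$, resp.\ $\tilde\varphi\in BC^{\infty}(\Omega)$); for $s\in[0,1)$ from Theorem~\ref{thmfallmultsmooth21}(1) (using $\tilde\varphi\in BC^{0,1}(\Omega)$); and for $s=k\in\mathbb{N}$ from Theorem~\ref{thmfallmultsmooth21}(2) (using $\tilde\varphi\in BC^{k}(\Omega)$) --- in each case one simply restricts the bounded operator $m_{\tilde\varphi}$ on $W^{s,p}(\Omega)$ to the subspace $W^{s,p}_K(\Omega)$. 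The only case not covered by an off-the-shelf statement is $s=k+\theta$ with $k\in\mathbb{N}$ and $\theta\in(0,1)$, and I expect this to be the main point of the proof, since on a general open $\Omega$ the excerpt's global multiplication theorems reach only $s\in\mathbb{Z}\cup(-1,1)$.

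I would handle this last case by induction on $\floor{s}$, via the characterization of Theorem~\ref{thmmay141029} together with the attendant norm equivalence $\|w\|_{W^{r,p}(\Omega)}\simeq\|w\|_{L^p(\Omega)}+\sum_{i=1}^{n}\|\partial_i w\|_{W^{r-1,p}(\Omega)}$ for $r\geq 1$, which is read off directly from Definition~\ref{defa1}. Assume, for every $\chi\in C_c^\infty(\Omega)$, every $r\geq 0$ with $\floor{r}<\floor{s}$, and every $w\in W^{r,p}_K(\Omega)$, that $\chi w\in W^{r,p}_K(\Omega)$ with $\|\chi w\|_{W^{r,p}(\Omega)}\preceq\|w\|_{W^{r,p}(\Omega)}$; the levels $\floor{r}=0$ are the cases settled above. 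Given $u\in W^{s,p}_K(\Omega)$, Theorem~\ref{thmmay141029} reduces $\tilde\varphi u\in W^{s,p}(\Omega)$ to $\tilde\varphi u\in L^p(\Omega)$ --- clear, with $\|\tilde\varphi u\|_{L^p}\leq\|\tilde\varphi\|_{L^\infty}\|u\|_{L^p}$ --- together with $\partial_i(\tilde\varphi u)=\tilde\varphi\,\partial_i u+(\partial_i\tilde\varphi)\,u\in W^{s-1,p}(\Omega)$ for $1\leq i\leq n$. For the first summand, $\partial_i u\in W^{s-1,p}_K(\Omega)$ with $\|\partial_i u\|_{W^{s-1,p}}\preceq\|u\|_{W^{s,p}}$, and since $\floor{s-1}<\floor{s}$ the inductive hypothesis gives $\tilde\varphi\,\partial_i u\in W^{s-1,p}_K(\Omega)$ with norm $\preceq\|u\|_{W^{s,p}}$. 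For the second summand, Theorem~\ref{thm3.2}(1) (with $t=s-1\geq 0$, $q=p$) gives $W^{s,p}_K(\Omega)\hookrightarrow W^{s-1,p}_K(\Omega)$, and as $\partial_i\tilde\varphi\in C_c^\infty(\Omega)$ the inductive hypothesis again yields $(\partial_i\tilde\varphi)\,u\in W^{s-1,p}_K(\Omega)$ with norm $\preceq\|u\|_{W^{s-1,p}}\preceq\|u\|_{W^{s,p}}$. Adding these estimates and applying Theorem~\ref{thmmay141029} once more, $\tilde\varphi u\in W^{s,p}(\Omega)$ with $\|\tilde\varphi u\|_{W^{s,p}}\preceq\|u\|_{W^{s,p}}$; combined with $\supp(\tilde\varphi u)\subseteq K$ this closes the induction and completes the proof. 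The genuinely delicate point is precisely this non-integer case $s>1$: one must run the derivative-by-derivative reduction by hand, taking care that the auxiliary embedding $W^{s,p}_K\hookrightarrow W^{s-1,p}_K$ keeps the support inside $K$ and that all constants remain independent of $u$; an alternative transfer to a bounded Lipschitz subdomain would instead require producing such a subdomain for arbitrary open $\Omega$ and a boundedness statement for extension by zero, neither of which is in the excerpt.
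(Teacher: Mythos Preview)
The paper does not actually prove this theorem; it is quoted from \cite{holstbehzadan2018b} as a background result, with no argument given here. So there is no in-paper proof to compare against.

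That said, your argument is sound and is essentially the natural one. The reduction $\varphi u=\tilde\varphi u$ via a cutoff equal to $1$ near $K$ is correct in both the function and distribution senses, and for $s\in(-1,1)\cup\mathbb{Z}$ the conclusion then follows directly from Theorem~\ref{thmfallmultsmooth21} as you say. For the remaining case $s=k+\theta$ with $k\in\mathbb{N}$ and $\theta\in(0,1)$, your induction via the Leibniz rule and Theorem~\ref{thmmay141029} is exactly the right mechanism; the only small point to make explicit is that the norm equivalence $\|w\|_{W^{r,p}(\Omega)}\simeq\|w\|_{L^p(\Omega)}+\sum_i\|\partial_i w\|_{W^{r-1,p}(\Omega)}$ you invoke is not literally stated in the paper, but it does follow from Definition~\ref{defa1} by the standard multiplicity bookkeeping (each $\partial^\alpha$ with $1\le|\alpha|\le k$ appears as some $\partial^\beta\partial_i$), with constants depending only on $n$ and $\floor{s}$. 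Your use of Theorem~\ref{thm3.2}(1) to get $W^{s,p}_K(\Omega)\hookrightarrow W^{s-1,p}_K(\Omega)$ and the observation $\supp(\partial_i u)\subseteq\supp u\subseteq K$ keep all supports inside $K$, so the induction closes cleanly.
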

\begin{theorem}[Multiplication by smooth functions V, \cite{holstbehzadan2018b}]\lab{corollarywinter92a}
Let $\Omega$ be a nonempty bounded open set in $\reals^n$ with
Lipschitz continuous boundary. Let $K\in \mathcal{K}(\Omega)$.
Suppose $s\in \reals$ and $p\in (1,\infty)$. If $\varphi\in
C^\infty(\Omega)$, then the linear map
$W^{s,p}_K(\Omega)\rightarrow W^{s,p}_K(\Omega)$ defined by
$u\mapsto \varphi u$ is well-defined and bounded.
\end{theorem}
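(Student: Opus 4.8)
The plan is to localize the multiplier: replace the general smooth function $\varphi\in C^\infty(\Omega)$ by a function in $BC^\infty(\Omega)$ and then quote Theorem~\ref{thmfallmultsmooth20}(2), which already handles $BC^\infty(\Omega)$ multipliers on a bounded Lipschitz domain for \emph{every} real $s$ (this is what lets us go beyond Theorem~\ref{lemapp3}, where only $-1<s<0$, $s\in\mathbb{Z}^-$, or $s\geq0$ were treated for a general open set).

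First I would use Theorem~\ref{thmmay7635} to choose a cutoff $\psi\in C_c^\infty(\Omega)$ with values in $[0,1]$ that equals $1$ on an open set $U$ with $K\subseteq U\subseteq\Omega$. Since $\psi\varphi\in C_c^\infty(\Omega)$, each derivative $\partial^\alpha(\psi\varphi)$ is continuous with support contained in the compact set $\supp\psi$, hence bounded on $\Omega$; therefore $\psi\varphi\in BC^\infty(\Omega)$. The key step is then to verify that $\varphi u=(\psi\varphi)u$ for every $u\in W^{s,p}_K(\Omega)$. For $s\geq0$ this is immediate, since $u$ vanishes almost everywhere off $K$ while $\psi\equiv1$ on $K$, so $\psi u=u$ in $L^p(\Omega)$. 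For $s<0$, where $u$ is identified with a distribution supported in $K$ via Theorem~\ref{thmmay7701}, I would test against $\phi\in D(\Omega)$ and observe that
\begin{equation*}
\langle\varphi u-(\psi\varphi)u,\phi\rangle_{D'(\Omega)\times D(\Omega)}=\langle u,(1-\psi)\varphi\phi\rangle_{D'(\Omega)\times D(\Omega)}=0,
\end{equation*}
because $(1-\psi)\varphi\phi\in D(\Omega)$ vanishes on the neighborhood $U$ of $\supp u$ (Theorem~\ref{thmmay31127}); hence $\varphi u=(\psi\varphi)u$ in $D'(\Omega)$, and by the convention preceding Theorem~\ref{winter87} this is the distribution meant by $\varphi u$.

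With this identity, Theorem~\ref{thmfallmultsmooth20}(2) applied to the $BC^\infty(\Omega)$ multiplier $\psi\varphi$ shows that $u\mapsto(\psi\varphi)u$ is a well-defined bounded operator on $W^{s,p}(\Omega)$, so there is a constant $C$, independent of $u$, with $\|\varphi u\|_{W^{s,p}(\Omega)}=\|(\psi\varphi)u\|_{W^{s,p}(\Omega)}\leq C\|u\|_{W^{s,p}(\Omega)}$. It then remains to check $\supp(\varphi u)\subseteq\supp u\subseteq K$: this is clear pointwise when $s\geq0$, and when $s<0$ it follows because $\langle\varphi u,\phi\rangle=\langle u,\varphi\phi\rangle=0$ whenever $\supp\phi\cap K=\emptyset$. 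Thus $\varphi u\in W^{s,p}_K(\Omega)$, and since the norm on $W^{s,p}_K(\Omega)$ is the restriction of $\|\cdot\|_{W^{s,p}(\Omega)}$, the inequality above gives boundedness of $m_\varphi\colon W^{s,p}_K(\Omega)\to W^{s,p}_K(\Omega)$.

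The main obstacle --- really the only subtle point --- is the bookkeeping in the case $s<0$: one must be sure that $\varphi u$, defined as a distribution in the sense of the Note before Theorem~\ref{winter87}, coincides with $(\psi\varphi)u$ not merely in $D'(\Omega)$ but as the same element of $W^{s,p}(\Omega)$ under the identifications of Definition~\ref{defa1} and Theorem~\ref{thmmay7701}. Once the distributional identity $\varphi u=(\psi\varphi)u$ is in place this is automatic, since membership in $W^{s,p}(\Omega)$ and the value of the norm are detected by the action on $C_c^\infty(\Omega)$ (Corollary~\ref{corapril2812}); still, it is worth spelling out.
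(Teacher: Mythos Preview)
The paper does not actually supply a proof of this statement; it is quoted from \cite{holstbehzadan2018b} as part of the background material. Your argument is correct and is precisely the natural localization strategy one expects: cut off $\varphi$ by a bump $\psi\in C_c^\infty(\Omega)$ equal to $1$ near $K$, observe that $\psi\varphi\in BC^\infty(\Omega)$, verify $\varphi u=(\psi\varphi)u$ for $u$ supported in $K$, and then invoke Theorem~\ref{thmfallmultsmooth20}(2). The handling of the case $s<0$ via Theorem~\ref{thmmay31127} and Corollary~\ref{corapril2812} is the right way to navigate the identifications, and your closing remark about the distributional versus $W^{s,p}$ identity is on point. One tiny cosmetic point: when you write ``$\langle u,\varphi\phi\rangle=0$ whenever $\supp\phi\cap K=\emptyset$,'' you are implicitly using that $\supp\phi$ and $K$ are disjoint compact sets, so $\varphi\phi$ vanishes on an open neighborhood of $K$, which is what Theorem~\ref{thmmay31127} requires; it would not hurt to say so.
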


In the next definition we introduce the notion of \emph{smooth multiplication triple} which will play a key role in several theorems that will follow.

\begin{definition}[Smooth multiplication triple]\lab{defmay7722}
Let $\Omega$ be a nonempty open set in $\reals^n$, $s\in \reals$ and $1<p<\infty$.
\begin{itemize}
\item  We say that the triple $(s,p,\Omega)$ is a \textbf{smooth multiplication triple} if for all $\varphi\in C_c^\infty(\Omega)$, the map
\begin{equation*}
m_\varphi: W^{s,p}(\Omega)\rightarrow W^{s,p}(\Omega)\qquad u\mapsto \varphi\,u
\end{equation*}
is well-defined and bounded.
\item We say that the triple $(s,p,\Omega)$ is an \textbf{interior smooth multiplication triple} if for all $\varphi\in C_c^\infty(\Omega)$ and $K\in \mathcal{K}(\Omega)$, the map
  \begin{equation*}
m_\varphi: W^{s,p}_K(\Omega)\rightarrow W^{s,p}_K(\Omega)\qquad u\mapsto \varphi\,u
\end{equation*}
is well-defined and bounded.
\end{itemize}
\end{definition}

\begin{remark}\lab{remapril161}
\leavevmode
\begin{itemizeX}
\item Every smooth multiplication triple is also an interior smooth multiplication triple.
\item It is a direct consequence of theorems ~\ref{winter87}, ~\ref{thmfallmultsmooth20}, and ~\ref{thmfallmultsmooth21} that
\begin{enumerate}
\item if $\Omega=\reals^n$ or $\Omega$ is bounded with Lipschitz continuous boundary, then for all $s\in \reals$ and $1<p<\infty$, $(s,p,\Omega)$ is a smooth multiplication triple.
    \item if $\Omega$ is any open set in $\reals^n$,  $1<p<\infty$, and $s\in \reals$ is not a noninteger with magnitude greater than $1$, then $(s,p,\Omega)$ is a smooth multiplication triple.
\end{enumerate}
\item It is a direct consequence of Theorem ~\ref{lemapp3} and Theorem ~\ref{corollarywinter92a} that
\begin{enumerate}
\item if $\Omega=\reals^n$ or $\Omega$ is bounded with Lipschitz continuous boundary, then for all $s\in \reals$ and $1<p<\infty$, $(s,p,\Omega)$ is an interior smooth multiplication triple.
    \item if $\Omega$ is any open set in $\reals^n$,  $1<p<\infty$, and $s\in \reals$ is not a noninteger less than $-1$, then $(s,p,\Omega)$ is an interior smooth multiplication triple.
\end{enumerate}
\item If $(s,p,\Omega)$ is a smooth multiplication triple and $K\in \mathcal{K}(\Omega)$, then $W^{s,p}_K(\Omega)\subseteq W^{s,p}_0(\Omega)$ (see the proof of Theorem 7.31 in \cite{holstbehzadan2018b}). Of course, if $s<0$, then $W^{s,p}(\Omega)=W^{s,p}_0(\Omega)$ and so $W^{s,p}_K(\Omega)\subseteq W^{s,p}_0(\Omega)$ holds for all $s<0$, $1<p<\infty$ and open sets $\Omega\subseteq \reals^n$.
\end{itemizeX}
\end{remark}

\begin{theorem}\lab{thmmay7724}
Let $\Omega$ be a nonempty open set in $\reals^n$, $s\geq 0$ and $1<p<\infty$. If $(s,p,\Omega)$ is a smooth multiplication triple so is $(-s,p',\Omega)$.
\end{theorem}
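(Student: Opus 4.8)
The plan is to exhibit multiplication by $\varphi \in C_c^\infty(\Omega)$ on $W^{-s,p'}(\Omega)$ as the Banach--space adjoint of multiplication by $\varphi$ on $W^{s,p}_0(\Omega)$, and then verify that this adjoint is literally the distributional product operator $u \mapsto \varphi u$. First, the case $s=0$ is trivial, since $(0,p',\Omega)$ is a smooth multiplication triple for every open $\Omega$ by Remark~\ref{remapril161} (item 2), so I assume $s>0$. Fix $\varphi \in C_c^\infty(\Omega)$ and set $K := \supp\varphi \in \mathcal{K}(\Omega)$. Since $(s,p,\Omega)$ is a smooth multiplication triple, $m_\varphi : W^{s,p}(\Omega) \to W^{s,p}(\Omega)$ is bounded, say with operator norm $\leq C$; and since $\supp(\varphi u) \subseteq K$, its image lies in $W^{s,p}_K(\Omega)$, which by the last bullet of Remark~\ref{remapril161} is contained in $W^{s,p}_0(\Omega)$. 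As $W^{s,p}_0(\Omega)$ carries the subspace norm, $m_\varphi$ restricts to a bounded linear map $m_\varphi : W^{s,p}_0(\Omega) \to W^{s,p}_0(\Omega)$ with operator norm $\leq C$.

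Next I would pass to adjoints. By Definition~\ref{defa1} (with $p$ and $p'$ interchanged and using $(p')'=p$) together with Remark~\ref{winter32}, the strong dual of the Banach space $W^{s,p}_0(\Omega)$ is $W^{-s,p'}(\Omega)$ equipped with the operator norm. Hence the adjoint $(m_\varphi)^* : W^{-s,p'}(\Omega) \to W^{-s,p'}(\Omega)$ is a well-defined bounded linear map with operator norm $\leq C$. It remains to check that $(m_\varphi)^*$ \emph{is} the map $u \mapsto \varphi u$. Given $u \in W^{-s,p'}(\Omega)$, regarded as a distribution via $W^{-s,p'}(\Omega) \hookrightarrow D'(\Omega)$ (Theorem~\ref{thmmay7701}) and the identifications of Remark~\ref{remapril28610}, and given any $\psi \in D(\Omega)$ — so that $\varphi\psi \in C_c^\infty(\Omega) \subseteq W^{s,p}_0(\Omega)$ — one computes
\begin{equation*}
\langle (m_\varphi)^* u, \psi \rangle_{D'(\Omega)\times D(\Omega)}
= \langle (m_\varphi)^* u, \psi \rangle_{W^{-s,p'}(\Omega)\times W^{s,p}_0(\Omega)}
= \langle u, \varphi\psi \rangle_{W^{-s,p'}(\Omega)\times W^{s,p}_0(\Omega)}
= \langle u, \varphi\psi \rangle_{D'(\Omega)\times D(\Omega)},
\end{equation*}
and the right-hand side is exactly $\langle \varphi u, \psi \rangle_{D'(\Omega)\times D(\Omega)}$ by the definition of the product of a smooth function with a distribution. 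Thus the distribution $\varphi u$ coincides with $(m_\varphi)^* u \in W^{-s,p'}(\Omega)$, so $m_\varphi : W^{-s,p'}(\Omega) \to W^{-s,p'}(\Omega)$ is well-defined and bounded (with norm $\leq C$). Since $\varphi \in C_c^\infty(\Omega)$ was arbitrary, $(-s,p',\Omega)$ is a smooth multiplication triple.

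The boundedness of the restriction to $W^{s,p}_0(\Omega)$ and of the adjoint are routine; the step that needs care — the actual content of the argument — is the middle display, where one must be certain that the duality pairing $\langle\cdot,\cdot\rangle_{W^{-s,p'}(\Omega)\times W^{s,p}_0(\Omega)}$, when both slots are restricted to test functions, agrees with the distributional pairing. This is precisely what Theorem~\ref{thmmay7701} and Remark~\ref{remapril28610} provide (together with the density of $C_c^\infty(\Omega)$ in $W^{s,p}_0(\Omega)$, which also shows that $(m_\varphi)^*u$ is uniquely determined by its values on $D(\Omega)$). I would also note that no multiplication theorem for the negative-index space $W^{-s,p'}(\Omega)$ is ever invoked directly — even when $s$ is a noninteger larger than $1$, so that $-s<-1$ — the entire argument rests only on the hypothesis on $(s,p,\Omega)$ and abstract Banach-space duality.
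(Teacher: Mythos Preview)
Your proof is correct and follows essentially the same approach as the paper. The paper computes directly $|\langle \varphi u,\psi\rangle_{D'\times D}| = |\langle u,\varphi\psi\rangle_{W^{-s,p'}\times W^{s,p}_0}| \leq \|u\|_{W^{-s,p'}}\|\varphi\psi\|_{W^{s,p}} \preceq \|u\|_{W^{-s,p'}}\|\psi\|_{W^{s,p}}$ and then invokes Corollary~\ref{corapril2812}, whereas you package the same computation as the Banach--space adjoint of $m_\varphi$ on $W^{s,p}_0(\Omega)$; the underlying duality identity (via Remark~\ref{remapril28610}) and use of the smooth multiplication hypothesis are identical.
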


\begin{proof}
Let $\varphi\in C_c^\infty(\Omega)$. For all $u\in W^{-s,p'}(\Omega)=W^{-s,p'}_0(\Omega)$ and $\psi\in D(\Omega)$ we have
\begin{align*}
|\langle \varphi u,\psi\rangle_{D'(\Omega)\times D(\Omega)}|&=|\langle  u,\varphi\psi\rangle_{D'(\Omega)\times D(\Omega)}|\stackrel{\textrm{Remark ~\ref{remapril28610}}}{=}|\langle  u,\varphi\psi\rangle_{W^{-s,p'}(\Omega)\times W^{s,p}_0(\Omega)}|\\
&\leq \|u\|_{W^{-s,p'}(\Omega)}\|\varphi\psi\|_{W^{s,p}(\Omega)}\\
&\preceq \|u\|_{W^{-s,p'}(\Omega)}\|\psi\|_{W^{s,p}(\Omega)}
\end{align*}
The last inequality holds because $(s,p,\Omega)$ is a smooth multiplication triple. It follows from Corollary ~\ref{corapril2812} that $\varphi u\in W^{-s,p'}_0(\Omega)$ and $\|\varphi u\|_{W^{-s,p'}(\Omega)}\preceq \|u\|_{W^{-s,p'}(\Omega)}$, that is, $m_\varphi: W^{-s,p'}(\Omega)\rightarrow W^{-s,p'}(\Omega)$ is well-defined and continuous.
\end{proof}

\begin{theorem}\lab{thmapril181}
Let $\Omega$ be a nonempty open set in $\reals^n$, $s\in \reals$ and $1<p<\infty$. If $s<0$, further assume that $(-s,p',\Omega)$ is a smooth multiplication triple. Suppose that $\Omega'\subseteq \Omega$ and $K\in \mathcal{K}(\Omega')$. Then
    \begin{enumerate}
    \item for all $u\in W^{s,p}_K(\Omega)$, $\|u\|_{W^{s,p}(\Omega)}\simeq \|u|_{\Omega'}\|_{W^{s,p}(\Omega')}$,
    \item for all $u\in W^{s,p}_K(\Omega')$, $\|\textrm{ext}^0_{\Omega',\Omega}u\|_{W^{s,p}(\Omega)}\simeq \|u\|_{W^{s,p}(\Omega')}$.
    \end{enumerate}
\end{theorem}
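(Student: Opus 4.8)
The plan is to first note that the two assertions are really one statement seen from two sides: if $u\in W^{s,p}_K(\Omega)$ and $v:=u|_{\Omega'}$, then since $\textrm{supp}\,u\subseteq K\subseteq\Omega'$ one has $v\in W^{s,p}_K(\Omega')$ and $\textrm{ext}^0_{\Omega',\Omega}v=u$, so (1) for $u$ is exactly (2) for $v$, and conversely. Hence it suffices to prove (2), namely that for $u\in W^{s,p}_K(\Omega')$ the extension $w:=\textrm{ext}^0_{\Omega',\Omega}u$ lies in $W^{s,p}(\Omega)$ with $\|w\|_{W^{s,p}(\Omega)}\simeq\|u\|_{W^{s,p}(\Omega')}$; here $K$ and $\Omega'$ are fixed data, so constants depending on them are admissible in the symbols $\preceq$ and $\simeq$. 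When $s<0$ write $t:=-s>0$, and note first that $w$ is a well-defined element of $D'(\Omega)$ with $\textrm{supp}\,w\subseteq K$ (standard extension of a compactly supported distribution), which is needed even to set up the duality estimate below.

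For $s\geq0$ I would argue directly from the intrinsic norms. Since $u$ vanishes on a neighborhood of $\reals^n\setminus\Omega'$, the zero-extension $w$ has $\|w\|_{L^p(\Omega)}=\|u\|_{L^p(\Omega')}$, its weak derivatives up to order $\floor{s}$ are the zero-extensions of those of $u$, and $\|w\|_{W^{\floor{s},p}(\Omega)}=\|u\|_{W^{\floor{s},p}(\Omega')}$. For a fractional part $\theta\in(0,1)$ the only point needing work is the Gagliardo seminorm of a derivative $g$ of $w$ of order $\floor{s}$, which satisfies $\textrm{supp}\,g\subseteq K$: splitting $\Omega\times\Omega$ into $K\times K$, the two symmetric collar pieces $K\times(\Omega'\setminus K)$, and $K\times(\Omega\setminus\Omega')$ (the remaining piece has zero integrand), the first three are dominated by the $\Omega'$-seminorm of $g$, while on the last $g$ vanishes in the second slot and the contribution is at most $\|g\|_{L^p}^p\,\sup_{x\in K}\int_{\reals^n\setminus\Omega'}|x-y|^{-n-\theta p}\,dy\preceq\textrm{dist}(K,\reals^n\setminus\Omega')^{-\theta p}\|g\|_{L^p}^p$, which is finite because $\theta p>0$ and $K$ is a compact subset of the open set $\Omega'$. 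Together with the trivial bound $\|u\|_{W^{s,p}(\Omega')}\leq\|w\|_{W^{s,p}(\Omega)}$ (coming from $\Omega'\times\Omega'\subseteq\Omega\times\Omega$ and nonnegativity of the integrands) this gives $\|w\|_{W^{s,p}(\Omega)}\simeq\|u\|_{W^{s,p}(\Omega')}$.

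For $s<0$ I would use the duality description of $W^{s,p}$. Fix once and for all $\chi\in C_c^\infty(\Omega')$ with $\chi\equiv1$ on a neighborhood of $K$ and $K':=\textrm{supp}\,\chi\in\mathcal{K}(\Omega')$ (Theorem~\ref{thmmay7635}), and set $\tilde\chi:=\textrm{ext}^0_{\Omega',\Omega}\chi$. Because $\textrm{supp}\,u\subseteq K$ and $\textrm{supp}\,w\subseteq K$, Theorem~\ref{thmmay31127} lets me replace an arbitrary test function $\varphi\in C_c^\infty(\Omega')$ by $\chi\varphi$ and an arbitrary $\psi\in C_c^\infty(\Omega)$ by $\tilde\chi\psi$ in the $D'$–$D$ pairings computing the norms of $u$ and $w$ via Corollary~\ref{corapril2812}, and the identity $\textrm{ext}^0_{\Omega',\Omega}(\chi\,\cdot)=\tilde\chi\,\textrm{ext}^0_{\Omega',\Omega}(\cdot)$ links the two sides. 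Thus everything reduces to
\begin{equation*}
\|\textrm{ext}^0_{\Omega',\Omega}(\chi\eta)\|_{W^{-s,p'}(\Omega)}\simeq\|\chi\eta\|_{W^{-s,p'}(\Omega')}\preceq\|\eta\|_{W^{-s,p'}(\Omega')},\qquad\eta\in C_c^\infty(\Omega'),
\end{equation*}
together with its variant involving $\tilde\chi$ on $\Omega$. The equivalence is the already–proved case (2) for the \emph{positive} exponent $t=-s$, applied on the pair $\Omega'\subseteq\Omega$ with the fixed compact set $K'$. The final inequality is boundedness of multiplication by $\chi$ on $W^{t,p'}$: on the $\Omega$–side this is precisely the standing hypothesis that $(-s,p',\Omega)$ is a smooth multiplication triple, so $\|\tilde\chi\psi\|_{W^{t,p'}(\Omega)}\preceq\|\psi\|_{W^{t,p'}(\Omega)}$; on the $\Omega'$–side, where $\Omega'$ is arbitrary, I would insert a bounded open set $\Omega''$ with smooth (hence Lipschitz) boundary and $K'\subseteq\Omega''\subseteq\overline{\Omega''}\subseteq\Omega'$ (take a regular sublevel set of a smooth function that is $>1$ on $K'$ and $<0$ off a small neighborhood, using Theorem~\ref{winter2} and Sard's theorem), apply case (2) for the exponent $t$ on $\Omega''\subseteq\Omega'$ to get $\|\chi\eta\|_{W^{t,p'}(\Omega')}\simeq\|(\chi\eta)|_{\Omega''}\|_{W^{t,p'}(\Omega'')}$, and finish with boundedness of multiplication by $\chi$ on the Lipschitz domain $\Omega''$ (Theorem~\ref{thmfallmultsmooth20}) and the trivial restriction bound $\|\eta|_{\Omega''}\|_{W^{t,p'}(\Omega'')}\leq\|\eta\|_{W^{t,p'}(\Omega')}$.

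The main obstacle is exactly this last step in the $s<0$ analysis. Zero-extension is \emph{not} bounded on $W^{t,p'}$ for a general open set and general $t$ (the underlying fractional Hardy inequality fails, e.g.\ when $tp'=1$), so one cannot simply push arbitrary test functions from $\Omega'$ into $\Omega$; the whole point of the fixed cutoff $\chi$ — legitimate only because $\textrm{supp}\,u$ is trapped in the fixed compact $K$ well inside $\Omega'$ — is to reduce the problem to zero-extension of functions supported in the fixed compact set $K'$, where the positive–order case (2) controls the transfer, leaving a residual multiplier estimate that is handled by the hypothesis on one side and by the intermediate smooth domain $\Omega''$ on the other. A secondary nuisance is keeping track of the distinction, for $s<0$, between the distributional extension $\textrm{ext}^0_{\Omega',\Omega}u$ and the naive "multiply by the indicator of $\Omega'$" operation, which agree here only thanks to the support condition.
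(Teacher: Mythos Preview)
Your argument is correct. The paper itself does not give a self-contained proof of this theorem; it simply refers to the proofs of Corollary~7.39 and Theorem~7.46 in \cite{holstbehzadan2018b}. Your approach---direct norm computation for $s\geq 0$ (with the Gagliardo collar estimate controlled by $\textrm{dist}(K,\reals^n\setminus\Omega')^{-\theta p}$), and duality via Corollary~\ref{corapril2812} for $s<0$, using a fixed cutoff $\chi$ to force all test functions into the compact $K'$---is exactly the natural route and almost certainly coincides with what the cited reference does.

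One small presentational point: your opening reduction, ``$v:=u|_{\Omega'}\in W^{s,p}_K(\Omega')$'', is immediate for $s\geq 0$ but for $s<0$ it is not a priori clear that the distributional restriction lands in $W^{s,p}(\Omega')$. This is not a gap, since the very duality estimate you set up (pairing $u|_{\Omega'}$ against $\varphi$, replacing $\varphi$ by $\chi\varphi$, and invoking the positive case plus the intermediate Lipschitz $\Omega''$) is precisely what shows $u|_{\Omega'}\in W^{s,p}_0(\Omega')$ via Corollary~\ref{corapril2812}; you may want to say so explicitly rather than claiming the two statements are equivalent from the outset. The use of an intermediate smooth-boundary $\Omega''$ to handle multiplication by $\chi$ on the possibly rough $\Omega'$ is a nice touch that cleanly avoids assuming $(-s,p',\Omega')$ is a smooth multiplication triple.
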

\begin{proof}
The claim follows from the argument presented in the proofs of Corollary 7.39 and Theorem 7.46 in \cite{holstbehzadan2018b}.
\end{proof}

\begin{theorem} [(\cite{33}, Pages 598-605), (\cite{Gris85}, Section 1.4)]\lab{winter88} Let $s\in \reals$, $1<p<\infty$, and $\alpha\in
\mathbb{N}_0^n$. Suppose $\Omega$ is a nonempty open set in $\reals^n$. Then
\begin{enumerateX}
\item the linear operator $\partial^\alpha: W^{s,p}(\reals^n)\rightarrow
W^{s-|\alpha|,p}(\reals^n)$ is well-defined and bounded;
\item for $s<0$, the linear operator $\partial^\alpha: W^{s,p}(\Omega)\rightarrow
W^{s-|\alpha|,p}(\Omega)$ is well-defined and bounded;
\item for $s\geq 0$ and $|\alpha|\leq s$, the linear operator $\partial^\alpha: W^{s,p}(\Omega)\rightarrow
W^{s-|\alpha|,p}(\Omega)$ is well-defined and bounded;
\item if $\Omega$ is bounded with Lipschitz continuous boundary, and if $s\geq 0$, $s-\frac{1}{p}\neq \textrm{integer}$ (i.e. the fractional part of $s$ is not equal to $\frac{1}{p}$), then the linear operator $\partial^\alpha: W^{s,p}(\Omega)\rightarrow W^{s-|\alpha|,p}(\Omega)$ for $|\alpha|>s$ is well-defined and bounded.
\end{enumerateX}
\end{theorem}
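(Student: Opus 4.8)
The plan is to prove the four items in an order in which each rests on the previous, isolating a single genuinely fractional estimate on $\reals^n$ at the very end. \textbf{Item (3)} comes first and is the engine. Here $s\ge 0$ and $|\alpha|\le s$; the case $|\alpha|=0$ is trivial, so set $m:=|\alpha|\ge 1$, whence $s\ge 1$. For each $\sigma\ge 1$, Theorem~\ref{thmmay141029} characterizes $W^{\sigma,p}(\Omega)$ by $u\in L^p(\Omega)$ together with $\partial_i u\in W^{\sigma-1,p}(\Omega)$ for all $i$; moreover the seminorms defining $\|\partial_i u\|_{W^{\sigma-1,p}(\Omega)}$ form, up to combinatorial factors depending only on $n$ and $\sigma$, a subfamily of those defining $\|u\|_{W^{\sigma,p}(\Omega)}$, so $\partial_i:W^{\sigma,p}(\Omega)\to W^{\sigma-1,p}(\Omega)$ is in fact bounded. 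Writing $\partial^\alpha=\partial_{i_1}\cdots\partial_{i_m}$ and applying this with $\sigma=s,\,s-1,\dots,\,s-m+1$ — all $\ge 1$ exactly because $m\le s$ — shows $\partial^\alpha:W^{s,p}(\Omega)\to W^{s-m,p}(\Omega)$ is well-defined and bounded, for arbitrary open $\Omega$.

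For \textbf{item (2)}, let $s<0$ and $\Omega$ be arbitrary. By Theorem~\ref{thmmay7701} each $u\in W^{s,p}(\Omega)$ is a distribution, so $\partial^\alpha u\in D'(\Omega)$; since $s-|\alpha|\le s<0$, Corollary~\ref{corapril2812} reduces the claim to the bound $\sup_{0\not\equiv\varphi\in C_c^\infty(\Omega)}\langle\partial^\alpha u,\varphi\rangle_{D'\times D}/\|\varphi\|_{W^{|\alpha|-s,p'}(\Omega)}\preceq\|u\|_{W^{s,p}(\Omega)}$, which then also yields $\partial^\alpha u\in W^{s-|\alpha|,p}(\Omega)$ with norm equal to the left side. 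Now $\langle\partial^\alpha u,\varphi\rangle=(-1)^{|\alpha|}\langle u,\partial^\alpha\varphi\rangle$; since $-s>0$ and $\partial^\alpha\varphi\in C_c^\infty(\Omega)\subseteq W^{-s,p'}_0(\Omega)$, Remark~\ref{remapril28610} identifies this with the $W^{s,p}$--$W^{-s,p'}_0$ duality pairing, so $|\langle\partial^\alpha u,\varphi\rangle|\le\|u\|_{W^{s,p}(\Omega)}\|\partial^\alpha\varphi\|_{W^{-s,p'}(\Omega)}$. Finally $|\alpha|-s\ge 0$ and $|\alpha|\le|\alpha|-s$, so item (3) (applied with exponent $|\alpha|-s$ and integrability $p'$) gives $\|\partial^\alpha\varphi\|_{W^{-s,p'}(\Omega)}\preceq\|\varphi\|_{W^{|\alpha|-s,p'}(\Omega)}$, completing the estimate.

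\textbf{Item (1)} is on $\reals^n$. If $|\alpha|\le s$ it is item (3), and if $s<0$ it is item (2), so suppose $0\le s<|\alpha|$. Write $s=k+\theta$ with $k=\floor{s}$, $\theta\in[0,1)$, and split $\alpha=\gamma+\beta$ with $|\gamma|=k$ (possible since $|\alpha|>s\ge k$), so $|\beta|=|\alpha|-k\ge 1$. Item (3) gives $\partial^\gamma:W^{s,p}(\reals^n)\to W^{\theta,p}(\reals^n)$ bounded, so it remains to bound $\partial^\beta:W^{\theta,p}(\reals^n)\to W^{\theta-|\beta|,p}(\reals^n)$. Peeling off the derivatives of $\beta$ one at a time, the first peel is the model estimate $\partial_i:W^{\theta,p}(\reals^n)\to W^{\theta-1,p}(\reals^n)$, after which $\theta-1<0$ and every further peel is an instance of item (2) on $\reals^n$; composing gives $\partial^\beta$, hence $\partial^\alpha=\partial^\beta\circ\partial^\gamma$ is bounded. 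When $\theta=0$ the model estimate reads $\partial_i:L^p(\reals^n)\to W^{-1,p}(\reals^n)$, which follows from Corollary~\ref{corapril2812} together with the trivial $\|\partial_i\varphi\|_{L^{p'}(\reals^n)}\le\|\varphi\|_{W^{1,p'}(\reals^n)}$. For $\theta\in(0,1)$ the model estimate is the genuinely fractional case: it follows from the Littlewood--Paley (Bessel-potential/Besov) characterization of $W^{\sigma,p}(\reals^n)$ plus Mikhlin's multiplier theorem, or by real interpolation between the endpoints $\partial_i:L^p(\reals^n)\to W^{-1,p}(\reals^n)$ and $\partial_i:W^{1,p}(\reals^n)\to L^p(\reals^n)$ just established, using $(L^p(\reals^n),W^{1,p}(\reals^n))_{\theta,p}=W^{\theta,p}(\reals^n)$ and $(W^{-1,p}(\reals^n),L^p(\reals^n))_{\theta,p}=W^{\theta-1,p}(\reals^n)$ (see \cite{36,33}).

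Finally, \textbf{item (4)}: for $\Omega$ bounded Lipschitz, $s\ge 0$, $s-\frac1p\notin\mathbb{Z}$, $|\alpha|>s$, take a bounded extension $E:W^{s,p}(\Omega)\to W^{s,p}(\reals^n)$ with $(Eu)|_\Omega=u$, apply item (1) on $\reals^n$, then restrict back; the restriction $W^{s-|\alpha|,p}(\reals^n)\to W^{s-|\alpha|,p}(\Omega)$ is immediate if $s-|\alpha|\ge 0$, and if $s-|\alpha|<0$ it is the adjoint of the extension-by-zero $W^{|\alpha|-s,p'}_0(\Omega)\to W^{|\alpha|-s,p'}(\reals^n)$, whose boundedness is precisely where $s-\frac1p\notin\mathbb{Z}$ (equivalently $(|\alpha|-s)-\frac1{p'}\notin\mathbb{Z}$) enters; see \cite{Gris85}, Section 1.4. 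Since differentiation commutes with restriction of distributions, $\partial^\alpha u=(\partial^\alpha Eu)|_\Omega$, and chaining the three bounds gives $\|\partial^\alpha u\|_{W^{s-|\alpha|,p}(\Omega)}\preceq\|u\|_{W^{s,p}(\Omega)}$. I expect the sole real obstacle to be the fractional model estimate $\partial_i:W^{\theta,p}(\reals^n)\to W^{\theta-1,p}(\reals^n)$ for $\theta\in(0,1)$: duality there only swaps $\theta$ with $1-\theta$, and iterating Theorem~\ref{thmmay141029} never crosses an integer smoothness level, so a genuine Fourier-analytic or interpolation input is unavoidable; everything else is bookkeeping built on Theorem~\ref{thmmay141029}, Corollary~\ref{corapril2812}, Remark~\ref{remapril28610}, and duality.
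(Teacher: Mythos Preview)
The paper does not give its own proof of Theorem~\ref{winter88}: it is stated with citations to \cite{33} (pages 598--605) and \cite{Gris85} (Section~1.4) and then used as a black box. So there is no in-paper argument to compare against; your proposal is therefore a genuine addition rather than a reproduction.

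Your strategy is sound and the logical dependencies are set up correctly. A few remarks. For item~(3) your observation that the seminorms constituting $\|\partial_i u\|_{W^{\sigma-1,p}(\Omega)}$ are literally a subfamily of those constituting $\|u\|_{W^{\sigma,p}(\Omega)}$ is exactly right and is the content of (the proof of) Theorem~\ref{thmmay141029}; this yields item~(3) directly from Definition~\ref{defa1} without any analysis. For item~(2) your duality argument via Corollary~\ref{corapril2812} and Remark~\ref{remapril28610} is clean and correct; note the inequality $|\alpha|\le|\alpha|-s$ holds strictly since $s<0$, so item~(3) applies on the dual side with room to spare. For item~(1) you have correctly isolated the single nontrivial ingredient, the model estimate $\partial_i:W^{\theta,p}(\reals^n)\to W^{\theta-1,p}(\reals^n)$ for $\theta\in(0,1)$, and your proposed justifications (Mikhlin, or real interpolation between the integer endpoints you have already established) are the standard ones; this is precisely what the cited references supply. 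For item~(4) your extension--differentiate--restrict chain is the right idea; just be aware that the ``restriction'' $W^{s-|\alpha|,p}(\reals^n)\to W^{s-|\alpha|,p}(\Omega)$ for $s-|\alpha|<0$, when read through Definition~\ref{defa1} as the adjoint of $\mathrm{ext}^0:W^{|\alpha|-s,p'}_0(\Omega)\to W^{|\alpha|-s,p'}(\reals^n)$, requires the boundedness of this extension-by-zero, and that in turn is exactly where the hypothesis that the fractional part of $s$ differ from $1/p$ (equivalently, the fractional part of $|\alpha|-s$ differ from $1/p'$) is used in \cite{Gris85}. Your identification of this point is correct.
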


\begin{theorem}\lab{thmamar191}${}$\\
\textbf{Assumptions:}
\begin{itemize}
\item $\Omega=\reals^n$ or $\Omega$ is a bounded domain with Lipschitz continuous boundary
\item $s_i, s\in \reals$,$\, s_i\geq s\geq 0$ for $i=1,2$
\item $1< p_i\leq p <\infty$  for $i=1,2$
\item $\displaystyle s_i-s\geq n(\frac{1}{p_i}-\frac{1}{p})$
\item $\displaystyle s_1+s_2-s>n(\frac{1}{p_1}+\frac{1}{p_2}-\frac{1}{p})$
\end{itemize}
\textbf{Claim:} If $u\in W^{s_1,p_1}(\Omega)$ and $v\in W^{s_2,p_2}(\Omega)$, then $uv\in W^{s,p}(\Omega)$ and moreover the pointwise multiplication of functions is a continuous bilinear map
\begin{equation*}
W^{s_1,p_1}(\Omega)\times W^{s_2,p_2}(\Omega)\rightarrow W^{s,p}(\Omega)
\end{equation*}
\end{theorem}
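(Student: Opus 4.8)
The plan is to reduce everything to the single bilinear estimate
\begin{equation*}
\|uv\|_{W^{s,p}(\Omega)}\preceq \|u\|_{W^{s_1,p_1}(\Omega)}\,\|v\|_{W^{s_2,p_2}(\Omega)},
\end{equation*}
since for a bilinear map between Banach spaces such a bound is equivalent to the asserted continuity (and it a fortiori gives $uv\in W^{s,p}(\Omega)$). First I would reduce to the case where $u$ and $v$ are smooth, using the standard density of smooth functions in $W^{\sigma,q}(\Omega)$ (valid for $\Omega=\reals^n$ and for bounded Lipschitz $\Omega$, for all $\sigma\ge 0$, $1<q<\infty$): if $u_m\to u$ in $W^{s_1,p_1}(\Omega)$ and $v_m\to v$ in $W^{s_2,p_2}(\Omega)$ with $u_m,v_m$ smooth, then the estimate applied to differences shows $\{u_mv_m\}$ is Cauchy in the complete space $W^{s,p}(\Omega)$ (Theorem~\ref{thmmay7658}), while $W^{s_i,p_i}(\Omega)\hookrightarrow L^{p_i}(\Omega)$ (Theorem~\ref{winter86}, Theorem~\ref{thm3.4}) lets one pass to a.e.-convergent subsequences and identify the limit with $uv$. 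It then remains to prove the estimate for smooth $u,v$, which I would carry out by a case analysis on $s$.

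For the base case $0\le s<1$: for the $L^p$-part, choose $a\in[p_1,\infty]$ and $b\in[p_2,\infty]$ with $\tfrac1a+\tfrac1b=\tfrac1p$ so that $W^{s_1,p_1}(\Omega)\hookrightarrow L^a(\Omega)$ and $W^{s_2,p_2}(\Omega)\hookrightarrow L^b(\Omega)$ (Theorem~\ref{winter86}, Theorem~\ref{thm3.3}, Theorem~\ref{thm3.4}); the strict inequality $s_1+s_2-s>n(\tfrac1{p_1}+\tfrac1{p_2}-\tfrac1p)$, together with $s_i\ge s\ge 0$ and $p_i\le p$, is exactly what makes the admissible ranges for $\tfrac1a$ and $\tfrac1b$ overlap (a short check according to whether each $s_ip_i$ is $<$, $=$, or $>n$, the strictness being used to avoid the critical endpoints), and then $\|uv\|_{L^p}\le\|u\|_{L^a}\|v\|_{L^b}$. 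When $0<s<1$ I would also bound the Gagliardo seminorm using the pointwise splitting
\begin{equation*}
u(x)v(x)-u(y)v(y)=u(x)\big(v(x)-v(y)\big)+v(y)\big(u(x)-u(y)\big),
\end{equation*}
so that, by Fubini and Holder in the non-differenced variable,
\begin{equation*}
|uv|_{W^{s,p}(\Omega)}\preceq \|u\|_{L^{\alpha}}\,\|G_sv\|_{L^\beta}+\|v\|_{L^{\gamma}}\,\|G_su\|_{L^\delta},\qquad \tfrac1\alpha+\tfrac1\beta=\tfrac1\gamma+\tfrac1\delta=\tfrac1p,
\end{equation*}
where $G_sw(x):=\big(\int_\Omega |x-y|^{-n-sp}|w(x)-w(y)|^p\,dy\big)^{1/p}$. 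Since $\|G_sw\|_{L^p}=|w|_{W^{s,p}(\Omega)}$, the two nontrivial ingredients are an $L$-space embedding and a Besov-type embedding $\|G_sw\|_{L^\beta}\preceq\|w\|_{W^{s_2,p_2}(\Omega)}$ (and symmetrically), with the exponents $\alpha,\beta,\gamma,\delta$ again fixed using the strict inequality.

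For $s\ge 1$ I would bootstrap from the base case. If $s=k\in\mathbb{N}$, the Leibniz rule (legitimate for smooth $u,v$) gives $\partial^\alpha(uv)=\sum_{\beta\le\alpha}\binom{\alpha}{\beta}\partial^\beta u\,\partial^{\alpha-\beta}v$ for $|\alpha|\le k$, where $\partial^\beta u\in W^{s_1-|\beta|,p_1}(\Omega)$ with norm $\preceq\|u\|_{W^{s_1,p_1}(\Omega)}$ by Theorem~\ref{winter88} (note $|\beta|\le k\le s_1$), and likewise for $v$; applying the base case with target smoothness $0$ to each product $\partial^\beta u\cdot\partial^{\alpha-\beta}v$ --- for which the hypotheses hold since $s_1-|\beta|\ge s_1-s\ge 0$ and $s_2-|\alpha-\beta|\ge s_2-s\ge 0$, $s_1-|\beta|\ge s_1-s\ge n(\tfrac1{p_1}-\tfrac1p)$ (and similarly for $v$), and $(s_1-|\beta|)+(s_2-|\alpha-\beta|)=s_1+s_2-|\alpha|\ge s_1+s_2-k>n(\tfrac1{p_1}+\tfrac1{p_2}-\tfrac1p)$ --- and summing yields $\|uv\|_{W^{k,p}}\preceq\|u\|_{W^{s_1,p_1}}\|v\|_{W^{s_2,p_2}}$. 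If $s=k+\theta$ with $k\in\mathbb{N}$ and $0<\theta<1$, then by Definition~\ref{defa1} $\|uv\|_{W^{s,p}}\simeq\|uv\|_{W^{k,p}}+\sum_{|\alpha|=k}|\partial^\alpha(uv)|_{W^{\theta,p}}$: the first term is handled by the integer case (its hypotheses again hold, e.g.\ $s_1+s_2-k>s_1+s_2-s>n(\tfrac1{p_1}+\tfrac1{p_2}-\tfrac1p)$), and for each Gagliardo seminorm I would expand $\partial^\alpha(uv)$ by Leibniz and invoke the base case with target smoothness $\theta$ on each $\partial^\beta u\cdot\partial^{\alpha-\beta}v$, the needed inequalities $s_1-|\beta|\ge s_1-k\ge\theta$, $s_1-|\beta|-\theta\ge s_1-s\ge n(\tfrac1{p_1}-\tfrac1p)$ (and symmetrically for $v$), and $(s_1-|\beta|)+(s_2-|\alpha-\beta|)-\theta=s_1+s_2-s>n(\tfrac1{p_1}+\tfrac1{p_2}-\tfrac1p)$ all following from $s_i\ge s=k+\theta$ and the standing hypotheses.

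The hard part will be the fractional seminorm estimate inside the base case: controlling the mixed norms $\|G_sw\|_{L^\beta}$ by $\|w\|_{W^{s_2,p_2}(\Omega)}$ --- that is, an embedding of a Sobolev-Slobodeckij space into a Besov space $B^{s}_{\beta,p}(\Omega)$ --- while at the same time choosing all of $a,b,\alpha,\beta,\gamma,\delta$ so that every Sobolev embedding used is admissible; this bookkeeping is exactly where the strict inequality $s_1+s_2-s>n(\tfrac1{p_1}+\tfrac1{p_2}-\tfrac1p)$ is consumed, to keep all exponents off the critical endpoints where the embeddings fail. A cleaner but less self-contained alternative to the base estimate is the Bony paraproduct decomposition $uv=T_uv+T_vu+R(u,v)$ together with Littlewood-Paley theory, the high-high remainder $R(u,v)$ being the term responsible for the sum condition; I would avoid it here to stay within the elementary framework of the earlier sections.
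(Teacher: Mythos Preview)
The paper does not actually prove Theorem~\ref{thmamar191}. It appears in Section~3 (``Background Material'') as a quoted result, stated without proof, and the subsequent Remark~\ref{remmar7758} refers the reader to \cite{holstbehzadan2015b} for this and related multiplication results. So there is no proof in the paper to compare your proposal against.

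That said, your outline follows a standard and reasonable route for such a multiplication theorem: density reduction, a base case for $0\le s<1$ via H\"older plus Sobolev embeddings for the $L^p$ part and a pointwise splitting of the Gagliardo difference, then bootstrapping to higher $s$ via the Leibniz rule. You correctly flag the genuine difficulty: the mixed-norm bound $\|G_s w\|_{L^\beta}\preceq\|w\|_{W^{s_2,p_2}}$, which is essentially a Sobolev-to-Besov embedding $W^{s_2,p_2}\hookrightarrow B^s_{\beta,p}$, is not proved in your sketch and is where most of the work lies. The exponent bookkeeping you describe (choosing $a,b,\alpha,\beta,\gamma,\delta$ off the critical endpoints using the strict inequality) is also only asserted, not carried out. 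If you want a self-contained elementary proof you would need to fill in both of these; otherwise, as you note, the paraproduct approach is the standard clean route.
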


\begin{remark}\lab{remmar7758}
A number of other results concerning the sufficient conditions on the exponents $s_i, p_i, s, p$ that guarantee the multiplication $W^{s_1,p_1}(\Omega)\times W^{s_2,p_2}(\Omega)\hookrightarrow W^{s,p}(\Omega)$ is well-defined and continuous are discussed in detail in \cite{holstbehzadan2015b}.
\end{remark}


\begin{remark}\lab{remapril301014}  Suppose that $(s,p,\Omega)$ is a smooth multiplication triple with $s\geq 0$. $W^{-s,p'}(\Omega)=W^{-s,p'}_0(\Omega)$ is the dual of $W^{s,p}_0(\Omega)$ and $\langle u,f\rangle_{W^{-s,p'}_{0}(\Omega)\times W^{s,p}_{0}(\Omega)}$ is the action of the functional $u$ on the function $f$. As it was discussed before, if $\psi$ is a function in $C_c^\infty(\Omega)$, $(\psi) (u|_{D(\Omega)})$ is defined as a product of a smooth function and a distribution. Since $(s,p,\Omega)$ is a smooth multiplication triple, $(-s,p',\Omega)$ will also be a smooth multiplication triple, and that means $(\psi) (u|_{D(\Omega)}): (C_c^\infty(\Omega),\|.\|_{s,p})\rightarrow \reals$ is continuous (see the Note right after Theorem ~\ref{thm3.3}). We interpret $\psi u$ as an element of $W^{-s,p'}(\Omega)=[W^{s,p}_0(\Omega)]^*$ to be the unique continuous linear extension of $\psi (u|_{D(\Omega)})$ to the entire $W^{s,p}_0(\Omega)$. It is easy to see that, this unique linear extension is given by
    \begin{equation*}
    \langle \psi u,f\rangle_{W^{-s,p'}(\Omega)\times W^{s,p}_0(\Omega)}:= \langle u,\psi f\rangle_{W^{-s,p'}(\Omega)\times W^{s,p}_0(\Omega)}
    \end{equation*}
    that is, the above map is linear continuous and its restriction to $D(\Omega)$ is the same as $\psi (u|_{D(\Omega)})$. (Note that since $(s,p,\Omega)$ is a smooth multiplication triple, $\psi f$ is indeed an element of $W^{s,p}_0(\Omega)$.)
\end{remark}

\begin{theorem}\cite{38}\lab{winter106}
Let $s\in [1,\infty)$, $1<p<\infty$, and let
\begin{align*}
m=\begin{cases}
&s,\quad \textrm{if $s$ is an integer}\\
& \floor{s}+1,\quad \textrm{otherwise}
\end{cases}
\end{align*}
If $F\in C^m(\reals)$ is such that $F(0)=0$ and $F,F',\cdots,F^{(m)}\in L^{\infty}(\reals)$ (in particular, note that every $F\in C_c^\infty(\reals)$ with $F(0)=0$ satisfies these conditions), then the map $u\mapsto F(u)$ is well-defined and continuous from $W^{s,p}(\reals^n)\cap W^{1,sp}(\reals^n)$ into $W^{s,p}(\reals^n)$.
\end{theorem}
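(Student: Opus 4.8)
The statement is a Moser-type composition (chain-rule) estimate, and the plan is to first establish the a priori bound
\[
\|F(u)\|_{W^{s,p}(\reals^n)}\le \mathcal{C}\big(\|u\|_{W^{s,p}(\reals^n)},\,\|u\|_{W^{1,sp}(\reals^n)}\big)
\]
for every $u\in W^{s,p}(\reals^n)\cap W^{1,sp}(\reals^n)$ (equipped with the norm $\|\cdot\|_{W^{s,p}}+\|\cdot\|_{W^{1,sp}}$), where $\mathcal{C}$ depends continuously on its two arguments and on $\|F'\|_{L^\infty},\dots,\|F^{(m)}\|_{L^\infty}$, and then to upgrade this to continuity of $u\mapsto F(u)$ by a subsequence-plus-almost-everywhere argument. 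First I would record the $L^p$ bound: since $F(0)=0$ and $F'\in L^\infty(\reals)$, the mean value theorem gives $|F(u(x))|\le\|F'\|_{L^\infty}|u(x)|$, hence $\|F(u)\|_{L^p}\le\|F'\|_{L^\infty}\|u\|_{L^p}$. Then, iterating the elementary chain rule $\partial_i(F\circ v)=(F'\circ v)\,\partial_i v$ (valid for $F\in C^1$ with $F'\in L^\infty$ and $v\in W^{1,q}$) together with the Leibniz rule, I would check by induction that for $1\le|\alpha|\le m$ the weak derivative $\partial^\alpha F(u)$ exists and equals a finite sum of Faa di Bruno terms $F^{(k)}(u)\prod_{i=1}^k\partial^{\gamma_i}u$, with $1\le k\le|\alpha|$, $|\gamma_i|\ge 1$ and $\sum_i|\gamma_i|=|\alpha|$, the induction being legitimate since the estimates below place each such product in a good Lebesgue space. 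By Theorem~\ref{thmmay141029}, applied inductively, it then suffices to control the $L^p$ norm of the top-order Faa di Bruno terms plus, when $s\notin\mathbb{N}$, the Slobodeckij seminorm of $\partial^\nu F(u)$ for $|\nu|=\floor s$.

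For the integer case $s=m$, fix $|\alpha|=m$ (lower orders follow from $W^{m,p}(\reals^n)\hookrightarrow W^{|\alpha|,p}(\reals^n)$, Theorem~\ref{winter86}). Placing $F^{(k)}(u)$ in $L^\infty$ and estimating $\big\|\prod_i\partial^{\gamma_i}u\big\|_{L^p}$ by Holder's inequality with exponents $q_i=sp/|\gamma_i|$ (so that $\sum_i q_i^{-1}=|\alpha|/(sp)=1/p$), I would invoke the classical Gagliardo--Nirenberg interpolation inequalities
\[
\|\partial^{\gamma_i}u\|_{L^{sp/|\gamma_i|}(\reals^n)}\preceq\|u\|_{W^{1,sp}(\reals^n)}^{\,1-\vartheta_i}\,\|u\|_{W^{s,p}(\reals^n)}^{\,\vartheta_i},\qquad \vartheta_i=\frac{|\gamma_i|-1}{s-1}\in[0,1],
\]
in which the derivative/integrability exponents are balanced exactly by this $\vartheta_i$, no excluded endpoint configuration of Gagliardo--Nirenberg arises because $1<p<\infty$, and the case $|\gamma_i|=1$ is trivial. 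Multiplying over $i$ bounds each term by a product of powers of $\|u\|_{W^{1,sp}}$ and $\|u\|_{W^{s,p}}$, which gives the a priori estimate when $s\in\mathbb{N}$.

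For $s=\ell+\sigma$ with $\ell=\floor s\ge 1$ and $\sigma\in(0,1)$ (so $m=\ell+1$ and $F',\dots,F^{(\ell+1)}\in L^\infty$), it remains to bound $|\partial^\nu F(u)|_{W^{\sigma,p}(\reals^n)}$ for $|\nu|=\ell$, and here I would use the equivalent difference characterization $|v|_{W^{\sigma,p}(\reals^n)}^p\simeq\int_{|h|\le 1}|h|^{-n-\sigma p}\|\tau_h v-v\|_{L^p(\reals^n)}^p\,dh+\|v\|_{L^p(\reals^n)}^p$, where $\tau_h v=v(\cdot+h)$. Expanding $\partial^\nu F(u)$ by Faa di Bruno and writing each product $\prod_{i=0}^k a_i$ (with $a_0=F^{(k)}(u)$ and $a_i=\partial^{\gamma_i}u$) via the telescoping identity
\[
\tau_h\Big(\prod_{i=0}^k a_i\Big)-\prod_{i=0}^k a_i=\sum_{l=0}^k\Big(\prod_{i<l}\tau_h a_i\Big)(\tau_h a_l-a_l)\Big(\prod_{i>l}a_i\Big),
\]
I would estimate each summand by Holder, placing $F^{(k)}$ and $\tau_h F^{(k)}$ in $L^\infty$, the undifferentiated derivative factors in the exponents $sp/|\gamma_i|$, and the distinguished difference $\tau_h a_l-a_l$ in the complementary exponent: for $l=0$, using $|\tau_h F^{(k)}(u)-F^{(k)}(u)|\le\|F^{(k+1)}\|_{L^\infty}|\tau_h u-u|$ and $\|\tau_h u-u\|_{L^q}\preceq|h|^{\rho}(\|u\|_{W^{1,sp}}+\|u\|_{W^{s,p}})$ with $\rho>\sigma$ for a suitable $q$, obtained by interpolating $W^{1,sp}$ against $W^{s,p}$; for the extremal term $k=1$, i.e.\ $F'(u)\,\partial^\nu u$, integrating $\tau_h\partial^\nu u-\partial^\nu u$ directly against $|h|^{-n-\sigma p}$ to get $\|F'\|_{L^\infty}^p\,|\partial^\nu u|_{W^{\sigma,p}(\reals^n)}^p<\infty$ (finite since $u\in W^{\ell+\sigma,p}$); and for a general $1\le l\le k$, combining $\partial^{\gamma_l}u\in W^{s-|\gamma_l|,p}(\reals^n)$ with $W^{1,sp}(\reals^n)$ by a further Gagliardo--Nirenberg interpolation so that $\tau_h\partial^{\gamma_l}u-\partial^{\gamma_l}u$ lands in the required exponent with a power of $|h|$ strictly exceeding $\sigma$. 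Summing the finitely many terms yields the a priori estimate in general.

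Finally, for continuity, let $u_j\to u$ in $W^{s,p}(\reals^n)\cap W^{1,sp}(\reals^n)$, so $(F(u_j))_j$ is bounded in $W^{s,p}(\reals^n)$ by the above. Since it suffices to show that every subsequence admits a further subsequence along which $F(u_j)\to F(u)$ in $W^{s,p}(\reals^n)$, I would pass to a subsequence along which $u_j$ and all weak derivatives $\partial^\gamma u_j$ with $|\gamma|\le\ell$ converge almost everywhere to those of $u$; moreover, applying the Gagliardo--Nirenberg inequalities to $u_j-u$ shows each $\partial^{\gamma_i}u_j\to\partial^{\gamma_i}u$ strongly in $L^{sp/|\gamma_i|}$. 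Inserting the Faa di Bruno expansion into $\partial^\alpha(F(u_j)-F(u))$ and expanding each difference of products by the same telescoping identity, every resulting term has the form $F^{(k)}(u_j)\cdot(\text{factor}\to 0\text{ in }L^p)$ or $(F^{(k)}(u_j)-F^{(k)}(u))\cdot(\text{factor fixed in }L^p)$: the first kind tends to $0$ by the bound $\|F^{(k)}\|_{L^\infty}$ and Holder, the second by dominated convergence, since $|F^{(k)}(u_j)-F^{(k)}(u)|\le 2\|F^{(k)}\|_{L^\infty}$ and $F^{(k)}(u_j)\to F^{(k)}(u)$ a.e.\ by continuity of $F^{(k)}$; in the non-integer case one feeds the same decomposition into the double integral defining $|\cdot|_{W^{\sigma,p}}$ and concludes by a generalized dominated convergence (Vitali) argument using the uniform a priori bounds. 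The hard part will be the non-integer case: one must verify that every Holder exponent used lies in $(1,\infty)$, that the Gagliardo--Nirenberg inequalities invoked avoid their exceptional configurations, and---above all---that the power of $|h|$ extracted from the distinguished difference factor is strictly larger than $\sigma$, so that $|h|^{-n-\sigma p}$ stays integrable near the origin; this is precisely where the auxiliary hypothesis $u\in W^{1,sp}(\reals^n)$ (Moser's condition) is indispensable.
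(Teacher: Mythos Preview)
The paper does not supply a proof of this theorem: it is quoted verbatim as a background result with the citation \cite{38} (and its corollary~\ref{winter107} is stated immediately afterward, again without proof), so there is no ``paper's own proof'' to compare against. Your proposal is therefore not competing with anything in this manuscript; it is an attempt to reconstruct the argument of the cited source.

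That said, your outline follows the standard Moser-type strategy (Fa\`a di Bruno expansion, Gagliardo--Nirenberg interpolation with exponents $sp/|\gamma_i|$, and a difference-quotient treatment of the fractional seminorm), which is indeed the approach used in the literature for results of this type. The genuinely delicate points you flag at the end --- checking that the H\"older exponents are admissible, that the Gagliardo--Nirenberg configurations avoid the excluded endpoints, and especially that in the non-integer case the extracted power of $|h|$ strictly exceeds $\sigma$ --- are exactly where the work lies, and your sketch does not carry them out. In particular, the claim that for a generic factor $1\le l\le k$ one can always interpolate so that the difference $\tau_h\partial^{\gamma_l}u-\partial^{\gamma_l}u$ lands in the required Lebesgue exponent with an $|h|$-power strictly larger than $\sigma$ is asserted but not verified; this is the crux of the non-integer case and requires a careful bookkeeping of exponents that your proposal defers. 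So as a plan the proposal is sound, but as a proof it is incomplete at precisely the point where the hypothesis $u\in W^{1,sp}$ must be shown to be exactly what is needed.
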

\begin{corollary}\lab{winter107}
Let $s$, $p$, and $F$ be as in the previous theorem. Moreover suppose $sp>n$. Then the map $u\mapsto F(u)$ is well-defined and continuous from $W^{s,p}(\reals^n)$ into $W^{s,p}(\reals^n)$. The reason is that when $sp>n$, we have $W^{s,p}(\reals^n)\hookrightarrow W^{1,sp}(\reals^n)$.
\end{corollary}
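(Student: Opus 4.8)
The plan is to deduce the corollary from Theorem~\ref{winter106} by showing that, once we add the hypothesis $sp>n$, the intersection space appearing there collapses: $W^{s,p}(\reals^n)\cap W^{1,sp}(\reals^n)$ equals $W^{s,p}(\reals^n)$ both as a set and, up to equivalence of norms, as a topological vector space. So the whole argument reduces to establishing the embedding $W^{s,p}(\reals^n)\hookrightarrow W^{1,sp}(\reals^n)$ and then composing with an identity isomorphism.

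First I would prove $W^{s,p}(\reals^n)\hookrightarrow W^{1,sp}(\reals^n)$ by applying the general Sobolev embedding Theorem~\ref{winter86} with the choices $t=1$ and $q=sp$. The hypotheses to check are: $1<p\le q<\infty$, which holds since $s\ge 1$ gives $p\le sp$ and $s,p<\infty$ give $sp<\infty$; $-\infty<t\le s<\infty$, which holds since $t=1\le s$; and the scaling inequality $s-\frac{n}{p}\ge t-\frac{n}{q}$, i.e. $s-\frac{n}{p}\ge 1-\frac{n}{sp}$. A short rearrangement rewrites this last inequality as $(s-1)\bigl(1-\frac{n}{sp}\bigr)\ge 0$, which is true because $s\ge 1$ and $sp>n$ (the degenerate case $s=1$ is harmless, since then $sp=p$ and the embedding is literally the identity). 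Hence Theorem~\ref{winter86} applies and gives the desired embedding; call its operator norm $C$.

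Next, the embedding immediately yields $W^{s,p}(\reals^n)\cap W^{1,sp}(\reals^n)=W^{s,p}(\reals^n)$ as vector spaces, and the natural norm $u\mapsto\|u\|_{W^{s,p}(\reals^n)}+\|u\|_{W^{1,sp}(\reals^n)}$ on the intersection satisfies
\[
\|u\|_{W^{s,p}(\reals^n)}\le \|u\|_{W^{s,p}(\reals^n)}+\|u\|_{W^{1,sp}(\reals^n)}\le (1+C)\,\|u\|_{W^{s,p}(\reals^n)},
\]
so the two norms are equivalent and the identity map $W^{s,p}(\reals^n)\to W^{s,p}(\reals^n)\cap W^{1,sp}(\reals^n)$ is a linear topological isomorphism. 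Finally, Theorem~\ref{winter106} asserts that $u\mapsto F(u)$ is well-defined and continuous from $W^{s,p}(\reals^n)\cap W^{1,sp}(\reals^n)$ into $W^{s,p}(\reals^n)$; precomposing with the identity isomorphism from the previous step shows $u\mapsto F(u)$ is well-defined and continuous from $W^{s,p}(\reals^n)$ into $W^{s,p}(\reals^n)$, which is the claim.

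There is no genuine obstacle here: the corollary is a direct consequence of the stated theorem plus an embedding. The only point requiring any care is the verification of the hypotheses of Theorem~\ref{winter86}, and in particular seeing that the scaling condition is exactly equivalent to $(s-1)(1-\frac{n}{sp})\ge 0$, which is where the hypothesis $sp>n$ is used.
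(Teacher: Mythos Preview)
Your proposal is correct and follows exactly the approach the paper indicates in the statement itself: the corollary is deduced from Theorem~\ref{winter106} together with the embedding $W^{s,p}(\reals^n)\hookrightarrow W^{1,sp}(\reals^n)$, which you correctly derive from Theorem~\ref{winter86} and whose scaling hypothesis you neatly verify via the factorization $(s-1)(1-\tfrac{n}{sp})\ge 0$. The paper does not spell out any of these details, so your write-up is in fact more complete than what appears there.
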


In the remaining of this section we will state certain useful properties of the topological vector space $W^{s,p}_{comp}$. The properties we will discuss here echo the ones stated in \cite{Petersen83} for spaces  $H^s_{comp}$.
\begin{theorem}\lab{thmmay7759}
Let $\Omega$ be a nonempty open set in $\reals^n$, $s\in \reals$, and $1<p<\infty$. Then $D(\Omega)$ is continuously embedded in $W^{s,p}_{comp}(\Omega)$.
\end{theorem}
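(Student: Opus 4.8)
The plan is to exploit that $D(\Omega)$ carries the inductive limit topology with respect to $\{\mathcal{E}_K(\Omega)\}_{K\in\mathcal{K}(\Omega)}$, so that by Theorem~\ref{winter37} it suffices to check that, for each fixed $K\in\mathcal{K}(\Omega)$, the restriction to $\mathcal{E}_K(\Omega)$ of the natural inclusion $i\colon D(\Omega)\to W^{s,p}_{comp}(\Omega)$ is continuous; here $W^{s,p}_{comp}(\Omega)$ is locally convex, being an inductive limit of locally convex spaces (Definition~\ref{winter36}). For $s\geq 0$ the map $i$ is the genuine identity $\varphi\mapsto\varphi$, and for $s<0$ it is the injective map $\varphi\mapsto l_\varphi$ of Remark~\ref{remmay7646}, which is precisely the embedding appearing in Theorem~\ref{thmmay7701}.

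First I would note that $i(\mathcal{E}_K(\Omega))\subseteq W^{s,p}_K(\Omega)$: if $\varphi\in\mathcal{E}_K(\Omega)$ then $i(\varphi)$ is either the function $\varphi$, with $\textrm{supp}\,\varphi\subseteq K$, or the distribution $l_\varphi=u_\varphi$, whose support is again $\textrm{supp}\,\varphi\subseteq K$. Next, since the canonical inclusion $\mathcal{E}_K(\Omega)\hookrightarrow D(\Omega)$ is continuous (Definition~\ref{winter36}; cf. Remark~\ref{remfallcontintotest1}) and $i\colon D(\Omega)\to W^{s,p}(\Omega)$ is continuous by Theorem~\ref{thmmay7701}, the composition $i|_{\mathcal{E}_K(\Omega)}\colon\mathcal{E}_K(\Omega)\to W^{s,p}(\Omega)$ is continuous. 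By Definition~\ref{defa1}, $W^{s,p}_K(\Omega)$ carries exactly the subspace topology from $W^{s,p}(\Omega)$, so a linear map into $W^{s,p}(\Omega)$ whose image lies in $W^{s,p}_K(\Omega)$ is automatically continuous as a map into $W^{s,p}_K(\Omega)$; hence $i|_{\mathcal{E}_K(\Omega)}\colon\mathcal{E}_K(\Omega)\to W^{s,p}_K(\Omega)$ is continuous. Composing with the continuous inclusion $W^{s,p}_K(\Omega)\hookrightarrow W^{s,p}_{comp}(\Omega)$ (again immediate from Definition~\ref{winter36}) shows that $i|_{\mathcal{E}_K(\Omega)}\colon\mathcal{E}_K(\Omega)\to W^{s,p}_{comp}(\Omega)$ is continuous.

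Since this holds for every $K\in\mathcal{K}(\Omega)$, Theorem~\ref{winter37} yields continuity of $i\colon D(\Omega)\to W^{s,p}_{comp}(\Omega)$; as $i$ is injective and $D(\Omega)\subseteq W^{s,p}_{comp}(\Omega)$ under the standing identifications, this is exactly $D(\Omega)\hookrightarrow W^{s,p}_{comp}(\Omega)$. I do not expect a real obstacle here; the only points to watch are the (routine) support computation showing $i$ maps $\mathcal{E}_K(\Omega)$ into the layer $W^{s,p}_K(\Omega)$, and the observation that $W^{s,p}_K(\Omega)$ is topologized as a subspace of $W^{s,p}(\Omega)$, which is what lets continuity into the ambient space descend to continuity into the layer. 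A more computational alternative would be to bypass Theorem~\ref{thmmay7701} and instead verify, via Theorem~\ref{winter57}, a direct estimate $\|i(\varphi)\|_{W^{s,p}(\Omega)}\leq C\|\varphi\|_{j,K}$ for all $\varphi\in\mathcal{E}_K(\Omega)$ --- taking $j=\floor{s}+1$ for $s\geq 0$ (splitting the Gagliardo seminorm over $|x-y|<1$ and $|x-y|\geq 1$ and using the Lipschitz bound on derivatives of $\varphi$) and $j=1$ for $s<0$ (from $|l_\varphi(\psi)|\leq\|\varphi\|_{L^p}\|\psi\|_{L^{p'}}\leq\|\varphi\|_{L^p}\|\psi\|_{W^{-s,p'}}$) --- but the reduction above is shorter and reuses machinery already in place.
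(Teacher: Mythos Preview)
Your proposal is correct and follows essentially the same approach as the paper: both reduce via Theorem~\ref{winter37} to showing $\mathcal{E}_K(\Omega)\hookrightarrow W^{s,p}_{comp}(\Omega)$ for each $K$, and both obtain this by factoring through $\mathcal{E}_K(\Omega)\hookrightarrow D(\Omega)\hookrightarrow W^{s,p}(\Omega)$ (from Theorem~\ref{thmmay7701}), noting the image lands in $W^{s,p}_K(\Omega)$, and then composing with $W^{s,p}_K(\Omega)\hookrightarrow W^{s,p}_{comp}(\Omega)$. Your write-up is slightly more explicit about why continuity into $W^{s,p}(\Omega)$ descends to continuity into the subspace $W^{s,p}_K(\Omega)$, but the argument is the same.
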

\begin{proof}
For all $K\in\mathcal{K}(\Omega)$ we have
\begin{equation*}
\mathcal{E}_K(\Omega)\hookrightarrow D(\Omega)\hookrightarrow W^{s,p}(\Omega)
\end{equation*}
This together with the fact that the image of $\mathcal{E}_K(\Omega)$ under the identity map is inside $W^{s,p}_K(\Omega)$, implies that
\begin{equation}\lab{eqnapril261}
\mathcal{E}_K(\Omega)\hookrightarrow W^{s,p}_K(\Omega)
\end{equation}
Also, by the definition of the inductive limit topology on $W^{s,p}_{comp}(\Omega)$, we have
\begin{equation}\lab{eqnapril262}
W^{s,p}_K(\Omega)\hookrightarrow W^{s,p}_{comp}(\Omega)
\end{equation}
It follows from (~\ref{eqnapril261}) and (~\ref{eqnapril262}) that for all $K\in \mathcal{K}(\Omega)$
\begin{equation*}
\mathcal{E}_K(\Omega)\hookrightarrow W^{s,p}_{comp}(\Omega)
\end{equation*}
which, by Theorem ~\ref{winter37}, implies that $D(\Omega)\hookrightarrow W^{s,p}_{comp}(\Omega)$.
\end{proof}

\begin{theorem}\lab{thmmay78}
Let $(s,p,\Omega)$ be a smooth multiplication triple. Then $C_c^\infty(\Omega)$ is dense in $W^{s,p}_{comp}(\Omega)$.
\end{theorem}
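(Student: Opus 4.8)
The plan is to reduce the statement to the already-available density of $C_c^\infty(\Omega)$ in $W^{s,p}_0(\Omega)$, and then to repair the supports of the approximants with a single fixed cutoff so that the resulting sequence actually lives in one step $W^{s,p}_{K'}(\Omega)$ of the inductive limit and therefore converges in $W^{s,p}_{comp}(\Omega)$.

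First I would fix an arbitrary $u\in W^{s,p}_{comp}(\Omega)$, so that $u\in W^{s,p}_K(\Omega)$ for some $K\in\mathcal{K}(\Omega)$. Since $(s,p,\Omega)$ is a smooth multiplication triple, Remark~\ref{remapril161} gives $W^{s,p}_K(\Omega)\subseteq W^{s,p}_0(\Omega)$ (and for $s<0$ this is automatic because $W^{s,p}(\Omega)=W^{s,p}_0(\Omega)$), hence $u\in W^{s,p}_0(\Omega)$. By Definition~\ref{defa1} together with Theorem~\ref{thmmay7647}, $C_c^\infty(\Omega)$ is dense in $W^{s,p}_0(\Omega)$, so there is a sequence $\phi_j\in C_c^\infty(\Omega)$ with $\phi_j\to u$ in $W^{s,p}(\Omega)$. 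At this point the obstruction is that the supports of the $\phi_j$ need not lie in a common compact set, so I cannot yet conclude convergence in $W^{s,p}_{comp}(\Omega)$.

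To remove this obstruction I would use Theorem~\ref{thmmay7635} to choose $\psi\in C_c^\infty(\Omega)$, valued in $[0,1]$, with $\psi\equiv 1$ on a neighborhood of $K$, and set $K':=\supp\psi\in\mathcal{K}(\Omega)$. Put $\varphi_j:=\psi\phi_j\in C_c^\infty(\Omega)$, so $\supp\varphi_j\subseteq K'$ for every $j$. Because $(s,p,\Omega)$ is a smooth multiplication triple, $m_\psi:W^{s,p}(\Omega)\to W^{s,p}(\Omega)$ is bounded, hence continuous, so $\varphi_j=\psi\phi_j\to\psi u$ in $W^{s,p}(\Omega)$; and since $\psi\equiv 1$ on a neighborhood of $\supp u\subseteq K$ one has $\psi u=u$ (for $s<0$ this is seen by applying Theorem~\ref{thmmay31127} to the distribution $(1-\psi)u$ and invoking injectivity of $W^{s,p}(\Omega)\hookrightarrow D'(\Omega)$ from Theorem~\ref{thmmay7701}). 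Thus $\varphi_j\to u$ in $W^{s,p}(\Omega)$ with all $\varphi_j$ and $u$ supported in the fixed compact set $K'$; as $W^{s,p}_{K'}(\Omega)$ carries the norm topology induced from $W^{s,p}(\Omega)$, this is convergence in $W^{s,p}_{K'}(\Omega)$. Finally $W^{s,p}_{K'}(\Omega)\hookrightarrow W^{s,p}_{comp}(\Omega)$ by the definition of the inductive limit topology, so $\varphi_j\to u$ in $W^{s,p}_{comp}(\Omega)$. Since $u$ was arbitrary and $C_c^\infty(\Omega)\subseteq W^{s,p}_{comp}(\Omega)$, this exhibits $C_c^\infty(\Omega)$ as sequentially dense, hence dense, in $W^{s,p}_{comp}(\Omega)$.

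The main obstacle is precisely the one flagged above: the raw density of $C_c^\infty(\Omega)$ in $W^{s,p}_0(\Omega)$ yields approximants whose supports may escape every compact set, and converting them into a sequence convergent for the inductive-limit topology is exactly where the smooth-multiplication-triple hypothesis is used — both to give meaning to $\psi u$ in the negative-order case and to ensure that multiplication by $\psi$ is a continuous operator on $W^{s,p}(\Omega)$. The remaining ingredients (existence of the cutoff, the identity $\psi u=u$, and the continuous embedding $W^{s,p}_{K'}(\Omega)\hookrightarrow W^{s,p}_{comp}(\Omega)$) are routine.
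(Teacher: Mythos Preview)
Your proof is correct and follows essentially the same strategy as the paper's: approximate $u$ in the $W^{s,p}$-norm by smooth functions, then multiply by a fixed cutoff $\psi$ equal to $1$ near $\supp u$ so that the approximants land in a single $W^{s,p}_{K'}(\Omega)$ and converge there. The only cosmetic difference is the source of the initial approximating sequence: the paper invokes the Meyers--Serrin theorem to get $\varphi_m\in C^\infty(\Omega)\cap W^{s,p}(\Omega)$, whereas you first observe $u\in W^{s,p}_0(\Omega)$ via Remark~\ref{remapril161} and then use the density of $C_c^\infty(\Omega)$ in $W^{s,p}_0(\Omega)$ to obtain compactly supported approximants directly.
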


\begin{proof}
We will follow the proof given in \cite{Petersen83} for spaces $H^s_{comp}$. Let $u\in W^{s,p}_{comp}(\Omega)$. It is enough to show that there exists a sequence in $C_c^\infty(\Omega)$ that converges to $u$ in $W^{s,p}_{comp}(\Omega)$ (this proves sequential density which implies density). By Meyers-Serrin theorem there exists a sequence $\varphi_m\in C^\infty(\Omega)\cap W^{s,p}(\Omega)$ such that $\varphi_m\rightarrow u$ in $W^{s,p}(\Omega)$. Let $\chi\in C_c^\infty(\Omega)$ be such that $\chi=1$ on a neighborhood containing $\textrm{supp}\,u$ (see Theorem ~\ref{thmmay7635}). Let $K=\textrm{supp}\,\chi$. Since $(s,p,\Omega)$ is a smooth multiplication triple, multiplication by $\chi$ is a linear continuous map on $W^{s,p}(\Omega)$ and so $\chi \varphi_m\rightarrow \chi u$ in $W^{s,p}(\Omega)$. Now we note that $\chi u=u$ and for all $m$, $\chi \varphi_m$ are in $C_c^\infty(\Omega)$ with support inside $K$. Consequently, $\chi \varphi_m \rightarrow u$ in $W^{s,p}_K(\Omega)$. Now since $W^{s,p}_K(\Omega)\hookrightarrow W^{s,p}_{comp}(\Omega)$ we may conclude that $\chi \varphi_m$ is a sequence in $C_c^\infty(\Omega)$ that converges to $u$ in $W^{s,p}_{comp}(\Omega)$.
\end{proof}

\begin{remark}\lab{remapril281}
As a consequence, if $(s,p,\Omega)$ is a smooth multiplication triple, then $[W^{s,p}_{comp}(\Omega)]^*$ (equipped with the strong topology) is continuously embedded in $D'(\Omega)$. More precisely, the identity map $i: D(\Omega)\rightarrow W^{s,p}_{comp}(\Omega)$ is continuous with dense image, and therefore, by Theorem ~\ref{thmfallinjectiveadjoint1}, the adjoint $i^*: [W^{s,p}_{comp}(\Omega)]^*\rightarrow D'(\Omega)$ is a continuous injective map. We have
\begin{equation*}
\langle i^* u,\varphi\rangle_{D'(\Omega)\times D(\Omega)}=\langle u,i\,\varphi\rangle_{[W^{s,p}_{comp}(\Omega)]^*\times W^{s,p}_{comp}(\Omega)}=\langle u,\varphi\rangle_{[W^{s,p}_{comp}(\Omega)]^*\times W^{s,p}_{comp}(\Omega)}
\end{equation*}
We usually identify $[W^{s,p}_{comp}(\Omega)]^*$ with its image under $i^*$ and view $[W^{s,p}_{comp}(\Omega)]^*$ as a subspace of $D'(\Omega)$. So, under this identification, we can rewrite the above equality as follows:
\begin{equation*}
\forall\, u\in [W^{s,p}_{comp}(\Omega)]^*\qquad \langle  u,\varphi\rangle_{D'(\Omega)\times D(\Omega)}=
\langle u,\varphi\rangle_{[W^{s,p}_{comp}(\Omega)]^*\times W^{s,p}_{comp}(\Omega)}
\end{equation*}
\end{remark}

Next we will prove that if $(s,p,\Omega)$ is a smooth multiplication triple, then $W^{s,p}_{comp}(\Omega)$ is separable. To this end, we need the following lemma.
\begin{lemma}\lab{lemapril281}
Let $(X,\tau)$ and $(Y,\tau')$ be two topological spaces. Suppose that
\begin{enumerateX}
\item $A$ is dense in $(X,\tau)$.
\item $T: (X,\tau)\rightarrow (Y,\tau')$ is continuous.
\item $T(X)$ is dense in $(Y,\tau')$.
\end{enumerateX}
Then $T(A)$ is dense in $(Y,\tau')$.
\end{lemma}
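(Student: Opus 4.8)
The plan is to argue entirely at the level of closures, using the single elementary fact that a continuous map pushes the closure of a set into the closure of its image. First I would recall that for a continuous map $T:(X,\tau)\to(Y,\tau')$ and any $B\subseteq X$ one has $T(\overline{B})\subseteq \overline{T(B)}$; this is immediate from the definition of continuity (the preimage of the closed set $\overline{T(B)}$ is closed and contains $B$, hence contains $\overline{B}$).

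Next I would apply this with $B=A$. Since $A$ is dense in $(X,\tau)$, we have $\overline{A}=X$, and therefore
\begin{equation*}
T(X)=T(\overline{A})\subseteq \overline{T(A)}.
\end{equation*}
Taking closures of both sides and using that the closure operation is monotone and idempotent gives $\overline{T(X)}\subseteq \overline{T(A)}$.

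Finally, I would invoke the hypothesis that $T(X)$ is dense in $(Y,\tau')$, i.e. $\overline{T(X)}=Y$. Combining this with the previous inclusion yields $Y\subseteq \overline{T(A)}$, hence $\overline{T(A)}=Y$, which is precisely the assertion that $T(A)$ is dense in $(Y,\tau')$.

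There is no real obstacle here: the statement is a soft topological fact and the argument is three lines once the inclusion $T(\overline{A})\subseteq\overline{T(A)}$ is in hand. The only point worth stating carefully is that continuity is used in exactly one place, namely to pass from $A$ dense to $T(A)$ "dense onto $T(X)$" via $T(\overline A)\subseteq\overline{T(A)}$; the rest is formal manipulation of the closure operator.
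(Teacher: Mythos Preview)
Your proof is correct. It differs from the paper's argument in that you work with closures while the paper works directly with open sets: the paper shows that $T(A)$ meets every nonempty open $O\subseteq Y$ by noting that $T^{-1}(O)$ is nonempty (density of $T(X)$) and open (continuity of $T$), hence meets $A$ (density of $A$), and then pushing forward. Your route via the inclusion $T(\overline{A})\subseteq\overline{T(A)}$ is the closure-dual of this and is arguably cleaner, since it isolates the single use of continuity in a standard lemma and reduces the rest to monotonicity and idempotence of closure. Both arguments are equally elementary and prove exactly the same thing with the same hypotheses; the choice between them is purely stylistic.
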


\begin{proof}
It is enough to show that $T(A)$ intersects every nonempty open set in $(Y,\tau')$. So let $O\in \tau'$ be nonempty. Since $T(X)$ is dense in $(Y,\tau')$, we have $O\cap T(X)\neq\emptyset$ and so $T^{-1}(O)$ is nonempty. Also since $T$ is continuous, $T^{-1}(O)\in \tau$. $A$ is dense in $(X,\tau)$, so $A\cap T^{-1}(O)\neq \emptyset$. Therefore
\begin{equation*}
T(A)\cap O\supseteq T(A\cap T^{-1}(O))\neq \emptyset\,.
\end{equation*}
\end{proof}

\begin{theorem}\lab{thmmay7801}
Let $(s,p,\Omega)$ be a smooth multiplication triple. Then $W^{s,p}_{comp}(\Omega)$ is separable.
\end{theorem}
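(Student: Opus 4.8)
The plan is to deduce separability of $W^{s,p}_{comp}(\Omega)$ from separability of $D(\Omega)$ by pushing a countable dense set forward along the canonical inclusion, using Lemma ~\ref{lemapril281}. The three inputs are: (a) $D(\Omega)$ is separable (stated earlier); (b) the identity map $i:D(\Omega)\rightarrow W^{s,p}_{comp}(\Omega)$ is continuous; (c) its image is dense in $W^{s,p}_{comp}(\Omega)$.

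First I would fix a countable set $A\subseteq D(\Omega)$ that is dense in $D(\Omega)$, which exists by the separability of $D(\Omega)$. Next I would recall that $i:D(\Omega)\rightarrow W^{s,p}_{comp}(\Omega)$ is continuous: this is precisely Theorem ~\ref{thmmay7759}, valid for every $s\in\reals$ and $1<p<\infty$ and requiring no multiplication hypothesis. The smooth multiplication triple assumption enters exactly at the next point: it is what licenses Theorem ~\ref{thmmay78}, which gives that $i(D(\Omega))=C_c^\infty(\Omega)$ is dense in $W^{s,p}_{comp}(\Omega)$. With (a), (b), (c) in place, Lemma ~\ref{lemapril281} applies verbatim with $X=D(\Omega)$, $Y=W^{s,p}_{comp}(\Omega)$, $T=i$, and yields that $i(A)$ is dense in $W^{s,p}_{comp}(\Omega)$. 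Since $i(A)$ is the image of a countable set it is countable, and a topological space with a countable dense subset is separable by definition; hence $W^{s,p}_{comp}(\Omega)$ is separable.

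There is essentially no serious obstacle once Theorems ~\ref{thmmay7759} and ~\ref{thmmay78} and Lemma ~\ref{lemapril281} are available. The one point deserving care is that the relevant notion here is \emph{topological} density (closure equal to the whole space), not merely sequential density, because $W^{s,p}_{comp}(\Omega)$ carries an inductive limit topology and is not metrizable in general; but Lemma ~\ref{lemapril281} is stated in exactly that topological generality, so this causes no difficulty.

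For completeness I would also note a more hands-on alternative, which I would \emph{not} adopt: exhaust $\Omega$ by compacts $\{K_j\}_{j\in\mathbb{N}_0}$, observe that each Banach space $W^{s,p}_{K_j}(\Omega)$ is separable (it is a closed subspace of $W^{s,p}(\Omega)$, which is separable for $1<p<\infty$ and $s\in\reals$), choose countable dense $A_j\subseteq W^{s,p}_{K_j}(\Omega)$, and argue that $\bigcup_j A_j$ is dense in the inductive limit $W^{s,p}_{comp}(\Omega)$. The step from sequential density to genuine density in that inductive limit is slightly delicate, which is why the route through $D(\Omega)$ and Lemma ~\ref{lemapril281} is the cleaner one to present.
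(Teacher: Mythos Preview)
Your proposal is correct and follows exactly the paper's own proof: invoke Theorems~\ref{thmmay7759} and~\ref{thmmay78} to get that $D(\Omega)$ is continuously and densely embedded in $W^{s,p}_{comp}(\Omega)$, then apply Lemma~\ref{lemapril281} together with the separability of $D(\Omega)$. Your additional remarks on topological versus sequential density and the alternative exhaustion argument are reasonable elaborations but go beyond what the paper records.
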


\begin{proof}
According to theorems ~\ref{thmmay7759} and ~\ref{thmmay78}, $D(\Omega)$ is continuously embedded in $W^{s,p}_{comp}(\Omega)$ and it is dense in $W^{s,p}_{comp}(\Omega)$. Since $D(\Omega)$ is separable, it follows from Lemma ~\ref{lemapril281} that  $W^{s,p}_{comp}(\Omega)$ is separable.



\end{proof}

\begin{theorem}\lab{thmmay3803}
Let $(s,p,\Omega)$ be an interior smooth multiplication triple. Let $\{\psi_j\}_{j\in \mathbb{N}_0}$ be the partition of unity introduced in Theorem ~\ref{winter3}.
Let $S$ be the collection of all sequences whose terms are nonnegative integers. For all sequences $a=(a_0,a_1,\cdots)\in S$ define $q_{a,s,p}: W^{s,p}_{comp}(\Omega)\rightarrow \reals$ by
\begin{equation*}
q_{a,s,p}(u)=\sum_{j=0}^\infty a_j\|\psi_j u\|_{W^{s,p}(\Omega)}
\end{equation*}
Then $\{q_{a,s,p}\}_{a\in S}$ is a separating family of seminorms on $W^{s,p}_{comp}(\Omega)$ and the natural topology induced by this family on $W^{s,p}_{comp}(\Omega)$ is the same as the inductive limit topology on $W^{s,p}_{comp}(\Omega)$.
\end{theorem}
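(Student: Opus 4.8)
The plan is to verify, in order, that each $q_{a,s,p}$ is a well-defined seminorm, that $\{q_{a,s,p}\}_{a\in S}$ is separating, and finally that the natural topology $\tau_q$ it induces equals the inductive limit topology $\tau_{ind}$, the latter by checking that each family of seminorms is continuous with respect to the topology induced by the other (Remark~\ref{remmay2421}). Throughout I would fix the exhaustion $\{K_j\}_{j\in\mathbb{N}}$ of $\Omega$ used in Theorem~\ref{winter3} to build the $V_j$ and $\{\psi_j\}$, and recall (the remark after Definition~\ref{defa1}) that $\tau_{ind}$ may be taken with respect to the chain $\{W^{s,p}_{K_j}(\Omega)\}_j$. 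The two elementary facts I would record first are: \textbf{(i)} $\textrm{supp}\,\psi_m\subseteq V_m\subseteq\mathring{K}_{m+4}$, so $\textrm{supp}\,\psi_m\subseteq K_{m+4}$; and \textbf{(ii)} if $\textrm{supp}\,u\subseteq K_j$ then $\psi_m u=0$ for all $m\geq j$, since then $K_j\subseteq K_m$ while $V_m\cap K_m=\emptyset$. Fact (ii) together with local finiteness of $\{V_j\}$ shows that for every $u\in W^{s,p}_{comp}(\Omega)$ only finitely many $\psi_m u$ are nonzero; and since $(s,p,\Omega)$ is an interior smooth multiplication triple and each $\psi_m\in C_c^\infty(\Omega)$, each $\psi_m u$ lies in $W^{s,p}(\Omega)$ (indeed in $W^{s,p}_K(\Omega)$ for any compact $K\supseteq\textrm{supp}\,u$), so the series defining $q_{a,s,p}(u)$ is a finite sum of finite terms. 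Subadditivity and absolute homogeneity then follow at once from those of $\|\cdot\|_{W^{s,p}(\Omega)}$, the linearity of $u\mapsto\psi_m u$, and $a_m\geq 0$; and the family is separating because if $u\neq 0$ then, as $\sum_m\psi_m\equiv 1$ forces $\sum_m\psi_m u=u$ (a locally finite sum), some $\psi_m u\neq 0$, whence $q_{a,s,p}(u)\neq 0$ for the sequence $a$ with $a_m=1$ and all other entries $0$.

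For $\tau_q\subseteq\tau_{ind}$ I would show the identity map $(W^{s,p}_{comp}(\Omega),\tau_{ind})\to(W^{s,p}_{comp}(\Omega),\tau_q)$ is continuous. By the universal property of the inductive limit (Theorem~\ref{winter37}, with target the locally convex space $(W^{s,p}_{comp}(\Omega),\tau_q)$) this reduces to continuity of each inclusion $W^{s,p}_{K_j}(\Omega)\hookrightarrow(W^{s,p}_{comp}(\Omega),\tau_q)$, which by Theorem~\ref{thmmay11714} reduces further to continuity of $q_{a,s,p}|_{W^{s,p}_{K_j}(\Omega)}$ for every $a\in S$ and every $j$. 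But on $W^{s,p}_{K_j}(\Omega)$, fact (ii) gives $q_{a,s,p}(u)=\sum_{m=0}^{j-1}a_m\|\psi_m u\|_{W^{s,p}(\Omega)}$, a \emph{finite} sum, and each term is continuous on the normed space $W^{s,p}_{K_j}(\Omega)$ because the interior smooth multiplication triple hypothesis makes $m_{\psi_m}:W^{s,p}_{K_j}(\Omega)\to W^{s,p}_{K_j}(\Omega)$ bounded, while $\|\cdot\|_{W^{s,p}_{K_j}(\Omega)}=\|\cdot\|_{W^{s,p}(\Omega)}$. Hence $q_{a,s,p}|_{W^{s,p}_{K_j}(\Omega)}$ is continuous, so $\tau_q\subseteq\tau_{ind}$; in particular every $q_{a,s,p}$ is $\tau_{ind}$-continuous.

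For the reverse inclusion $\tau_{ind}\subseteq\tau_q$ I would use Theorem~\ref{thmmay3627} to fix a separating family $\mathcal{R}$ of seminorms generating $\tau_{ind}$ and show each $r\in\mathcal{R}$ is $\tau_q$-continuous; by Theorem~\ref{thmmay2656} it suffices to exhibit, for each such $r$, a single sequence $a\in S$ with $r(u)\leq q_{a,s,p}(u)$ for all $u\in W^{s,p}_{comp}(\Omega)$. Since the inclusion $W^{s,p}_{K_m}(\Omega)\hookrightarrow(W^{s,p}_{comp}(\Omega),\tau_{ind})$ is continuous, $r|_{W^{s,p}_{K_m}(\Omega)}$ is a continuous seminorm on a normed space, so by Theorem~\ref{thmmay3626} there is $C_m>0$ with $r(v)\leq C_m\|v\|_{W^{s,p}(\Omega)}$ for all $v\in W^{s,p}_{K_m}(\Omega)$; combined with fact (i) this yields $r(\psi_m u)\leq C_{m+4}\|\psi_m u\|_{W^{s,p}(\Omega)}$ for every $u\in W^{s,p}_{comp}(\Omega)$. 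Given $u$, choosing $N$ with $\textrm{supp}\,u\subseteq K_N$ gives $u=\sum_{m=0}^{N-1}\psi_m u$ by fact (ii), so
\begin{equation*}
r(u)\leq\sum_{m=0}^{N-1}r(\psi_m u)\leq\sum_{m=0}^{N-1}C_{m+4}\|\psi_m u\|_{W^{s,p}(\Omega)}\leq\sum_{m=0}^{\infty}\lceil C_{m+4}\rceil\,\|\psi_m u\|_{W^{s,p}(\Omega)}=q_{a,s,p}(u),
\end{equation*}
where $a:=(\lceil C_{m+4}\rceil)_{m\geq 0}\in S$. This gives $\tau_{ind}\subseteq\tau_q$, and together with the previous step $\tau_q=\tau_{ind}$.

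The hard part is this last inclusion: one must absorb the entire sequence of constants $C_m$ — which need not be bounded as $m\to\infty$ — coming from the local estimates into one admissible weight sequence $a$, and what makes this possible is precisely fact (ii): on any fixed compact support only finitely many $\psi_m$ contribute, so no uniformity in $m$ is required. The role of the interior smooth multiplication triple hypothesis is exactly to guarantee both that $\psi_m u\in W^{s,p}(\Omega)$ and that $u\mapsto\psi_m u$ is bounded within each support class $W^{s,p}_{K_j}(\Omega)$; without it the seminorms $q_{a,s,p}$ need not even be well-defined.
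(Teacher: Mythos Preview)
Your proof is correct and follows essentially the same approach as the paper: both directions are handled by the same combination of Theorems~\ref{winter37}, \ref{thmmay11714}, \ref{thmmay3627}, \ref{thmmay2656}, and \ref{thmmay3626}, with the key observations that on each $W^{s,p}_{K}(\Omega)$ only finitely many $\psi_m$ contribute and that the interior smooth multiplication hypothesis makes each $u\mapsto\|\psi_m u\|_{W^{s,p}(\Omega)}$ continuous there. The only cosmetic difference is that in the reverse inclusion the paper bounds $\tilde p(\psi_j u)$ using the normed space $W^{s,p}_{\mathrm{supp}\,\psi_j}(\Omega)$ directly, whereas you route through $W^{s,p}_{K_{m+4}}(\Omega)$ via your fact (i); both are fine.
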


\begin{proof}
Note that support of every $u\in W^{s,p}_{comp}$ is compact, so for each $u$ only finitely many of $\psi_j u$'s are nonzero. Thus the sum in the definition of $q_{a,s,p}$ is a finite sum. Now it is not hard to show that each $q_{a,s,p}$ is a seminorm and $\{q_{a,s,p}\}_{a\in S}$ is separating. Here we will show that the topologies are the same. Let's denote the inductive limit topology on $W^{s,p}_{comp}(\Omega)$ by $\tau$ and the natural topology induced by the given family of seminorms $\tau'$. \\

In what follows we implicitly use the fact that both topologies are locally convex and translation invariant.
\begin{itemizeX}
\item \textbf{Step 1: ($\tau'\subseteq \tau$)}  We will prove that for each $K\in \mathcal{K}(\Omega)$, $W^{s,p}_K(\Omega)\hookrightarrow (W^{s,p}_{comp}(\Omega),\tau')$. This together with the definition of $\tau$ (the biggest topology with this property) implies that $\tau'\subseteq \tau$. Let $K\in \mathcal{K}(\Omega)$. By Theorem ~\ref{thmmay11714} it is enough to show that for all $a\in S$, $q_{a,s,p}\circ \textrm{Id}: W^{s,p}_K(\Omega)\rightarrow \reals$ is continuous. Since $K$ is compact, there are only finitely may $\psi_j$'s such that $K\cap \textrm{supp}\,\psi_j\neq \emptyset$; let's call them $\psi_{j_1},\cdots,\psi_{j_l}$. So for all $u\in W^{s,p}_K(\Omega)$
    \begin{equation*}
q_{a,s,p}(u)= a_{j_1}\|\psi_{j_1} u\|_{W^{s,p}(\Omega)}+\cdots+a_{j_l}\|\psi_{j_l} u\|_{W^{s,p}(\Omega)}
\end{equation*}
By assumption $(s,p,\Omega)$ is an interior smooth multiplication triple, so for each $j\in \{j_1,\cdots,j_l\}$ the mapping $u\mapsto \|\psi_j u\|_{W^{s,p}(\Omega)}$ from $W^{s,p}_K(\Omega)\rightarrow \reals$ is continuous. Hence $q_{a,s,p}\circ \textrm{Id}: W^{s,p}_K(\Omega)\rightarrow \reals$ must be continuous.
\item \textbf{Step 2: ($\tau\subseteq \tau'$)} Since $(W^{s,p}_{comp}(\Omega),\tau)$ is a locally convex topological vector space, there exists a separating family of seminorms $\mathcal{P}$ whose corresponding natural topology is $\tau$ (see Theorem ~\ref{thmmay3627}). We will prove that for all $\tilde{p}\in \mathcal{P}$, $\tilde{p}: (W^{s,p}_{comp}(\Omega),\tau')\rightarrow \reals$ is continuous. This together with the fact that $\tau$ is the smallest topology with this property, shows that $\tau\subseteq \tau'$. Let $\tilde{p}\in \mathcal{P}$. By Theorem ~\ref{thmmay2656}, it is enough to prove that there exists $a\in S$ such that
    \begin{equation*}
    \forall\,u\in W^{s,p}_{comp}(\Omega)\qquad \tilde{p}(u)\leq  q_{a,s,p}(u)
    \end{equation*}
    For all $u\in W^{s,p}_{comp}(\Omega)$ we have $\displaystyle\tilde{p}(u)=\tilde{p}\big(\sum_{j}\psi_j\,u\big)$. Since $u$ has compact support, only finitely many terms in the sum are nonzero, and so by the finite subadditivity of a seminorm we get
    \begin{align*}
    \tilde{p}(u)=\tilde{p}\big(\sum_{j}\psi_j\,u\big)\leq \sum_{j}\tilde{p}(\psi_j\,u)
     \end{align*}
     Now note that $\psi_j u$ belongs to the normed space $W^{s,p}_{\textrm{supp}\,\psi_j}(\Omega)$. Since $\tilde{p}: (W^{s,p}_{comp}(\Omega),\tau)\rightarrow \reals$ is continuous and $W^{s,p}_{\textrm{supp}\,\psi_j}(\Omega)\hookrightarrow (W^{s,p}_{comp}(\Omega),\tau)$, we can conclude that $\tilde{p}: W^{s,p}_{\textrm{supp}\,\psi_j}(\Omega)\rightarrow \reals$ is continuous. Thus, by Theorem ~\ref{thmmay3626}, there exists a positive integer $a_j$ such that
     \begin{equation*}
     \forall\,u\in W^{s,p}_{comp}(\Omega)\qquad  \tilde{p}(\psi_j u)\leq a_j\|\psi_j u\|_{W^{s,p}(\Omega)}
     \end{equation*}
     It follows that for all $u\in W^{s,p}_{comp}(\Omega)$
     \begin{equation*}
     \tilde{p}(u)\leq \sum_{j}\tilde{p}(\psi_j\,u)\leq \sum_{j} a_j\|\psi_j u\|_{W^{s,p}(\Omega)}=q_{a,s,p}(u)
     \end{equation*}
     where $a=(a_0,a_1,\cdots)$.
\end{itemizeX}
\end{proof}

\section{Spaces of Locally Sobolev Functions}
Let $s\in \reals$, $1<p<\infty$. Let $\Omega$ be a nonempty open set in
$\reals^n$. We define
\begin{align*}
 W^{s,p}_{loc}(\Omega):=\{u\in D'(\Omega): \forall \varphi\in
C_c^\infty(\Omega)\quad \varphi u\in W^{s,p}(\Omega)\}
\end{align*}
We equip $W^{s,p}_{loc}(\Omega)$ with the natural topology
induced by the separating family of seminorms $\{|.|_{\varphi,s,p}\}_{\varphi\in
C_c^{\infty}(\Omega)}\}$ (see Definition \ref{winter28}) where
\begin{equation*}
\forall\, u\in W^{s,p}_{loc}(\Omega)\quad \varphi\in
C_c^{\infty}(\Omega)\qquad |u|_{\varphi,s,p}:=\|\varphi
u\|_{W^{s,p}(\Omega)}
\end{equation*}
When $s$ and $p$ are clear from the context, we may just write $|u|_\varphi$ or $p_\varphi(u)$ instead of $|u|_{\varphi,s,p}$. It is easy to show that for all $\varphi\in C_c^\infty(\Omega)$, $|.|_{\varphi,s,p}$ is a seminorm on $W^{s,p}_{loc}(\Omega)$. The fact that the family of seminorms $\{|.|_{\varphi,s,p}\}_{\varphi\in
C_c^{\infty}(\Omega)}\}$ is separating will be proved in Theorem ~\ref{thmapp7b}.

\begin{remark}
Note that, by item 1. of Theorem \ref{thmapptvconvergence1}, $u_i\rightarrow u$  in $W^{s,p}_{loc}(\Omega)$ if and only if $\varphi u_i\rightarrow \varphi u$ in $W^{s,p}(\Omega)$ for all $\varphi \in C_c^\infty(\Omega)$.
\end{remark}
\begin{remark}\lab{remmay7803}
Clearly if $(s,p,\Omega)$ is a smooth multiplication triple, then $W^{s,p}(\Omega)\subseteq W^{s,p}_{loc}(\Omega)$.
\end{remark}
An equivalent description of locally Sobolev functions
is described in the following theorem.
\begin{theorem}\lab{thmapp7a}
Suppose that $(s,p,\Omega)$ is a smooth multiplication triple. Then $u\in D'(\Omega)$ is in $W^{s,p}_{loc}(\Omega)$ if and only
if for every precompact open set $V$ with $\bar{V}\subseteq
\Omega$ there is $w\in W^{s,p}(\Omega)$ such that $w|_V=u|_V$.
\end{theorem}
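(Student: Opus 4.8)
The plan is to establish the two implications separately; in each direction the argument reduces to a suitable cutoff function together with the definition of the restriction of a distribution (Definition~\ref{winter64}).

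For the forward implication, assume $u\in W^{s,p}_{loc}(\Omega)$ and let $V$ be a precompact open set with $\bar V\subseteq\Omega$. Since $\bar V$ is then a compact subset of $\Omega$, I would invoke Theorem~\ref{thmmay7635} to pick $\psi\in C_c^\infty(\Omega)$, $0\le\psi\le 1$, with $\psi\equiv 1$ on a neighborhood of $\bar V$, and set $w:=\psi u$. By the very definition of $W^{s,p}_{loc}(\Omega)$ we have $w\in W^{s,p}(\Omega)$, so it only remains to check that $w|_V=u|_V$. I would verify this by testing against an arbitrary $\chi\in D(V)$: the function $\textrm{ext}^0_{V,\Omega}\chi$ is supported in $V$, where $\psi\equiv 1$, hence $\psi\,\textrm{ext}^0_{V,\Omega}\chi=\textrm{ext}^0_{V,\Omega}\chi$, and therefore $\langle w,\textrm{ext}^0_{V,\Omega}\chi\rangle=\langle u,\psi\,\textrm{ext}^0_{V,\Omega}\chi\rangle=\langle u,\textrm{ext}^0_{V,\Omega}\chi\rangle$, which is precisely $\langle u|_V,\chi\rangle$. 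Note this direction does not use the smooth multiplication triple hypothesis.

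For the converse, assume that every precompact open $V$ with $\bar V\subseteq\Omega$ admits a representative $w_V\in W^{s,p}(\Omega)$ with $w_V|_V=u|_V$, and fix $\varphi\in C_c^\infty(\Omega)$. Put $K:=\textrm{supp}\,\varphi\in\mathcal{K}(\Omega)$ and, using Theorem~\ref{winter2}, choose an open set $V$ with $K\subseteq V\subseteq\bar V\subseteq\Omega$ (so $V$ is precompact); let $w:=w_V\in W^{s,p}(\Omega)$ be the associated representative. The crux is the identity $\varphi u=\varphi w$ in $D'(\Omega)$: for any $\eta\in D(\Omega)$ the test function $\varphi\eta$ is supported in $K\subseteq V$, so writing $g:=(\varphi\eta)|_V\in D(V)$ we have $\textrm{ext}^0_{V,\Omega}g=\varphi\eta$ and hence $\langle\varphi u-\varphi w,\eta\rangle=\langle u-w,\varphi\eta\rangle=\langle (u-w)|_V,g\rangle=0$ by hypothesis. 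Since $(s,p,\Omega)$ is a smooth multiplication triple, $m_\varphi$ maps $W^{s,p}(\Omega)$ boundedly into itself, so $\varphi w\in W^{s,p}(\Omega)$; therefore $\varphi u=\varphi w\in W^{s,p}(\Omega)$, and as $\varphi$ was arbitrary, $u\in W^{s,p}_{loc}(\Omega)$.

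The cutoff constructions and the distributional bookkeeping are routine. The one step that genuinely needs care --- and the only place the smooth multiplication triple hypothesis enters --- is the converse: one must justify that the products $\varphi u$ and $\varphi w$ (which for $s<0$ are a priori merely products of a smooth compactly supported function with a distribution) coincide in $D'(\Omega)$, and then use boundedness of $m_\varphi$ on $W^{s,p}(\Omega)$ to pull $\varphi w$, and with it $\varphi u$, back into $W^{s,p}(\Omega)$. I expect that identification, together with checking that the intermediate open set $V$ can be taken precompact in $\Omega$ while still containing $\textrm{supp}\,\varphi$, to be the main (mild) obstacle.
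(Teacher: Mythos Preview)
Your proof is correct and follows essentially the same route as the paper's: in both directions one produces a cutoff function equal to $1$ on a neighborhood of the relevant compact set, sets $w=\psi u$ (forward) or checks $\varphi u=\varphi w$ (converse), and then invokes the smooth multiplication triple hypothesis only in the converse to place $\varphi w$ in $W^{s,p}(\Omega)$. Your version is slightly more explicit in verifying the distributional identities $w|_V=u|_V$ and $\varphi u=\varphi w$ by pairing against test functions, whereas the paper simply asserts these as ``clear''; you also correctly note that the forward implication does not use the hypothesis.
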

\begin{proof}
\leavevmode
$(\Rightarrow)$ Suppose $u\in W^{s,p}_{loc}(\Omega)$ and let
$V$ be a precompact open set such that $\bar{V}\subseteq \Omega$.
Let $\varphi \in C_c^{\infty}(\Omega)$ be such that $\varphi=1$
on a neighborhood containing $\bar{V}$. Let $w=\varphi u$. $u$ is a locally Sobolev function, so $w\in W^{s,p}(\Omega)$; also clearly $w|_V=u|_V$.\\
$(\Leftarrow)$ Suppose $u\in D'(\Omega)$ has the property that
for every precompact open set $V$ with $\bar{V}\subseteq \Omega$
there is $w\in W^{s,p}(\Omega)$ such that $w|_V=u|_V$. Let $\varphi \in
C_c^{\infty}(\Omega)$. We need to show that $\varphi u\in
W^{s,p}(\Omega)$. Note that $\textrm{supp}\,\varphi$ is compact,
so there exists a bounded open set $V$ such that
\begin{equation*}
\textrm{supp}\, \varphi\subseteq V\subseteq \bar{V}\subseteq
\Omega
\end{equation*}
By assumption there exists $w\in W^{s,p}(\Omega)$ such
that $w|_V=u|_V$. It follows from the hypothesis of the theorem that $\varphi
 w\in W^{s,p}(\Omega)$. Clearly
 $\varphi w=\varphi u$ on $\Omega$. Therefore $\varphi u\in W^{s,p}(\Omega)$.
\end{proof}

\section{Overview of the Basic Properties}
Material of this section is mainly an adaptation of the material presented in the excellent work of Antonic and Burazin \cite{Antonic1}, which is restricted to integer order Sobolev spaces, and Peterson \cite{Petersen83}, which is restricted to Hilbert spaces $H^s$. We have added certain details to the statements of the theorems and their proofs to ensure all the arguments are valid for both integer and noninteger order Sobolev-Slobodeckij spaces.

\begin{definition}
If $A$ is a subset of $C_c^{\infty}(\Omega)$ with the
following property:
\begin{equation*}
\forall\,x\in \Omega\quad \exists \varphi \in A\quad \textrm{such
that}\quad \varphi\geq 0\quad \textrm{and}\quad \varphi(x)\neq 0
\end{equation*}
then we say $A$ is an \textbf{admissible} family of functions.
\end{definition}

\begin{remark}\lab{remmay7805}
Note that if $A$ is an admissible family of functions, then for
all $m\in \mathbb{N}$, the set $\{\varphi^m: \varphi\in A\}$ is
also an admissible family of functions.
\end{remark}

\begin{theorem}\lab{thmapp7b}
Let $(s,p,\Omega)$ be an interior smooth multiplication triple.
  If $A$ is an admissible family of functions then
\begin{enumerateX}
\item $
W^{s,p}_{loc}(\Omega)=\{u\in D'(\Omega): \forall \varphi\in
A\quad \varphi u\in W^{s,p}(\Omega)\}
$
\item The collection $\{|.|_\varphi: \varphi\in A\}$ is a separating family of seminorms on $W^{s,p}_{loc}(\Omega)$.
\item The natural topology induced by the separating family of seminorms $\{|.|_\varphi: \varphi\in A\}$ is the same as the topology of $W^{s,p}_{loc}(\Omega)$.
\end{enumerateX}
\end{theorem}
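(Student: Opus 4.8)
The plan is to isolate a single covering lemma and then read off all three items from it.

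\emph{The lemma:} for every $K\in\mathcal{K}(\Omega)$ there exist finitely many $\varphi_1,\dots,\varphi_N\in A$ and a function $\theta\in C_c^\infty(\Omega)$ such that $\theta\,(\varphi_1+\dots+\varphi_N)\equiv 1$ on a neighborhood of $K$; consequently, for every $\psi\in C_c^\infty(\Omega)$ with $\textrm{supp}\,\psi\subseteq K$ one has $\psi=\sum_{i=1}^N(\psi\theta)\varphi_i$, hence (by associativity of multiplication of distributions by smooth functions) $\psi u=\sum_{i=1}^N(\psi\theta)(\varphi_i u)$ for every $u\in D'(\Omega)$. To prove it I would use the admissibility of $A$ to pick, for each $x\in K$, a function $\varphi_x\in A$ with $\varphi_x\ge 0$ and $\varphi_x(x)\ne 0$; by continuity $\varphi_x>0$ on an open neighborhood $U_x\ni x$, and compactness of $K$ extracts a finite subcover $U_{x_1},\dots,U_{x_N}$. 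Put $\Phi:=\varphi_{x_1}+\dots+\varphi_{x_N}$; then $\Phi\ge 0$ on $\Omega$ and $\Phi>0$ on the open set $U:=U_{x_1}\cup\dots\cup U_{x_N}\supseteq K$ --- this is the one place where the nonnegativity built into the notion of an admissible family is essential. Using Theorem~\ref{winter2} and Theorem~\ref{thmmay7635}, choose $\chi\in C_c^\infty(\Omega)$ with $\chi\equiv 1$ near $K$ and $\textrm{supp}\,\chi\subseteq U$; since $\textrm{supp}\,\chi$ is a compact subset of the open set $\{\Phi>0\}$, the function equal to $\chi/\Phi$ on $U$ and to $0$ off $\textrm{supp}\,\chi$ is well-defined and smooth on all of $\Omega$, so it defines $\theta\in C_c^\infty(\Omega)$ with $\theta\Phi=\chi\equiv 1$ near $K$, as desired.

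\textbf{Deducing the three items.} Granting the lemma, item (1) is immediate in the inclusion ``$\subseteq$'' since $A\subseteq C_c^\infty(\Omega)$. For ``$\supseteq$'', let $u\in D'(\Omega)$ satisfy $\varphi u\in W^{s,p}(\Omega)$ for all $\varphi\in A$, fix $\psi\in C_c^\infty(\Omega)$, and apply the lemma with $K=\textrm{supp}\,\psi$. Each $\varphi_i u$ has distributional support inside $\textrm{supp}\,\varphi_i$, so $\varphi_i u\in W^{s,p}_{\textrm{supp}\,\varphi_i}(\Omega)$; because $(s,p,\Omega)$ is an interior smooth multiplication triple, multiplication by $\psi\theta\in C_c^\infty(\Omega)$ keeps $(\psi\theta)(\varphi_i u)$ inside $W^{s,p}_{\textrm{supp}\,\varphi_i}(\Omega)\subseteq W^{s,p}(\Omega)$, hence $\psi u=\sum_i(\psi\theta)(\varphi_i u)\in W^{s,p}(\Omega)$, i.e. $u\in W^{s,p}_{loc}(\Omega)$. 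For item (2), if $|u|_\varphi=0$ for all $\varphi\in A$ then $\varphi u=0$ in $W^{s,p}(\Omega)$, hence $\varphi u=0$ in $D'(\Omega)$ by the embedding $W^{s,p}(\Omega)\hookrightarrow D'(\Omega)$ of Theorem~\ref{thmmay7701}; for any $\psi\in D(\Omega)$, the lemma gives $\langle u,\psi\rangle=\sum_i\langle u,\varphi_i(\psi\theta)\rangle=\sum_i\langle\varphi_i u,\psi\theta\rangle=0$, so $u=0$ as a distribution, which proves that $\{|.|_\varphi:\varphi\in A\}$ separates points of $W^{s,p}_{loc}(\Omega)$.

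\textbf{Item (3).} Write $\tau$ for the natural topology on $W^{s,p}_{loc}(\Omega)$ induced by $\{|.|_\varphi\}_{\varphi\in C_c^\infty(\Omega)}$ and $\tau'$ for the one induced by $\{|.|_\varphi\}_{\varphi\in A}$. Since $A\subseteq C_c^\infty(\Omega)$, each seminorm defining $\tau'$ is $\tau$-continuous, so by Remark~\ref{remmay2421} it suffices to show each $|.|_\psi$ with $\psi\in C_c^\infty(\Omega)$ is $\tau'$-continuous; by Theorem~\ref{thmmay2656} this reduces to an estimate of the form $|u|_\psi\le C\sum_{i=1}^N|u|_{\varphi_i}$ with $\varphi_i\in A$. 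Taking the $\varphi_i$ and $\theta$ from the lemma applied with $K=\textrm{supp}\,\psi$, and letting $C_i$ denote the operator norm of $m_{\psi\theta}$ on $W^{s,p}_{\textrm{supp}\,\varphi_i}(\Omega)$ (finite by the interior smooth multiplication triple hypothesis), one gets
\begin{equation*}
|u|_\psi=\|\psi u\|_{W^{s,p}(\Omega)}\le\sum_{i=1}^N\|(\psi\theta)(\varphi_i u)\|_{W^{s,p}(\Omega)}\le\sum_{i=1}^N C_i\,\|\varphi_i u\|_{W^{s,p}(\Omega)}=\sum_{i=1}^N C_i\,|u|_{\varphi_i},
\end{equation*}
and $C:=\max_i C_i$ works; hence $\tau=\tau'$.

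\textbf{Main obstacle.} The only genuinely delicate point is the construction of $\theta$ in the lemma: one must verify that $\chi/\Phi$, which a priori is defined only where $\Phi>0$, actually extends to a compactly supported smooth function on all of $\Omega$, and this rests squarely on the nonnegativity of the members of an admissible family, so that $\Phi$ cannot vanish anywhere on the neighborhood $U\supseteq K$ produced by the finite subcover. Everything afterwards is bookkeeping with the support inclusion $\textrm{supp}(\varphi u)\subseteq\textrm{supp}\,\varphi$ and repeated use of the interior smooth multiplication triple property.
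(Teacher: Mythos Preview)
Your proof is correct and follows essentially the same route as the paper's. Both arguments cover $K=\textrm{supp}\,\psi$ by finitely many sets $\{\varphi_{x_i}>0\}$ with $\varphi_{x_i}\in A$, divide by $\Phi=\sum_i\varphi_{x_i}$ to manufacture a smooth compactly supported function (your $\psi\theta$ coincides with the paper's $\xi:=\psi/\Phi$ since $\chi\equiv 1$ on $\textrm{supp}\,\psi$), and then invoke the interior smooth multiplication triple property; the only presentational difference is that you isolate the construction as a standalone covering lemma and apply it uniformly to all three items, whereas the paper builds the decomposition inside the proof of item~(1) and reuses it.
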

\begin{proof}
\leavevmode
\begin{enumerateX}
\item Let $u\in D'(\Omega)$ be such that $\varphi u\in W^{s,p}(\Omega)$ for all
$\varphi \in A$. We need to show that if $\psi\in
C_c^{\infty}(\Omega)$, then $\psi u\in W^{s,p}(\Omega)$. By the
definition of $A$, for all $x\in \textrm{supp}\,\psi$ there
exists $\varphi_x \in A$ such that $\varphi_x(x)>0$. Define
\begin{equation*}
U_x:=\{y\in \Omega: \varphi_x(y)>0\}
\end{equation*}
Clearly $x\in U_x$ and since $\varphi_x$ is continuous, $U_x$ is
an open set. $\{U_x\}_{x\in \textrm{supp}\psi}$ is an open cover
of the compact set $\textrm{supp}\psi$. So there exist points
$x_1,\cdots,x_k$ such that $\textrm{supp}\psi\subseteq
U:=U_{x_1}\cup  \cdots \cup U_{x_k}$. If $y\in U$, then there
exists $1\leq i\leq k$ such that $y\in U_{x_i}$ and so
$\varphi_{x_i}(y)>0$. So the smooth function $\sum_{i=1}^k
\varphi_{x_i}$ is nonzero on $U$. Thus on $U$ we have
\begin{equation*}
\psi u=\frac{\psi}{\sum_{i=1}^k
\varphi_{x_i}}\big(\sum_{i=1}^k \varphi_{x_i} u\big)
\end{equation*}
Indeed, if we define
\begin{equation*}
\xi(z)=
\begin{cases}
\frac{\psi(z)}{\sum_{i=1}^k
\varphi_{x_i}(z)}\quad &\textrm{if $z\in U$}\\
 0\quad\quad &\textrm{otherwise}
\end{cases}
\end{equation*}
then $\xi$ is smooth with compact support in $U$ and
\begin{equation*}
\psi u=\xi \sum_{i=1}^k \varphi_{x_i} u
\end{equation*}
on the entire $\Omega$. Now note that for each $i$, $\varphi_{x_i} u$ is in $W^{s,p}(\Omega)$
(because by assumption $\varphi u\in W^{s,p}(\Omega)$ for all
$\varphi \in A$). So
$\sum_{i=1}^k \varphi_{x_i} u\in W^{s,p}(\Omega)$. Since $\xi\in C_c^\infty(\Omega)$ and $\sum_{i=1}^k \varphi_{x_i} u$ has compact support and $(s,p,\Omega)$ is an interior smooth multiplication triple, it follows that  $\xi\sum_{i=1}^k \varphi_{x_i} u\in
W^{s,p}(\Omega)$.
\item Now we prove that $\{|.|_\varphi: \varphi\in A\}$ is a separating family of seminorms. We need to show that if $u\in W^{s,p}_{loc}(\Omega)$ is such that for all $\varphi\in A$ $|u|_\varphi=\|\varphi\,u\|_{W^{s,p}(\Omega)}= 0$, then $u=0$. By definition of locally Sobolev functions, $u$ is an element of $D'(\Omega)$. So, in order to show that $u=0$, it is enough to prove that for all $\eta \in C_c^\infty(\Omega)$, $\langle u,\eta\rangle_{D'(\Omega)\times D(\Omega)}=0$. We consider two cases:
    \begin{itemize}
    \item \textbf{Case 1:} $A=C_c^\infty(\Omega)$.\\
    Let $\varphi\in A$ be such that $\varphi=1$ on a neighborhood containing $\textrm{supp}\,\eta$. By assumption $\varphi\,u=0$ in $W^{s,p}(\Omega)$ and so it is zero in $D'(\Omega)$. Now we have
    \begin{equation*}
    \langle u,\eta\rangle_{D'(\Omega)\times D(\Omega)}=\langle u,\varphi\eta\rangle_{D'(\Omega)\times D(\Omega)}
    =\langle \varphi u,\eta\rangle_{D'(\Omega)\times D(\Omega)}=0
    \end{equation*}
    which is exactly what we wanted to prove.
    \item \textbf{Case 2:} $A\subset C_c^\infty(\Omega)$.\\
    We claim that if $\|\varphi u\|_{W^{s,p}(\Omega)}=0$ for all $\varphi\in A$, then for any $\psi\in C_c^\infty(\Omega)$, $\|\psi u\|_{W^{s,p}(\Omega)}=0$ and so this case reduces to the previous case. Indeed, if $\psi$ is an arbitrary element of $C_c^\infty(\Omega)$, then by what was proved in the first item
    \begin{equation*}
    \psi u=\xi \sum_{i=1}^k \varphi_{x_i} u
    \end{equation*}
    where by assumption for each $i$, $\varphi_{x_i}u$ is zero as an element of $W^{s,p}(\Omega)$. Hence $\psi u=0$ in $W^{s,p}(\Omega)$.
    \end{itemize}
\item Finally we show that the natural topology $\tau_{\mathcal{P}}$ induced by $\mathcal{P}=\{|.|_\varphi: \varphi\in A\}$ is the same as the natural topology $\tau_{\mathcal{Q}}$ induced by $\mathcal{Q}=\{|.|_\varphi: \varphi\in C_c^\infty(\Omega)\}$. Obviously $\mathcal{P}$ is a subset of $\mathcal{Q}$, so it follows from the definition of natural topology induced by a family of seminorms (see Definition ~\ref{winter28}) that $\tau_{\mathcal{P}}\subseteq \tau_{\mathcal{Q}}$. In order to show that $\tau_{\mathcal{Q}}\subseteq \tau_{\mathcal{P}}$, it is enough to prove that for all $\psi\in C_c^\infty(\Omega)$, the map $|.|_\psi: (W^{s,p}_{loc}(\Omega),\tau_P)\rightarrow \reals$ is continuous. By what was shown in the first item, we can write
    \begin{align*}
    \forall\, u\in W^{s,p}_{loc}(\Omega)\qquad |u|_\psi &=\|\psi\, u\|_{W^{s,p}(\Omega)}=\|\xi \sum_{i=1}^k \varphi_{x_i} u\|_{W^{s,p}(\Omega)}\\
    &\preceq \sum_{i=1}^k \| \varphi_{x_i} u\|_{W^{s,p}(\Omega)}=\sum_{i=1}^k |u|_{\varphi_{x_i}}
    \end{align*}
    where the implicit constant does not depend on $u$. In the last inequality we used the assumption that $(s,p,\Omega)$ is an interior smooth multiplication triple. Now it follows from Theorem ~\ref{thmapptvconvergence1} that $|.|_\psi: (W^{s,p}_{loc}(\Omega),\tau_P)\rightarrow \reals$ is continuous.
\end{enumerateX}
\end{proof}

\begin{lemma}\lab{lemapril171}
There exists an admissible family $A\subseteq C_c^\infty(\Omega)$ that has only countably many elements.
\end{lemma}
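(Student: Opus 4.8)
The plan is to read off a countable admissible family from the locally finite partition of unity of Theorem~\ref{winter3}. Fix an exhaustion $\{K_j\}_{j\in\mathbb{N}}$ of $\Omega$ by compact sets (Theorem~\ref{winter2}) and let $\{\psi_j\}_{j\in\mathbb{N}_0}$ be the family furnished by Theorem~\ref{winter3}: each $\psi_j\in C_c^\infty(\Omega)$ takes values in $[0,1]$, the supports are subordinate to a locally finite cover of $\Omega$, and $\sum_{j\in\mathbb{N}_0}\psi_j(x)=1$ for every $x\in\Omega$. I would then simply set $A:=\{\psi_j:j\in\mathbb{N}_0\}$, which is a subset of $C_c^\infty(\Omega)$ with only countably many elements.

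To see that $A$ is admissible, fix $x\in\Omega$. Every $\psi_j$ is nonnegative since it takes values in $[0,1]$, and, because the cover is locally finite, the sum $\sum_{j}\psi_j(x)=1$ has only finitely many nonzero terms; in particular at least one term must be nonzero, i.e.\ $\psi_j(x)\neq 0$ for some $j$. For this $j$ we then have $\psi_j\in A$, $\psi_j\geq 0$, and $\psi_j(x)\neq 0$, which is precisely the defining property of an admissible family. This is essentially the whole argument; there is no real obstacle, the only thing to notice being that the partition of unity of Theorem~\ref{winter3} is indexed by $\mathbb{N}_0$ and is therefore countable.

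A self-contained alternative that avoids invoking the partition of unity is to use the second countability of $\reals^n$. The collection of open balls $B(q,\rho)$ with $q\in\mathbb{Q}^n$, $\rho\in\mathbb{Q}\cap(0,\infty)$, and $\overline{B(q,\rho)}\subseteq\Omega$ is countable; to each such ball one attaches the bump function $\psi_{q,\rho}(y):=\exp\bigl(-1/(\rho^2-|y-q|^2)\bigr)$ for $|y-q|<\rho$ and $\psi_{q,\rho}(y):=0$ otherwise, which lies in $C_c^\infty(\Omega)$ and is strictly positive on $B(q,\rho)$. Given $x\in\Omega$, one chooses $r>0$ with $\overline{B(x,r)}\subseteq\Omega$, then picks $q\in\mathbb{Q}^n$ with $|x-q|<r/4$ and a rational $\rho\in(r/4,r/2)$; a one-line triangle-inequality check then gives $x\in B(q,\rho)$ and $\overline{B(q,\rho)}\subseteq B(x,r)\subseteq\Omega$, so $\psi_{q,\rho}$ belongs to the family and is positive at $x$. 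The only mildly delicate point along this route is verifying that last inclusion, which is elementary.
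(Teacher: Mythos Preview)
Your proof is correct. Both your argument and the paper's rely on the exhaustion by compact sets, but the paper makes a slightly different choice: instead of the partition of unity $\{\psi_j\}$, it takes for each $j$ a nonnegative $\varphi_j\in C_c^\infty(\Omega)$ with $\varphi_j=1$ on $K_j$ and $\varphi_j=0$ outside $\mathring{K}_{j+2}$. Admissibility is then immediate since every $x\in\Omega$ lies in some $K_j$. The two constructions are equally valid for the present lemma; the paper's choice has the extra feature that for any compact $K\subseteq\Omega$ one eventually has $\varphi_j\equiv 1$ on $K$, a property that is exploited later in the proof of Theorem~\ref{thmmay7822}. Your alternative via rational balls is also fine and entirely self-contained.
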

\begin{proof}
Let $\{K_j\}_{j\in\mathbb{N}}$ be an exhaustion by compact sets for $\Omega$. For each $j\in \mathbb{N}$, let $\varphi_j\in C_c^\infty(\Omega)$ be a nonnegative function such that $\varphi_j=1$ on $K_j$ and $\varphi_j=0$ outside $\mathring{K}_{j+2}$. Clearly $A=\{\varphi_j\}_{j\in\mathbb{N}}$ is a countable admissible family of functions.
\end{proof}
\begin{corollary}\lab{cormay7806}
Let $(s,p,\Omega)$ be an interior smooth multiplication triple. Considering Theorem ~\ref{thmmay7310}, it follows from the previous lemma and Theorem ~\ref{thmapp7b} that $W^{s,p}_{loc}(\Omega)$ is metrizable. Indeed, if $A=\{\varphi_j\}_{j=1}^\infty$ is a countable admissible family, then
\begin{equation}\lab{eqnspring181}
d(u,v)=\sum_{j=1}^\infty\frac{1}{2^j}\frac{|u-v|_{\varphi_j}}{1+|u-v|_{\varphi_j}}
\end{equation}
is a compatible translation invariant metric on $W^{s,p}_{loc}(\Omega)$.
\end{corollary}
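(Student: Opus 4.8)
The plan is to observe that this is an immediate consequence of assembling the three results cited in the statement, so the ``proof'' amounts to checking that the hypotheses line up. First I would fix an arbitrary countable admissible family $A=\{\varphi_j\}_{j=1}^\infty$; such a family exists by Lemma~\ref{lemapril171}, but since the assertion is made for any countable admissible family, I simply take one as given. The relevant seminorms are then $\{|.|_{\varphi_j}\}_{j=1}^\infty$, a \emph{countable} subfamily of the defining family $\{|.|_\varphi\}_{\varphi\in C_c^\infty(\Omega)}$ of $W^{s,p}_{loc}(\Omega)$.

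Next I would invoke Theorem~\ref{thmapp7b}, whose hypothesis that $(s,p,\Omega)$ be an interior smooth multiplication triple is exactly what we have assumed. Part~(2) of that theorem gives that $\{|.|_{\varphi_j}\}_{j=1}^\infty$ is a \emph{separating} family of seminorms on $W^{s,p}_{loc}(\Omega)$, and part~(3) gives that the natural topology it induces coincides with the topology of $W^{s,p}_{loc}(\Omega)$ (i.e.\ the natural topology induced by the full family $\{|.|_\varphi\}_{\varphi\in C_c^\infty(\Omega)}$). Thus $W^{s,p}_{loc}(\Omega)$ carries a topology that is the natural topology of a countable separating family of seminorms.

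Finally I would apply Theorem~\ref{thmmay7310} with $\mathcal{P}=\{|.|_{\varphi_j}\}_{j\in\mathbb{N}}$: since this family is countable and separating, the associated natural topology is metrizable and the translation invariant metric
\begin{equation*}
d(u,v)=\sum_{j=1}^\infty\frac{1}{2^j}\frac{|u-v|_{\varphi_j}}{1+|u-v|_{\varphi_j}}
\end{equation*}
is compatible with it; combined with the previous paragraph this metric is compatible with the topology of $W^{s,p}_{loc}(\Omega)$, which is the claim. There is no real obstacle in this argument — the entire content was already extracted in Theorem~\ref{thmapp7b}. The only point worth flagging is that the metric $d$ depends on the chosen countable admissible family; however, by Remark~\ref{remmay72} any two such choices yield the same class of Cauchy sequences, so the completeness question taken up subsequently is independent of this choice.
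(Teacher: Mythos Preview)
Your proposal is correct and follows exactly the same approach as the paper, which simply cites Lemma~\ref{lemapril171}, Theorem~\ref{thmapp7b}, and Theorem~\ref{thmmay7310} in the statement itself without a separate proof. Your added remark about the independence of Cauchy sequences from the choice of countable admissible family (via Remark~\ref{remmay72}) is a nice anticipation of what is needed in the subsequent completeness argument.
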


\begin{theorem}\lab{thmmay7819}
Let $(s,p,\Omega)$ be an interior smooth multiplication triple. Then $W^{s,p}_{loc}(\Omega)$ is a Frechet space.
\end{theorem}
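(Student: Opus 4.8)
The plan is to reduce everything to completeness via Corollary~\ref{corspring181}. By Corollary~\ref{cormay7806} (which uses the standing hypothesis that $(s,p,\Omega)$ is an interior smooth multiplication triple, together with Lemma~\ref{lemapril171} and Theorem~\ref{thmapp7b}), the topology of $W^{s,p}_{loc}(\Omega)$ is induced by a \emph{countable} separating family of seminorms, and by Theorem~\ref{thmmay7310} it is a metrizable locally convex topological vector space. Hence, by Corollary~\ref{corspring181}, it is a Frechet space precisely when it is complete, so the whole argument comes down to showing that an arbitrary Cauchy sequence $\{u_i\}$ in $W^{s,p}_{loc}(\Omega)$ converges.

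First I would produce candidate local limits. For a fixed $\varphi\in C_c^\infty(\Omega)$ the set $\{v:|v|_\varphi<\varepsilon\}$ is a neighborhood of the origin, so the Cauchy condition forces $\|\varphi u_i-\varphi u_k\|_{W^{s,p}(\Omega)}=|u_i-u_k|_\varphi\to 0$ as $i,k\to\infty$. Thus $\{\varphi u_i\}$ is Cauchy in the Banach space $W^{s,p}(\Omega)$ (Banach for $s\geq 0$ by Theorem~\ref{thmmay7658}, and by definition a dual of a normed space, hence Banach, when $s<0$), so $\varphi u_i\to w_\varphi$ in $W^{s,p}(\Omega)$ for some $w_\varphi\in W^{s,p}(\Omega)$.

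Next I would glue these local limits into one distribution. Given $\psi\in D(\Omega)$, pick $\varphi\in C_c^\infty(\Omega)$ with $\varphi\equiv 1$ on a neighborhood of $\textrm{supp}\,\psi$; then $\langle u_i,\psi\rangle=\langle\varphi u_i,\psi\rangle$, and since $W^{s,p}(\Omega)\hookrightarrow D'(\Omega)$ by Theorem~\ref{thmmay7701}, we get $\langle u_i,\psi\rangle\to\langle w_\varphi,\psi\rangle$. So $\lim_i\langle u_i,\psi\rangle$ exists for every $\psi\in D(\Omega)$, and Theorem~\ref{thmapril141} gives a $u\in D'(\Omega)$ with $\langle u,\psi\rangle=\lim_i\langle u_i,\psi\rangle$, i.e. $u_i\to u$ in $D'(\Omega)$. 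For any $\varphi\in C_c^\infty(\Omega)$ this yields $\langle\varphi u_i,\psi\rangle=\langle u_i,\varphi\psi\rangle\to\langle u,\varphi\psi\rangle=\langle\varphi u,\psi\rangle$, so $\varphi u_i\to\varphi u$ in $D'(\Omega)$; comparing with $\varphi u_i\to w_\varphi$ and using uniqueness of limits in $D'(\Omega)$ gives $\varphi u=w_\varphi\in W^{s,p}(\Omega)$. Hence $u\in W^{s,p}_{loc}(\Omega)$, and since $|u_i-u|_\varphi=\|\varphi u_i-w_\varphi\|_{W^{s,p}(\Omega)}\to 0$ for every $\varphi\in C_c^\infty(\Omega)$, item~(1) of Theorem~\ref{thmapptvconvergence1} shows $u_i\to u$ in $W^{s,p}_{loc}(\Omega)$. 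This establishes completeness, and the theorem follows.

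The seminorm bookkeeping is routine; the one delicate point is the consistency of the family $\{w_\varphi\}_{\varphi\in C_c^\infty(\Omega)}$, i.e. that the local limits actually assemble into a single global distribution. I would handle this, as above, by first passing to the coarser distributional topology, where the completeness-type statement Theorem~\ref{thmapril141} is available, and only afterwards promoting the limit back to $W^{s,p}_{loc}(\Omega)$. I note that the argument is uniform in the sign of $s$: it uses only that $W^{s,p}(\Omega)$ is a Banach space continuously embedded in $D'(\Omega)$, both of which hold for all $s\in\reals$.
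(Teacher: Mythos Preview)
Your proof is correct and follows essentially the same approach as the paper's: reduce to completeness via Corollary~\ref{corspring181}, use that $\{\varphi u_i\}$ is Cauchy in the Banach space $W^{s,p}(\Omega)$, pass to a distributional limit via Theorem~\ref{thmapril141}, and then identify $\varphi u$ with the $W^{s,p}$-limit $w_\varphi$. The only cosmetic difference is that the paper fixes a partition of unity $\{\psi_j\}$ from Theorem~\ref{winter3} and carries out both the construction of the distributional limit and the verification that $u\in W^{s,p}_{loc}(\Omega)$ using only those $\psi_j$, whereas you work directly with arbitrary $\varphi\in C_c^\infty(\Omega)$ and a single cutoff equal to $1$ near $\textrm{supp}\,\psi$; your version is slightly more streamlined but not substantively different.
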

\begin{proof}
By Corollary ~\ref{corspring181} it is enough to show that $W^{s,p}_{loc}(\Omega)$ equipped with the metric in (\ref{eqnspring181}) is complete. Note that all admissible families result in equivalent topologies in $W^{s,p}_{loc}(\Omega)$. So we can choose the functions $\varphi_j$'s in the definition of $d$ to be the partition of unity introduced in Theorem ~\ref{winter3}. Now suppose $\{u_m\}$ is a Cauchy sequence with respect to $d$. In what follows we will prove that $\{u_m\}$ converges to a distribution $u$ in $D'(\Omega)$. For now let's assume this is true. We need to show that $u$ is an element of $W^{s,p}_{loc}(\Omega)$, that is we need to show that for all $j$, $\varphi_j\,u\in W^{s,p}(\Omega)$.\\
It follows from the definition of $d$ that for each $j\in \mathbb{N}$, $\{\varphi_j u_m\}_{m\in \mathbb{N}}$ is a Cauchy sequence in $W^{s,p}(\Omega)$. Since $W^{s,p}(\Omega)$ is a Banach space, there exists $f_j$ in $W^{s,p}(\Omega)$ such that $\varphi_j u_m\rightarrow f_j$ in $W^{s,p}(\Omega)$. Note that $W^{s,p}(\Omega)\hookrightarrow D'(\Omega)$, so $\varphi_j u_m\rightarrow f_j$ in $D'(\Omega)$ and thus for all $\psi\in D(\Omega)$ we have
\begin{align*}
\langle f_j,\psi\rangle_{D'(\Omega)\times D(\Omega)}&=\lim_{m\rightarrow \infty}\langle \varphi_j\,u_m,\psi\rangle_{D'(\Omega)\times D(\Omega)}=
\lim_{m\rightarrow \infty}\langle u_m,\varphi_j\psi\rangle_{D'(\Omega)\times D(\Omega)}\\
&=
\langle u,\varphi_j \psi\rangle_{D'(\Omega)\times D(\Omega)}=\langle \varphi_j u,\psi\rangle_{D'(\Omega)\times D(\Omega)}
\end{align*}
Hence $\varphi_j u=f_j$ in $D'(\Omega)$. Since $f_j\in W^{s,p}(\Omega)$ we can conclude that $\varphi_j u\in W^{s,p}(\Omega)$.\\
It remains to show that $\{u_m\}$ converges in $D'(\Omega)$. To this end it is enough to show that for all $\psi\in D(\Omega)$, the sequence $\{\langle u_m,\psi\rangle\}$ converges in $\reals$ (see Theorem ~\ref{thmapril141}). Let $\psi\in D(\Omega)$. Since $\textrm{supp}\,\psi$ is compact, there are only finitely many of $\varphi_j$'s that are nonzero on the support of $\psi$ (see Theorem ~\ref{winter3}) which we denote by $\varphi_{j_1},\cdots,\varphi_{j_l}$. So for each $x\in \textrm{supp}\,\psi$, $\varphi_{j_1}(x)+\cdots+\varphi_{j_l}(x)=1$. We have
\begin{equation*}
\langle u_m,\psi\rangle=\langle u_m,(\varphi_{j_1}+\cdots+\varphi_{j_l})\psi\rangle=
\langle (\varphi_{j_1}+\cdots+\varphi_{j_l})u_m,\psi\rangle=\langle \varphi_{j_1}u_m,\psi\rangle+\cdots +\langle \varphi_{j_l}u_m,\psi\rangle
\end{equation*}
$\lim_{m\rightarrow \infty} \langle \varphi_{j_1}u_m,\psi\rangle,\cdots, \lim_{m\rightarrow \infty}\langle \varphi_{j_l}u_m,\psi\rangle$ all exist (since $\varphi_{j_r}u_m$ is Cauchy in $W^{s,p}(\Omega)$, it is convergent in $W^{s,p}(\Omega)$, and so it is convergent in $D'(\Omega)$). Therefore $\lim_{m\rightarrow \infty} \langle u_m,\psi\rangle$ exists.
\end{proof}

\begin{theorem}\lab{thmmay7820}
Let $(s,p,\Omega)$ be a smooth multiplication triple (so we know that $W^{s,p}(\Omega)\subseteq W^{s,p}_{loc}(\Omega)$ and $W^{s,p}_{loc}(\Omega)$ is metrizable). Then $W^{s,p}(\Omega)\hookrightarrow W^{s,p}_{loc}(\Omega)$.
\end{theorem}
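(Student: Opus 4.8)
The plan is to verify that the natural inclusion $\iota: W^{s,p}(\Omega)\to W^{s,p}_{loc}(\Omega)$, which is already well-defined as a set map by Remark~\ref{remmay7803}, is continuous. Recall that the topology on $W^{s,p}_{loc}(\Omega)$ is the natural topology induced by the separating family of seminorms $\{|.|_{\varphi,s,p}\}_{\varphi\in C_c^\infty(\Omega)}$, where $|u|_{\varphi,s,p}=\|\varphi u\|_{W^{s,p}(\Omega)}$. Hence, by Theorem~\ref{thmmay11714} (equivalently, by item (2) of Theorem~\ref{thmapptvconvergence1}), it suffices to show that for each fixed $\varphi\in C_c^\infty(\Omega)$ the composition $|.|_{\varphi,s,p}\circ\iota:W^{s,p}(\Omega)\to\reals$ is continuous.

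Now $|.|_{\varphi,s,p}\circ\iota$ sends $u\in W^{s,p}(\Omega)$ to $\|\varphi u\|_{W^{s,p}(\Omega)}=\|m_\varphi u\|_{W^{s,p}(\Omega)}$. Since $(s,p,\Omega)$ is a smooth multiplication triple, the map $m_\varphi:W^{s,p}(\Omega)\to W^{s,p}(\Omega)$ is well-defined and bounded, so there is a constant $C_\varphi>0$ with
\begin{equation*}
\|\varphi u\|_{W^{s,p}(\Omega)}\le C_\varphi\|u\|_{W^{s,p}(\Omega)}\qquad\textrm{for all }u\in W^{s,p}(\Omega).
\end{equation*}
In particular $u\mapsto\|\varphi u\|_{W^{s,p}(\Omega)}$ is a continuous seminorm on the Banach space $W^{s,p}(\Omega)$, which is exactly the required continuity; this completes the argument. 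As an alternative route, since $W^{s,p}(\Omega)$ is a Banach (hence Frechet) space and $W^{s,p}_{loc}(\Omega)$ is metrizable, one may instead invoke Theorem~\ref{thmapptvconvergence2} and simply observe that $u_m\to 0$ in $W^{s,p}(\Omega)$ forces $\varphi u_m\to 0$ in $W^{s,p}(\Omega)$ for every $\varphi\in C_c^\infty(\Omega)$, i.e. $u_m\to 0$ in $W^{s,p}_{loc}(\Omega)$.

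There is no real obstacle here: the only point deserving a moment's care is the interpretation of the product $\varphi u$ when $s<0$, where $u$ is regarded as a distribution and $\varphi u$ denotes $(\varphi)(u|_{D(\Omega)})$ viewed back in $W^{s,p}(\Omega)$. But this is precisely the interpretation built into the definition of a smooth multiplication triple, so the displayed estimate is exactly what that hypothesis provides. Thus the statement is an immediate consequence of Remark~\ref{remmay7803}, the definition of the topology on $W^{s,p}_{loc}(\Omega)$, and the definition of a smooth multiplication triple.
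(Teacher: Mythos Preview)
Your proof is correct and essentially the same as the paper's. The paper opts for the sequential-continuity argument (your ``alternative route''), using metrizability to reduce to sequences and then invoking the boundedness of $m_\varphi$; your primary argument via Theorem~\ref{thmmay11714} is simply the seminorm formulation of the same idea, and both rest on the single fact that $(s,p,\Omega)$ being a smooth multiplication triple gives $\|\varphi u\|_{W^{s,p}(\Omega)}\le C_\varphi\|u\|_{W^{s,p}(\Omega)}$.
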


\begin{proof}
Since both spaces are metrizable, it suffices to show that if $u_i\rightarrow u$ in $W^{s,p}(\Omega)$, then $u_i\rightarrow u$ in $W^{s,p}_{loc}(\Omega)$. To this end, let $\varphi$ be an arbitrary element of $C_c^\infty(\Omega)$. We need to show that if $u_i\rightarrow u$ in $W^{s,p}(\Omega)$, then $\varphi u_i\rightarrow \varphi u$ in $W^{s,p}(\Omega)$. But this is a consequence of the fact that $(s,p,\Omega)$ is a smooth multiplication triple.
\end{proof}

\begin{theorem}\lab{thmapril291}
Let $\Omega$ be a nonempty open set in $\reals^n$, $s\in \reals$ and $1<p<\infty$. Then $\mathcal{E}(\Omega)$ is continuously embedded in $W^{s,p}_{loc}(\Omega)$, i.e., the "identity map" from $\mathcal{E}(\Omega)$ to $W^{s,p}_{loc}(\Omega)$ is continuous.
\end{theorem}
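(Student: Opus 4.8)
The plan is to first verify the set-theoretic inclusion $\mathcal{E}(\Omega)\subseteq W^{s,p}_{loc}(\Omega)$ and then establish continuity seminorm-by-seminorm. For the inclusion: each $u\in C^\infty(\Omega)$ is identified with a distribution as in Remark~\ref{winter61}, and for any $\varphi\in C_c^\infty(\Omega)$ the product $\varphi u$ belongs to $C_c^\infty(\Omega)$; recalling (when $s<0$) the identification of $C_c^\infty(\Omega)$ with a subspace of $W^{s,p}(\Omega)$ from Remark~\ref{remmay7646}, we obtain $\varphi u\in W^{s,p}(\Omega)$, hence $u\in W^{s,p}_{loc}(\Omega)$. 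No smooth-multiplication-triple hypothesis is needed here, precisely because $\varphi u$ is automatically a compactly supported smooth function.

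Since $W^{s,p}_{loc}(\Omega)$ carries the natural topology induced by the separating family $\{|\cdot|_{\varphi,s,p}\}_{\varphi\in C_c^\infty(\Omega)}$, Theorem~\ref{thmmay11714} reduces the continuity of the identity map $\mathcal{E}(\Omega)\to W^{s,p}_{loc}(\Omega)$ to showing that, for each fixed $\varphi\in C_c^\infty(\Omega)$, the map $u\mapsto |u|_{\varphi,s,p}=\|\varphi u\|_{W^{s,p}(\Omega)}$ is continuous on $\mathcal{E}(\Omega)$. Equivalently, writing $K=\operatorname{supp}\varphi$, it suffices to show that the multiplication operator
\begin{equation*}
m_\varphi\colon \mathcal{E}(\Omega)\longrightarrow W^{s,p}(\Omega),\qquad u\mapsto \varphi u
\end{equation*}
is continuous, since $|\cdot|_{\varphi,s,p}$ is its composition with the norm of $W^{s,p}(\Omega)$.

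I would prove this continuity by factoring $m_\varphi$ as $\mathcal{E}(\Omega)\to\mathcal{E}_K(\Omega)\hookrightarrow W^{s,p}_K(\Omega)\hookrightarrow W^{s,p}(\Omega)$. The second and third maps form a continuous embedding that was established, for all $s\in\reals$ and $1<p<\infty$, in the proof of Theorem~\ref{thmmay7759} (see~(\ref{eqnapril261})); in particular there are $j_0\in\mathbb{N}$ and $C_2>0$ with $\|\psi\|_{W^{s,p}(\Omega)}\leq C_2\|\psi\|_{j_0,K}$ for every $\psi\in\mathcal{E}_K(\Omega)$. The first map is continuous by part (2) of Theorem~\ref{winter56} together with the Leibniz rule: for each $j$ one has $\|\varphi u\|_{j,K}\leq C_1(\varphi,j)\,\|u\|_{j,K}$, since only values on $K$ and derivatives of $\varphi$ enter. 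Composing the two estimates gives $\|\varphi u\|_{W^{s,p}(\Omega)}\leq C_1(\varphi,j_0)\,C_2\,\|u\|_{j_0,K}$ for all $u\in\mathcal{E}(\Omega)$, which is the continuity of $m_\varphi$, and hence of the identity map $\mathcal{E}(\Omega)\to W^{s,p}_{loc}(\Omega)$.

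There is essentially no obstacle in this argument. The only point deserving care is that for $s<0$ the product $\varphi u$ and its $W^{s,p}$-norm must be interpreted through the identification in Remark~\ref{remmay7646}; but this is precisely the content of the already-available embedding $\mathcal{E}_K(\Omega)\hookrightarrow W^{s,p}_K(\Omega)$, so nothing new needs to be proven, and the remaining Leibniz estimate is entirely routine.
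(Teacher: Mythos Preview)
Your proof is correct and follows essentially the same route as the paper's: both arguments factor the multiplication map $m_\varphi:\mathcal{E}(\Omega)\to W^{s,p}(\Omega)$ through $\mathcal{E}_K(\Omega)$ (with $K=\operatorname{supp}\varphi$) and then invoke the known embedding of $\mathcal{E}_K(\Omega)$ into $W^{s,p}(\Omega)$. The only cosmetic differences are that the paper argues via sequential continuity (using the Fr\'echet structure of $\mathcal{E}(\Omega)$) and passes explicitly through $D(\Omega)$, whereas you give direct seminorm estimates and cite the embedding~(\ref{eqnapril261}) already packaged in Theorem~\ref{thmmay7759}; also, you spell out the set-theoretic inclusion and the Leibniz estimate, which the paper absorbs into a citation to~\cite{Reus1}.
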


\begin{proof}
By Theorem ~\ref{winter56} it is enough to show that if $\varphi_m\rightarrow 0$ in $\mathcal{E}(\Omega)$, then $\varphi_m\rightarrow 0$ in $W^{s,p}_{loc}(\Omega)$, that is, for all $\psi\in C_c^\infty(\Omega)$, $\psi \varphi_m\rightarrow 0$ in $W^{s,p}(\Omega)$.\\
 Let $\psi\in C_c^\infty(\Omega)$ and let $m_\psi$ denote multiplication by $\psi$. Multiplication by smooth functions is a continuous linear operator on $\mathcal{E}(\Omega)$ (\cite{Reus1}). So $m_\psi: \mathcal{E}(\Omega)\rightarrow \mathcal{E}(\Omega)$ is continuous. The range of this map is in the subspace $\mathcal{E}_{\textrm{supp}\,\psi}(\Omega)$. So $m_\psi: \mathcal{E}(\Omega)\rightarrow \mathcal{E}_{\textrm{supp}\,\psi}(\Omega)$ is continuous. However, $\mathcal{E}_{\textrm{supp}\,\psi}(\Omega)\hookrightarrow D(\Omega)$. Hence $m_\psi: \mathcal{E}(\Omega)\rightarrow D(\Omega)$ is continuous. As a consequence, since $\varphi_m\rightarrow 0$ in $\mathcal{E}(\Omega)$, $\psi\,\varphi_m\rightarrow 0$ in $D(\Omega)$. Finally, since $D(\Omega)\hookrightarrow W^{s,p}(\Omega)$, we can conclude that $\psi \varphi_m\rightarrow 0$ in $W^{s,p}(\Omega)$.
\end{proof}

\begin{corollary}\lab{cormay7821}
Since $D(\Omega)\hookrightarrow \mathcal{E}(\Omega)$, it follows that under the hypotheses of Theorem  ~\ref{thmapril291}, $D(\Omega)$ is continuously embedded in $W^{s,p}_{loc}(\Omega)$.
\end{corollary}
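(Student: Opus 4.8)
The plan is to obtain the corollary by transitivity of continuous embeddings, from two facts already established: the continuous inclusion $D(\Omega)\hookrightarrow\mathcal{E}(\Omega)$ recorded in the first item of Remark~\ref{remfallcontintotest2}, and the continuous inclusion $\mathcal{E}(\Omega)\hookrightarrow W^{s,p}_{loc}(\Omega)$ just proved in Theorem~\ref{thmapril291}. First I would check that the underlying set inclusions compose correctly: every $\varphi\in D(\Omega)=C_c^\infty(\Omega)$ is an element of $C^\infty(\Omega)=\mathcal{E}(\Omega)$; and every element of $\mathcal{E}(\Omega)$ defines a distribution (Remark~\ref{winter61}) and lies in $W^{s,p}_{loc}(\Omega)$ once a smooth function is identified with the corresponding element of $W^{s,p}(\Omega)$ as in Remark~\ref{remmay7646} (applied through each seminorm $|\cdot|_{\psi,s,p}$, $\psi\in C_c^\infty(\Omega)$). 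Hence the ``identity map'' $D(\Omega)\rightarrow W^{s,p}_{loc}(\Omega)$ is a well-defined linear map that factors as the composition of $i_{D,\mathcal{E}}\colon D(\Omega)\rightarrow\mathcal{E}(\Omega)$ with the inclusion $\mathcal{E}(\Omega)\rightarrow W^{s,p}_{loc}(\Omega)$ of Theorem~\ref{thmapril291}.

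Then I would simply invoke that a composition of continuous linear maps between topological vector spaces is continuous: the first map is continuous by Remark~\ref{remfallcontintotest2}, the second by Theorem~\ref{thmapril291}, so the composite is continuous. Since this composite is precisely the set inclusion $D(\Omega)\subseteq W^{s,p}_{loc}(\Omega)$, we conclude $D(\Omega)\hookrightarrow W^{s,p}_{loc}(\Omega)$, which is the assertion.

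I do not anticipate any genuine obstacle: the content lies entirely in Theorem~\ref{thmapril291}, and this corollary is just the formal two-step composition. If a self-contained argument were preferred, one could instead apply Theorem~\ref{thmfallconvcont13} directly: given $\varphi_m\rightarrow 0$ in $D(\Omega)$, the supports lie in a common $K\in\mathcal{K}(\Omega)$ and $\varphi_m\rightarrow 0$ in $\mathcal{E}_K(\Omega)$, so for each $\psi\in C_c^\infty(\Omega)$ one has $\psi\varphi_m\rightarrow 0$ in $D(\Omega)$ (multiplication by a smooth function is a continuous local operation) and hence $\psi\varphi_m\rightarrow 0$ in $W^{s,p}(\Omega)$ by Theorem~\ref{thmmay7701}, i.e.\ $|\varphi_m|_{\psi,s,p}\rightarrow 0$; but this route only reproves a special case of Theorem~\ref{thmapril291}, so the composition argument above is the economical one.
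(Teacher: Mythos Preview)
Your proposal is correct and matches the paper's reasoning exactly: the corollary is stated without a separate proof precisely because it follows immediately by composing $D(\Omega)\hookrightarrow\mathcal{E}(\Omega)$ (Remark~\ref{remfallcontintotest2}) with $\mathcal{E}(\Omega)\hookrightarrow W^{s,p}_{loc}(\Omega)$ (Theorem~\ref{thmapril291}). Your alternative direct route via Theorem~\ref{thmfallconvcont13} is also fine but, as you note, unnecessary.
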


\begin{theorem}\lab{thmmay7822}
Let $(s,p,\Omega)$ be a smooth multiplication triple. Then $C_c^\infty(\Omega)$ is dense in $W^{s,p}_{loc}(\Omega)$.
\end{theorem}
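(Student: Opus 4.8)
The plan is to follow the pattern of Theorem~\ref{thmmay78} (density of $C_c^\infty(\Omega)$ in $W^{s,p}_{comp}(\Omega)$), but now to localise the given $u$ over an exhaustion of $\Omega$ rather than over a single cutoff. First I would observe that, by the first bullet of Remark~\ref{remapril161}, a smooth multiplication triple is automatically an interior smooth multiplication triple, so Corollary~\ref{cormay7806} applies and $W^{s,p}_{loc}(\Omega)$ is metrizable. Hence it suffices to prove sequential density: given $u\in W^{s,p}_{loc}(\Omega)$, I will exhibit a sequence $u_j\in C_c^\infty(\Omega)$ with $u_j\to u$ in $W^{s,p}_{loc}(\Omega)$, which by item~1 of Theorem~\ref{thmapptvconvergence1}, applied to the defining family of seminorms $\{|\cdot|_\varphi\}_{\varphi\in C_c^\infty(\Omega)}$, means precisely that $\varphi u_j\to\varphi u$ in $W^{s,p}(\Omega)$ for every $\varphi\in C_c^\infty(\Omega)$.

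Next I would build the approximating sequence. Fix an exhaustion $\{K_j\}_{j\in\mathbb{N}}$ of $\Omega$ by compact sets (Theorem~\ref{winter2}), and for each $j$ pick $\chi_j\in C_c^\infty(\Omega)$ with $\chi_j\equiv 1$ on a neighbourhood of $K_j$ (Theorem~\ref{thmmay7635}). Since $u\in W^{s,p}_{loc}(\Omega)$ and $\chi_j\in C_c^\infty(\Omega)$, we have $\chi_j u\in W^{s,p}(\Omega)$, and $\chi_j u$ is supported in the compact set $\supp\chi_j$, so $\chi_j u\in W^{s,p}_{\supp\chi_j}(\Omega)$. This is where the (full, not merely interior) smooth multiplication triple hypothesis is used: by the last bullet of Remark~\ref{remapril161} we get $W^{s,p}_{\supp\chi_j}(\Omega)\subseteq W^{s,p}_0(\Omega)$ (and for $s<0$ this is automatic, since then $W^{s,p}(\Omega)=W^{s,p}_0(\Omega)$). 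As $W^{s,p}_0(\Omega)$ is by definition the closure of $C_c^\infty(\Omega)$ in $W^{s,p}(\Omega)$ (Definition~\ref{defa1}), I can choose $u_j\in C_c^\infty(\Omega)$ with $\|u_j-\chi_j u\|_{W^{s,p}(\Omega)}<1/j$.

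Finally I would verify convergence. Fix $\varphi\in C_c^\infty(\Omega)$; its support is a compact subset of $\Omega=\bigcup_j\mathring{K}_j$, and since the interiors $\mathring K_j$ increase there is an $N$ with $\supp\varphi\subseteq\mathring{K}_N\subseteq K_N$. For every $j\ge N$ we then have $\chi_j\equiv 1$ on a neighbourhood of $K_N\supseteq\supp\varphi$, hence $\varphi\chi_j=\varphi$ and therefore $\varphi u_j-\varphi u=\varphi(u_j-\chi_j u)$; since $(s,p,\Omega)$ is a smooth multiplication triple, $m_\varphi$ is bounded on $W^{s,p}(\Omega)$ with some norm $C_\varphi$ not depending on $j$, so $\|\varphi u_j-\varphi u\|_{W^{s,p}(\Omega)}\le C_\varphi/j\to 0$. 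Thus $\varphi u_j\to\varphi u$ in $W^{s,p}(\Omega)$ for every $\varphi\in C_c^\infty(\Omega)$, i.e. $u_j\to u$ in $W^{s,p}_{loc}(\Omega)$, and since each $u_j\in C_c^\infty(\Omega)$ this proves density. The facts about exhaustions, cutoffs, and metrizability are routine; the one genuinely load-bearing step — and the place where a careless argument would break — is the inclusion $W^{s,p}_{\supp\chi_j}(\Omega)\subseteq W^{s,p}_0(\Omega)$, which need not hold for a merely interior smooth multiplication triple and is precisely why the statement is restricted to smooth multiplication triples.
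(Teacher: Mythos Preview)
Your proof is correct and follows essentially the same approach as the paper's: both arguments pick an exhaustion $\{K_j\}$, choose cutoffs equal to $1$ on $K_j$, use the smooth multiplication triple hypothesis via Remark~\ref{remapril161} to place $\chi_j u$ in $W^{s,p}_0(\Omega)$, approximate each $\chi_j u$ in $W^{s,p}(\Omega)$ by a test function to within $1/j$, and then verify convergence of the resulting sequence against each seminorm $|\cdot|_\varphi$ using that $\varphi\chi_j=\varphi$ for $j$ large. Your identification of the inclusion $W^{s,p}_{\supp\chi_j}(\Omega)\subseteq W^{s,p}_0(\Omega)$ as the load-bearing step, and of the reason it requires the full (not merely interior) smooth multiplication triple hypothesis, matches the paper's use of the same fact.
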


\begin{proof}
Let $u\in W^{s,p}_{loc}(\Omega)$. It is enough to show that there exists a sequence $\{\psi_j\}$ in $C_c^\infty(\Omega)$ such that $\psi_j\rightarrow u$ in $W^{s,p}_{loc}(\Omega)$, i.e.,
\begin{equation*}
\forall\,\xi\in C_c^\infty(\Omega)\qquad \xi \psi_j \rightarrow \xi u\quad \textrm{in $W^{s,p}(\Omega)$}
\end{equation*}
First note that, since $(s,p,\Omega)$ is a smooth multiplication triple, for all $\xi\in C_c^\infty(\Omega)$, there exists a constant $C_{\xi,s,p,\Omega}$ such that
\begin{equation*}
\forall\, v\in W^{s,p}(\Omega)\qquad \|\xi\,v\|_{W^{s,p}(\Omega)}\leq
C_{\xi,s,p,\Omega}\|v\|_{W^{s,p}(\Omega)}
\end{equation*}
Let $\{\varphi_j\}_{j\in \mathbb{N}}$ be the admissible family introduced in the proof of Lemma ~\ref{lemapril171}. For each $\xi\in C_c^\infty(\Omega)$, there exists a number $J_\xi$ such that for all $j\geq J_{\xi}$, $\varphi_j=1$ on $\textrm{supp}\,\xi$. So
\begin{equation*}
\forall\, j\geq J_\xi\qquad \varphi_j\,\xi=\xi
\end{equation*}
Clearly, by definition of $W^{s,p}_{loc}(\Omega)$, for each $j$, $\varphi_j u\in W^{s,p}(\Omega)$, also $\varphi_j u$ has compact support, so $\varphi_j u\in W^{s,p}_0(\Omega)$ (see Remark ~\ref{remapril161}). Hence for each $j$, there exists $\psi_j\in C_c^\infty(\Omega)$ such that $\|\psi_j-\varphi_j u\|<\frac{1}{j}$. We claim that $\xi \psi_j \rightarrow \xi u$ in $W^{s,p}(\Omega)$. Indeed, given $\epsilon>0$ and $\xi\in C_c^\infty(\Omega)$, let  $J>J_\xi$ be such that $\frac{1}{J}<\frac{\epsilon}{C_{\xi,s,p,\Omega}}$. Then for $j\geq J$ we have
\begin{align*}
\|\xi \psi_j-\xi u\|_{W^{s,p}(\Omega)}&=\|\xi \psi_j-\xi\varphi_j u\|_{W^{s,p}(\Omega)}
=\|\xi (\psi_j-\varphi_j u)\|_{W^{s,p}(\Omega)}\\
&\leq C_{\xi,s,p,\Omega}\|\psi_j-\varphi_j u\|_{W^{s,p}(\Omega)}
< C_{\xi,s,p,\Omega}\frac{1}{J}<\epsilon\,.
\end{align*}
\end{proof}
\begin{remark}\lab{remapril282}
As a consequence, if $(s,p,\Omega)$ is a smooth multiplication triple, then $[W^{s,p}_{loc}(\Omega)]^*$ (equipped with the strong topology) is continuously embedded in $D'(\Omega)$. More precisely, the identity map $i: D(\Omega)\rightarrow W^{s,p}_{loc}(\Omega)$ is continuous with dense image, and therefore, by Theorem ~\ref{thmfallinjectiveadjoint1}, the adjoint $i^*: [W^{s,p}_{loc}(\Omega)]^*\rightarrow D'(\Omega)$ is a continuous injective map. We have
\begin{equation*}
\langle i^* u,\varphi\rangle_{D'(\Omega)\times D(\Omega)}=\langle u,i\,\varphi\rangle_{[W^{s,p}_{loc}(\Omega)]^*\times W^{s,p}_{loc}(\Omega)}=\langle u,\varphi\rangle_{[W^{s,p}_{loc}(\Omega)]^*\times W^{s,p}_{loc}(\Omega)}
\end{equation*}
We usually identify $[W^{s,p}_{loc}(\Omega)]^*$ with its image under $i^*$ and view $[W^{s,p}_{loc}(\Omega)]^*$ as a subspace of $D'(\Omega)$. So, under this identification, we can rewrite the above equality as follows:
\begin{equation*}
\forall\, u\in [W^{s,p}_{loc}(\Omega)]^*\qquad \langle  u,\varphi\rangle_{D'(\Omega)\times D(\Omega)}=
\langle u,\varphi\rangle_{[W^{s,p}_{loc}(\Omega)]^*\times W^{s,p}_{loc}(\Omega)}
\end{equation*}
\end{remark}

\begin{theorem}\lab{thmmay7823}
Let $(s,p,\Omega)$ be a smooth multiplication triple. Then $W^{s,p}_{loc}(\Omega)$ is separable.
\end{theorem}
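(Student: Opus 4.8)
The plan is to follow the same strategy used to establish separability of $W^{s,p}_{comp}(\Omega)$ in Theorem~\ref{thmmay7801}: exhibit a countable set whose image under a continuous map with dense range is dense. All three ingredients are already in hand. The space $D(\Omega)$ is separable; by Corollary~\ref{cormay7821} the identity map $i\colon D(\Omega)\rightarrow W^{s,p}_{loc}(\Omega)$ is continuous; and by Theorem~\ref{thmmay7822}---which is where the smooth multiplication triple hypothesis is genuinely used---the image $i(D(\Omega))=C_c^\infty(\Omega)$ is dense in $W^{s,p}_{loc}(\Omega)$.

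First I would fix a countable dense subset $A\subseteq D(\Omega)$, which exists since $D(\Omega)$ is separable. Then I would apply Lemma~\ref{lemapril281} with $X=D(\Omega)$, $Y=W^{s,p}_{loc}(\Omega)$, and $T=i$: its hypotheses are exactly that $A$ is dense in $X$, that $T$ is continuous, and that $T(X)$ is dense in $Y$, all of which have just been recalled. The conclusion is that $i(A)$ is dense in $W^{s,p}_{loc}(\Omega)$; being countable, it witnesses separability, which finishes the proof.

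There is essentially no remaining obstacle; the work has been front-loaded into Theorem~\ref{thmmay7822} and the embedding results, together with the soft topological Lemma~\ref{lemapril281}. In particular I would not need metrizability of $W^{s,p}_{loc}(\Omega)$ (it does hold, since every smooth multiplication triple is an interior smooth multiplication triple by Remark~\ref{remapril161}, so Corollary~\ref{cormay7806} applies), because the argument via Lemma~\ref{lemapril281} is valid for an arbitrary topological space. The only point deserving a word of care is checking that the ``identity map'' on $D(\Omega)$ really lands inside $W^{s,p}_{loc}(\Omega)$ and coincides with the continuous inclusion of Corollary~\ref{cormay7821}; this is immediate, since for $s\geq 0$ one has $C_c^\infty(\Omega)\subseteq W^{s,p}(\Omega)\subseteq W^{s,p}_{loc}(\Omega)$ by Remark~\ref{remmay7803}, and for $s<0$ elements of $D(\Omega)$ are identified with the corresponding distributions as usual.
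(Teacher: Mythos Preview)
Your proof is correct and follows exactly the same approach as the paper: continuity of the inclusion $D(\Omega)\hookrightarrow W^{s,p}_{loc}(\Omega)$, density of $C_c^\infty(\Omega)$ via Theorem~\ref{thmmay7822}, separability of $D(\Omega)$, and then Lemma~\ref{lemapril281}. Your additional remarks on metrizability and the meaning of the identity map are accurate but not needed for the argument.
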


\begin{proof}
$D(\Omega)$ is continuously embedded in $W^{s,p}_{loc}(\Omega)$ and it is dense in $W^{s,p}_{loc}(\Omega)$. Since $D(\Omega)$ is separable, it follows from Lemma ~\ref{lemapril281} that $W^{s,p}_{loc}(\Omega)$ is separable.
\end{proof}

As a direct consequence of the definitions, locally Sobolev functions and Sobolev functions with compact support are both subsets of the space of distributions. The next two theorems establish a duality connection between the two spaces. But first we need to state a simple lemma.

\begin{lemma}\lab{lemmay12408}
Let $X$ and $Y$ be two topological spaces. Suppose that $Y$ is Hausdorff. Let $f: X\rightarrow Y$ and $g: X\rightarrow Y$ be two continuous functions that agree on a dense subset $A$ of $X$. Then $f=g$ everywhere. (So, in particular, in order to show that two continuous mappings from $X$ to $Y$ are equal, we just need to show that they agree on some dense subset.)
\end{lemma}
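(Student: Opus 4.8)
The plan is to show that the ``agreement set'' $E := \{x \in X : f(x) = g(x)\}$ is closed in $X$; once we know this, the density of $A$ together with $A \subseteq E$ forces $\overline{E} = X$, i.e. $E = X$, which is exactly the assertion that $f = g$ everywhere.

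First I would package $f$ and $g$ into a single map $h : X \rightarrow Y \times Y$, $h(x) = (f(x), g(x))$, where $Y \times Y$ carries the product topology. Since $f$ and $g$ are continuous, $h$ is continuous. The key structural fact is that, because $Y$ is Hausdorff, the diagonal $\Delta_Y := \{(y,y) : y \in Y\}$ is a closed subset of $Y \times Y$ (this is the standard reformulation of the Hausdorff separation axiom). Then $E = h^{-1}(\Delta_Y)$ is closed, being the preimage of a closed set under a continuous map. Finally $A \subseteq E$ and $\overline{A} = X$ give $X = \overline{A} \subseteq \overline{E} = E$, so $E = X$.

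If one prefers to avoid the product space, the same conclusion follows by a direct contradiction argument: if $f(x_0) \neq g(x_0)$ for some $x_0 \in X$, use the Hausdorff property of $Y$ to choose disjoint open sets $U \ni f(x_0)$ and $V \ni g(x_0)$; then $f^{-1}(U) \cap g^{-1}(V)$ is an open neighborhood of $x_0$, and by density of $A$ it contains some $a \in A$, whence $f(a) = g(a) \in U \cap V = \emptyset$, a contradiction.

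There is no genuine obstacle here; the only point worth emphasizing is that the hypothesis that $Y$ is Hausdorff is used in an essential way and enters precisely through the closedness of $\Delta_Y$ (equivalently, through separating two distinct values by disjoint open sets). The parenthetical application noted in the statement --- that to check equality of two continuous maps into a Hausdorff space it suffices to check it on a dense set --- is then immediate.
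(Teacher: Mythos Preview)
Your proposal is correct. Your second argument --- the direct contradiction using disjoint open neighborhoods $U$ and $V$ of $f(x_0)$ and $g(x_0)$ and then picking a point of $A$ in the open set $f^{-1}(U)\cap g^{-1}(V)$ --- is exactly the paper's proof; your first argument via the closed diagonal $\Delta_Y$ is just a repackaging of the same idea.
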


\begin{proof}
Suppose that there exists $x_0\in X$ such that $f(x_0)\neq g(x_0)$. Since $Y$ is Hausdorff, there exist open neighborhoods $U$ and $V$ of $f(x_0)$ and $g(x_0)$, respectively, such that $U\cap V=\emptyset$. $f^{-1}(U)\cap g^{-1}(V)$ is a nonempty ($x_0$ is in it) open set in $X$ so its intersection with $A$ is nonempty. Let $z$ be a point in the intersection of $f^{-1}(U)\cap g^{-1}(V)$ and $A$. Clearly $f(z)\in U$ and $g(z)\in V$; but since $z\in A$, we have $f(z)=g(z)$. This contradicts the assumption that $U\cap V=\emptyset$.
\end{proof}

\begin{theorem}\lab{thmmay7824}
Suppose that $(s,p,\Omega)$ and $(-s,p',\Omega)$ are smooth multiplication triples. Define the mapping $T: W^{-s,p'}_{loc}(\Omega)\rightarrow [W^{s,p}_{comp}(\Omega)]^*$ by
\begin{equation*}
 \forall\,u\in W^{-s,p'}_{loc}(\Omega)\,\,\forall\,f\in W^{s,p}_{comp}(\Omega)\qquad [T(u)](f):=\langle  \psi_f u, f\rangle_{W^{-s,p'}_0(\Omega)\times W^{s,p}_0(\Omega)}
\end{equation*}
where $\psi_f$ is any function in $C_c^\infty(\Omega)$ that is equal to $1$ on a neighborhood containing the support of $f$. Then
\begin{enumerate}
\item $[T(u)](f)$ does not depend on the choice of $\psi_f$.
\item For all $u\in W^{-s,p'}_{loc}(\Omega)$, $T(u)$ is indeed an element of $[W^{s,p}_{comp}(\Omega)]^*$.
\item $T: W^{-s,p'}_{loc}(\Omega)\rightarrow [W^{s,p}_{comp}(\Omega)]^*$ is bijective.
\item Suppose $[W^{s,p}_{comp}(\Omega)]^*$ is equipped with the strong topology. Then the bijective linear map $T: W^{-s,p'}_{loc}(\Omega)\rightarrow [W^{s,p}_{comp}(\Omega)]^*$ is a topological isomorphism, i.e. it is continuous with continuous inverse. So $[W^{s,p}_{comp}(\Omega)]^*$ can be identified with $W^{-s,p'}_{loc}(\Omega)$ as topological vector spaces.
\end{enumerate}
\end{theorem}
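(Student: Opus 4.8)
The plan is to establish the four items in order, using repeatedly the identifications recorded in Remarks~\ref{remapril28610} and~\ref{remapril301014}: for $\varphi\in C_c^\infty(\Omega)$ and $u\in W^{-s,p'}_{loc}(\Omega)$ the element $\varphi u\in W^{-s,p'}(\Omega)$ in fact lies in $W^{-s,p'}_0(\Omega)$ (Remark~\ref{remapril161}), it restricts on $D(\Omega)$ to the distribution $\varphi\,(u|_{D(\Omega)})$, and $\langle\varphi u,\eta\rangle_{W^{-s,p'}_0(\Omega)\times W^{s,p}_0(\Omega)}=\langle\varphi u,\eta\rangle_{D'(\Omega)\times D(\Omega)}=\langle u,\varphi\eta\rangle_{D'(\Omega)\times D(\Omega)}$ for every $\eta\in D(\Omega)$; similarly every $f\in W^{s,p}_{comp}(\Omega)$ lies in $W^{s,p}_0(\Omega)$ because $(s,p,\Omega)$ is a smooth multiplication triple. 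Below I abbreviate $D'(\Omega)\times D(\Omega)$ by $D'\times D$ and $W^{-s,p'}_0(\Omega)\times W^{s,p}_0(\Omega)$ by $W^{-s,p'}_0\times W^{s,p}_0$.

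For item~(1): if $\psi_1,\psi_2\in C_c^\infty(\Omega)$ both equal $1$ near $K:=\supp f$, then $\psi:=\psi_1-\psi_2$ vanishes on an open set $W\supseteq K$. By Theorem~\ref{thmmay7635} applied to the open set $W$ choose $\chi\in C_c^\infty(\Omega)$ with $\supp\chi\subseteq W$ and $\chi=1$ near $K$, and by the Meyers--Serrin argument in the proof of Theorem~\ref{thmmay78} obtain $f_m:=\chi\varphi_m\in C_c^\infty(\Omega)$ with $\supp f_m\subseteq W$ and $f_m\to\chi f=f$ in $W^{s,p}(\Omega)$. Since $\psi u\in W^{-s,p'}(\Omega)=[W^{s,p}_0(\Omega)]^*$ and $f,f_m\in W^{s,p}_0(\Omega)$, continuity of the pairing gives $\langle\psi u,f\rangle_{W^{-s,p'}_0\times W^{s,p}_0}=\lim_m\langle\psi u,f_m\rangle_{W^{-s,p'}_0\times W^{s,p}_0}=\lim_m\langle u,\psi f_m\rangle_{D'\times D}=0$ since $\psi f_m\equiv 0$; hence $[T(u)](f)$ is independent of the cutoff. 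For item~(2): granting~(1), a single cutoff can be chosen valid for $f$, $g$ and $f+g$, so bilinearity of the pairing makes $T(u)$ linear, and linearity of $u\mapsto\psi u$ makes $T$ linear; for continuity of $T(u)$, by Theorem~\ref{winter37} it suffices that $T(u)|_{W^{s,p}_K(\Omega)}$ be continuous for each $K\in\mathcal{K}(\Omega)$, and with $\psi_K=1$ near $K$ we get $|[T(u)](f)|\le\|\psi_K u\|_{W^{-s,p'}(\Omega)}\,\|f\|_{W^{s,p}(\Omega)}$ for $f\in W^{s,p}_K(\Omega)$, where $\|\psi_K u\|_{W^{-s,p'}(\Omega)}<\infty$ because $u\in W^{-s,p'}_{loc}(\Omega)$.

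For item~(3): injectivity follows by testing against $f\in C_c^\infty(\Omega)\subseteq W^{s,p}_{comp}(\Omega)$ (Theorem~\ref{thmmay7759}): $0=[T(u)](f)=\langle u,\psi_f f\rangle_{D'\times D}=\langle u,f\rangle_{D'\times D}$ for all such $f$ forces $u=0$ in $D'(\Omega)$, hence in $W^{-s,p'}_{loc}(\Omega)$. For surjectivity, given $L\in[W^{s,p}_{comp}(\Omega)]^*$ put $u:=i^*L\in D'(\Omega)$, where $i^*$ is the injection of Remark~\ref{remapril281}, so $\langle u,\eta\rangle_{D'\times D}=L(\eta)$ for $\eta\in D(\Omega)$. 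To see $u\in W^{-s,p'}_{loc}(\Omega)$, fix $\varphi\in C_c^\infty(\Omega)$, set $K:=\supp\varphi$, let $C_K$ bound $L$ on the Banach space $W^{s,p}_K(\Omega)$ and $C_\varphi$ bound $m_\varphi$ on $W^{s,p}(\Omega)$; then $|\langle\varphi u,\eta\rangle_{D'\times D}|=|L(\varphi\eta)|\le C_KC_\varphi\|\eta\|_{W^{s,p}(\Omega)}$ for all $\eta\in C_c^\infty(\Omega)$, so Corollary~\ref{corapril2812} yields $\varphi u\in W^{-s,p'}(\Omega)$ with
\[
\|\varphi u\|_{W^{-s,p'}(\Omega)}=\sup_{0\not\equiv\eta\in C_c^\infty(\Omega)}\frac{|L(\varphi\eta)|}{\|\eta\|_{W^{s,p}(\Omega)}}.
\]
Finally $T(u)$ and $L$ are continuous functionals on $W^{s,p}_{comp}(\Omega)$ agreeing on the dense subspace $D(\Omega)$ (Theorem~\ref{thmmay78}), so $T(u)=L$ by Lemma~\ref{lemmay12408}; in particular $T^{-1}=i^*$.

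For item~(4), continuity of $T$: by Theorem~\ref{thmmay11714} it suffices that each strong seminorm $p_B\circ T$ be continuous on $W^{-s,p'}_{loc}(\Omega)$; by Theorem~\ref{thmmay7609} a bounded set $B\subseteq W^{s,p}_{comp}(\Omega)$ lies in some $W^{s,p}_{K_m}(\Omega)$ with $M:=\sup_{f\in B}\|f\|_{W^{s,p}(\Omega)}<\infty$, and with $\psi_m=1$ near $K_m$ one obtains $p_B(T(u))\le M\,\|\psi_m u\|_{W^{-s,p'}(\Omega)}=M\,|u|_{\psi_m,-s,p'}$, a multiple of a defining seminorm of $W^{-s,p'}_{loc}(\Omega)$. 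Continuity of $T^{-1}=i^*$: by Theorem~\ref{thmmay11714} it suffices that $L\mapsto|i^*L|_\varphi$ be continuous for the strong topology, and the displayed norm formula gives $|i^*L|_\varphi=\sup\{|L(g)|:g\in B_\varphi\}=p_{B_\varphi}(L)$ for $B_\varphi:=\{\varphi\eta/\|\eta\|_{W^{s,p}(\Omega)}:0\not\equiv\eta\in C_c^\infty(\Omega)\}$; since each element of $B_\varphi$ is supported in $\supp\varphi$ and has $W^{s,p}$-norm $\le C_\varphi$, $B_\varphi$ is bounded in $W^{s,p}_{comp}(\Omega)$, so $p_{B_\varphi}$ is a genuine defining strong seminorm. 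I expect the main obstacle to be the careful handling of item~(1) together with the identity $\langle\varphi u,\eta\rangle_{D'\times D}=L(\varphi\eta)$, and, in item~(4), the recognition of the sets $B_\varphi$ so that the seminorm formula of Corollary~\ref{corapril2812} exactly matches the (metrizable) topology of $W^{-s,p'}_{loc}(\Omega)$ with the strong dual topology of $W^{s,p}_{comp}(\Omega)$; the linearity assertions and the reconciliation of the three duality pairings are routine.
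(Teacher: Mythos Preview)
Your proof is correct and follows essentially the same route as the paper's: the same cutoff-and-approximation argument for item~(1), the same $K$-wise bound for item~(2), the identification $T^{-1}=i^*$ via Remark~\ref{remapril281} and Corollary~\ref{corapril2812} for item~(3), and the same reduction to bounded sets in $W^{s,p}_{K}(\Omega)$ for item~(4). The only differences are organizational: in item~(3) you prove injectivity directly and then surjectivity via $T\circ i^*=\mathrm{id}$ (the paper verifies both compositions), and in item~(4) your use of the exact norm formula from Corollary~\ref{corapril2812} to write $|i^*L|_\varphi=p_{B_\varphi}(L)$ is a slightly cleaner variant of the paper's inequality chain that lands on a ball in $W^{s,p}_K(\Omega)$.
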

\begin{proof}
\begin{enumerateX}
\item For the first item, it is enough to show that if $\psi\in C_c^\infty(\Omega)$ is equal to zero on a neighborhood $U$ containing the $\textrm{supp}\,f$, then $\langle \psi u, f\rangle_{W^{-s,p'}_0(\Omega)\times W^{s,p}_0(\Omega)}=0$. Note that $f$ is not necessarily in $C_c^\infty(\Omega)$, so we cannot directly apply the duality pairing identity stated in Remark ~\ref{remapril28610}. Let $\{f_m\}$ be sequence in $C_c^\infty(\Omega)$ such that $f_m\rightarrow f$ in $W^{s,p}_0(\Omega)$. Let $\xi\in C_c^\infty(\Omega)$ be such that $\xi=1$ on $\textrm{supp}\,f$ and $\xi=0$ outside $U$. By assumption $(s,p,\Omega)$ is a smooth multiplication triple and so $\xi f_m\rightarrow \xi f=f$ in $W^{s,p}_0(\Omega)$. Since elements of dual are continuous, we have
    \begin{align*}
    \langle \psi u, f\rangle_{W^{-s,p'}_0(\Omega)\times W^{s,p}_0(\Omega)}&=
    \lim_{m\rightarrow \infty} \langle \psi u, \xi f_m\rangle_{W^{-s,p'}_0(\Omega)\times W^{s,p}_0(\Omega)}\\
    & \stackrel{\textrm{Remark ~\ref{remapril28610}}}{=} \lim_{m\rightarrow \infty} \langle \psi u, \xi f_m\rangle_{D'(\Omega)\times D(\Omega)}\\
    &= \lim_{m\rightarrow \infty} \langle  u,\psi \xi f_m\rangle_{D'(\Omega)\times D(\Omega)}=\lim_{m\rightarrow \infty}
    u(0)=0
    \end{align*}
    Note that $\xi f_m$ is zero outside $U$ and $\psi=0$ in $U$, so $\psi \xi f_m=0$ everywhere.
\item In order to show that $T(u)$ is an element of $[W^{s,p}_{comp}(\Omega)]^*$, we need to prove that $T(u): W^{s,p}_{comp}(\Omega)\rightarrow \reals$ is linear and continuous. Linearity is obvious. In order to prove continuity, we need to show that for all $K\in \mathcal{K}(\Omega)$, $T(u)|_{W^{s,p}_K(\Omega)}$ is continuous (see Theorem ~\ref{winter37}). Let $K\in \mathcal{K}(\Omega)$ and fix a function $\psi\in C_c^\infty(\Omega)$ which is equal to $1$ on a neighborhood containing $K$. For all $f\in W^{s,p}_K(\Omega)$ we have
    \begin{equation*}
    |[T(u)](f)|=|\langle \psi u, f\rangle_{W^{-s,p'}_0(\Omega)\times W^{s,p}_0(\Omega)}|\leq \|\psi u\|_{W^{-s,p'}(\Omega)}\times\|f\|_{W^{s,p}(\Omega)}
    \end{equation*}
    which proves the continuity of the linear map $T(u)$.
\item In order to prove that $T$ is bijective we give an explicit formula for the inverse. Recall that by definition $W^{-s,p'}_{loc}(\Omega)$ is a subspace of $D'(\Omega)$ and by Remark ~\ref{remapril281}, $[W^{s,p}_{comp}(\Omega)]^*$ can also be viewed as a subspace of $D'(\Omega)$. More precisely, if we let $i: D(\Omega)\rightarrow W^{s,p}_{comp}(\Omega)$ be the "identity map" and $i^*: [W^{s,p}_{comp}(\Omega)]^*\rightarrow D'(\Omega)$ be the adjoint of $i$, then $i^*$ is a continuous injective linear map and
\begin{align}\lab{eqnmar6257}
& \forall\, u\in [W^{s,p}_{comp}(\Omega)]^*\,\,\forall\,\varphi\in D(\Omega)\qquad \langle  i^* u,\varphi\rangle_{D'(\Omega)\times D(\Omega)}=
\langle u,\varphi\rangle_{[W^{s,p}_{comp}(\Omega)]^*\times W^{s,p}_{comp}(\Omega)}
\end{align}
Moreover, if $K\in \mathcal{K}(\Omega)$, then $W^{s,p}_K(\Omega)\hookrightarrow W^{s,p}_{comp}(\Omega)$ and therefore if $u\in [W^{s,p}_{comp}(\Omega)]^*$, then $u|_{W^{s,p}_K(\Omega)}\in [W^{s,p}_{K}(\Omega)]^*$ and
\begin{equation*}
\forall\,g\in W^{s,p}_K(\Omega)\qquad \langle u,g\rangle_{[W^{s,p}_{comp}(\Omega)]^*\times W^{s,p}_{comp}(\Omega)}=\langle u|_{W^{s,p}_K(\Omega)},g\rangle_{[W^{s,p}_{K}(\Omega)]^*\times W^{s,p}_{K}(\Omega)}
\end{equation*}
Now we claim that the image of $i^*$ is in $W^{-s,p'}_{loc}(\Omega)$ and in fact $i^*$ is the inverse of $T$. Let us first prove that the image of $i^*$ is in $W^{-s,p'}_{loc}(\Omega)$. Let $u\in [W^{s,p}_{comp}(\Omega)]^*$. We need to show that for all $\varphi\in C_c^\infty(\Omega)$, $(\varphi) (i^*u)\in W^{-s,p'}(\Omega)$. To this end we make use of Corollary ~\ref{corapril2812}. Let $\varphi\in C_c^\infty(\Omega)$ and let $K=\textrm{supp}\,\varphi$. For all $\psi\in D(\Omega)$ we have
    \begin{align*}
    \big|\langle \varphi i^*u, \psi\rangle_{D'(\Omega)\times D(\Omega)}\big|&=\big|\langle i^*u, \varphi\psi\rangle_{D'(\Omega)\times D(\Omega)}\big|=
    \big|\langle u, \varphi\psi\rangle_{[W^{s,p}_{comp}(\Omega)]^*\times W^{s,p}_{comp}(\Omega)}\big|\\
    &=\big|\langle u|_{W^{s,p}_K(\Omega)}, \varphi\psi\rangle_{[W^{s,p}_{K}(\Omega)]^*\times W^{s,p}_{K}(\Omega)}\big|\\
    &\leq \|u|_{W^{s,p}_K(\Omega)}\|_{[W^{s,p}_{K}(\Omega)]^*}\|\varphi\psi\|_{W^{s,p}_K(\Omega)}\\
    & =\|u|_{W^{s,p}_K(\Omega)}\|_{[W^{s,p}_{K}(\Omega)]^*}\|\varphi\psi\|_{W^{s,p}(\Omega)}\\
    &\preceq \|u|_{W^{s,p}_K(\Omega)}\|_{[W^{s,p}_{K}(\Omega)]^*}\|\psi\|_{W^{s,p}(\Omega)}
    \end{align*}
    which, by Corollary ~\ref{corapril2812}, proves that $\varphi i^*u\in W^{-s,p'}(\Omega)$. \\
    Now we prove $i^*$ is the inverse of $T$. Note that for all $u\in W^{-s,p'}_{loc}(\Omega)\subseteq D'(\Omega)$ and $\varphi\in D(\Omega)$
    \begin{align*}
    \langle (i^*\circ T)(u),\varphi\rangle_{D'(\Omega)\times D(\Omega)}&\stackrel{\textrm{Equation ~\ref{eqnmar6257}}}{=}
    \langle T(u),\varphi\rangle_{[W^{s,p}_{comp}(\Omega)]^*\times W^{s,p}_{comp}(\Omega)}\\
    &\stackrel{\textrm{Definition of $T$}}{=}\langle  \psi_\varphi u, \varphi\rangle_{W^{-s,p'}_0(\Omega)\times W^{s,p}_0(\Omega)}\\
    &\stackrel{\textrm{Remark ~\ref{remapril28610}}}{=} \langle  \psi_\varphi u, \varphi\rangle_{D'(\Omega)\times D(\Omega)}\\
    &= \langle   u, \psi_\varphi\varphi\rangle_{D'(\Omega)\times D(\Omega)}\\
    &=\langle   u, \varphi\rangle_{D'(\Omega)\times D(\Omega)}
    \end{align*}
    Therefore $i^*\circ T$ is identity. Next we show that for all $v\in [W^{s,p}_{comp}(\Omega)]^*$, $(T\circ i^*)(v)=v$. Note that $(T\circ i^*)(v)$ and $v$ both are in  $[W^{s,p}_{comp}(\Omega)]^*$ and so they are continuous functions from $W^{s,p}_{comp}(\Omega)$ to $\reals$. Since $D(\Omega)$ is dense in $W^{s,p}_{comp}(\Omega)$, according to Lemma ~\ref{lemmay12408} it is enough to show that for all $f\in D(\Omega)$ we have $[(T\circ i^*)(v)](f)=v(f)$.
    \begin{align*}
    \langle (T\circ i^*)(v), f\rangle_{[W^{s,p}_{comp}(\Omega)]^*\times W^{s,p}_{comp}(\Omega)}
    &\stackrel{\textrm{Definition of $T$}}{=}\langle  \psi_f (i^*v), f\rangle_{W^{-s,p'}_0(\Omega)\times W^{s,p}_0(\Omega)}\\
    & \stackrel{\textrm{Remark ~\ref{remapril28610}}}{=}\langle  \psi_f (i^*v), f\rangle_{D'(\Omega)\times D(\Omega)}\\
    &=\langle  i^*v, \psi_f f\rangle_{D'(\Omega)\times D(\Omega)}\\
    &=\langle  i^*v,  f\rangle_{D'(\Omega)\times D(\Omega)}\\
    &\stackrel{\textrm{Equation ~\ref{eqnmar6257}}}{=} \langle v, f\rangle_{[W^{s,p}_{comp}(\Omega)]^*\times W^{s,p}_{comp}(\Omega)}
    \end{align*}
 \item Let's denote the topology of $W^{-s,p'}_{loc}(\Omega)$ by $\tau$ and the strong topology on $[W^{s,p}_{comp}(\Omega)]^*$ by $\tau'$. Our goal is to show that $T: (W^{-s,p'}_{loc}(\Omega),\tau)\rightarrow ([W^{s,p}_{comp}(\Omega)]^*,\tau')$ and also $T^{-1}=i^*: ([W^{s,p}_{comp}(\Omega)]^*,\tau')\rightarrow (W^{-s,p'}_{loc}(\Omega),\tau)$ are both continuous maps. To this end we make use of Theorem ~\ref{thmapptvconvergence1}. Recall that $\tau$ is induced by the family of seminorms $\{p_\varphi: W^{-s,p'}_{loc}(\Omega)\rightarrow \reals\}_{\varphi\in C_c^\infty(\Omega)}$ where $p_\varphi(u)=\|\varphi u\|_{W^{-s,p'}(\Omega)}$. Also $\tau'$ is induced by the family of seminorms
$\{p'_B: [W^{s,p}_{comp}(\Omega)]^*\rightarrow \reals\}$ where $B$ varies over all bounded sets in $W^{s,p}_{comp}(\Omega)$ and $p'_B(u)=\sup_{f\in B}{|u(f)|}$.
\begin{itemizeX}
\item \textbf{Step 1:} Let $B$ be a bounded subset of $W^{s,p}_{comp}(\Omega)$. Since $B$ is bounded, there exists $K\in \mathcal{K}(\Omega)$ such that $B$ is bounded in $W^{s,p}_K(\Omega)$ (See Theorem ~\ref{thmmay7609}; note  that the topology of $W^{s,p}_{comp}(\Omega)$ can be constructed as the inductive limit of $W^{s,p}_{K_j}(\Omega)$ where $\{K_j\}$ is an increasing chain of compact subsets of $\Omega$). So there exists a constant $C$ such that for all $f\in B$, $\|f\|_{W^{s,p}(\Omega)}\leq C$. Let $\psi$ be a function in $C_c^\infty(\Omega)$ which is equal to $1$ on a neighborhood containing $K$. For all $u\in W^{-s,p'}_{loc}(\Omega)$ we have
    \begin{align*}
    (p'_B\circ T)(u)=\sup_{f\in B}{|[T(u)](f)|}&\stackrel{\textrm{Definition of $T$}}{=}\sup_{f\in B}{|\langle \psi u, f\rangle_{W^{-s,p'}_{0}(\Omega)\times W^{s,p}_{0}(\Omega)}|}\\
    &\leq \sup_{f\in B}\|\psi u\|_{W^{-s,p'}(\Omega)}\|f\|_{W^{s,p}(\Omega)}\\
    &\leq C p_\psi (u)
    \end{align*}
    It follows from Theorem ~\ref{thmapptvconvergence1} that $T: (W^{-s,p'}_{loc}(\Omega),\tau)\rightarrow ([W^{s,p}_{comp}(\Omega)]^*,\tau')$ is continuous.
\item \textbf{Step 2:} Let $\varphi\in C_c^\infty(\Omega)$. Let $K$ be a compact set whose interior contains $\textrm{supp}\,\varphi$. Since $(s,p,\Omega)$ is a smooth multiplication triple, there exists a constant $C_\varphi>0$ such that for all $f\in W^{s,p}(\Omega)$ we have $\|\varphi f\|_{W^{s,p}(\Omega)}\leq C_\varphi \|f\|_{W^{s,p}(\Omega)}$.\\

     We have
    \begin{align*}
    (p_\varphi\circ i^*)(u)&=\|\varphi i^*u\|_{W^{s,p}_0(\Omega)}=\|\varphi i^*u\|_{[W^{-s,p'}_0(\Omega)]^*}\\
    &=\sup_{\xi \in C_c^\infty(\Omega), \|\xi\|_{W^{s,p}(\Omega)}\leq 1}|\langle \varphi i^*u, \xi\rangle_{W^{s,p}_0(\Omega)\times W^{-s,p'}_0(\Omega)}|\\
    &=\sup_{\xi \in C_c^\infty(\Omega), \|\xi\|_{W^{s,p}(\Omega)}\leq 1}|\langle  \varphi i^*u, \xi\rangle_{D'(\Omega)\times D(\Omega)}|\\
    &=\sup_{\xi \in C_c^\infty(\Omega), \|\xi\|_{W^{s,p}(\Omega)}\leq 1}|\langle   i^*u, \varphi\xi\rangle_{D'(\Omega)\times D(\Omega)}|\\
    &\leq\sup_{\eta \in C_{\textrm{supp}\,\varphi}^\infty(\Omega), \|\eta\|_{W^{s,p}(\Omega)}\leq C_\varphi}|\langle i^*u, \eta\rangle_{D'(\Omega)\times D(\Omega)}|\\
    &\stackrel{\textrm{Equation ~\ref{eqnmar6257}}}{=}\sup_{\eta \in C_{\textrm{supp}\,\varphi}^\infty(\Omega), \|\eta\|_{W^{s,p}(\Omega)}\leq C_\varphi}|\langle u, \eta\rangle_{[W^{s,p}_{comp}(\Omega)]^*\times W^{s,p}_{comp}(\Omega)}|
    \end{align*}
    So if we let $B$ be the ball of radius $2C_\varphi$ centered at $0$ in $W^{s,p}_{K}(\Omega)$ (clearly $B$ is a bounded set in $W^{s,p}_{comp}(\Omega)$) we get
    \begin{align*}
    (p_\varphi\circ i^*)(u)&\leq \sup_{f\in B}|\langle u, f\rangle_{[W^{s,p}_{comp}(\Omega)]^*\times W^{s,p}_{comp}(\Omega)}|\\
    &= p'_B(u)
    \end{align*}
\end{itemizeX}
\end{enumerateX}
\end{proof}

\begin{corollary}\lab{cormay13615}
Suppose that $(s,p,\Omega)$ and $(-s,p',\Omega)$ are both smooth multiplication triples. By the previous Theorem $[W^{-s,p'}_{comp}(\Omega)]^*$ can be identified with $W^{s,p}_{loc}(\Omega)$. Also, by Remark ~\ref{remapril281}, $[W^{-s,p'}_{comp}(\Omega)]^*$ is continuously embedded in $D'(\Omega)$. Therefore $W^{s,p}_{loc}(\Omega)$ is continuously embedded in $D'(\Omega)$. Since $W^{s,p}_{loc}(\Omega)$ is a Frechet space, it follows from Theorem ~\ref{thmapptvconvergence2} and Remark ~\ref{winter32} that the preceding statement remains true even if we consider $D'(\Omega)$ equipped with the weak$^*$ topology. So
\begin{equation*}
W^{s,p}_{loc}(\Omega)\hookrightarrow (D'(\Omega), \textrm{strong topology})\qquad \textrm{and}\qquad
W^{s,p}_{loc}(\Omega)\hookrightarrow (D'(\Omega), \textrm{weak$^*$ topology})
\end{equation*}
\end{corollary}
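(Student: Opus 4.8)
The plan is to exhibit the natural inclusion $W^{s,p}_{loc}(\Omega)\hookrightarrow D'(\Omega)$ as a composition of two maps already proved continuous, and then to pass from the strong to the weak$^*$ topology on $D'(\Omega)$ by a metrizability argument.

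First I would invoke Theorem~\ref{thmmay7824} with the exponent pairs interchanged: since $-(-s)=s$ and $(p')'=p$, applying that theorem to the triple $(-s,p',\Omega)$ (legitimate because both $(-s,p',\Omega)$ and $(s,p,\Omega)$ are smooth multiplication triples by hypothesis) produces a topological isomorphism $T:W^{s,p}_{loc}(\Omega)\to [W^{-s,p'}_{comp}(\Omega)]^*$, where the target carries the strong topology; moreover the proof of that theorem identifies the inverse of $T$ with the adjoint $i^*$ of the identity embedding $i:D(\Omega)\to W^{-s,p'}_{comp}(\Omega)$, and shows that $i^*\circ T$ is nothing but the inclusion of $W^{s,p}_{loc}(\Omega)$, regarded as a subspace of $D'(\Omega)$, into $D'(\Omega)$. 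Next, Remark~\ref{remapril281} (again with $(-s,p')$ in place of $(s,p)$) says that $i^*:[W^{-s,p'}_{comp}(\Omega)]^*\to D'(\Omega)$ is continuous when $D'(\Omega)$ is given the strong topology. Composing the continuous maps $T$ and $i^*$ then yields that $W^{s,p}_{loc}(\Omega)\hookrightarrow (D'(\Omega),\text{strong topology})$.

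For the weak$^*$ assertion I would note that $W^{s,p}_{loc}(\Omega)$ is a Frechet space by Theorem~\ref{thmmay7819} (every smooth multiplication triple is an interior one, by Remark~\ref{remapril161}), so by Theorem~\ref{thmapptvconvergence2} it is enough to verify that $u_m\to 0$ in $W^{s,p}_{loc}(\Omega)$ forces $u_m\to 0$ in $(D'(\Omega),\text{weak}^*)$; but the previous step gives $u_m\to 0$ in $(D'(\Omega),\text{strong topology})$, and by Remark~\ref{winter32} the strongly convergent sequences of $D'(\Omega)$ are exactly its weak$^*$ convergent ones, with the same limit. (Even more directly, the weak$^*$ topology is coarser than the strong one, so continuity into the latter automatically gives continuity into the former.) The only point needing genuine attention is the bookkeeping of the substitution $(s,p)\leftrightarrow(-s,p')$ and the check that $i^*\circ T$ is precisely the distributional inclusion rather than some other identification; both are already contained in the proof of Theorem~\ref{thmmay7824}, so there is no essential obstacle here.
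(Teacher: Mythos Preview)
Your proposal is correct and follows essentially the same approach as the paper: the corollary's proof is already contained in its statement, and you have simply unpacked those references (Theorem~\ref{thmmay7824} with the roles of $(s,p)$ and $(-s,p')$ interchanged, Remark~\ref{remapril281}, Theorem~\ref{thmmay7819}, Theorem~\ref{thmapptvconvergence2}, and Remark~\ref{winter32}) in the same order and with the same logic. Your added remark that $i^*\circ T$ coincides with the distributional inclusion is a helpful clarification, but it does not constitute a different route.
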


\begin{theorem}\lab{thmmay7825}
Suppose that $(s,p,\Omega)$ and $(-s,p',\Omega)$ are smooth multiplication triples. Define the mapping $R: W^{-s,p'}_{comp}(\Omega)\rightarrow [W^{s,p}_{loc}(\Omega)]^*$ by
\begin{equation*}
 \forall\,u\in W^{-s,p'}_{comp}(\Omega)\,\,\forall\,f\in W^{s,p}_{loc}(\Omega)\qquad [R(u)](f):=\langle   u, \psi_u f\rangle_{W^{-s,p'}_0(\Omega)\times W^{s,p}_0(\Omega)}
\end{equation*}
where $\psi_u$ is any function in $C_c^\infty(\Omega)$ that is equal to $1$ on a neighborhood containing the support of $u$. Then
\begin{enumerate}
\item $[R(u)](f)$ does not depend on the choice of $\psi_u$.
\item For all $u\in W^{-s,p'}_{comp}(\Omega)$, $R(u)$ is indeed an element of $[W^{s,p}_{loc}(\Omega)]^*$.
\item $R: W^{-s,p'}_{comp}(\Omega)\rightarrow [W^{s,p}_{loc}(\Omega)]^*$ is bijective.
\item Suppose $[W^{s,p}_{loc}(\Omega)]^*$ is equipped with the strong topology. Then the linear map $R$ is  bijective and continuous. In particular, $[W^{s,p}_{loc}(\Omega)]^*$ and $W^{-s,p'}_{comp}(\Omega)$ are isomorphic vector spaces.
\end{enumerate}
\end{theorem}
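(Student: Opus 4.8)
The plan is to mimic the proof of Theorem~\ref{thmmay7824}, with the roles of the $comp$ and $loc$ spaces interchanged. First I would check that the formula defining $R$ makes sense: since $(s,p,\Omega)$ is a smooth multiplication triple and $\psi_u\in C_c^\infty(\Omega)$, for $f\in W^{s,p}_{loc}(\Omega)$ the product $\psi_u f$ lies in $W^{s,p}(\Omega)$ and has compact support, hence $\psi_u f\in W^{s,p}_0(\Omega)$ (Remark~\ref{remapril161}); moreover $u\in W^{-s,p'}_{comp}(\Omega)\subseteq W^{-s,p'}_0(\Omega)$ (again Remark~\ref{remapril161} when $-s\geq 0$, and trivially when $-s<0$), so the pairing $\langle u,\psi_u f\rangle_{W^{-s,p'}_0(\Omega)\times W^{s,p}_0(\Omega)}$ is well-posed. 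The natural candidate for $R^{-1}$ will be the adjoint $j^*\colon [W^{s,p}_{loc}(\Omega)]^*\to D'(\Omega)$ of the inclusion $j\colon D(\Omega)\hookrightarrow W^{s,p}_{loc}(\Omega)$, which is a continuous injection by Remark~\ref{remapril282}.

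For item (1): if $\psi_1,\psi_2\in C_c^\infty(\Omega)$ both equal $1$ on neighborhoods of $\textrm{supp}\,u$, set $\psi=\psi_1-\psi_2$, so $\psi$ vanishes on an open set containing $\textrm{supp}\,u$. I would show $\langle u,\psi f\rangle_{W^{-s,p'}_0(\Omega)\times W^{s,p}_0(\Omega)}=0$ by choosing $g_m\in C_c^\infty(\Omega)$ with $g_m\to\psi f$ in $W^{s,p}_0(\Omega)$ and a cutoff $\chi\in C_c^\infty(\Omega)$ equal to $1$ on a neighborhood of the compact set $\textrm{supp}\,\psi$ and vanishing on a neighborhood of $\textrm{supp}\,u$ (possible since these two compacta are disjoint); then $\chi g_m\to\chi(\psi f)=\psi f$ in $W^{s,p}_0(\Omega)$ by the smooth multiplication property, while $\langle u,\chi g_m\rangle_{D'(\Omega)\times D(\Omega)}=0$ because $\chi g_m$ vanishes near $\textrm{supp}\,u$ (Theorem~\ref{thmmay31127}), and passing to the limit (identifying the two pairings on $D(\Omega)$ via Remark~\ref{remapril28610}) gives the claim. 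For item (2), linearity of $R(u)$ is immediate, and its continuity on $W^{s,p}_{loc}(\Omega)$ follows from the bound $|[R(u)](f)|\leq\|u\|_{W^{-s,p'}(\Omega)}\,\|\psi_u f\|_{W^{s,p}(\Omega)}=\|u\|_{W^{-s,p'}(\Omega)}\,|f|_{\psi_u,s,p}$ together with Theorem~\ref{thmapptvconvergence1}.

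For item (3) I would show $j^*$ maps $[W^{s,p}_{loc}(\Omega)]^*$ into $W^{-s,p'}_{comp}(\Omega)$ and is a two-sided inverse of $R$. Given $v\in[W^{s,p}_{loc}(\Omega)]^*$, Theorem~\ref{thmapptvconvergence1} provides $c>0$ and $\varphi_1,\dots,\varphi_k\in C_c^\infty(\Omega)$ with $|v(g)|\leq c\max_i\|\varphi_i g\|_{W^{s,p}(\Omega)}$ for all $g\in W^{s,p}_{loc}(\Omega)$; setting $K:=\bigcup_i\textrm{supp}\,\varphi_i$, any $\varphi\in D(\Omega)$ with $\textrm{supp}\,\varphi\cap K=\emptyset$ satisfies $\langle j^*v,\varphi\rangle_{D'(\Omega)\times D(\Omega)}=v(\varphi)=0$, so $\textrm{supp}(j^*v)\subseteq K$; and since $(s,p,\Omega)$ is a smooth multiplication triple, $|v(\varphi)|\leq c\max_i\|\varphi_i\varphi\|_{W^{s,p}(\Omega)}\preceq\|\varphi\|_{W^{s,p}(\Omega)}$, whence by Corollary~\ref{corapril2812} $j^*v\in W^{-s,p'}_0(\Omega)$; together these give $j^*v\in W^{-s,p'}_{comp}(\Omega)$. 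The identity $j^*\circ R=\textrm{id}$ is checked on $D(\Omega)$ directly: for $u\in W^{-s,p'}_{comp}(\Omega)\subseteq D'(\Omega)$ and $\varphi\in D(\Omega)$, unwinding the definitions with Remarks~\ref{remapril282} and~\ref{remapril28610} reduces $\langle(j^*\circ R)(u),\varphi\rangle_{D'(\Omega)\times D(\Omega)}$ to $\langle\psi_u u,\varphi\rangle_{D'(\Omega)\times D(\Omega)}=\langle u,\varphi\rangle_{D'(\Omega)\times D(\Omega)}$, since $\psi_u u=u$ in $D'(\Omega)$. The identity $R\circ j^*=\textrm{id}$ is checked similarly on $D(\Omega)$ (using $\psi_{j^*v}\,(j^*v)=j^*v$), and then extended to all of $W^{s,p}_{loc}(\Omega)$ by Lemma~\ref{lemmay12408}, since $(R\circ j^*)(v)$ and $v$ are both continuous on $W^{s,p}_{loc}(\Omega)$ (metrizable by Corollary~\ref{cormay7806}, hence Hausdorff) and $D(\Omega)$ is dense there by Theorem~\ref{thmmay7822}.

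Finally, for item (4), to prove continuity of $R\colon W^{-s,p'}_{comp}(\Omega)\to([W^{s,p}_{loc}(\Omega)]^*,\textrm{strong})$: by Theorem~\ref{winter37} it suffices to show $R|_{W^{-s,p'}_K(\Omega)}$ is continuous for each $K\in\mathcal{K}(\Omega)$, and by Theorem~\ref{thmmay11714} it suffices to bound $p'_B\circ R$ on the normed space $W^{-s,p'}_K(\Omega)$ for every bounded $B\subseteq W^{s,p}_{loc}(\Omega)$, where $p'_B(v)=\sup_{f\in B}|v(f)|$. Fixing $\psi_K\in C_c^\infty(\Omega)$ equal to $1$ on a neighborhood of $K$ (so it may serve as $\psi_u$ for every $u\in W^{-s,p'}_K(\Omega)$), one gets $p'_B(R(u))\leq\|u\|_{W^{-s,p'}(\Omega)}\sup_{f\in B}|f|_{\psi_K,s,p}$, and $\sup_{f\in B}|f|_{\psi_K,s,p}<\infty$ because $|\cdot|_{\psi_K,s,p}$ is one of the defining seminorms of $W^{s,p}_{loc}(\Omega)$ and $B$ is bounded (Theorem~\ref{thmmay3814}); this is the required norm estimate. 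The isomorphism of vector spaces is then immediate from item (3). I expect the main obstacle to be item (3): showing that $j^*$ actually lands in $W^{-s,p'}_{comp}(\Omega)$ — both the compact-support part and the $W^{-s,p'}$-regularity, the latter resting on $(s,p,\Omega)$ being a smooth multiplication triple — and that it is genuinely a two-sided inverse of $R$; item (1) also requires some care in setting up the approximation argument correctly.
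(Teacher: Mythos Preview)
Your proposal is correct and follows the same overall architecture as the paper's proof: the inverse of $R$ is the adjoint $j^*$ of the dense inclusion $D(\Omega)\hookrightarrow W^{s,p}_{loc}(\Omega)$, and one verifies $j^*\circ R=\textrm{id}$ and $R\circ j^*=\textrm{id}$ on test functions and extends by density. The technical execution differs in a few places, however, and the differences are worth noting. For item~(1), the paper dispatches independence of $\psi_u$ in one line by invoking Theorem~\ref{thmmay31127} directly; your approximation argument with $\chi g_m$ is more explicit (and arguably what is needed to justify that one-line citation, since $\psi f$ is in $W^{s,p}_0(\Omega)$ rather than $D(\Omega)$). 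For item~(3), to show $j^*v$ has compact support the paper observes that $\mathcal{E}(\Omega)\hookrightarrow W^{s,p}_{loc}(\Omega)$ continuously and densely (Theorem~\ref{thmapril291}), so $j^*$ factors through $\mathcal{E}'(\Omega)$; your route via the seminorm estimate from Theorem~\ref{thmapptvconvergence1} is equally valid and has the advantage of yielding both compact support and the $W^{-s,p'}$-bound from a single inequality. For item~(4), the paper works with the explicit seminorm family $\{q_{a,-s,p'}\}$ of Theorem~\ref{thmmay3803} and a partition-of-unity decomposition of $u$; your reduction via Theorem~\ref{winter37} to a fixed $W^{-s,p'}_K(\Omega)$ with a single cutoff $\psi_K$ is cleaner and avoids that machinery entirely.
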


\begin{proof}
\begin{enumerateX}
\item Note that since $(s,p,\Omega)$ is a smooth multiplication triple, $\psi_u f$ is in $W^{s,p}_0(\Omega)$. Also by assumption $(-s,p',\Omega)$ is a smooth multiplication triple. Therefore for each $K\in \mathcal{K}(\Omega)$, $W^{-s,p'}_{K}(\Omega)\hookrightarrow W^{-s,p'}_0(\Omega)$ and hence $W^{-s,p'}_{comp}(\Omega)\hookrightarrow W^{-s,p'}_0(\Omega)$. So the pairing in the definition of $[R(u)](f)$ makes sense. The fact that the output is independent of the choice of $\psi_u$ follows directly from Theorem ~\ref{thmmay31127}.
\item  Clearly $R(u)$ is linear. Also $R(u)$ is continuous (so it is an element of $[W^{s,p}_{loc}(\Omega)]^*$). The reason is as follows: for all $f\in W^{s,p}_{loc}(\Omega)$ we have
   \begin{equation*}
      |[R(u)](f)|=|\langle  u, \psi_u f\rangle_{W^{-s,p'}_0(\Omega)\times W^{s,p}_0(\Omega)}|\leq
      \|u\|_{W^{-s,p'}_0(\Omega)}\|\psi_u f\|_{W^{s,p}_0(\Omega)}
   \end{equation*}
   That is for all $f\in W^{s,p}_{loc}(\Omega)$ we have $|[R(u)](f)|\preceq \|\psi_u f\|_{W^{s,p}(\Omega)}$. It follows from Theorem ~\ref{thmapptvconvergence1} that $R(u): W^{s,p}_{loc}(\Omega)\rightarrow \reals$ is continuous.
\item
In order to prove that $R$ is bijective we give an explicit formula for the inverse. Recall that by Remark ~\ref{remapril282}, $[W^{s,p}_{loc}(\Omega)]^*$ can also be viewed as a subspace of $D'(\Omega)$. More precisely, if we let $i: D(\Omega)\rightarrow W^{s,p}_{loc}(\Omega)$ be the "identity map" and $i^*: [W^{s,p}_{loc}(\Omega)]^*\rightarrow D'(\Omega)$ be the adjoint of $i$, then $i^*$ is a continuous injective linear map and
\begin{align}\lab{eqnmar6654}
& \forall\, u\in [W^{s,p}_{loc}(\Omega)]^*\,\,\forall\,\varphi\in D(\Omega)\qquad \langle  i^* u,\varphi\rangle_{D'(\Omega)\times D(\Omega)}=
\langle u,\varphi\rangle_{[W^{s,p}_{loc}(\Omega)]^*\times W^{s,p}_{loc}(\Omega)}
\end{align}
Now we claim that the image of $i^*$ is in $W^{-s,p'}_{comp}(\Omega)$ and in fact $i^*$ is the inverse of $R$. Let us first prove that the image of $i^*$ is in $W^{-s,p'}_{comp}(\Omega)$. $\mathcal{E}(\Omega)$ is continuously and densely  embedded in $W^{s,p}_{loc}(\Omega)$ (continuity is proved in Theorem ~\ref{thmapril291} and density is a direct consequence of the density of $C_c^\infty(\Omega)$ in $W^{s,p}_{loc}(\Omega)$). Therefore $i^*\big([W^{s,p}_{loc}(\Omega)]^*\big)$ is indeed a subspace of $\mathcal{E}'(\Omega)\subseteq D'(\Omega)$ and so elements of $i^*\big([W^{s,p}_{loc}(\Omega)]^*\big)$ can be identified with distributions in $D'(\Omega)$ that have compact support. It remains to show that if $u\in [W^{s,p}_{loc}(\Omega)]^*$, then $i^*u\in W^{-s,p'}(\Omega)$. To this end we make use of Corollary ~\ref{corapril2812}. For all $\varphi\in D(\Omega)$ we have
        \begin{align*}
        |\langle i^*u,\varphi\rangle_{D'(\Omega)\times D(\Omega)}|
        &\stackrel{\textrm{Equation ~\ref{eqnmar6654}}}{=}|\langle u,\varphi\rangle_{[W^{s,p}_{loc}(\Omega)]^*\times W^{s,p}_{loc}(\Omega)} |\\
        &=|\langle u|_{W^{s,p}(\Omega)},\varphi\rangle_{[W^{s,p}_0(\Omega)]^*\times W^{s,p}_0(\Omega)}|\\
        &\leq \|u|_{W^{s,p}_0(\Omega)}\|_{[W^{s,p}_0(\Omega)]^*}\|\varphi\|_{W^{s,p}(\Omega)}
        \end{align*}
        So, by Corollary ~\ref{corapril2812}, we can conclude that $u\in W^{-s,p'}(\Omega)$. In the above we used the fact that $W^{s,p}_0(\Omega)\hookrightarrow W^{s,p}(\Omega)\hookrightarrow W^{s,p}_{loc}(\Omega)$ and so for $u\in [W^{s,p}_{loc}(\Omega)]^*$ we have $u|_{W^{s,p}_0(\Omega)}\in [W^{s,p}_0(\Omega)]^*$.\\
        Now we prove that $i^*: [W^{s,p}_{loc}(\Omega)]^*\rightarrow W^{-s,p'}_{comp}(\Omega)$ is the inverse of $R$. For all $u\in W^{-s,p'}_{comp}(\Omega)$ and $\varphi\in D(\Omega)$ we have
        \begin{align*}
        \langle (i^*\circ R) (u),\varphi\rangle_{D'(\Omega)\times D(\Omega)}&\stackrel{\textrm{Equation ~\ref{eqnmar6654}}}{=}
        \langle R u, \varphi\rangle_{[W^{s,p}_{loc}(\Omega)]^*\times W^{s,p}_{loc}(\Omega)} \\
        &\stackrel{\textrm{Definition of $R$}}{=} \langle u,\psi_u \varphi\rangle_{W^{-s,p'}_0(\Omega)\times W^{s,p}_0(\Omega)}\\
        &\stackrel{\textrm{Remark ~\ref{remapril28610}}}{=}\langle u,\psi_u \varphi\rangle_{D'(\Omega)\times D(\Omega)}\\
        &=\langle \psi_u u, \varphi\rangle_{D'(\Omega)\times D(\Omega)}\\
        &=\langle u, \varphi\rangle_{D'(\Omega)\times D(\Omega)}
        \end{align*}
        Therefore $(i^*\circ R) (u)=u$ for all $u\in W^{-s,p'}_{comp}(\Omega)$.\\
        Now we prove that $R\circ i^*$ is also the identity map. Considering Lemma ~\ref{lemmay12408}, since $D(\Omega)$ is dense in $W^{s,p}_{loc}(\Omega)]^*$, it is enough to show that for all $v\in [W^{s,p}_{loc}(\Omega)]^*$ and $f\in D(\Omega)$, $[R\circ i^*(v)](f)=v(f)$. We have
        \begin{align*}
        \langle (R\circ i^*)v, f\rangle_{[W^{s,p}_{loc}(\Omega)]^*\times W^{s,p}_{loc}(\Omega)}&\stackrel{\textrm{Definition of $R$}}{=}
        \langle i^* v, \psi_{i^*v} f\rangle_{W^{-s,p'}_0(\Omega)\times W^{s,p}_0(\Omega)} \\
        &\stackrel{\textrm{Remark ~\ref{remapril28610}}}{=}\langle i^* v, \psi_{i^*v} f\rangle_{D'(\Omega)\times D(\Omega)}\\
        &= \langle \psi_{i^*v}  i^* v, f\rangle_{D'(\Omega)\times D(\Omega)}\\
        &= \langle   i^* v, f\rangle_{D'(\Omega)\times D(\Omega)}\\
        &\stackrel{\textrm{Equation ~\ref{eqnmar6654}}}{=} \langle v, f\rangle_{[W^{s,p}_{loc}(\Omega)]^*\times W^{s,p}_{loc}(\Omega)}
        \end{align*}
        which shows $R\circ i^*(v)=v$.
   \item Let's denote the topology of $W^{-s,p'}_{comp}(\Omega)$ by $\tau$ and the strong topology on $[W^{s,p}_{loc}(\Omega)]^*$ by $\tau'$. Our goal is to show that $R: (W^{-s,p'}_{comp}(\Omega),\tau)\rightarrow ([W^{s,p}_{loc}(\Omega)]^*,\tau')$ is continuous. To this end we make use of Theorem ~\ref{thmapptvconvergence1}. Recall that $\tau$ is induced by the family of seminorms $\{q_{a,-s,p'}: W^{-s,p'}_{comp}(\Omega)\rightarrow \reals\}_{a\in S}$ where $q_{a,-s,p'}(u)=\sum_j a_j\|\psi_j  u\|_{W^{-s,p'}(\Omega)}$ (here we are using the notations introduced in Theorem ~\ref{thmmay3803}). Also $\tau'$ is induced by the family of seminorms
$\{p_B: [W^{s,p}_{loc}(\Omega)]^*\rightarrow \reals\}$ where $B$ varies over all bounded sets in $W^{s,p}_{loc}(\Omega)$ and $p_B(u)=\sup_{f\in B}{|u(f)|}$.\\
Let $B$ be a bounded subset of $W^{s,p}_{loc}(\Omega)$. Since $B$ is bounded, for all $\varphi\in C_c^\infty(\Omega)$, the set $\{\|\varphi f\|_{W^{s,p}(\Omega)}: f\in B\}$ is bounded in $\reals$ (see Theorem ~\ref{thmmay3814}). Thus for all $\varphi\in C_c^\infty(\Omega)$ there exists a positive integer $a_\varphi$ such that for all $f\in B$, $\|\varphi f\|_{W^{s,p}(\Omega)}<a_\varphi$. Recall that $\{\psi_j\}$ in the definition of $q_{a,-s,p'}$ denotes a fixed partition of unity. For each $j$ let $\varphi_j$ be a function in $C_c^\infty(\Omega)$ which is equal to $1$ on a neighborhood containing the support of $\psi_j$. For all $u\in W^{-s,p'}_{comp}(\Omega)$ we have
    \begin{align*}
    (p_B\circ R)(u)&=(p_B\circ R)(\sum_j \psi_j\, u)\leq \sum_j (p_B\circ R)(\psi_j u)=\sup_{f\in B}{|(R(\psi_ju))(f)|}\\
    &\stackrel{\textrm{Definition of $R$}}{=}\sum_j\sup_{f\in B}{|\langle \psi_j u, \varphi_j f\rangle_{W^{-s,p'}_{0}(\Omega)\times W^{s,p}_{0}(\Omega)}|}\\
    &\leq \sum_j \sup_{f\in B}\|\psi_j u\|_{W^{-s,p'}(\Omega)}\|\varphi_j f\|_{W^{s,p}(\Omega)}\\
    &\leq \sum_j a_{\varphi_j} \|\psi_j u\|_{W^{-s,p'}(\Omega)}\\
    &=q_{a,-s,p'}(u)
    \end{align*}
    where $a=(a_{\varphi_1},a_{\varphi_2},\cdots)$. Note that the inequality $(p_B\circ R)(\sum_j \psi_j\,u)\leq \sum_j (p_B\circ R)(\psi_j u)$ holds because $u$ has compact support and so only finitely many terms in the sum are nonzero, so we can use the subadditivity of the seminorm and linearity of $R$.\\
    It follows from Theorem ~\ref{thmapptvconvergence1} that $R: (W^{-s,p'}_{comp}(\Omega),\tau)\rightarrow [W^{s,p}_{loc}(\Omega)]^*$ is continuous.

\end{enumerateX}
\end{proof}

\begin{remark}\lab{remmay2757}
According to the previous two theorems, we have the following
\begin{itemizeX}
\item When $u\in W^{-s,p'}_{loc}(\Omega)$ is viewed as an element of $[W^{s,p}_{comp}(\Omega)]^*$ we have
\begin{equation*}
\forall\,f\in W^{s,p}_{comp}(\Omega)\qquad u(f)=\langle \psi_f u,f\rangle_{W^{-s,p'}_0(\Omega)\times W^{s,p}_0(\Omega)}
\end{equation*}
where $\psi_f$ is any function in $C_c^\infty(\Omega)$ that is equal to $1$ on a neighborhood containing $\textrm{supp}\,f$.
\item When $u\in W^{-s,p'}_{comp}(\Omega)$ is viewed as an element of $[W^{s,p}_{loc}(\Omega)]^*$ we have
    \begin{equation*}
\forall\,f\in W^{s,p}_{loc}(\Omega)\qquad u(f)=\langle  u,\psi f\rangle_{W^{-s,p'}_0(\Omega)\times W^{s,p}_0(\Omega)}
\end{equation*}
where $\psi$ is any function in $C_c^\infty(\Omega)$ that is equal to $1$ on a neighborhood containing $\textrm{supp}\,u$.
\end{itemizeX}
\end{remark}


\begin{corollary}\lab{cormay7826}
Suppose that $(s,p,\Omega)$ and $(-s,p',\Omega)$ are both smooth multiplication triples. As a direct consequence of the previous theorems, the bidual of $W^{s,p}_{comp}(\Omega)$ is itself. So $W^{s,p}_{comp}(\Omega)$ is semireflexive. It follows from Theorem ~\ref{thmmay6758} that $W^{s,p}_{comp}(\Omega)$ is reflexive and subsequently its dual $W^{-s,p'}_{loc}(\Omega)$ is reflexive.
\end{corollary}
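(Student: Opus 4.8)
The plan is to use the two topological descriptions of duals obtained in Theorems~\ref{thmmay7824} and~\ref{thmmay7825} to identify the bidual of $W^{s,p}_{comp}(\Omega)$ with $W^{s,p}_{comp}(\Omega)$ itself via the canonical evaluation map, and then to read off reflexivity from Theorem~\ref{thmmay6758}. First I would record the ingredients. Since $(s,p,\Omega)$ and $(-s,p',\Omega)$ are smooth multiplication triples, Theorem~\ref{thmmay7824} supplies a topological isomorphism $T: W^{-s,p'}_{loc}(\Omega)\to [W^{s,p}_{comp}(\Omega)]^*$ (strong topology); and applying Theorem~\ref{thmmay7825} with the roles of $(s,p)$ and $(-s,p')$ interchanged — the pair of hypotheses being symmetric under this interchange — supplies a continuous linear bijection $R: W^{s,p}_{comp}(\Omega)\to [W^{-s,p'}_{loc}(\Omega)]^*$ (strong topology). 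Because $T$ is a topological isomorphism, so is its transpose $T^*: [W^{s,p}_{comp}(\Omega)]^{**}\to [W^{-s,p'}_{loc}(\Omega)]^*$: indeed $T$ and $T^{-1}$ are both continuous, hence $T^*$ and $(T^{-1})^*$ are both continuous by Theorem~\ref{thmfallinjectiveadjoint1}, and $(T^{-1})^*=(T^*)^{-1}$ by functoriality of the transpose. Consequently $(T^*)^{-1}\circ R: W^{s,p}_{comp}(\Omega)\to [W^{s,p}_{comp}(\Omega)]^{**}$ is a bijection.

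The decisive step is to check that $(T^*)^{-1}\circ R$ is precisely the evaluation map $J$, $J(u)(g)=g(u)$ (which is well defined and injective into the bidual by Theorem~\ref{thmmay7314}). Since $T$ is onto, it suffices to verify $\big((T^*)^{-1}R(u)\big)(Tv)=(Tv)(u)$ for all $u\in W^{s,p}_{comp}(\Omega)$ and $v\in W^{-s,p'}_{loc}(\Omega)$. By the definition of the transpose, the left-hand side equals $R(u)(v)$, which by the interchanged formula from Theorem~\ref{thmmay7825} is $\langle u,\psi_u v\rangle_{W^{s,p}_0(\Omega)\times W^{-s,p'}_0(\Omega)}$, where $\psi_u\in C_c^\infty(\Omega)$ is $\equiv 1$ near $\textrm{supp}\,u$; this is meaningful because $u\in W^{s,p}_0(\Omega)$ by Remark~\ref{remapril161} and $\psi_u v\in W^{-s,p'}_0(\Omega)$ since $(-s,p',\Omega)$ is a smooth multiplication triple. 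On the other hand, the formula for $T$ in Theorem~\ref{thmmay7824}, evaluated on the compactly supported element $u$ with the same cutoff $\psi_u$, gives $(Tv)(u)=\langle\psi_u v,u\rangle_{W^{-s,p'}_0(\Omega)\times W^{s,p}_0(\Omega)}$. These two pairings coincide because, $W^{s,p}_0(\Omega)$ being reflexive, $\langle\cdot,\cdot\rangle_{W^{s,p}_0\times W^{-s,p'}_0}$ and $\langle\cdot,\cdot\rangle_{W^{-s,p'}_0\times W^{s,p}_0}$ are one and the same bilinear pairing (Remark~\ref{remmay77}). Hence $J=(T^*)^{-1}\circ R$ is bijective, i.e.\ $W^{s,p}_{comp}(\Omega)$ is semireflexive.

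To conclude, note that by Definition~\ref{defa1} the topology of $W^{s,p}_{comp}(\Omega)$ is the inductive limit topology of the sequence $\{W^{s,p}_{K_j}(\Omega)\}_j$ attached to an exhaustion of $\Omega$ by compact sets, and each $W^{s,p}_{K_j}(\Omega)$ is a closed subspace of the Banach space $W^{s,p}(\Omega)$, hence itself Banach. So $W^{s,p}_{comp}(\Omega)$ is a semireflexive space carrying the inductive limit topology of a sequence of Banach spaces, and Theorem~\ref{thmmay6758} then makes it reflexive; the same theorem makes the strong dual of a reflexive space reflexive, and that strong dual is topologically isomorphic to $W^{-s,p'}_{loc}(\Omega)$ via $T$, so $W^{-s,p'}_{loc}(\Omega)$ is reflexive as well. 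The only genuinely delicate point is the identification $(T^*)^{-1}\circ R=J$ — essentially bookkeeping of which cutoff functions and which duality pairings appear — since once semireflexivity is established the remainder is a direct appeal to Theorem~\ref{thmmay6758}.
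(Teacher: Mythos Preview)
Your proof is correct and follows the same high-level route the paper takes: compose the two duality identifications from Theorems~\ref{thmmay7824} and~\ref{thmmay7825} to see that the bidual of $W^{s,p}_{comp}(\Omega)$ is itself, then invoke Theorem~\ref{thmmay6758}. The paper states the corollary without proof, treating the identification of the bidual as self-evident; you have supplied the one genuinely nontrivial step that the paper glosses over, namely that the bijection $W^{s,p}_{comp}(\Omega)\to [W^{s,p}_{comp}(\Omega)]^{**}$ arising from the composition is precisely the canonical evaluation map $J$ (as required by Definition~\ref{defmay7316}), rather than merely some abstract linear bijection. Your verification of $(T^*)^{-1}\circ R=J$ via the explicit pairing formulas and Remark~\ref{remmay77} is exactly what is needed to make the argument rigorous.
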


Now we put everything together to build general embedding theorems for spaces of locally Sobolev-Slobodeckij functions.

\begin{theorem}[Embedding Theorem I]\lab{thmmay7827}
Let $\Omega$ be a nonempty open set in $\reals^n$. If $s_1,s_2\in \reals$ and $1<p_1,p_2<\infty$ are such that $W^{s_1,p_1}(\Omega)\hookrightarrow W^{s_2,p_2}(\Omega)$, then $W^{s_1,p_1}_{loc}(\Omega)\hookrightarrow W^{s_2,p_2}_{loc}(\Omega)$.
\end{theorem}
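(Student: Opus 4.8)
The plan is to unpack the symbol $\hookrightarrow$ into its two components: the set inclusion $W^{s_1,p_1}_{loc}(\Omega)\subseteq W^{s_2,p_2}_{loc}(\Omega)$, and the continuity of the resulting ``identity map'' with respect to the natural topologies of the two spaces.

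For the inclusion, take $u\in W^{s_1,p_1}_{loc}(\Omega)$, so $u\in D'(\Omega)$ and $\varphi u\in W^{s_1,p_1}(\Omega)$ for every $\varphi\in C_c^\infty(\Omega)$ (when $s_1<0$, $\varphi u$ is read as the product of a smooth function with a distribution). Since $W^{s_1,p_1}(\Omega)\hookrightarrow W^{s_2,p_2}(\Omega)$ and, by Theorem~\ref{thmmay7701}, both spaces sit inside $D'(\Omega)$ compatibly (the inclusion $W^{s_1,p_1}(\Omega)\hookrightarrow W^{s_2,p_2}(\Omega)$ is the identity on the underlying distributions), the distribution $\varphi u$ also lies in $W^{s_2,p_2}(\Omega)$. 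As this holds for all $\varphi\in C_c^\infty(\Omega)$, we conclude $u\in W^{s_2,p_2}_{loc}(\Omega)$; thus the identity map $\iota:W^{s_1,p_1}_{loc}(\Omega)\to W^{s_2,p_2}_{loc}(\Omega)$ is a well-defined linear map.

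For continuity, let $C>0$ be a constant witnessing $W^{s_1,p_1}(\Omega)\hookrightarrow W^{s_2,p_2}(\Omega)$, i.e. $\|v\|_{W^{s_2,p_2}(\Omega)}\leq C\|v\|_{W^{s_1,p_1}(\Omega)}$ for all $v\in W^{s_1,p_1}(\Omega)$. Then for every $\varphi\in C_c^\infty(\Omega)$ and every $u\in W^{s_1,p_1}_{loc}(\Omega)$,
\[
|\iota(u)|_{\varphi,s_2,p_2}=\|\varphi u\|_{W^{s_2,p_2}(\Omega)}\leq C\,\|\varphi u\|_{W^{s_1,p_1}(\Omega)}=C\,|u|_{\varphi,s_1,p_1}.
\]
Recalling that the topology of $W^{s_2,p_2}_{loc}(\Omega)$ is the natural topology induced by $\{|.|_{\varphi,s_2,p_2}\}_{\varphi\in C_c^\infty(\Omega)}$ and that of $W^{s_1,p_1}_{loc}(\Omega)$ by $\{|.|_{\varphi,s_1,p_1}\}_{\varphi\in C_c^\infty(\Omega)}$, this bound is precisely the criterion of Theorem~\ref{thmapptvconvergence1}(2): each seminorm of the target, composed with $\iota$, is dominated by a single seminorm of the source up to a constant; equivalently, by Theorem~\ref{thmmay2656} each $|.|_{\varphi,s_2,p_2}\circ\iota$ is a continuous seminorm on $W^{s_1,p_1}_{loc}(\Omega)$, so $\iota$ is continuous by Theorem~\ref{thmmay11714}. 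Hence $W^{s_1,p_1}_{loc}(\Omega)\hookrightarrow W^{s_2,p_2}_{loc}(\Omega)$. There is no real obstacle here; the only point meriting care is the compatibility of the two $D'(\Omega)$-identifications used in the inclusion step, which guarantees that ``$\varphi u\in W^{s_1,p_1}(\Omega)$'' and ``$\varphi u\in W^{s_2,p_2}(\Omega)$'' refer to the same distribution.
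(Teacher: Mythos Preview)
Your proof is correct and follows essentially the same approach as the paper: first the set inclusion via $\varphi u\in W^{s_1,p_1}(\Omega)\Rightarrow \varphi u\in W^{s_2,p_2}(\Omega)$, then continuity via the seminorm estimate $|u|_{\varphi,s_2,p_2}\preceq |u|_{\varphi,s_1,p_1}$ together with Theorem~\ref{thmapptvconvergence1}. The only differences are cosmetic---you spell out the compatibility of the $D'(\Omega)$-identifications and cite Theorems~\ref{thmmay2656} and~\ref{thmmay11714} as alternative justifications for continuity.
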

\begin{proof}
We have
\begin{align*}
u\in W^{s_1,p_1}_{loc}(\Omega)&\Longleftrightarrow \forall\,\varphi\in C_c^\infty(\Omega)\qquad \varphi u\in W^{s_1,p_1}(\Omega)\\
&\Longrightarrow \forall\,\varphi\in C_c^\infty(\Omega)\qquad \varphi u\in W^{s_2,p_2}(\Omega)
\\
&\Longleftrightarrow u\in W^{s_2,p_2}_{loc}(\Omega)
\end{align*}
So $W^{s_1,p_1}_{loc}(\Omega)\subseteq W^{s_2,p_2}_{loc}(\Omega)$. Now note that for all $\varphi\in C_c^\infty(\Omega)$
\begin{equation*}
|u|_{\varphi, s_2,p_2}=\|\varphi u\|_{W^{s_2,p_2}(\Omega)}\preceq \|\varphi u\|_{W^{s_1,p_1}(\Omega)}=|u|_{\varphi, s_1,p_1}
\end{equation*}
So it follows from Theorem ~\ref{thmapptvconvergence1} that the inclusion map from $W^{s_1,p_1}_{loc}(\Omega)$ to $W^{s_2,p_2}_{loc}(\Omega)$ is continuous.
\end{proof}

\begin{theorem}[Embedding Theorem II]\lab{thmmay7828}
Let $\Omega$ be a nonempty open set in $\reals^n$ that has the interior Lipschitz property. Suppose that  $s_1,s_2\in \reals$ and $1<p_1,p_2<\infty$ are such that $W^{s_1,p_1}(U)\hookrightarrow W^{s_2,p_2}(U)$ for all bounded open sets $U$ with Lipschitz continuous boundary. If $s_1<0$, further assume that $(-s_1,p_1',\Omega)$ is a smooth multiplication triple. If $s_2<0$, further assume that $(-s_2,p'_2,\Omega)$ is a smooth multiplication triple. Then $W^{s_1,p_1}_{loc}(\Omega)\hookrightarrow W^{s_2,p_2}_{loc}(\Omega)$.
\end{theorem}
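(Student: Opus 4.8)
The plan is to prove the statement one cutoff at a time, reducing it to the local comparison estimates of Theorem~\ref{thmapril181} together with the assumed embedding on bounded Lipschitz domains. So first I would fix $u\in W^{s_1,p_1}_{loc}(\Omega)$ and $\varphi\in C_c^\infty(\Omega)$, and set $K:=\operatorname{supp}\varphi$. Using the interior Lipschitz property of $\Omega$, choose a bounded open set $U$ with Lipschitz continuous boundary such that $K\subseteq U\subseteq\Omega$; then $K\in\mathcal{K}(U)$. Since $u$ is locally Sobolev, $\varphi u\in W^{s_1,p_1}(\Omega)$, and $\operatorname{supp}(\varphi u)\subseteq K$, so $\varphi u\in W^{s_1,p_1}_K(\Omega)$.

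Next I would restrict to $U$, embed, and extend back. By Theorem~\ref{thmapril181}(1), applied with the triple $(s_1,p_1,\Omega)$ (the extra hypothesis for $s_1<0$ being precisely that $(-s_1,p_1',\Omega)$ is a smooth multiplication triple), $(\varphi u)|_U\in W^{s_1,p_1}(U)$ with $\|(\varphi u)|_U\|_{W^{s_1,p_1}(U)}\simeq\|\varphi u\|_{W^{s_1,p_1}(\Omega)}$, and this restriction still has support in $K$, hence lies in $W^{s_1,p_1}_K(U)$. Since $U$ is bounded with Lipschitz boundary, the embedding hypothesis gives $(\varphi u)|_U\in W^{s_2,p_2}(U)$ with $\|(\varphi u)|_U\|_{W^{s_2,p_2}(U)}\preceq\|(\varphi u)|_U\|_{W^{s_1,p_1}(U)}$, and of course $(\varphi u)|_U\in W^{s_2,p_2}_K(U)$. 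Finally, by Theorem~\ref{thmapril181}(2), applied with $(s_2,p_2,\Omega)$ (the extra hypothesis for $s_2<0$ being that $(-s_2,p_2',\Omega)$ is a smooth multiplication triple), $\operatorname{ext}^0_{U,\Omega}\big((\varphi u)|_U\big)\in W^{s_2,p_2}(\Omega)$ with comparable norm; since $\varphi u$ is supported in $K\subseteq U$, this extension equals $\varphi u$ as a distribution on $\Omega$. Chaining the four estimates produces a constant $c_\varphi$ independent of $u$ with
\begin{equation*}
|u|_{\varphi,s_2,p_2}=\|\varphi u\|_{W^{s_2,p_2}(\Omega)}\le c_\varphi\|\varphi u\|_{W^{s_1,p_1}(\Omega)}=c_\varphi\,|u|_{\varphi,s_1,p_1}.
\end{equation*}

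Since $\varphi\in C_c^\infty(\Omega)$ was arbitrary, this simultaneously yields the set inclusion $W^{s_1,p_1}_{loc}(\Omega)\subseteq W^{s_2,p_2}_{loc}(\Omega)$ and, by Theorem~\ref{thmapptvconvergence1}, the continuity of the inclusion map, because each target seminorm $|\cdot|_{\varphi,s_2,p_2}$ is dominated by the single source seminorm $|\cdot|_{\varphi,s_1,p_1}$. The main thing to be careful about is the bookkeeping of the smooth-multiplication-triple hypotheses, so that Theorem~\ref{thmapril181} is legitimately invoked both at the restriction step (smoothness $s_1$) and at the extension step (smoothness $s_2$), and the verification that $\operatorname{ext}^0_{U,\Omega}\big((\varphi u)|_U\big)=\varphi u$ at the level of distributions when $s_i<0$; once the interior Lipschitz property produces the sandwiching Lipschitz domain $U$, the remainder is just a concatenation of continuous linear maps.
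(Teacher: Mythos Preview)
Your proposal is correct and follows essentially the same route as the paper: fix $\varphi$, use the interior Lipschitz property to sandwich $\operatorname{supp}\varphi$ inside a bounded Lipschitz domain, then invoke Theorem~\ref{thmapril181} to pass between $\Omega$ and that subdomain and apply the assumed embedding there, chaining the resulting norm equivalences. Your write-up is in fact slightly more explicit than the paper's about which part of Theorem~\ref{thmapril181} is used at each step and about where the smooth-multiplication-triple hypotheses enter.
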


\begin{proof}
Suppose $u\in W^{s_1,p_1}_{loc}(\Omega)$ and $\varphi\in C_c^\infty(\Omega)$. Let $\Omega'$ be an open set in $\Omega$ that contains $\textrm{supp}\,\varphi$ and has Lipschitz continuous boundary. We have
\begin{align*}
u\in W^{s_1,p_1}_{loc}(\Omega)&\Longrightarrow \varphi u\in W^{s_1,p_1}(\Omega)
\stackrel{\textrm{Theorem ~\ref{thmapril181}}}{\Longrightarrow} (\varphi u)|_{\Omega'}\in W^{s_1,p_1}(\Omega')\\
&\Longrightarrow (\varphi u)|_{\Omega'}\in W^{s_2,p_2}(\Omega')\\
&\stackrel{\textrm{Theorem ~\ref{thmapril181}}}{\Longrightarrow} \varphi u\in W^{s_2,p_2}(\Omega)
\end{align*}
Since $\varphi$ can be any element of $C_c^\infty(\Omega)$, we can conclude that if $u\in W^{s_1,p_1}_{loc}(\Omega)$, then $u\in W^{s_2,p_2}_{loc}(\Omega)$. In order to prove the continuity of the inclusion map we can proceed as follows: let $\varphi\in C_c^\infty(\Omega)$ and choose $\Omega'$ as before.
\begin{align*}
|u|_{\varphi, s_2,p_2}&=\|\varphi u\|_{W^{s_2,p_2}(\Omega)}\stackrel{\textrm{Theorem ~\ref{thmapril181}}}{\simeq} \|\varphi u\|_{W^{s_2,p_2}(\Omega')}\\
&
\preceq \|\varphi u\|_{W^{s_1,p_1}(\Omega')}\stackrel{\textrm{Theorem ~\ref{thmapril181}}}{\simeq} \|\varphi u\|_{W^{s_1,p_1}(\Omega)}\\
&=|u|_{\varphi, s_1,p_1}
\end{align*}
So it follows from Theorem ~\ref{thmapptvconvergence1} that the inclusion map from $W^{s_1,p_1}_{loc}(\Omega)$ to $W^{s_2,p_2}_{loc}(\Omega)$ is continuous.
\end{proof}

A version of compact embedding for spaces $H^{m}_{loc}$ with integer smoothness degree has been studied in \cite{Antonic1}. In what follows we will state the corresponding theorem and its proof for spaces of locally Sobolev-Slobodeckij functions.

\begin{lemma}\lab{lemmay2903}
Suppose that $(s,p,\Omega)$ and $(-s,p',\Omega)$ are smooth multiplication triples. If $u_m$ converges \textbf{weakly} to $u$ in $W^{s,p}_{loc}(\Omega)$ then
\begin{equation*}
\forall\,\varphi\in C_c^\infty(\Omega)\qquad \varphi u_m\rightharpoonup \varphi u\quad \textrm{in $W^{s,p}(\Omega)$}
\end{equation*}
\end{lemma}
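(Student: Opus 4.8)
The plan is to reduce weak convergence in $W^{s,p}_{loc}(\Omega)$ to weak convergence of each "localization" $\varphi u_m$ in the Banach space $W^{s,p}(\Omega)$, by exploiting the duality identification from Theorem~\ref{thmmay7824}. Fix $\varphi\in C_c^\infty(\Omega)$. To show $\varphi u_m\rightharpoonup \varphi u$ in $W^{s,p}(\Omega)$, it suffices to show $\langle F,\varphi u_m\rangle\to\langle F,\varphi u\rangle$ for every $F\in [W^{s,p}(\Omega)]^*=W^{-s,p'}(\Omega)$ (using Corollary~\ref{cormay7659} and the fact that $(-s,p',\Omega)$ is a smooth multiplication triple so $W^{-s,p'}(\Omega)=W^{-s,p'}_0(\Omega)$). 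First I would rewrite, for $F\in W^{-s,p'}(\Omega)$,
\begin{equation*}
\langle F,\varphi u_m\rangle_{W^{-s,p'}(\Omega)\times W^{s,p}_0(\Omega)} = \langle \varphi F, u_m\rangle,
\end{equation*}
where $\varphi F\in W^{-s,p'}(\Omega)$ because $(-s,p',\Omega)$ is a smooth multiplication triple; the identity itself is the duality-pairing formula recorded in Remark~\ref{remapril301014} (valid first on $D(\Omega)$ and then extended by density of $C_c^\infty(\Omega)$ in $W^{s,p}_0(\Omega)$, using that both sides are continuous in $u\in W^{s,p}_0(\Omega)$). Since $\varphi F$ has compact support, it lies in $W^{-s,p'}_{comp}(\Omega)$.

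Next I would interpret the pairing $\langle \varphi F, u_m\rangle$ through the isomorphism $R:W^{-s,p'}_{comp}(\Omega)\to [W^{s,p}_{loc}(\Omega)]^*$ of Theorem~\ref{thmmay7825}: by Remark~\ref{remmay2757}, viewing $\varphi F$ as an element of $[W^{s,p}_{loc}(\Omega)]^*$ gives
\begin{equation*}
[R(\varphi F)](u_m) = \langle \varphi F, \psi u_m\rangle_{W^{-s,p'}_0(\Omega)\times W^{s,p}_0(\Omega)} = \langle \varphi F, u_m\rangle
\end{equation*}
for any $\psi\in C_c^\infty(\Omega)$ equal to $1$ near $\operatorname{supp}(\varphi F)$ (the last equality because $\varphi F$ is supported where $\psi\equiv 1$). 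In other words, $\langle \varphi F, u_m\rangle$ is precisely the action of a fixed continuous linear functional on $W^{s,p}_{loc}(\Omega)$ evaluated at $u_m$. By hypothesis $u_m\rightharpoonup u$ in $W^{s,p}_{loc}(\Omega)$, which by definition of the weak topology $\sigma(W^{s,p}_{loc},[W^{s,p}_{loc}]^*)$ means exactly that $G(u_m)\to G(u)$ for every $G\in [W^{s,p}_{loc}(\Omega)]^*$; applying this with $G=R(\varphi F)$ yields $\langle \varphi F,u_m\rangle\to\langle\varphi F,u\rangle$, i.e. $\langle F,\varphi u_m\rangle\to\langle F,\varphi u\rangle$. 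Since $F\in W^{-s,p'}(\Omega)$ was arbitrary, $\varphi u_m\rightharpoonup \varphi u$ in $W^{s,p}(\Omega)$, and since $\varphi$ was arbitrary the lemma follows.

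The only delicate point is bookkeeping: one must make sure the three distinct duality pairings involved — on $W^{s,p}(\Omega)$, on $W^{s,p}_{loc}(\Omega)$, and the bare distributional pairing $D'(\Omega)\times D(\Omega)$ — are all consistently identified, which is exactly what Remarks~\ref{remapril28610}, \ref{remapril282}, and \ref{remmay2757} set up; in particular one needs $(s,p,\Omega)$ a smooth multiplication triple (so that $\psi u_m\in W^{s,p}_0(\Omega)$ makes sense and Theorem~\ref{thmmay7825} applies) and $(-s,p',\Omega)$ a smooth multiplication triple (so that $\varphi F\in W^{-s,p'}(\Omega)$ and so that $W^{-s,p'}_{comp}\hookrightarrow W^{-s,p'}_0$). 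There is no hard analytic estimate here — the content is the identification $[W^{s,p}_{loc}(\Omega)]^*\cong W^{-s,p'}_{comp}(\Omega)$ already established, and weak convergence transfers essentially formally once the functional $u\mapsto\langle F,\varphi u\rangle$ is recognized as lying in $[W^{s,p}_{loc}(\Omega)]^*$.
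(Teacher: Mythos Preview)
Your argument is correct and takes a genuinely different route from the paper's proof. The paper proceeds indirectly via a subsequence argument: it uses that weak convergence in $W^{s,p}_{loc}(\Omega)$ implies boundedness (Corollary~\ref{cormay7322}), hence $\{\varphi u_{m'}\}$ is norm-bounded in $W^{s,p}(\Omega)$; reflexivity then yields a weakly convergent sub-subsequence $\varphi u_{m''}\rightharpoonup F$, and the limit $F$ is identified with $\varphi u$ by passing to $(D'(\Omega),\textrm{weak}^*)$ via Corollary~\ref{cormay13615}. The ``every subsequence has a further subsequence converging to the same limit'' trick then closes the argument. Your approach is more direct: you recognize the functional $v\mapsto \langle F,\varphi v\rangle$ as the action of $R(\varphi F)\in [W^{s,p}_{loc}(\Omega)]^*$ via Theorem~\ref{thmmay7825}, and then the conclusion is immediate from the definition of weak convergence. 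Your route is shorter and makes the role of the duality $[W^{s,p}_{loc}]^*\cong W^{-s,p'}_{comp}$ transparent; the paper's route avoids invoking Theorem~\ref{thmmay7825} explicitly but uses the closely related Corollary~\ref{cormay13615}.

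One small correction: the identification $[W^{s,p}(\Omega)]^*=W^{-s,p'}(\Omega)$ you write is not generally true when $s>0$ (by definition $W^{-s,p'}(\Omega)=[W^{s,p}_0(\Omega)]^*$, and $W^{s,p}_0(\Omega)$ may be a proper closed subspace of $W^{s,p}(\Omega)$). This does not damage your argument, because $\varphi u_m,\varphi u\in W^{s,p}_0(\Omega)$ (Remark~\ref{remapril161}), so for any $G\in [W^{s,p}(\Omega)]^*$ one has $G(\varphi u_m)=(G|_{W^{s,p}_0(\Omega)})(\varphi u_m)$, and $G|_{W^{s,p}_0(\Omega)}\in W^{-s,p'}(\Omega)$. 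Phrased differently, weak convergence of $\{\varphi u_m\}$ in the closed subspace $W^{s,p}_0(\Omega)$ is equivalent to weak convergence in $W^{s,p}(\Omega)$ (this is the content of Theorem~\ref{thmmay21220} and its converse via Hahn--Banach). With that adjustment your bookkeeping is clean.
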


\begin{proof}
The proof is based on the following well-known fact:\\
\textbf{Fact 1:} Let $X$ be a topological space and suppose that $x$ is a point in $X$. Let $\{x_m\}$ be a sequence in $X$. If every subsequence of $\{x_m\}$ contains a subsequence that converges to $x$, then $x_m\rightarrow x$.\\
Let $\varphi\in C_c^\infty(\Omega)$. By Fact 1, it is enough to show that every subsequence of $\varphi u_m$ has a subsequence that converges weakly to $\varphi u$ in $W^{s,p}(\Omega)$. Let $\varphi u_{m'}$ be a subsequence of $\varphi u_m$. We have
\begin{align*}
u_{m'}\rightharpoonup u\quad \textrm{in $W^{s,p}_{loc}(\Omega)$}&\stackrel{\textrm{Corollary ~\ref{cormay7322}}}{\Longrightarrow}
\textrm{$\{u_{m'}\}$ is bounded in $W^{s,p}_{loc}(\Omega)$}
\\
&\stackrel{\textrm{Corollary ~\ref{cormay13343}}}{\Longrightarrow}  \textrm{$\{\varphi u_{m'}\}$ is bounded in $W^{s,p}(\Omega)$}
\end{align*}
Since $W^{s,p}(\Omega)$ is reflexive, there exists a subsequence $\varphi u_{m''}$ that converges weakly to some  $F\in W^{s,p}(\Omega)$. To finish the proof it is enough to show that $F=\varphi u$. We have
\begin{align*}
u_{m''}\rightharpoonup u\quad \textrm{in $W^{s,p}_{loc}(\Omega)$}&\Longrightarrow
u_{m''}\rightarrow u \quad\textrm{in $(D'(\Omega), \textrm{weak}^*)$}\\
&\Longrightarrow\varphi u_{m''}\rightarrow \varphi u \quad\textrm{in $(D'(\Omega), \textrm{weak}^*)$}
\end{align*}
In the first line we used Theorem ~\ref{thmmay2} and the fact that $W^{s,p}_{loc}(\Omega)\hookrightarrow (D'(\Omega), \textrm{weak}^*)$ (see Corollary ~\ref{cormay13615}). In the second line we used the fact that multiplication by smooth functions is a continuous operator on $(D'(\Omega), \textrm{weak}^*)$.\\
Similarly, since $W^{s,p}(\Omega)\hookrightarrow (D'(\Omega), \textrm{weak}^*)$, it follows from Theorem ~\ref{thmmay2} that
\begin{align*}
\varphi u_{m''}\rightharpoonup F \quad\textrm{in $W^{s,p}(\Omega)$}\Longrightarrow
\varphi u_{m''}\rightarrow F \quad\textrm{in $(D'(\Omega), \textrm{weak}^*)$}
\end{align*}
Consequently, $\varphi u=F$ as elements of $D'(\Omega)$ and subsequently as elements of $W^{s,p}_0(\Omega)$.
\end{proof}

\begin{theorem}[Compact Embedding]\lab{thmmay7829}
Let $\Omega$ be a nonempty open set in $\reals^n$ that has the interior Lipschitz property. Suppose that $(s_1,p_1,\Omega)$ and $(-s_1,p'_1,\Omega)$ are smooth multiplication triples. If $s_2<0$, further assume that $(-s_2,p_2',\Omega)$ is a smooth multiplication triple. Moreover, suppose that  $s_1,s_2, p_1$, and $p_2$ are such that $W^{s_1,p_1}(U)$ is compactly embedded in $W^{s_2,p_2}(U)$ for all bounded open sets $U$ with Lipschitz continuous boundary. Then every bounded sequence in $W^{s_1,p_1}_{loc}(\Omega)$ has a convergent subsequence in $W^{s_2,p_2}_{loc}(\Omega)$.
\end{theorem}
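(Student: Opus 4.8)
The plan is to combine weak sequential compactness in the Frechet space $W^{s_1,p_1}_{loc}(\Omega)$ with a localized diagonal extraction, transferring the compactness hypothesis from the bounded Lipschitz pieces $U$ to $\Omega$ via the support-restricted norm estimates of Theorem~\ref{thmapril181}. Fix a bounded sequence $\{u_m\}$ in $W^{s_1,p_1}_{loc}(\Omega)$. By the stated hypotheses $(s_1,p_1,\Omega)$ and $(-s_1,p_1',\Omega)$ are smooth multiplication triples, so Theorems~\ref{thmmay7819} and~\ref{thmmay7823} together with Corollary~\ref{cormay7826} make $W^{s_1,p_1}_{loc}(\Omega)$ a separable reflexive Frechet space; hence Theorem~\ref{thmmay2858} gives a subsequence, still denoted $\{u_m\}$, and some $u\in W^{s_1,p_1}_{loc}(\Omega)$ with $u_m\rightharpoonup u$. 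Let $\{\varphi_j\}_{j\in\mathbb{N}}$ be the countable admissible family of Lemma~\ref{lemapril171}, built from an exhaustion $\{K_j\}$ of $\Omega$, with $\varphi_j=1$ on $K_j$ and $\textrm{supp}\,\varphi_j\subseteq\mathring{K}_{j+2}$. By Lemma~\ref{lemmay2903}, $\varphi_j u_m\rightharpoonup\varphi_j u$ in $W^{s_1,p_1}(\Omega)$ for every $j$, so by Corollary~\ref{cormay7322} each $\{\varphi_j u_m\}_m$ is bounded in the Banach space $W^{s_1,p_1}_{K_{j+2}}(\Omega)$.

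Next, for fixed $j$, use the interior Lipschitz property to pick a bounded open $\Omega_j$ with Lipschitz boundary and $K_{j+2}\subseteq\Omega_j\subseteq\Omega$. By Theorem~\ref{thmapril181}(1) (invoking that $(-s_1,p_1',\Omega)$ is a smooth multiplication triple when $s_1<0$), restriction to $\Omega_j$ preserves the $W^{s_1,p_1}$-norm up to constants on $W^{s_1,p_1}_{K_{j+2}}(\Omega)$, so $\{(\varphi_j u_m)|_{\Omega_j}\}_m$ is bounded in $W^{s_1,p_1}_{K_{j+2}}(\Omega_j)\subseteq W^{s_1,p_1}(\Omega_j)$. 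The compact embedding $W^{s_1,p_1}(\Omega_j)\hookrightarrow W^{s_2,p_2}(\Omega_j)$ then yields a subsequence along which $(\varphi_j u_m)|_{\Omega_j}$ converges in $W^{s_2,p_2}(\Omega_j)$, the limit again being supported in $K_{j+2}$; applying Theorem~\ref{thmapril181}(2) (invoking that $(-s_2,p_2',\Omega)$ is a smooth multiplication triple when $s_2<0$) and using $\textrm{ext}^0_{\Omega_j,\Omega}\big((\varphi_j u_m)|_{\Omega_j}\big)=\varphi_j u_m$, we conclude that $\varphi_j u_m$ converges in $W^{s_2,p_2}(\Omega)$ along that subsequence. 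Performing this for $j=1,2,\dots$ in turn and passing to the diagonal subsequence produces a single subsequence, still written $\{u_m\}$, such that $\varphi_j u_m$ converges in $W^{s_2,p_2}(\Omega)$ for every $j$.

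It remains to identify the limit and pass to an arbitrary test function. Since $\varphi_j u_m\rightharpoonup\varphi_j u$ in $W^{s_1,p_1}(\Omega)$ and $W^{s_1,p_1}(\Omega)\hookrightarrow D'(\Omega)$, Theorem~\ref{thmmay2} gives $\varphi_j u_m\to\varphi_j u$ in $(D'(\Omega),\textrm{weak}^*)$; the already-established convergence of $\varphi_j u_m$ in $W^{s_2,p_2}(\Omega)\hookrightarrow D'(\Omega)$ likewise forces convergence in $(D'(\Omega),\textrm{weak}^*)$ (Remark~\ref{winter32}), so by uniqueness of limits the $W^{s_2,p_2}(\Omega)$-limit of $\varphi_j u_m$ is $\varphi_j u$. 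Now let $\psi\in C_c^\infty(\Omega)$ be arbitrary; compactness of $\textrm{supp}\,\psi$ and $\Omega=\bigcup_j\mathring{K}_j$ give an $N$ with $\textrm{supp}\,\psi\subseteq\mathring{K}_N$, hence $\psi\varphi_N=\psi$ and $\psi u_m=\psi(\varphi_N u_m)$. Multiplication by $\psi$ is bounded from $W^{s_2,p_2}_{K_{N+2}}(\Omega)$ into $W^{s_2,p_2}(\Omega)$ --- by Theorem~\ref{lemapp3} when $s_2\geq0$, and when $s_2<0$ because the assumption that $(-s_2,p_2',\Omega)$ is a smooth multiplication triple makes $(s_2,p_2,\Omega)$ one as well by Theorem~\ref{thmmay7724} --- so $\psi u_m\to\psi(\varphi_N u)=\psi u$ in $W^{s_2,p_2}(\Omega)$. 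As $\psi$ was arbitrary, $u\in W^{s_2,p_2}_{loc}(\Omega)$ (consistent with Theorem~\ref{thmmay7828}) and $u_m\to u$ in $W^{s_2,p_2}_{loc}(\Omega)$.

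The routine part is the seminorm bookkeeping; the two places needing care are the repeated use of Theorem~\ref{thmapril181} --- where one must keep every object supported in the fixed compact $K_{j+2}$ so that restriction and zero-extension are mutually inverse, and where the smooth-multiplication-triple hypotheses must be engaged precisely when $s_1$ or $s_2$ is negative --- and the final multiplication-by-$\psi$ step when $s_2$ is a noninteger below $-1$, which genuinely requires the hypothesis on $(-s_2,p_2',\Omega)$ rather than the general Theorem~\ref{lemapp3}. I expect this last point, together with correctly matching the weak convergence in $W^{s_1,p_1}$ to convergence in $D'(\Omega)$, to be the main conceptual obstacle.
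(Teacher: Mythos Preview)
Your proof is correct, but it takes a longer route than the paper's. The paper's proof also begins by extracting a weakly convergent subsequence $u_{m'}\rightharpoonup u$ in $W^{s_1,p_1}_{loc}(\Omega)$ via the separable reflexive Frechet structure, but then dispenses with the diagonal argument entirely by invoking the standard fact that a compact linear operator sends weakly convergent sequences to norm-convergent ones. Concretely, for an \emph{arbitrary} $\varphi\in C_c^\infty(\Omega)$ with $K=\textrm{supp}\,\varphi$, Lemma~\ref{lemmay2903} and Theorem~\ref{thmmay21220} give $\varphi u_{m'}\rightharpoonup\varphi u$ in $W^{s_1,p_1}_K(\Omega)$; weak--weak continuity of restriction (Theorem~\ref{thmmay21022}) transfers this to $W^{s_1,p_1}(\Omega')$ for a Lipschitz $\Omega'\supseteq K$; and then the compact embedding $W^{s_1,p_1}(\Omega')\hookrightarrow W^{s_2,p_2}(\Omega')$ immediately yields $\varphi u_{m'}\to\varphi u$ strongly in $W^{s_2,p_2}(\Omega')$, hence in $W^{s_2,p_2}(\Omega)$ by Theorem~\ref{thmapril181}. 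Since this works for every $\varphi$ along the \emph{same} subsequence, no countable family, no diagonal extraction, and no final ``pass to arbitrary $\psi$'' step are needed. Your approach trades this single functional-analytic observation for a more hands-on construction; it is self-contained and the limit identification via $D'(\Omega)$ is handled correctly, but you may find it instructive to see how the weak-to-strong upgrade collapses the argument.
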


\begin{proof}
The proof makes use of the following well-known fact:\\
\textbf{Fact 2:} Let $X$ and $Y$ be Banach spaces. Suppose that $T: X\rightarrow Y$ is a linear compact operator. If the sequence $x_m$ converges weakly (i.e. with respect to the weak topology) to $x$ in $X$, then $T(x_n)$ converges to $T(x)$ (with respect to the norm of $Y$) in $Y$.\\
Let $u_m$ be a bounded sequence in $W^{s_1,p_1}_{loc}(\Omega)$. By Theorem ~\ref{thmmay2858}, since $W^{s_1,p_1}_{loc}(\Omega)$ is a separable reflexive Frechet space, there exists $u\in W^{s_1,p_1}_{loc}(\Omega)$ and a subsequence $\{u_{m'}\}$ such that $u_{m'}\rightharpoonup u$ in $W^{s_1,p_1}_{loc}(\Omega)$. We claim that $\{u_{m'}\}$ converges to $u$ in $W^{s_2,p_2}_{loc}(\Omega)$, that is, for all $\varphi\in C_c^\infty(\Omega)$, $\varphi u_{m'}\rightarrow \varphi u$ in $W^{s_2,p_2}_{loc}(\Omega)$. Suppose that $\varphi\in C_c^\infty(\Omega)$ and let $K:=\textrm{supp}\,\varphi$. By Lemma ~\ref{lemmay2903} we have
\begin{equation*}
\varphi u_{m'}\rightharpoonup \varphi u \qquad \textrm{in $W^{s_1,p_1}(\Omega)$}
\end{equation*}
So by Theorem ~\ref{thmmay21220}
\begin{equation*}
\varphi u_{m'}\rightharpoonup \varphi u \qquad \textrm{in $W^{s_1,p_1}_K(\Omega)$}
\end{equation*}

Let $\Omega'$ be an open bounded set in $\Omega$ with Lipschitz continuous boundary such that $K \subseteq \Omega'$. By Theorem ~\ref{thmapril181}, the restriction map from $W^{s_1,p_1}_K(\Omega)$ to $W^{s_1,p_1}(\Omega')$ is well-defined and continuous. It follows from Theorem ~\ref{thmmay21022} that this restriction map is weak-weak continuous. So $\varphi u_{m'}\rightharpoonup \varphi u$ in $W^{s_1,p_1}_K(\Omega)$ implies that  $\varphi u_{m'}\rightharpoonup \varphi u$ in $W^{s_1,p_1}(\Omega')$. By assumption the identity map from $W^{s_1,p_1}(\Omega')$ to $W^{s_2,p_2}(\Omega')$ is compact, so it follows from Fact 2 that $\varphi u_{m'}\rightarrow \varphi u$ in $W^{s_2,p_2}(\Omega')$ which subsequently implies  $\varphi u_{m'}\rightarrow \varphi u$ in $W^{s_2,p_2}(\Omega)$ by Theorem ~\ref{thmapril181}.
\end{proof}

\section{Other Properties}
The main results of this section do not appear to be in the literature in the generality appearing here and they play a fundamental role in the study of the properties of differential operators between Sobolev spaces of sections of vector bundles on manifolds equipped with nonsmooth metric (see \cite{holstbehzadan2017c,holstbehzadan2018b}).
\begin{theorem}\lab{thmmay141107}
Let $\Omega$ be a nonempty open set in $\reals^n$, $s\geq 1$ and $1<p<\infty$. Then $u\in W^{s,p}_{loc}(\Omega)$ if and only if $u\in L^p_{loc}(\Omega)$ and for all $1\leq i\leq n$, $\displaystyle \frac{\partial u}{\partial x^i}\in W^{s-1,p}_{loc}(\Omega)$.
\end{theorem}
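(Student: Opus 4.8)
The plan is to reduce to the global characterization in Theorem~\ref{thmmay141029} by testing against cutoff functions, the workhorse being the Leibniz identity
\[
\partial_i(\varphi u)=\varphi\,\partial_i u+(\partial_i\varphi)\,u\qquad\textrm{in }D'(\Omega),\qquad \varphi\in C_c^\infty(\Omega).
\]
Apart from Theorem~\ref{thmmay141029}, the only extra ingredient is the compactly supported embedding $W^{\sigma,p}_{K}(\Omega)\hookrightarrow W^{\tau,p}_{K}(\Omega)$ whenever $0\le\tau\le\sigma$, which is the case $q=p$ of Theorem~\ref{thm3.2}(1); this is exactly what is needed to absorb the leftover terms $(\partial_i\varphi)u$, which always have compact support. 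I would also use that $W^{\sigma,p}(\Omega)\subseteq L^p(\Omega)$ for every $\sigma\ge0$ (immediate from Definition~\ref{defa1}), hence $W^{\sigma,p}_{loc}(\Omega)\subseteq L^p_{loc}(\Omega)=W^{0,p}_{loc}(\Omega)$.

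For the forward implication, suppose $u\in W^{s,p}_{loc}(\Omega)$ and fix $\chi\in C_c^\infty(\Omega)$. Then $\chi u\in W^{s,p}(\Omega)\subseteq L^p(\Omega)$, and letting $\chi$ range gives $u\in L^p_{loc}(\Omega)$. Since $s\ge1$, Theorem~\ref{thmmay141029} applied to $\chi u$ gives $\partial_i(\chi u)\in W^{s-1,p}(\Omega)$ for each $i$; also $(\partial_i\chi)u\in W^{s,p}(\Omega)$ by the definition of $W^{s,p}_{loc}$, and since it has compact support it lies in $W^{s,p}_{K}(\Omega)\hookrightarrow W^{s-1,p}_{K}(\Omega)\subseteq W^{s-1,p}(\Omega)$. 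Rearranging the Leibniz identity, $\chi\,\partial_i u=\partial_i(\chi u)-(\partial_i\chi)u\in W^{s-1,p}(\Omega)$, and since $\chi$ was arbitrary, $\partial_i u\in W^{s-1,p}_{loc}(\Omega)$.

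The reverse implication is the heart of the matter, and the obstruction is already visible: given $u\in L^p_{loc}(\Omega)$ and $\partial_i u\in W^{s-1,p}_{loc}(\Omega)$, for $\varphi\in C_c^\infty(\Omega)$ the term $\varphi\,\partial_i u$ lies in $W^{s-1,p}(\Omega)$, but $(\partial_i\varphi)u$ is a priori only in $L^p(\Omega)$ with compact support, which embeds into $W^{s-1,p}$ only if $s\le1$. I would resolve this by a finite bootstrap upgrading $u$ one integer order at a time. Step one: $\partial_i u\in W^{s-1,p}_{loc}(\Omega)\subseteq L^p_{loc}(\Omega)$, so a direct Leibniz computation shows $\varphi u\in W^{1,p}(\Omega)$ for every $\varphi$, i.e.\ $u\in W^{1,p}_{loc}(\Omega)$. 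Inductive step: if $u\in W^{j,p}_{loc}(\Omega)$ with $1\le j\le\floor{s}-1$, then for $\varphi\in C_c^\infty(\Omega)$, Theorem~\ref{thmmay141029} at the integer exponent $j+1$ reduces $\varphi u\in W^{j+1,p}(\Omega)$ to $\partial_i(\varphi u)\in W^{j,p}(\Omega)$, which holds because $\varphi\,\partial_i u\in W^{s-1,p}_{K}(\Omega)\hookrightarrow W^{j,p}_{K}(\Omega)$ (legitimate since $j\le\floor{s}-1\le s-1$) and $(\partial_i\varphi)u\in W^{j,p}(\Omega)$; iterating gives $u\in W^{\floor{s},p}_{loc}(\Omega)$. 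Final step: if $s\notin\mathbb{N}$, apply Theorem~\ref{thmmay141029} once more, now at the exponent $s$, to reduce $\varphi u\in W^{s,p}(\Omega)$ to $\partial_i(\varphi u)\in W^{s-1,p}(\Omega)$, which holds since $\varphi\,\partial_i u\in W^{s-1,p}(\Omega)$ by hypothesis and $(\partial_i\varphi)u\in W^{\floor{s},p}_{K}(\Omega)\hookrightarrow W^{s-1,p}_{K}(\Omega)$ (legitimate since $0\le s-1\le\floor{s}$); if $s\in\mathbb{N}$ the inductive step already produced $u\in W^{s,p}_{loc}(\Omega)$. Hence $u\in W^{s,p}_{loc}(\Omega)$.

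The step I expect to be the main obstacle is this reverse bootstrap: one has to keep track of which compact sets carry the relevant supports and verify at each stage that the exponent inequalities making $W^{s-1,p}_{K}\hookrightarrow W^{j,p}_{K}$ and $W^{\floor{s},p}_{K}\hookrightarrow W^{s-1,p}_{K}$ valid actually hold; everything else is routine Leibniz bookkeeping. I note also that the seminorm estimates $|\partial_i u|_{\varphi,\,s-1,p}\preceq$ (finite sums of the seminorms $|u|_{\,\cdot\,,\,s,p}$) and their converses that fall out of the same computations, together with Theorem~\ref{thmapptvconvergence1}, would upgrade the set equality to a topological identification, though only the set-theoretic equivalence is asserted here.
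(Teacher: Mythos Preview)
Your argument is correct and shares its core with the paper's: reduce to Theorem~\ref{thmmay141029} via the Leibniz identity. The forward direction is identical in both. For the reverse direction, the paper writes the proof as a single chain of biconditionals, justifying the equivalence $\partial_i(\varphi u)\in W^{s-1,p}\Longleftrightarrow \varphi\,\partial_i u\in W^{s-1,p}$ by the assertion $(\partial_i\varphi)u\in W^{s,p}(\Omega)\hookrightarrow W^{s-1,p}(\Omega)$; but that membership is only clear under the forward hypothesis $u\in W^{s,p}_{loc}$, not under the reverse hypothesis $u\in L^p_{loc}$ alone. You correctly identify this as the main obstacle and supply a finite bootstrap ($L^p_{loc}\to W^{1,p}_{loc}\to\cdots\to W^{\floor{s},p}_{loc}\to W^{s,p}_{loc}$), using the compactly supported embedding $W^{\sigma,p}_K\hookrightarrow W^{\tau,p}_K$ from Theorem~\ref{thm3.2}(1) to absorb the $(\partial_i\varphi)u$ terms at each stage. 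Your reverse direction is therefore more carefully justified than the paper's, which glosses over precisely the point your bootstrap resolves.
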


\begin{proof}
\begin{align*}
&u\in W^{s,p}_{loc}(\Omega)\Longleftrightarrow \forall\,\varphi\in C_c^\infty(\Omega)\quad \varphi u\in W^{s,p}(\Omega)\\
&\stackrel{\textrm{Theorem ~\ref{thmmay141029}}}{\Longleftrightarrow} \forall\,\varphi\in C_c^\infty(\Omega)\quad
\textrm{$\varphi u\in L^p(\Omega)$ and for all $1\leq i\leq n$, $\frac{\partial (\varphi u)}{\partial x^i}\in W^{s-1,p}(\Omega)$}
\end{align*}
Note that $\displaystyle \frac{\partial (\varphi u)}{\partial x^i}=\frac{\partial \varphi}{\partial x^i}u+\varphi \frac{\partial u}{\partial x^i}$. Since $\displaystyle \frac{\partial \varphi}{\partial x^i}u\in W^{s,p}(\Omega)\hookrightarrow W^{s-1,p}(\Omega)$, we have
\begin{equation*}
\frac{\partial (\varphi u)}{\partial x^i}\in W^{s-1,p}(\Omega) \Longleftrightarrow
\varphi \frac{\partial u}{\partial x^i}\in W^{s-1,p}(\Omega)
\end{equation*}
Therefore
\begin{align*}
&u\in W^{s,p}_{loc}(\Omega)\Longleftrightarrow \forall\,\varphi\in C_c^\infty(\Omega)\quad
\textrm{$\varphi u\in L^p(\Omega)$ and for all $1\leq i\leq n$, $\varphi\frac{\partial  u}{\partial x^i}\in W^{s-1,p}(\Omega)$}\\
&\Longleftrightarrow \textrm{$u\in L^p_{loc}(\Omega)$ and for all $1\leq i\leq n$, $\displaystyle \frac{\partial u}{\partial x^i}\in W^{s-1,p}_{loc}(\Omega)$}
\end{align*}
\end{proof}

\begin{theorem}\lab{thmmay141123}
Let $\Omega$ be a nonempty open set in $\reals^n$, $k\in\mathbb{N}$ and $1<p<\infty$. Then $u\in W^{k,p}_{loc}(\Omega)$ if and only if $\partial^\alpha u\in L^p_{loc}(\Omega)$ for all $|\alpha|\leq k$.
\end{theorem}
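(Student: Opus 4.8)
The plan is to argue by induction on $k$, using Theorem~\ref{thmmay141107} to remove one derivative at each step, together with the elementary identification $W^{0,p}_{loc}(\Omega)=L^p_{loc}(\Omega)$ (both are, by definition, the set of distributions $u$ with $\varphi u\in W^{0,p}(\Omega)=L^p(\Omega)$ for every $\varphi\in C_c^\infty(\Omega)$). Note that Theorem~\ref{thmmay141107} carries no smooth-multiplication-triple hypothesis, so neither will the present argument.

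For the base case $k=1$, I would invoke Theorem~\ref{thmmay141107} with $s=1$: it says that $u\in W^{1,p}_{loc}(\Omega)$ if and only if $u\in L^p_{loc}(\Omega)$ and $\partial u/\partial x^i\in W^{0,p}_{loc}(\Omega)=L^p_{loc}(\Omega)$ for all $1\le i\le n$, which is exactly the statement that $\partial^\alpha u\in L^p_{loc}(\Omega)$ for all $|\alpha|\le 1$.

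For the inductive step, assuming the claim for $k-1\ge 1$, I would apply Theorem~\ref{thmmay141107} with $s=k$ to obtain that $u\in W^{k,p}_{loc}(\Omega)$ if and only if $u\in L^p_{loc}(\Omega)$ and $\partial u/\partial x^i\in W^{k-1,p}_{loc}(\Omega)$ for each $1\le i\le n$; then apply the induction hypothesis to each distribution $\partial u/\partial x^i$ to rewrite the latter condition as $\partial^\beta(\partial u/\partial x^i)\in L^p_{loc}(\Omega)$ for all $|\beta|\le k-1$ and all $1\le i\le n$. Since distributional derivatives commute, $\partial^\beta(\partial u/\partial x^i)=\partial^{\beta+e_i}u$, where $e_i$ denotes the multi-index with a $1$ in the $i$-th slot and $0$ elsewhere, and as $\beta$ ranges over $|\beta|\le k-1$ and $i$ over $\{1,\dots,n\}$ the index $\beta+e_i$ ranges over all $\alpha$ with $1\le|\alpha|\le k$. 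Together with the $|\alpha|=0$ case ($u\in L^p_{loc}(\Omega)$) this is precisely the assertion that $\partial^\alpha u\in L^p_{loc}(\Omega)$ for all $|\alpha|\le k$, closing the induction.

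I do not expect a genuine obstacle here: the content is entirely contained in Theorem~\ref{thmmay141107} plus an induction. The only points requiring care are starting the induction correctly, via the identification $W^{0,p}_{loc}(\Omega)=L^p_{loc}(\Omega)$, and the elementary bookkeeping that every multi-index $\alpha$ with $1\le|\alpha|\le k$ can be written as $\beta+e_i$ with $|\beta|\le k-1$, so that passing between the two characterizations neither drops nor duplicates a condition.
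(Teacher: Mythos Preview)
Your proposal is correct and follows essentially the same approach as the paper: induction on $k$, with the base case and inductive step both reduced to Theorem~\ref{thmmay141107}, plus the identification $W^{0,p}_{loc}(\Omega)=L^p_{loc}(\Omega)$ and the elementary multi-index bookkeeping $\alpha=\beta+e_i$.
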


\begin{proof}
We prove the claim by induction on $k$. For $k=1$ we have
\begin{align*}
u\in W^{1,p}_{loc}(\Omega)
& \stackrel{\textrm{Theorem ~\ref{thmmay141107}}}{\Longleftrightarrow}  u\in L^{p}_{loc}(\Omega), \, \forall\,1\leq i\leq n \,\,\frac{\partial u}{\partial x^i}\in L^p_{loc}(\Omega)\\
&\Longleftrightarrow  \forall\,|\alpha|\leq 1\quad \partial^\alpha u\in L^p_{loc}(\Omega)
\end{align*}
Now suppose the claim holds for $k=m$. For $k=m+1$ we have
\begin{align*}
&u\in W^{m+1,p}_{loc}(\Omega)\stackrel{\textrm{Theorem ~\ref{thmmay141107}}}{\Longleftrightarrow}  u\in L^{p}_{loc}(\Omega), \, \forall\,1\leq i\leq n \,\,\frac{\partial u}{\partial x^i}\in W^{m,p}_{loc}(\Omega)\\
& \stackrel{\textrm{induction hypothesis}}{\Longleftrightarrow}
u\in L^{p}_{loc}(\Omega), \, \forall\,1\leq i\leq n \,\,\forall\, 0\leq |\alpha|\leq m\,\,\partial^\alpha \big[\frac{\partial u}{\partial x^i}\big]\in L^{p}_{loc}(\Omega)\\
&\Longleftrightarrow u\in L^{p}_{loc}(\Omega), \, \,\forall\, 0< |\beta|\leq m+1\,\,\partial^\beta u\in L^{p}_{loc}(\Omega)\\
&\Longleftrightarrow \forall\, |\beta|\leq m+1\,\,\partial^\beta u\in L^{p}_{loc}(\Omega)
\end{align*}
\end{proof}

\begin{theorem}\lab{thmapp8a}
 Let $s\in \reals$, $1<p<\infty$, and $\alpha\in
\mathbb{N}_0^n$. Suppose $\Omega$ is a nonempty bounded open set in $\reals^n$ with Lipschitz continuous boundary. Then
\begin{enumerateX}
\item the linear operator $\partial^\alpha: W^{s,p}_{loc}(\reals^n)\rightarrow
W^{s-|\alpha|,p}_{loc}(\reals^n)$ is well-defined and continuous;
\item for $s<0$, the linear operator $\partial^\alpha: W^{s,p}_{loc}(\Omega)\rightarrow
W^{s-|\alpha|,p}_{loc}(\Omega)$ is well-defined and continuous;
\item for $s\geq 0$ and $|\alpha|\leq s$, the linear operator $\partial^\alpha: W^{s,p}_{loc}(\Omega)\rightarrow
W^{s-|\alpha|,p}_{loc}(\Omega)$ is well-defined and continuous;
\item if $s\geq 0$, $s-\frac{1}{p}\neq \textrm{integer}$ (i.e. the fractional part of $s$ is not equal to $\frac{1}{p}$), then the linear operator $\partial^\alpha: W^{s,p}_{loc}(\Omega)\rightarrow W^{s-|\alpha|,p}_{loc}(\Omega)$ for $|\alpha|>s$ is well-defined and continuous.
\end{enumerateX}
\end{theorem}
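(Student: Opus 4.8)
The plan is to reduce the statement to the (non-local) mapping properties of $\partial^\alpha$ recorded in Theorem~\ref{winter88}, by the same cutoff technique used throughout Section~4. Fix $u\in W^{s,p}_{loc}(\Omega)$ (so $u\in D'(\Omega)$ and $\psi u\in W^{s,p}(\Omega)$ for every $\psi\in C_c^\infty(\Omega)$) and fix $\varphi\in C_c^\infty(\Omega)$; we must produce $\varphi\,\partial^\alpha u\in W^{s-|\alpha|,p}(\Omega)$ together with a seminorm estimate. Using Theorem~\ref{thmmay7635}, choose $\psi\in C_c^\infty(\Omega)$ with $\psi\equiv 1$ on an open neighborhood $U$ of $\textrm{supp}\,\varphi$. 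The elementary identity we will use is
\begin{equation*}
\varphi\,\partial^\alpha u=\varphi\,\partial^\alpha(\psi u),
\end{equation*}
which follows from the distributional Leibniz rule $\partial^\alpha(\psi u)=\sum_{\beta\le\alpha}\binom{\alpha}{\beta}(\partial^\beta\psi)(\partial^{\alpha-\beta}u)$ together with $\varphi\psi=\varphi$ and $\varphi\,\partial^\beta\psi=0$ for $\beta\neq 0$ (each $\partial^\beta\psi$, $\beta\ne 0$, vanishes on $U\supseteq\textrm{supp}\,\varphi$); equivalently, $\partial^\alpha(\psi u)-\partial^\alpha u=\partial^\alpha\big((\psi-1)u\big)$ has support disjoint from $\textrm{supp}\,\varphi$.

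Next, $\psi u\in W^{s,p}(\Omega)$ by the definition of $W^{s,p}_{loc}(\Omega)$. In each of the four cases the appropriate item of Theorem~\ref{winter88} applies: item~(1) when $\Omega=\reals^n$, item~(2) when $s<0$, item~(3) when $s\ge 0$ and $|\alpha|\le s$, and item~(4) when $\Omega$ is bounded with Lipschitz boundary, $s\ge 0$, $s-\frac{1}{p}$ is not an integer, and $|\alpha|>s$. Hence $\partial^\alpha(\psi u)\in W^{s-|\alpha|,p}(\Omega)$ with $\|\partial^\alpha(\psi u)\|_{W^{s-|\alpha|,p}(\Omega)}\preceq\|\psi u\|_{W^{s,p}(\Omega)}$. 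Since $\Omega$ is $\reals^n$ or a bounded Lipschitz domain, Remark~\ref{remapril161} says $(s-|\alpha|,p,\Omega)$ is a smooth multiplication triple for every value of $s-|\alpha|\in\reals$ (in particular the negative ones, where $W^{s-|\alpha|,p}(\Omega)$ is understood inside $D'(\Omega)$ as in the Note following Theorem~\ref{thm3.3}), so $m_\varphi$ is bounded on $W^{s-|\alpha|,p}(\Omega)$. Combining, $\varphi\,\partial^\alpha u=m_\varphi\big(\partial^\alpha(\psi u)\big)\in W^{s-|\alpha|,p}(\Omega)$; since $\varphi$ was arbitrary, $\partial^\alpha u\in W^{s-|\alpha|,p}_{loc}(\Omega)$, and linearity of $u\mapsto\partial^\alpha u$ is clear.

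For continuity I would use item~(2) of Theorem~\ref{thmapptvconvergence1}: the source topology is induced by $\{|\cdot|_{\psi,s,p}\}_{\psi\in C_c^\infty(\Omega)}$ and the target topology by $\{|\cdot|_{\varphi,s-|\alpha|,p}\}_{\varphi\in C_c^\infty(\Omega)}$, and the estimates above give, for the fixed $\varphi$ with its associated $\psi$,
\begin{equation*}
|\partial^\alpha u|_{\varphi,\,s-|\alpha|,\,p}=\|\varphi\,\partial^\alpha(\psi u)\|_{W^{s-|\alpha|,p}(\Omega)}\preceq\|\partial^\alpha(\psi u)\|_{W^{s-|\alpha|,p}(\Omega)}\preceq\|\psi u\|_{W^{s,p}(\Omega)}=|u|_{\psi,\,s,\,p},
\end{equation*}
with the implicit constant independent of $u$. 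Thus every target seminorm is dominated by a single source seminorm, so $\partial^\alpha:W^{s,p}_{loc}(\Omega)\to W^{s-|\alpha|,p}_{loc}(\Omega)$ is continuous. The only delicate points are bookkeeping: verifying that the correct item of Theorem~\ref{winter88} is available in each case (in particular, case~(4) genuinely needs $\Omega$ bounded with Lipschitz boundary), and checking that the support identity $\varphi\,\partial^\alpha u=\varphi\,\partial^\alpha(\psi u)$ and the multiplier bound remain valid when $s-|\alpha|<0$, i.e.\ when these objects live only in $D'(\Omega)$; the latter is covered by the conventions of the Note following Theorem~\ref{thm3.3} and Remark~\ref{remapril161}.
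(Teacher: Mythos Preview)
Your proof is correct and takes a genuinely different route from the paper's. The paper argues by induction on $|\alpha|$: for the base case $|\alpha|=1$ it writes $\varphi\,\partial_i u=\partial_i(\varphi u)-(\partial_i\varphi)u$, bounds the two pieces separately via Theorem~\ref{winter88} and the embedding $W^{s,p}\hookrightarrow W^{s-1,p}$, and then iterates; this produces, at each step, a maximum over an increasing collection of source seminorms $\|\varphi_1 u\|_{s,p},\dots,\|\varphi_k u\|_{s,p}$. You instead introduce a single auxiliary cutoff $\psi$ equal to $1$ near $\textrm{supp}\,\varphi$, use the identity $\varphi\,\partial^\alpha u=\varphi\,\partial^\alpha(\psi u)$ in $D'(\Omega)$, and apply Theorem~\ref{winter88} once together with the smooth-multiplication-triple property of $(s-|\alpha|,p,\Omega)$ from Remark~\ref{remapril161}. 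This avoids induction entirely and yields a bound by a \emph{single} source seminorm $|u|_{\psi,s,p}$, which is slightly sharper and considerably cleaner. The trade-off is that your argument leans on $(s-|\alpha|,p,\Omega)$ being a smooth multiplication triple for arbitrary real $s-|\alpha|$; that is available here precisely because $\Omega$ is $\reals^n$ or bounded Lipschitz, whereas the paper's Leibniz-and-induct approach could in principle be pushed through in settings where only the interior smooth multiplication property is known.
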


\begin{proof}
This is the counterpart of Theorem \ref{winter88} for locally Sobolev functions. Here we will prove the first item. The remaining items can be proved using a similar technique.
\begin{itemizeX}
\item \textbf{Step 1:} First we prove by induction on $|\alpha|$ that if $u\in W^{s,p}_{loc}(\reals^n)$, then $\partial^\alpha u\in W^{s-|\alpha|,p}_{loc}(\reals^n)$. Let $\varphi\in C_c^\infty(\reals^n)$; we need to show that $\varphi \partial^\alpha u\in W^{s-|\alpha|,p}(\reals^n)$. If $|\alpha|=0$, there is nothing to prove. If $|\alpha|=1$, there exists $1\leq i\leq n$ such that $\partial^\alpha=\frac{\partial}{\partial x^i}$. We have
    \begin{equation*}
    \varphi\partial^\alpha u=\varphi \frac{\partial u}{\partial x^i}=\frac{\partial (\varphi u)}{\partial x^i}-\frac{\partial \varphi}{\partial x^i}u
    \end{equation*}
    By assumption $\varphi u\in W^{s,p}(\reals^n)$ and so it follows from Theorem \ref{winter88} that the first term on the right hand side is in $W^{s-1,p}(\reals^n)$. Also, since $u\in W^{s,p}_{loc}(\reals^n)$, the second term on the right hand side is in $W^{s,p}(\reals^n)\hookrightarrow W^{s-1,p}(\reals^n)$. Hence $\varphi\partial^\alpha u\in W^{s-1,p}(\reals^n)$. Now suppose the claim holds for all $|\alpha|\leq k$. Suppose $\alpha$ is a multi-index such that $|\alpha|=k+1$. Clearly there exists $1\leq i\leq n$ such that $\partial^\alpha=\frac{\partial}{\partial x^i}(\partial^\beta)$ where $\beta$ is a multi-index with $|\beta|=k$. By the induction hypothesis, $\partial^\beta u\in W^{s-|\beta|,p}_{loc}(\reals^n)$ and so by the argument that was presented for the base case we have $\frac{\partial}{\partial x^i}\partial^\beta u\in W^{s-|\beta|-1,p}_{loc}(\reals^n)=W^{s-|\alpha|,p}_{loc}(\reals^n)$.
\item \textbf{Step 2:} In this step we prove the continuity. Again we use induction on $|\alpha|$. Let $|\alpha|=1$. Choose $i$ as in the previous step. For every $\varphi\in C_c^\infty (\reals^n)$ we have
    \begin{align*}
    \|\varphi\frac{\partial}{\partial x^i}u\|_{s-1,p}&=\|\frac{\partial (\varphi u)}{\partial x^i}-\frac{\partial \varphi}{\partial x^i}u\|_{s-1,p}\\
    &\leq \|\frac{\partial (\varphi u)}{\partial x^i}\|_{s-1,p}+\|\frac{\partial \varphi}{\partial x^i}u\|_{s-1,p}\\
    &\preceq \|\varphi u\|_{s,p}+\|\frac{\partial \varphi}{\partial x^i}u\|_{s,p}
    \end{align*}
    On the right hand side we have sum of two of the seminorms that define the topology of $W^{s,p}_{loc}(\reals^n)$. It follows from item 2. of Theorem \ref{thmapptvconvergence1} that $\partial^\alpha: W^{s,p}_{loc}(\reals^n)\rightarrow W^{s-1,p}_{loc}(\reals^n)$ is continuous. Now suppose the claim holds for all $|\alpha|\leq k$. Suppose $\alpha$ is a multi-index such that $|\alpha|=k+1$. Clearly there exists $1\leq i\leq n$ such that $\partial^\alpha=\frac{\partial}{\partial x^i}(\partial^\beta)$ where $\beta$ is a multi-index with $|\beta|=k$. We have
{\fontsize{10}{10}{\begin{align*}
\|\varphi \partial^{\alpha} &u\|_{s-|\alpha|,p}=\|\varphi\frac{\partial}{\partial x^i}(\partial^\beta u)\|_{s-|\alpha|,p}\\
&\stackrel{\textrm{argument of the base case}}{\preceq}\|\varphi\partial^\beta u\|_{s-|\alpha|+1,p}+\|\frac{\partial \varphi}{\partial x^i}\partial^\beta u\|_{s-|\alpha|+1,p}\\
&\preceq\|\varphi\partial^\beta u\|_{s-|\beta|,p}+\|\frac{\partial \varphi}{\partial x^i}\partial^\beta u\|_{s-|\beta|,p}\\
&\stackrel{\textrm{induction hypothesis; Theorem \ref{thmapptvconvergence1}}}{\preceq} \max(\|\varphi_1 u\|_{s,p},\cdots,\|\varphi_k u\|_{s,p} )+\|\frac{\partial \varphi}{\partial x^i}\partial^\beta u\|_{s-|\beta|,p}\\
&\stackrel{\textrm{induction hypothesis; Theorem \ref{thmapptvconvergence1}}}{\preceq} \max(\|\varphi_1 u\|_{s,p},\cdots,\|\varphi_k u\|_{s,p} )+\max(\|\psi_1 u\|_{s,p},\cdots,\|\psi_l u\|_{s,p} )\\
&\preceq \max (\|\varphi_1 u\|_{s,p},\cdots,\|\varphi_k u\|_{s,p},\|\psi_1 u\|_{s,p},\cdots,\|\psi_l u\|_{s,p})
\end{align*}}}
for some $\varphi_1,\cdots,\varphi_k$ and $\psi_1,\cdots,\psi_l$ in $C_c^\infty(\reals^n)$. It follows from item 2. of Theorem \ref{thmapptvconvergence1} that $\partial^\alpha: W^{s,p}_{loc}(\reals^n)\rightarrow W^{s-|\alpha|,p}_{loc}(\reals^n)$ is continuous.
\end{itemizeX}
\end{proof}
Next we want to establish a counterpart of Theorem \ref{thm3.3} for locally Sobolev-Slobodeckij spaces. To this end, first we state and prove a simple lemma.

\begin{lemma}\lab{lemapp8c}
Let $\Omega$ be a nonempty open subset of $\reals^n$. Suppose $u:\Omega\rightarrow \reals$ and
$\tilde{u}:\Omega\rightarrow \reals$ are such that
$u=\tilde{u}\,\,a.e.$ If $\tilde{u}$ is continuous then
$\textrm{supp}\tilde{u}\subseteq \textrm{supp} u$.
\end{lemma}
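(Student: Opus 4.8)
The plan is to reduce everything to the elementary observation that an a.e.\ equality on an open set of positive Lebesgue measure forces the relevant non-vanishing set to be non-empty. Concretely, I would prove the inclusion at the level of the "raw" non-vanishing sets, namely $\{x\in\Omega:\tilde u(x)\neq 0\}\subseteq \supp u$, and then pass to closures, using that $\supp u$ is closed in $\Omega$ and that, since $\tilde u$ is continuous, $\supp \tilde u=\overline{\{x\in\Omega:\tilde u(x)\neq 0\}}$.

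First I would fix $x_0\in\Omega$ with $\tilde u(x_0)\neq 0$. By continuity of $\tilde u$ there is an open set $U$ with $x_0\in U\subseteq\Omega$ on which $\tilde u$ never vanishes. Next, to show $x_0\in\supp u$, I would take an arbitrary open neighborhood $W\subseteq\Omega$ of $x_0$ and verify that $W$ meets $\{u\neq 0\}$ (and, in the essential-support reading of $\supp u$, that $u$ is not a.e.\ zero on $W$). Indeed $W\cap U$ is a non-empty open set, hence has positive Lebesgue measure; since $u=\tilde u$ a.e.\ and $\tilde u\neq 0$ everywhere on $W\cap U$, it follows that $u\neq 0$ a.e.\ on $W\cap U$, so $\{u\neq 0\}\cap W$ has positive measure and in particular is non-empty. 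As $W$ was arbitrary, $x_0\in\overline{\{u\neq 0\}}\subseteq\supp u$.

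Finally, since $x_0$ was an arbitrary point of $\{\tilde u\neq 0\}$, we obtain $\{\tilde u\neq 0\}\subseteq\supp u$, and because $\supp u$ is closed in $\Omega$ we conclude $\supp\tilde u=\overline{\{\tilde u\neq 0\}}\subseteq\supp u$. There is no genuine obstacle here; the only point to be careful about is the step where an a.e.\ statement on a positive-measure open set is converted into the non-emptiness (indeed positive measure) of $\{u\neq 0\}\cap W$, which is exactly what makes the continuity of $\tilde u$ essential.
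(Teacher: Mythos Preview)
Your proof is correct and follows essentially the same approach as the paper's: both exploit the continuity of $\tilde{u}$ to produce an open set on which $\tilde{u}$ is nowhere zero, and then use that nonempty open sets have positive Lebesgue measure together with $u=\tilde{u}$ a.e.\ to force $u$ to be nonzero there. The only difference is packaging: the paper argues by contradiction starting from a hypothetical point in $\supp\tilde{u}\setminus\supp u$, whereas you argue directly by showing $\{\tilde{u}\neq 0\}\subseteq\supp u$ and then passing to closures.
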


\begin{proof}[Proof by Contradiction] Suppose $x\in \textrm{supp}\tilde{u}\setminus \textrm{supp}
u$. Since $x$ belongs to the complement of $\textrm{supp}\,u$,
which is an open set, there exists $\epsilon>0$ such that
$B_\epsilon(x)\subseteq \Omega$ and $B_\epsilon(x)\cap
\textrm{supp}\,u=\emptyset$. Since $x\in \textrm{supp}\tilde{u}$,
there exists $y\in B_{\epsilon/4}(x)$ such that $\tilde{u}(y)\neq
0$. $\tilde{u}$ is continuous, therefore there exists
$0<\delta<\frac{\epsilon}{4}$ such that $\tilde{u}(z)\neq 0$ for
all $z\in B_{\delta}(y)\subseteq B_\epsilon(x)$. But $u=0$ a.e. on
$B_\epsilon (x)$. This contradicts the fact that
$u=\tilde{u}\,\,a.e.$
\end{proof}

\begin{theorem}\lab{thmapp8b}
Let $\Omega$ be a nonempty bounded open set in $\reals^n$ with Lipschitz continuous boundary or $\Omega=\reals^n$. Suppose $u\in W^{s,p}_{loc}(\Omega)$ where $sp>n$. Then $u$ has a continuous
version.
\end{theorem}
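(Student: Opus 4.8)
The plan is to cut $u$ off by smooth, compactly supported functions, apply the Sobolev embedding $W^{s,p}\hookrightarrow C^{0}$ on each local piece, and then glue the resulting continuous representatives together into a single continuous function on $\Omega$. First I would note that $sp>n>0$ forces $s>0$, so $W^{s,p}(\Omega)\subseteq L^{p}(\Omega)\subseteq L^{1}_{loc}(\Omega)$; given any compact $K\subseteq\Omega$, Theorem~\ref{thmmay7635} gives $\varphi\in C_{c}^{\infty}(\Omega)$ with $\varphi\equiv 1$ on a neighborhood of $K$, and since $u\in W^{s,p}_{loc}(\Omega)$ means $\varphi u\in W^{s,p}(\Omega)$ while $\varphi u=u$ on that neighborhood, this shows $u\in L^{1}_{loc}(\Omega)$, so that "continuous version" is a meaningful notion. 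Next, fix $x_{0}\in\Omega$; by Theorem~\ref{winter2} there is a precompact open $V$ with $x_{0}\in V$ and $\bar{V}\subseteq\Omega$, and by Theorem~\ref{thmmay7635} a cutoff $\varphi\in C_{c}^{\infty}(\Omega)$ equal to $1$ on a neighborhood of $\bar V$. Then $\varphi u\in W^{s,p}(\Omega)$, and because $\Omega$ is either bounded with Lipschitz boundary or all of $\reals^{n}$, Theorem~\ref{thm3.3} applies and gives $W^{s,p}(\Omega)\hookrightarrow L^{\infty}(\Omega)\cap C^{0}(\Omega)$; hence $\varphi u$ has a representative $g_{V}\in C^{0}(\Omega)$. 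Since $\varphi\equiv 1$ on $V$, we have $g_{V}=u$ a.e.\ on $V$, i.e.\ $g_{V}|_{V}$ is a continuous version of $u$ on $V$.

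Then I would cover $\Omega$ by countably many such sets: taking an exhaustion $K_{1}\subseteq\mathring{K}_{2}\subseteq K_{2}\subseteq\cdots$ of $\Omega$ by compacts (Theorem~\ref{winter2}) and putting $V_{j}=\mathring{K}_{j}$, the step above yields continuous functions $g_{j}\in C^{0}(V_{j})$ with $g_{j}=u$ a.e.\ on $V_{j}$, and $\bigcup_{j}V_{j}=\Omega$. On any overlap $V_{i}\cap V_{j}$ the continuous function $g_{i}-g_{j}$ vanishes almost everywhere, so applying Lemma~\ref{lemapp8c} on the open set $V_{i}\cap V_{j}$ (with the pair of functions $0$ and $g_{i}-g_{j}$) gives $\textrm{supp}(g_{i}-g_{j})\subseteq\textrm{supp}(0)=\emptyset$, hence $g_{i}=g_{j}$ on $V_{i}\cap V_{j}$. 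Therefore the $g_{j}$ patch together to a well-defined function $\tilde{u}:\Omega\to\reals$ that is continuous on each $V_{j}$ and hence on $\Omega$, and $\tilde{u}=u$ a.e.\ on each $V_{j}$ and so a.e.\ on $\Omega$. This $\tilde{u}$ is the desired continuous version of $u$.

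I do not expect a serious obstacle here: the argument is essentially soft once Theorem~\ref{thm3.3} is in hand. The one place where the geometric hypothesis on $\Omega$ is genuinely used is precisely the invocation of that embedding $W^{s,p}(\Omega)\hookrightarrow C^{0}(\Omega)$, which fails for an arbitrary open set, so one really does need $\Omega$ to be a bounded Lipschitz domain or all of $\reals^{n}$ (for a general $\Omega$ with the interior Lipschitz property one would instead work on bounded Lipschitz subdomains exhausting $\Omega$, in the spirit of Theorems~\ref{thmmay7828} and \ref{thmmay7829}). The only mildly technical step is verifying consistency of the local continuous representatives on overlaps, and that is exactly what Lemma~\ref{lemapp8c} provides.
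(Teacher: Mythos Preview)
Your proof is correct and rests on the same two ingredients as the paper's: the Sobolev embedding of Theorem~\ref{thm3.3} applied to $\varphi u\in W^{s,p}(\Omega)$, and Lemma~\ref{lemapp8c} to control the continuous representatives. The difference is only in the gluing step. The paper fixes the partition of unity $\{\psi_j\}$ of Theorem~\ref{winter3}, takes a continuous version $\tilde{u}_j$ of each $\psi_j u$, uses Lemma~\ref{lemapp8c} to conclude $\textrm{supp}\,\tilde{u}_j\subseteq\textrm{supp}\,\psi_j$, and then sets $\tilde{u}:=\sum_j\tilde{u}_j$, checking continuity by hand via local finiteness. You instead take an increasing exhaustion $V_j=\mathring{K}_j$, produce a continuous version $g_j$ of $u$ on each $V_j$, and use Lemma~\ref{lemapp8c} (applied to the pair $0$ and $g_i-g_j$) to show the $g_j$ agree on overlaps, so they patch to a single continuous $\tilde{u}$. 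Your route is marginally more economical in that it avoids the partition of unity and the explicit sequential-continuity check; the paper's route gives a concrete formula for $\tilde{u}$ as a locally finite sum. Your closing remark about relaxing the hypothesis on $\Omega$ to the interior Lipschitz property is exactly the content of Remark~\ref{remmay7831}.
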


\begin{proof}
Let $\{V_j\}_{j\in\mathbb{N}_0}$ and $\{\psi_j\}_{j\in \mathbb{N}_0}$ be as in Theorem \ref{winter3}.
Note that $u=\sum_j \psi_j u$. For all $j$, $\psi_j u\in
W^{s,p}(\Omega)$ so by Theorem \ref{thm3.3} there exists $\tilde{u}_j\in C(\Omega)$ such
that $\psi_j u=\tilde{u}_j$ on $\Omega\setminus A_j$ where $A_j$
is a set of measure zero.  Also by Lemma \ref{lemapp8c}
$\textrm{supp}\tilde{u}_j\subseteq \textrm{supp} \psi_j$.
 Therefore for any $x\in \Omega$ only finitely many of
$\tilde{u}_j(x)$'s are nonzero. So we may define
$\tilde{u}:\Omega\rightarrow \reals$ by $\tilde{u}=\sum_j
\tilde{u}_j$. Clearly $\tilde{u}=u$ on $\Omega\setminus A$ where
$A=\cup A_j$ (so $A$ is a set of measure zero). Consequently
$\tilde{u}=u\,\, a.e$. It remains to show that $\tilde{u}:\Omega
\rightarrow \reals$ is indeed continuous. To this end suppose
$a_m\rightarrow a$ in $\Omega$. We need to prove that
$\tilde{u}(a_m)\rightarrow \tilde{u}(a)$. Let $\epsilon>0$ be such that $\overline{B_\epsilon(a)}\subseteq \Omega$. So
$B_\epsilon(a)$ intersects only finitely many of
$\textrm{supp}\tilde{u}_j$'s; let's denote them by
$\tilde{u}_{r_1},\cdots,\tilde{u}_{r_l}$. Also since
$a_m\rightarrow a$ there exists $M$ such that for all $m\geq M$,
$a_m\in B_\epsilon(a)$. Hence
\begin{align*}
&\tilde{u}(a)=\sum_j\tilde{u}_j(a)=\tilde{u}_{r_1}(a)+\cdots+\tilde{u}_{r_l}(a)\\
&\forall\,m\geq M\quad
\tilde{u}(a_m)=\tilde{u}_{r_1}(a_m)+\cdots+\tilde{u}_{r_l}(a_m)
\end{align*}
Recall that $\tilde{u}_{r_1}+\cdots+\tilde{u}_{r_l}$ is a finite
sum of continuous functions and so it is continuous. Thus
\begin{equation*}
\lim_{m\rightarrow \infty}\tilde{u}(a_m)=\lim_{m\rightarrow
\infty}
(\tilde{u}_{r_1}+\cdots+\tilde{u}_{r_l})(a_m)=\tilde{u}_{r_1}(a)+\cdots+\tilde{u}_{r_l}(a)=\tilde{u}(a)
\end{equation*}
\end{proof}

\begin{remark}\lab{remmay7831}
In the above proof the only place we used the assumption of $\Omega$ being Lipschitz was in applying Theorem \ref{thm3.3}. We can replace this assumption by the weaker assumption that $\Omega$ has the interior Lipschitz property. Then, since $\textrm{supp}\,(\psi_j u)$ is compact, there exists $\Omega'$ with Lipschitz boundary that contains $\textrm{supp}\, (\psi_j u)$. Then by Theorem ~\ref{thmapril181}, $\psi_j u\in W^{s,p}(\Omega')$ and so it has a continuous version $\hat{u}_j\in C(\Omega')$. Since $\psi_j u=\hat{u}_j$ almost everywhere on $\Omega'$ and $\psi_j u=0$ outside of the compact set $\textrm{supp}\,\psi_j$, we can conclude that $\textrm{ext}^0_{\Omega',\Omega}\hat{u}_j$ is in $C(\Omega)$ and it is almost everywhere equal to $\psi_j u$. We set $\tilde{u}_j=\textrm{ext}^0_{\Omega',\Omega}\hat{u}_j$. The rest of the proof will be exactly the same as before.
\end{remark}

\begin{theorem}\lab{lemapp1}
Let $\Omega=\reals^n$ or $\Omega$ be a bounded open set in $\reals^n$ with Lipschitz continuous boundary. Suppose $s_1, s_2, s\in \reals$ and $1<p_1,p_2,p<\infty$ are such that
\begin{equation*}
W^{s_1,p_1}(\Omega)\times W^{s_2,p_2}(\Omega)\hookrightarrow W^{s,p}(\Omega)\,.
\end{equation*}
Then
\begin{enumerate}
\item $W^{s_1,p_1}_{loc}(\Omega)\times W^{s_2,p_2}_{loc}(\Omega)\hookrightarrow
W^{s,p}_{loc}(\Omega)$.
\item For all $K\in \mathcal{K}(\Omega)$, $W^{s_1,p_1}_{loc}(\Omega)\times W^{s_2,p_2}_{K}(\Omega)\hookrightarrow
W^{s,p}(\Omega)$. In particular, if $f\in W^{s_1,p_1}_{loc}(\Omega)$, then the mapping $u\mapsto fu$ is a well-defined continuous linear map from $W^{s_2,p_2}_{K}(\Omega)$ to $W^{s,p}(\Omega)$.
\end{enumerate}
\end{theorem}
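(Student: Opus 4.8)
The plan is to reduce both statements to the hypothesis $W^{s_1,p_1}(\Omega)\times W^{s_2,p_2}(\Omega)\hookrightarrow W^{s,p}(\Omega)$ by localizing with a single cutoff function, using the fact that on a bounded Lipschitz domain (or $\reals^n$) multiplication by smooth compactly supported functions is bounded on every $W^{s,p}(\Omega)$ (Remark~\ref{remapril161}, so $(s,p,\Omega)$ is a smooth multiplication triple for all $s,p$ here). For item~(1), let $u\in W^{s_1,p_1}_{loc}(\Omega)$ and $v\in W^{s_2,p_2}_{loc}(\Omega)$; the product $uv$ is defined as a distribution since locally one factor can be taken smooth enough, or more simply: given $\varphi\in C_c^\infty(\Omega)$, pick $\chi\in C_c^\infty(\Omega)$ equal to $1$ on a neighborhood of $\textrm{supp}\,\varphi$, and write $\varphi(uv)=\varphi\,(\chi u)(\chi v)$. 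Then $\chi u\in W^{s_1,p_1}(\Omega)$ and $\chi v\in W^{s_2,p_2}(\Omega)$ by definition of the local spaces, so $(\chi u)(\chi v)\in W^{s,p}(\Omega)$ by hypothesis, and multiplying by $\varphi$ keeps us in $W^{s,p}(\Omega)$; hence $uv\in W^{s,p}_{loc}(\Omega)$. For continuity, I would use the seminorm estimate
\begin{equation*}
|uv|_{\varphi,s,p}=\|\varphi(uv)\|_{W^{s,p}(\Omega)}=\|\varphi(\chi u)(\chi v)\|_{W^{s,p}(\Omega)}\preceq \|(\chi u)(\chi v)\|_{W^{s,p}(\Omega)}\preceq \|\chi u\|_{W^{s_1,p_1}(\Omega)}\|\chi v\|_{W^{s_2,p_2}(\Omega)}=|u|_{\chi,s_1,p_1}\,|v|_{\chi,s_2,p_2},
\end{equation*}
where the implicit constants depend only on $\varphi,\chi$ and the data; continuity of the bilinear map then follows from the product-space version of Theorem~\ref{thmapptvconvergence1} (each defining seminorm of the target is dominated by a product of a seminorm on each factor).

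For item~(2), fix $K\in\mathcal{K}(\Omega)$ and choose once and for all $\chi\in C_c^\infty(\Omega)$ with $\chi=1$ on a neighborhood of $K$. If $u\in W^{s_1,p_1}_{loc}(\Omega)$ and $v\in W^{s_2,p_2}_K(\Omega)$, then $v=\chi v$ (as distributions, since $\chi=1$ on $\textrm{supp}\,v$), and $\chi u\in W^{s_1,p_1}(\Omega)$, so $uv=(\chi u)v\in W^{s,p}(\Omega)$ by hypothesis, with
\begin{equation*}
\|uv\|_{W^{s,p}(\Omega)}=\|(\chi u)(\chi v)\|_{W^{s,p}(\Omega)}\preceq \|\chi u\|_{W^{s_1,p_1}(\Omega)}\|\chi v\|_{W^{s_2,p_2}(\Omega)}\preceq |u|_{\chi,s_1,p_1}\,\|v\|_{W^{s_2,p_2}(\Omega)}.
\end{equation*}
This gives a continuous bilinear map $W^{s_1,p_1}_{loc}(\Omega)\times W^{s_2,p_2}_K(\Omega)\to W^{s,p}(\Omega)$: the estimate shows boundedness in the $\chi$-seminorm on the first factor and the norm on the second, which by Theorem~\ref{thmapptvconvergence1} is exactly joint continuity with $W^{s_2,p_2}_K(\Omega)$ carrying its Banach norm and $W^{s_1,p_1}_{loc}$ its Fr\'echet topology. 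Fixing $f\in W^{s_1,p_1}_{loc}(\Omega)$ then yields that $u\mapsto fu$ is a bounded linear map $W^{s_2,p_2}_K(\Omega)\to W^{s,p}(\Omega)$ with operator norm $\preceq |f|_{\chi,s_1,p_1}$.

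The only genuinely delicate point is making sure the product $uv$ is well defined as a distribution independently of the auxiliary cutoffs, and that the various representations $\varphi(uv)=\varphi(\chi u)(\chi v)=\varphi(\chi' u)(\chi' v)$ agree. This is where I would be careful: for two cutoffs $\chi,\chi'$ both equal to $1$ near $\textrm{supp}\,\varphi$, the difference $(\chi u)(\chi v)-(\chi' u)(\chi' v)$ has an element of $C_c^\infty$ factor (coming from $\chi-\chi'$) whose support misses $\textrm{supp}\,\varphi$, so multiplying by $\varphi$ annihilates it; a short argument using that multiplication by smooth functions is local on $D'(\Omega)$ (Remark~\ref{remfallcontintotest2}) and that $W^{s,p}(\Omega)\hookrightarrow D'(\Omega)$ (Theorem~\ref{thmmay7701}) closes this gap. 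Everything else is a routine application of Theorem~\ref{thmapptvconvergence1} together with the fact that all relevant triples are smooth multiplication triples on $\Omega$.
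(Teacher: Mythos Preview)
Your proof is correct and follows essentially the same localization strategy as the paper, but with one notable tactical difference worth pointing out.

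For item~(1), the paper avoids your auxiliary cutoff $\chi$ entirely by invoking Theorem~\ref{thmapp7b}: since $A=\{\varphi^2:\varphi\in C_c^\infty(\Omega)\}$ is an admissible family, it suffices to check that $\varphi^2(uv)\in W^{s,p}(\Omega)$ for every $\varphi\in C_c^\infty(\Omega)$, and this factors directly as $(\varphi u)(\varphi v)$. This neatly sidesteps the well-definedness discussion you flag as the ``delicate point'': there is no auxiliary $\chi$ to vary, so no consistency check is needed. Your approach with $\varphi(uv)=\varphi(\chi u)(\chi v)$ works too, and your argument that different choices of $\chi$ agree after multiplying by $\varphi$ is fine, but the $\varphi^2$ trick is cleaner.

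For continuity in item~(1), the paper uses metrizability (Remark~\ref{remmay7832}) and argues sequentially: $u_i\to u$, $v_i\to v$ in the local spaces implies $\varphi u_i\to\varphi u$, $\varphi v_i\to\varphi v$ in the Sobolev spaces, hence $(\varphi u_i)(\varphi v_i)\to(\varphi u)(\varphi v)$ in $W^{s,p}(\Omega)$. Your direct seminorm estimate is equally valid and arguably more transparent. For item~(2), your argument and the paper's are essentially identical.
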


\begin{remark}\lab{remmay7832}
In the above theorem, since the locally Sobolev spaces on $\Omega$ are metrizable, the
continuity of the mapping
\begin{equation*}
W^{s_1,p_1}_{loc}(\Omega)\times W^{s_2,p_2}_{loc}(\Omega)\rightarrow
W^{s,p}_{loc}(\Omega),\quad (u,v)\mapsto uv
\end{equation*}
can be interpreted as follows: if $u_i\rightarrow u$ in
$W^{s_1,p_1}_{loc}(\Omega)$ and $v_i\rightarrow v$ in
$W^{s_2,p_2}_{loc}(\Omega)$, then $u_iv_i\rightarrow uv$ in
$W^{s,p}_{loc}(\Omega)$. Also since $W^{s_2,p_2}_{K}(\Omega)$ is considered as a normed
subspace of $W^{s_2,p_2}(\Omega)$, we have a similar interpretation of the continuity of the mapping in item 2.
\end{remark}

\begin{proof}
\leavevmode
\begin{enumerateXALI}
\item Suppose $u\in W^{s_1,p_1}_{loc}(\Omega)$ and $v\in
W^{s_2,p_2}_{loc}(\Omega)$. First we show that $uv$ is in
$W^{s,p}_{loc}(\Omega)$. Clearly the set $A=\{\varphi^2: \varphi\in
C_c^{\infty}(\Omega)\}$ is an admissible family of test
functions. So in order to show that $uv\in W^{s,p}_{loc}(\Omega)$, it is
enough to show that for all $\varphi\in C_c^{\infty}(\Omega)$,
$\varphi^2 uv=(\varphi u)(\varphi v)$ is in $W^{s,p}(\Omega)$. This is
clearly true because $\varphi u\in W^{s_1,p_1}(\Omega)$, $\varphi v\in
W^{s_2,p_2}(\Omega)$, and by assumption
\begin{equation*}
W^{s_1,p_1}(\Omega)\times W^{s_2,p_2}(\Omega)\hookrightarrow W^{s,p}(\Omega)
\end{equation*}
In order to prove the continuity of the map $(u,v)\mapsto uv$,
suppose $u_i\rightarrow u$ in $W^{s_1,p_1}_{loc}(\Omega)$ and
$v_i\rightarrow v$ in $W^{s_2,p_2}_{loc}(\Omega)$. We need to
show that  $u_iv_i\rightarrow uv$ in $W^{s,p}_{loc}(\Omega)$.
That is, we need to prove that for all $\varphi\in
C_c^{\infty}(\Omega)$
\begin{equation*}
\varphi^2 u_iv_i\rightarrow \varphi^2 uv\quad \textrm{in
$W^{s,p}(\Omega)$}
\end{equation*}
We have
\begin{align*}
& \textrm{$u_i\rightarrow u$ in
$W^{s_1,p_1}_{loc}(\Omega)$}\Longrightarrow \textrm{$\varphi
u_i\rightarrow \varphi u$ in $W^{s_1,p_1}(\Omega)$}\\
& \textrm{$v_i\rightarrow v$ in
$W^{s_2,p_2}_{loc}(\Omega)$}\Longrightarrow \textrm{$\varphi
v_i\rightarrow \varphi v$ in $W^{s_2,p_2}(\Omega)$}\\
\end{align*}
By assumption $W^{s_1,p_1}(\Omega)\times W^{s_2,p_2}(\Omega)\hookrightarrow
W^{s,p}(\Omega)$, so
\begin{equation*}
(\varphi u_i)(\varphi v_i)\rightarrow (\varphi u)(\varphi v)\quad
\textrm{in $W^{s,p}(\Omega)$}
\end{equation*}
\item Suppose $u\in W^{s_1,p_1}_{loc}(\Omega)$ and $v\in
W^{s_2,p_2}_{K}(\Omega)$. First we show that $uv$ is in
$W^{s,p}(\Omega)$. To this end, let $\varphi\in C_c^\infty(\Omega)$ be such that $\varphi=1$ on a neighborhood containing $K$. We have
\begin{equation*}
uv=u(\varphi v)=\underbrace{(\varphi u)}_{\in W^{s_1,p_1}(\Omega)} \underbrace{v}_{\in W^{s_2,p_2}(\Omega)}\in W^{s,p}(\Omega)
\end{equation*}
Now we prove the continuity. Suppose $u_i\rightarrow u$ in $W^{s_1,p_1}_{loc}(\Omega)$ and $v_i\rightarrow v$ in $W^{s_2,p_2}_{K}(\Omega)$. Let $\varphi$ be as before. We have
\begin{align*}
& u_i\rightarrow u \quad \textrm{in $W^{s_1,p_1}_{loc}(\Omega)$}\Longrightarrow \varphi u_i\rightarrow \varphi u \quad \textrm{in $W^{s_1,p_1}(\Omega)$}\\
& v_i\rightarrow v\quad \textrm{in $W^{s_2,p_2}(\Omega)$}
\end{align*}
This together with the assumption that $W^{s_1,p_1}(\Omega)\times W^{s_2,p_2}(\Omega)\hookrightarrow
W^{s,p}(\Omega)$ implies $\varphi u_iv_i\rightarrow \varphi uv$ in $W^{s,p}(\Omega)$. Since $\varphi v=v$ and $\varphi v_i=v_i$, we conclude that $u_iv_i\rightarrow uv$ in $W^{s,p}(\Omega)$.
\end{enumerateXALI}
\end{proof}

\begin{remark}\lab{remmay7833}
In the above theorem the assumption that $\Omega$ is Lipschitz or $\reals^n$ was used only to ensure that we can apply Theorem ~\ref{thmapp7b} and to make sure that the locally Sobolev spaces involved are metrizable. For item (1) we can use the weaker assumption that $(s_1,p_1,\Omega)$, $(s_2,p_2,\Omega)$, and $(s,p,\Omega)$ are interior smooth multiplication triples. For item (2) we just need to assume that $(s_1,p_1,\Omega)$ is an interior smooth multiplication triple.
\end{remark}

\begin{corollary}\lab{corapp1}
Let $\Omega$ be the same as the previous theorem. If $sp>n$, then $W^{s,p}_{loc}(\Omega)$ is closed under multiplication.
Moreover if
\begin{equation*}
(f_1)_m\rightarrow f_1\quad \textrm{in $W^{s,p}_{loc}(\Omega)$},\cdots,
(f_l)_m\rightarrow f_l\quad \textrm{in $W^{s,p}_{loc}(\Omega)$}\,
\end{equation*}
then
\begin{equation*}
(f_1)_m\cdots (f_l)_m\rightarrow f_1\cdots f_l\quad \textrm{in
$W^{s,p}_{loc}(\Omega)$}
\end{equation*}
\end{corollary}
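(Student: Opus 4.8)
The plan is to reduce everything to Theorem~\ref{lemapp1} and Theorem~\ref{thm3.3}, followed by an induction on the number of factors. First I would note that the hypotheses on $\Omega$ (namely $\Omega=\reals^n$ or $\Omega$ bounded with Lipschitz continuous boundary) together with $sp>n$ put us exactly in the setting of Theorem~\ref{thm3.3}, so $W^{s,p}(\Omega)$ is a Banach algebra; in particular pointwise multiplication is a continuous bilinear map $W^{s,p}(\Omega)\times W^{s,p}(\Omega)\to W^{s,p}(\Omega)$, i.e. $W^{s,p}(\Omega)\times W^{s,p}(\Omega)\hookrightarrow W^{s,p}(\Omega)$. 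Applying Theorem~\ref{lemapp1} with $s_1=s_2=s$ and $p_1=p_2=p$ then yields at once
\begin{equation*}
W^{s,p}_{loc}(\Omega)\times W^{s,p}_{loc}(\Omega)\hookrightarrow W^{s,p}_{loc}(\Omega),
\end{equation*}
which already shows that $W^{s,p}_{loc}(\Omega)$ is closed under multiplication. (When $sp>n$ every element of $W^{s,p}_{loc}(\Omega)$ has a continuous version by Theorem~\ref{thmapp8b}, so the pointwise product of two such functions is unambiguous; alternatively one uses the identity $\varphi^2 uv=(\varphi u)(\varphi v)$ as in the proof of Theorem~\ref{lemapp1}.)

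Next, in this setting $(s,p,\Omega)$ is a smooth — hence interior smooth — multiplication triple, so $W^{s,p}_{loc}(\Omega)$ is metrizable (Corollary~\ref{cormay7806}, Theorem~\ref{thmmay7819}), and therefore the continuity of the bilinear map $(u,v)\mapsto uv$ is equivalent to its sequential continuity: if $u_m\to u$ and $v_m\to v$ in $W^{s,p}_{loc}(\Omega)$ then $u_m v_m\to uv$ in $W^{s,p}_{loc}(\Omega)$ (cf. Remark~\ref{remmay7832}). This is precisely the case $l=2$ of the second assertion, and the case $l=1$ is trivial.

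Finally I would argue by induction on $l$. Assuming the statement for $l-1$ factors, write
\begin{equation*}
(f_1)_m\cdots(f_l)_m=\big[(f_1)_m\cdots(f_{l-1})_m\big]\,(f_l)_m.
\end{equation*}
By the induction hypothesis the bracketed factor converges to $f_1\cdots f_{l-1}$ in $W^{s,p}_{loc}(\Omega)$, and $(f_l)_m\to f_l$ in $W^{s,p}_{loc}(\Omega)$ by hypothesis; applying the $l=2$ case (the sequential continuity of multiplication just established) gives $(f_1)_m\cdots(f_l)_m\to(f_1\cdots f_{l-1})f_l=f_1\cdots f_l$ in $W^{s,p}_{loc}(\Omega)$, which completes the induction.

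There is no serious obstacle here: the substantive analytic content is entirely contained in Theorem~\ref{lemapp1} and Theorem~\ref{thm3.3}, and the only points needing a little care are that the pointwise product is well-defined (via the continuous representatives furnished by Theorem~\ref{thmapp8b}) and that metrizability is invoked so that the abstract bilinear continuity can be applied in the sequential form in which the corollary is phrased.
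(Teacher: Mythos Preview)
Your proposal is correct and is exactly the intended argument: the paper states this as an immediate corollary of Theorem~\ref{lemapp1} (with $s_1=s_2=s$, $p_1=p_2=p$) together with the Banach algebra property of $W^{s,p}(\Omega)$ for $sp>n$ from Theorem~\ref{thm3.3}, and the extension to $l$ factors by induction is the natural step the paper leaves implicit.
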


The next theorem plays a key role in the study of differential operators on manifolds equipped with nonsmooth metrics (see \cite{holstbehzadan2017c}).

\begin{theorem}\lab{lemapp7}
Let $\Omega=\reals^n$ or let $\Omega$ be a nonempty bounded open set in $\reals^n$ with Lipschitz continuous boundary. Let $s\in \reals$ and
$p\in (1,\infty)$ be such that $sp>n$. Let
$B:\Omega\rightarrow GL(k,\reals)$. Suppose for all $x\in \Omega$
and $1\leq i,j\leq k$, $B_{ij}(x)\in W^{s,p}_{loc}(\Omega)$. Then
\begin{enumerate}
\item $\textrm{det}\,B\in W^{s,p}_{loc}(\Omega)$.
\item Moreover if for each $m\in \mathbb{N}$ $B_m: \Omega\rightarrow
GL(k,\reals)$ and for all $1\leq i,j\leq k$
$(B_m)_{ij}\rightarrow B_{ij}$ in $W^{s,p}_{loc}(\Omega)$, then
$\textrm{det}\, B_m\rightarrow \textrm{det}\, B$ in
$W^{s,p}_{loc}(\Omega)$.
\end{enumerate}
\end{theorem}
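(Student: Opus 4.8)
The plan is to use the fact that $\textrm{det}\,B$ is nothing more than a fixed polynomial — with no constant term — in the $k^2$ entries $B_{ij}$, namely the Leibniz expansion $\textrm{det}\,B=\sum_{\sigma\in S_k}(\textrm{sgn}\,\sigma)\,B_{1\sigma(1)}\cdots B_{k\sigma(k)}$ (here $S_k$ is the group of permutations of $\{1,\dots,k\}$), and then to invoke the multiplicative structure of $W^{s,p}_{loc}(\Omega)$ already established. Since $sp>n$, every entry $B_{ij}\in W^{s,p}_{loc}(\Omega)$ has a continuous representative (Theorem~\ref{thmapp8b}), so the pointwise products and sums defining $\textrm{det}\,B$ make unambiguous sense as elements of $L^1_{loc}(\Omega)$; the content of the theorem is that they remain in $W^{s,p}_{loc}(\Omega)$ and depend continuously on $B$.

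For item (1), first I would note that because $\Omega=\reals^n$ or $\Omega$ is bounded with Lipschitz boundary and $sp>n$, Theorem~\ref{thm3.3} gives that $W^{s,p}(\Omega)$ is a Banach algebra, i.e.\ $W^{s,p}(\Omega)\times W^{s,p}(\Omega)\hookrightarrow W^{s,p}(\Omega)$; hence by Theorem~\ref{lemapp1} and, more directly, by Corollary~\ref{corapp1}, the space $W^{s,p}_{loc}(\Omega)$ is closed under finite products. Consequently each monomial $B_{1\sigma(1)}\cdots B_{k\sigma(k)}$ lies in $W^{s,p}_{loc}(\Omega)$. Finally, $W^{s,p}_{loc}(\Omega)$ is a locally convex topological vector space — indeed a Frechet space by Theorem~\ref{thmmay7819}, since under the present hypotheses $(s,p,\Omega)$ is a smooth, hence interior smooth, multiplication triple (Remark~\ref{remapril161}) — so the finite linear combination $\textrm{det}\,B=\sum_\sigma(\textrm{sgn}\,\sigma)\,B_{1\sigma(1)}\cdots B_{k\sigma(k)}$ again belongs to $W^{s,p}_{loc}(\Omega)$.

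For item (2), suppose $(B_m)_{ij}\to B_{ij}$ in $W^{s,p}_{loc}(\Omega)$ for every $i,j$. I would fix $\sigma\in S_k$ and apply the convergence statement in Corollary~\ref{corapp1} (with $l=k$ factors) to conclude $(B_m)_{1\sigma(1)}\cdots(B_m)_{k\sigma(k)}\to B_{1\sigma(1)}\cdots B_{k\sigma(k)}$ in $W^{s,p}_{loc}(\Omega)$. Since $S_k$ is finite and addition together with multiplication by the fixed scalars $\textrm{sgn}\,\sigma$ are continuous operations on the topological vector space $W^{s,p}_{loc}(\Omega)$, summing over $\sigma$ yields $\textrm{det}\,B_m\to\textrm{det}\,B$ in $W^{s,p}_{loc}(\Omega)$.

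There is no serious obstacle here: all the genuine analytic work — the product estimate $W^{s,p}(\Omega)\times W^{s,p}(\Omega)\hookrightarrow W^{s,p}(\Omega)$ for $sp>n$, its localization, and the stability of products under convergence — was carried out in Theorem~\ref{thm3.3}, Theorem~\ref{lemapp1}, and Corollary~\ref{corapp1}; what remains is purely algebraic bookkeeping with the Leibniz formula. The only points worth flagging are that the invertibility hypothesis $B(x)\in GL(k,\reals)$ is not actually needed for this particular statement (it would be relevant only for treating $B^{-1}$, whose entries are $\pm(\det B)^{-1}$ times minors), and that one must use that a finite sum of $W^{s,p}_{loc}$-convergent sequences is $W^{s,p}_{loc}$-convergent, which is immediate from continuity of addition in the locally convex topology of $W^{s,p}_{loc}(\Omega)$.
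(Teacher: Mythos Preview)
Your proof is correct and follows essentially the same route as the paper: both arguments write out the Leibniz expansion of the determinant and then invoke Corollary~\ref{corapp1} (closure of $W^{s,p}_{loc}(\Omega)$ under products and stability of products under convergence when $sp>n$) to handle each monomial, with the finite sum and scalar multiples being trivial. Your remark that the invertibility hypothesis $B(x)\in GL(k,\reals)$ is never used is apt; the paper's proof does not use it either.
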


\begin{proof}
\leavevmode
\begin{enumerateXALI}
\item By Leibniz formula we have
\begin{equation*}
\textrm{det}\,B(x)=\sum_{\sigma\in
S_n}\textrm{sgn}(\sigma)B_{\sigma (1),1}\cdots B_{\sigma (k),k}
\end{equation*}
By assumption for all $1\leq i\leq k$, $B_{\sigma (i),i}$ is in
 $W^{s,p}_{loc}(\Omega)$. Since $sp>n$, it follows from Corollary
 ~\ref{corapp1} that $\textrm{det}\,B\in W^{s,p}_{loc}(\Omega)$.
\item Since $(B_m)_{ij}\rightarrow B_{ij}$ in
$W^{s,p}_{loc}(\Omega)$ it again follows from Corollary
 ~\ref{corapp1} that for all $\sigma \in S_n$
 \begin{equation*}
(B_m)_{\sigma (1),1}\cdots (B_m)_{\sigma (k),k} \rightarrow
B_{\sigma (1),1}\cdots B_{\sigma (k),k}\quad \textrm{in
$W^{s,p}_{loc}(\Omega)$}
 \end{equation*}
 Thus $\textrm{det}\, B_m\rightarrow \textrm{det}\, B$ in $W^{s,p}_{loc}(\Omega)$.
\end{enumerateXALI}
\end{proof}

\begin{theorem}\lab{thmapp9}
Let $\Omega=\reals^n$ or let $\Omega$ be a nonempty bounded open set in $\reals^n$ with Lipschitz continuous boundary. Let $s\geq 1$ and
$p\in (1,\infty)$ be such that $sp>n$.
\begin{enumerate}
\item Suppose that $u\in W^{s,p}_{loc}(\Omega)$ and that $u(x)\in I$ for
all $x\in \Omega$ where $I$ is some interval in $\reals$. If
$F:I\rightarrow \reals$ is a smooth function, then $F(u)\in
W^{s,p}_{loc}(\Omega)$.
\item Suppose that $u_m\rightarrow u$ in $W^{s,p}_{loc}(\Omega)$
and that for all $m\geq 1$ and $x\in \Omega$, $u_m(x),u(x)\in I$
where $I$ is some open interval in $\reals$. If $F:I\rightarrow
\reals$ is a smooth function, then $F(u_m)\rightarrow F(u)$ in
 $W^{s,p}_{loc}(\Omega)$.
\item If $F:\reals\rightarrow \reals$ is a smooth function, then
the map taking $u$ to $F(u)$ is continuous from
$W^{s,p}_{loc}(\Omega)$ to $W^{s,p}_{loc}(\Omega)$.
\end{enumerate}
\end{theorem}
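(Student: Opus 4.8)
The plan is to localize with a cutoff and then fall back on the composition theorem on the whole space, Corollary~\ref{winter107}, together with the zero-extension norm equivalences of Theorem~\ref{thmapril181} and the Banach-algebra property of $W^{s,p}$ when $sp>n$ (Theorem~\ref{thm3.3}); note also that $(s,p,\Omega)$ is a smooth multiplication triple (Remark~\ref{remapril161}), so multiplication by a fixed $\varphi\in C_c^\infty(\Omega)$ is bounded on $W^{s,p}(\Omega)$ and $W^{s,p}_{loc}(\Omega)$ is metrizable (Corollary~\ref{cormay7806}). Throughout, $u$ (and each $u_m$) denotes the continuous version furnished by Theorem~\ref{thmapp8b}, legitimate since $sp>n$, so the pointwise conditions $u(x)\in I$ are meaningful and $F(u)$ is a genuine continuous, hence locally integrable, function defining a distribution.

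For item (1), fix $\varphi\in C_c^\infty(\Omega)$; it suffices to show $\varphi F(u)\in W^{s,p}(\Omega)$. Pick $\psi\in C_c^\infty(\Omega)$ with $0\le\psi\le1$ and $\psi\equiv1$ on an open neighborhood $W$ of $\supp\varphi$ whose closure $\bar W$ is compact in $\Omega$, and set $v:=\psi u\in W^{s,p}(\Omega)$. Since $u(\bar W)$ is compact and contained in $I$, it lies in a compact subinterval $[a,b]\subseteq I$, and the smooth function $F|_{[a,b]}$ extends to some $\tilde F\in C_c^\infty(\reals)$. On $W$ we have $v=u\in[a,b]$, so $\tilde F(v)=F(u)$ there, and since $\supp\varphi\subseteq W$ this yields $\varphi F(u)=\varphi\,\tilde F(v)$ on all of $\Omega$. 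Writing $\tilde F=\tilde F(0)+G$ with $G(0)=0$ (and $G$ together with all its derivatives bounded, automatic as $\tilde F\in C_c^\infty$), we extend $v$ by zero to $\bar v\in W^{s,p}(\reals^n)$ (Theorem~\ref{thmapril181}, applicable because $v$ has compact support in $\Omega$), apply Corollary~\ref{winter107} to obtain $G(\bar v)\in W^{s,p}(\reals^n)$, observe that $G(\bar v)$ is supported in $\supp\psi$ because $G(0)=0$, and restrict back via Theorem~\ref{thmapril181} to conclude $G(v)=G(\bar v)|_\Omega\in W^{s,p}(\Omega)$. Hence $\varphi F(u)=\varphi G(v)+\tilde F(0)\,\varphi\in W^{s,p}(\Omega)$.

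Item (2) repeats this, run simultaneously for $u_m$ and $u$, the new ingredient being that the openness of $I$ supplies one compact subinterval serving all indices. Fixing $\varphi$ and choosing $\psi,W$ as above, $\psi u_m\to\psi u$ in $W^{s,p}(\Omega)$, hence uniformly on $\Omega$ (as $sp>n$, by Theorem~\ref{thm3.3}), so $u_m\to u$ uniformly on $\bar W$; since $u(\bar W)$ is compact and $I$ open, a fixed compact neighborhood $[a,b]\subseteq I$ of $u(\bar W)$ contains $u_m(\bar W)$ for all large $m$, and enlarging $[a,b]$ to absorb the finitely many exceptional $u_m(\bar W)$ we may take $u_m(\bar W),u(\bar W)\subseteq[a,b]\subseteq I$ for every $m$. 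With $\tilde F,G$ as before, $\varphi F(u_m)-\varphi F(u)=\varphi\big(G(\psi u_m)-G(\psi u)\big)$; the continuity half of Corollary~\ref{winter107}, transported to $\Omega$ exactly as in item (1), gives $G(\psi u_m)\to G(\psi u)$ in $W^{s,p}(\Omega)$, and multiplying by $\varphi$ yields $\varphi F(u_m)\to\varphi F(u)$ in $W^{s,p}(\Omega)$, i.e. $F(u_m)\to F(u)$ in $W^{s,p}_{loc}(\Omega)$. Item (3) is then immediate: item (1) with $I=\reals$ gives well-definedness, and since $W^{s,p}_{loc}(\Omega)$ is metrizable, continuity reduces to sequential continuity, which is item (2) with $I=\reals$.

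I expect the bookkeeping in item (2) to be the main obstacle: pinning down a single compact subinterval of $I$ capturing the ranges of all the $u_m$ and of $u$ near $\supp\varphi$ — precisely where $sp>n$ (to upgrade $W^{s,p}$-convergence to uniform convergence) and the openness of $I$ enter — and carefully threading the composition statement and its continuity through the zero-extension isomorphisms of Theorem~\ref{thmapril181}, so that Corollary~\ref{winter107}, which lives on $\reals^n$, can be invoked on the bounded Lipschitz domain as well.
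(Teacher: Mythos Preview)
Your proof is correct, but Part~(1) takes a genuinely different route from the paper. The paper proves Part~(1) ``from scratch'': it first shows $F(u)\in W^{k,p}_{loc}(\Omega)$ with $k=\lfloor s\rfloor$ by expanding $\partial^\alpha(F(u))$ via the chain rule into sums of terms $F^{(l)}(u)\,\partial^{\beta_1}u\cdots\partial^{\beta_r}u$, handles each product with the local multiplication results (Theorem~\ref{lemapp1}), and then runs a bootstrapping argument on $\partial_i(F(u))=F'(u)\,\partial_i u$ to climb from $W^{k,p}_{loc}$ up to $W^{s,p}_{loc}$ when $s$ is noninteger. You instead localize, extend by zero to $\reals^n$, and invoke Corollary~\ref{winter107} directly; this is cleaner and entirely sidesteps the bootstrapping. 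For Parts~(2) and~(3) the two arguments are essentially the same---both trap the ranges in a compact subinterval via the embedding $W^{s,p}\hookrightarrow L^\infty$, replace $F$ by a compactly supported smooth extension, and appeal to continuity of composition on $W^{s,p}$; the paper organizes this with the partition of unity $\{\psi_j\}$ while you use a single cutoff adapted to $\varphi$, and you are more explicit about threading the zero-extension isomorphisms.

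One tiny technical wrinkle: as written you have $\psi\equiv 1$ on $W$, but then assert $u_m\to u$ uniformly on $\bar W$; this needs $\psi\equiv 1$ on $\bar W$. Simply choose $\psi\equiv 1$ on a slightly larger open set containing $\bar W$ (or equivalently take $W$ precompact inside the set where $\psi\equiv 1$), and the argument goes through unchanged.
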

\begin{proof}
The proof of part~(1) generalizes the argument given in~\cite{dM05}.
Let $k=\floor{s}$. First we show that $F(u)\in
W^{k,p}_{loc}(\Omega)$. To this end we fix a multi-index
$|\alpha|=m\leq k$ and we show that $\partial^{\alpha}(F(u))\in
L^p_{loc}(\Omega)$ (see Theorem ~\ref{thmmay141123}).\\
It follows from the chain rule (and induction) that
$\partial^{\alpha}(F(u))$ is a sum of the terms of the form
\begin{equation*}
F^{(l)}(u)\partial^{\beta_1}u\cdots \partial^{\beta_r}u
\end{equation*}
where $l\in \mathbb{N}$ and $\sum_{i=1}^r |\beta_i|=m$. It is
a consequence of Theorem \ref{lemapp1} that if $s_1,s_2\geq s_3\geq 0$ and
$s_1+s_2-s_3>\frac{n}{p}$, then $W^{s_1,p}_{loc}(\Omega)\times
W^{s_2,p}_{loc}(\Omega)\hookrightarrow W^{s_3,p}_{loc}(\Omega)$. As a consequence
\begin{align*}
& W^{s-|\beta_1|,p}_{loc}(\Omega)\times
W^{s-|\beta_2|,p}_{loc}(\Omega)\hookrightarrow W^{s-|\beta_1|-|\beta_2|,p}_{loc}(\Omega)\\
& W^{s-|\beta_1|-|\beta_2|,p}_{loc}(\Omega)\times
W^{s-|\beta_3|,p}_{loc}(\Omega)\hookrightarrow W^{s-|\beta_1|-|\beta_2|-|\beta_3|,p}_{loc}(\Omega)\\
&\vdots\\
& W^{s-|\beta_1|-\cdots-|\beta_{r-1}|,p}_{loc}(\Omega)\times
W^{s-|\beta_r|,p}_{loc}(\Omega)\hookrightarrow W^{s-|\beta_1|-\cdots-|\beta_r|,p}_{loc}(\Omega)=W^{s-m,p}_{loc}(\Omega)
\end{align*}
Considering this and the
fact that $\partial^{\beta_i}u\in W^{s-|\beta_i|,p}_{loc}(\Omega)$, we
have
\begin{equation*}
\partial^{\beta_1}u\cdots \partial^{\beta_r}u\in W^{t,p}_{loc}(\Omega)
\end{equation*}
for all $0\leq t\leq s-m$. In particular, $\partial^{\beta_1}u\cdots
\partial^{\beta_r}u\in W^{0,p}_{loc}(\Omega)=L^p_{loc}(\Omega)$. Also, since $F$
is smooth and $u$ is continuous, $F^{(l)}(u)\in
L^{\infty}_{loc}(\Omega)$. Therefore
\begin{equation*}
F^{(l)}(u)\partial^{\beta_1}u\cdots \partial^{\beta_r}u\in
L^p_{loc}(\Omega)
\end{equation*}
So $F(u)\in W^{k,p}_{loc}(\Omega)$ where $k=\floor{s}$. Now, for noninteger $s$, we use
a bootstrapping argument to show that $F(u)$ in fact belongs to
$W^{s,p}_{loc}(\Omega)$.\\
$F'$ is smooth, therefore $F'(u)\in W^{k,p}_{loc}(\Omega)$. Also
$\frac{\partial u}{\partial x^i} \in W^{s-1,p}_{loc}(\Omega)$ (note that $s-1\geq 0$). By
Theorem \ref{lemapp1} we have
\begin{equation*}
W^{k,p}_{loc}(\Omega)\times
W^{s-1,p}_{loc}(\Omega)\hookrightarrow W^{t-1,p}_{loc}(\Omega)
\end{equation*}
provided that
\begin{align*}
k\geq t-1\geq 0,\quad s-1\geq t-1\geq 0,\quad k+(s-1)-(t-1)>\frac{n}{p}
\end{align*}
Therefore $\frac{\partial}{\partial x^i}(F(u))=F'(u)\frac{\partial u}{\partial x^i}\in
W^{t-1,p}_{loc}(\Omega)$ for all $1\leq t\leq s$ such that
$t<k+(s-\frac{n}{p})$. Consequently $F(u)\in
W^{t,p}_{loc}(\Omega)$ for all $1\leq t\leq s$ such that
$t<k+(s-\frac{n}{p})$ (see Theorem ~\ref{thmmay141107}). Now we can repeat this argument by starting
with "$F'$ is smooth, therefore $F'(u)\in W^{t,p}_{loc}(\Omega)$
for all $1\leq t\leq s$ such that $t<k+(s-\frac{n}{p})$". This results
in $F(u)\in W^{t,p}_{loc}(\Omega)$ for all $1\leq t\leq s$ such that
$t<k+2(s-\frac{n}{p})$. Repeating this a finite number of times
shows that $F(u)\in W^{s,p}_{loc}(\Omega)$.\\
Our next goal is to prove items 2 and 3. First we note that if $0\in I$ then WLOG we may assume that
$F(0)=0$. Indeed, the constant function $F(0)$ is an element of
$W^{s,p}_{loc}(\Omega)$. So
\begin{equation*}
F(u_m)\rightarrow F(u)\quad \textrm{in
$W^{s,p}_{loc}(\Omega)$}\Longleftrightarrow \tilde{F}(u_m)\rightarrow
\tilde{F}(u)\quad \textrm{in $W^{s,p}_{loc}(\Omega)$}
\end{equation*}
where $\tilde{F}(t)=F(t)-F(0)$. Thus WLOG we may assume that
$F(0)=0$.\\ Let $\{K_j\}_{j\in\mathbb{N}_0}$, $\{V_j\}_{j\in\mathbb{N}_0}$, and $\{\psi_j\}_{j\in \mathbb{N}_0}$ be as in Theorem \ref{winter3}. Clearly $\{\psi_j\}$ is an admissible family of
functions. Therefore in order to show that $F(u_m)\rightarrow
F(u)$ in $W^{s,p}_{loc}(\Omega)$ it is enough to prove that
\begin{equation*}
\forall\, r\in \mathbb{N}_0\qquad \psi_r(F(u_m)-F(u))\rightarrow
0\quad \textrm{in $W^{s,p}(\Omega)$ as $m\rightarrow \infty$}
\end{equation*}
Let $\psi_{r_1},\cdots,\psi_{r_k}$ be those admissible test
functions whose support intersects the support of $\psi_r$.
 So
\begin{equation*}
\forall\, x\in \textrm{supp}\,\psi_r\qquad \sum_{j\in
\mathbb{N}_0}\psi_j u=\psi_{r_1}u+\cdots+\psi_{r_k}u
\end{equation*}
Consequently
\begin{equation*}
\psi_r(F(u_m)-F(u))=\psi_rF(\psi_{r_1}u_m+\cdots+\psi_{r_k}u_m)-\psi_rF(\psi_{r_1}u+\cdots+\psi_{r_k}u)
\end{equation*}
Since $u_m\rightarrow u$ in $W^{s,p}_{loc}(\Omega)$, for all $1\leq i\leq
k$ we have
\begin{equation*}
\psi_{r_i}u_m\rightarrow \psi_{r_i}u\qquad \textrm{in $W^{s,p}(\Omega)$}
\end{equation*}
and so
\begin{equation*}
\psi_{r_1}u_m+\cdots+\psi_{r_k}u_m\rightarrow
\psi_{r_1}u+\cdots+\psi_{r_k}u\qquad \textrm{in $W^{s,p}(\Omega)$}
\end{equation*}
Since $W^{s,p}(\Omega)\hookrightarrow L^\infty (\Omega)$ also we
have
\begin{equation}\lab{equniform1}
\psi_{r_1}u_m+\cdots+\psi_{r_k}u_m\rightarrow
\psi_{r_1}u+\cdots+\psi_{r_k}u\qquad \textrm{in
$L^{\infty}(\Omega)$}
\end{equation}
Consequently for the continuous versions of
$\psi_{r_1}u_m+\cdots+\psi_{r_k}u_m$ and
$\psi_{r_1}u+\cdots+\psi_{r_k}u$ we have uniform convergence.
From this point, we work with these continuous versions. The
continuous function $\psi_{r_1}u+\cdots+\psi_{r_k}u$ attains its
max and min on the compact set $\textrm{supp}\psi_r$ which we
denote by $A_{max}$ and $A_{min}$, respectively. Note that
\begin{equation*}
\forall\,x\in \textrm{supp}\psi_r\quad
(\psi_{r_1}u+\cdots+\psi_{r_k}u)(x)=u(x)\in I
\end{equation*}
So $A_{max}$ and $A_{min}$ are in $I$ (that is
$[A_{min},A_{max}]\subseteq I$). Let $\epsilon>0$ be such that
$[A_{min}-2\epsilon,A_{max}+2\epsilon]\subseteq I$. By
(\ref{equniform1}) there exists $M$ such that
\begin{equation*}
\forall\, m\geq M,\,\,\forall\,x\in\textrm{supp}\psi_r\qquad
(\psi_{r_1}u_m+\cdots+\psi_{r_k}u_m)(x)\in
[A_{min}-\epsilon,A_{max}+\epsilon]\subseteq I
\end{equation*}
Let $\xi\in C_c^\infty (\reals)$ be such that $\xi=1$ on
$[A_{min}-\epsilon,A_{max}+\epsilon]$ and $\xi=0$ outside of
$[A_{min}-2\epsilon,A_{max}+2\epsilon]\subseteq I$. Define
$\hat{F}:\reals\rightarrow \reals$ by
\begin{equation*}
\hat{F}(t)=\begin{cases}
      \hfill \xi(t)F(t)    \hfill & \text{ if $t \in I$ } \\
      \hfill 0 \hfill & \text{ if $t\not \in I$}
  \end{cases}
\end{equation*}
Clearly $\hat{F}:\reals\rightarrow \reals$ is a smooth function
and $\hat{F}(0)=0$. Moreover $\hat{F}=F$ on
$[A_{min}-\epsilon,A_{max}+\epsilon]$. Also for all $x\in \Omega$
and $m\geq M$ we have
\begin{align*}
\psi_r(F(u_m)-F(u))&=\psi_rF(\psi_{r_1}u_m+\cdots+\psi_{r_k}u_m)-\psi_rF(\psi_{r_1}u+\cdots+\psi_{r_k}u)\\
&=\psi_r\hat{F}(\psi_{r_1}u_m+\cdots+\psi_{r_k}u_m)-\psi_r\hat{F}(\psi_{r_1}u+\cdots+\psi_{r_k}u)
\end{align*}
Indeed, if $x\not \in \textrm{supp}\psi_r$, then both sides are
equal to zero. If $x\in \textrm{supp}\psi_r$, then
\begin{align*}
& (\psi_{r_1}u+\cdots+\psi_{r_k}u)(x)\in
[A_{min},A_{max}] \\
& (\psi_{r_1}u_m+\cdots+\psi_{r_k}u_m)(x)\in
[A_{min}-\epsilon,A_{max}+\epsilon]
\end{align*}
and so
\begin{align*}
& F((\psi_{r_1}u+\cdots+\psi_{r_k}u)(x))=\hat{F}((\psi_{r_1}u+\cdots+\psi_{r_k}u)(x))\\
&
F((\psi_{r_1}u_m+\cdots+\psi_{r_k}u_m)(x))=\hat{F}((\psi_{r_1}u_m+\cdots+\psi_{r_k}u_m)(x))
\end{align*}
$\hat{F}$ is a smooth function and its value at $0$ is $0$. Also
by assumption $sp>n$. Therefore the mapping $v\rightarrow
\psi_r\hat{F}(v)$ from $W^{s,p}(\Omega)$ to $W^{s,p}(\Omega)$ is continuous. Hence
\begin{equation*}
\psi_r\hat{F}(\psi_{r_1}u_m+\cdots+\psi_{r_k}u_m)\rightarrow
\psi_r\hat{F}(\psi_{r_1}u+\cdots+\psi_{r_k}u) \qquad \textrm{in
$W^{s,p}(\Omega)$}
\end{equation*}
That is
\begin{equation*}
\psi_r(F(u_m)-F(u))\rightarrow 0\qquad \textrm{in $W^{s,p}(\Omega)$}
\end{equation*}
So we proved item 2. Finally we note that $W^{s,p}_{loc}(\Omega)$ is metrizable. So
continuity of the mapping $u\rightarrow F(u)$ is equivalent to
sequential continuity which was proved in item 2.
\end{proof}

\section*{Acknowledgments}
   \label{sec:ack}

MH was supported in part by NSF Awards~1262982, 1318480, and 1620366.
AB was supported by NSF Award~1262982.

\bibliographystyle{abbrv}
\bibliography{refsscratch}

\begin{thebibliography}{10}

\bibitem{32}
R.~A. Adams and J.~J.~F. Fournier.
\newblock {\em Sobolev Spaces}.
\newblock New York: Academic Press, second edition, 2003.

\bibitem{Antonic1}
N.~Antonic and K.~Burazin.
\newblock On certain properties of spaces of locally {S}obolev functions.
\newblock In Z.~Drma\'c, editor, {\em Proceedings of the Conference on Applied
  Mathematics and Scientific Computing}, pages 109--120. Springer, 2005.

\bibitem{holstbehzadan2018d}
A.~Behzadan.
\newblock Some remarks on ${W}^{s,p}$ interior elliptic regularity estimates.
\newblock {\em Journal of Elliptic and Parabolic Equations}, 7:137--169, 2021.

\bibitem{holstbehzadan2017c}
A.~Behzadan and M.~Holst.
\newblock On certain geometric operators between {S}obolev spaces of sections
  of tensor bundles on compact manifolds equipped with rough metrics.
\newblock Available as arXiv:1704.07930v2 [math.AP], 2017.

\bibitem{holstbehzadan2018b}
A.~Behzadan and M.~Holst.
\newblock Sobolev-{S}lobodeckij spaces on compact manifolds, {R}evisited.
\newblock Available as arXiv:1704.07930v3 [math.AP], 2018.

\bibitem{holstbehzadan2015b}
A.~Behzadan and M.~Holst.
\newblock Multiplication in {S}obolev spaces, revisited.
\newblock Accepted for publication in Arkiv for Matematik. Available as
  arXiv:1512.07379 [math.AP], 2021.

\bibitem{33}
P.~K. Bhattacharyya.
\newblock {\em Distributions: {G}eneralized {f}unctions with {A}pplications in
  {S}obolev {S}paces}.
\newblock de Gruyter, 2012.

\bibitem{Blanchard2003}
P.~Blanchard and E.~Bruening.
\newblock {\em Mathematical Methods in Physics: Distributions, Hilbert Space
  Operators, and Variational Methods}.
\newblock Birkh{\"a}user, 2003.

\bibitem{Bogachev2017}
V.~Bogachev and O.~Smolyanov.
\newblock {\em Topological Vector Spaces and Their Applications}.
\newblock Springer.

\bibitem{Brezis2011}
H.~Brezis.
\newblock {\em Functional Analysis, Sobolev Spaces and Partial Differential
  Equations}.
\newblock Springer.

\bibitem{38}
H.~Brezis and P.~Mironescu.
\newblock {G}agliardo-{N}irenberg, composition and products in fractional
  {S}obolev spaces.
\newblock {\em Journal of Evolution Equations}, 1(4):387--404, 2001.

\bibitem{Gris85}
P.~Grisvard.
\newblock {\em Elliptic Problems in Nonsmooth Domains}.
\newblock Pitman Publishing, Marshfield, MA, 1985.

\bibitem{9}
G.~Grubb.
\newblock {\em Distributions and Operators}.
\newblock Springer-Verlag, New York, NY, 2009.

\bibitem{dM05}
D.~Maxwell.
\newblock Rough solutions of the {E}instein constraint equations on compact
  manifolds.
\newblock {\em J. Hyp. Diff. Eqs.}, 2(2):521--546, 2005.

\bibitem{Narici1985}
L.~Narici and E.~Beckenstein.
\newblock {\em Topological vector spaces}.
\newblock M. Dekker.

\bibitem{Petersen83}
B.~Petersen.
\newblock {\em An introduction to the {F}ourier {T}ransform and
  pseudo-differential operators}.
\newblock Pitman Advanced Publishing Program, Boston, London, Melbourne, 1983.

\bibitem{Reus1}
M.~D. Reus.
\newblock An introduction to functional spaces.
\newblock Master's thesis, Utrecht University, 2011.

\bibitem{Rudi73}
W.~Rudin.
\newblock {\em Functional Analysis}.
\newblock McGraw-Hill, New York, NY, 1973.

\bibitem{Treves1}
F.~Treves.
\newblock {\em Topological {V}ector {S}paces, {D}istributions and {K}ernels}.
\newblock ACADEMIC PRESS, New York, London, 1967.

\bibitem{36}
H.~Triebel.
\newblock {\em Interpolation {T}heory, {F}unction {S}paces, {D}ifferential
  {O}perators}.
\newblock North-Holland Publishing Company, 1977.

\bibitem{Trie92}
H.~Triebel.
\newblock {\em Theory of {F}unction {S}paces. {II}}, volume~84 of {\em
  Monographs in Mathematics}.
\newblock Birkh\"auser Verlag, Basel, 1992.

\bibitem{Vogt2000}
D.~Vogt.
\newblock Lectures on {F}rechet spaces.
\newblock 2000.

\end{thebibliography}
\end{document}